\documentclass[11pt,reqno]{amsart}
\usepackage[a4paper,margin=30mm,headheight=14pt]{geometry}
\usepackage{amsmath,amssymb,amsthm,mathtools,mathrsfs,booktabs,array}
\usepackage{xcolor,enumitem,etoolbox,microtype}
\usepackage[T1]{fontenc}
\usepackage{lmodern}
\usepackage{xurl}
\usepackage{hyperref}
\usepackage{aliascnt}
\usepackage[nameinlink,capitalize,noabbrev]{cleveref}
\hypersetup{
  colorlinks=true,
  linkcolor=blue!45!black,
  citecolor=blue!45!black,
  urlcolor=blue!45!black,
  pdftitle={Weighted potential and Green systems with Dini coefficients and primitive growth},
  pdfauthor={Hongtao Hu, Zhouyu Long and Wenming Zou},
  pdfsubject={Liouville theorems for weighted potential and Green systems},
  pdfkeywords={Lane--Emden systems, Liouville theorems, polyharmonic equations, Riesz potentials, Dini continuity, integral representation}
}
\numberwithin{equation}{section}
\newtheorem{theorem}{Theorem}[section]
\theoremstyle{plain}
\newaliascnt{lemma}{theorem}
\newtheorem{lemma}[lemma]{Lemma}
\aliascntresetthe{lemma}
\theoremstyle{plain}
\newaliascnt{proposition}{theorem}
\newtheorem{proposition}[proposition]{Proposition}
\aliascntresetthe{proposition}
\theoremstyle{plain}
\newaliascnt{corollary}{theorem}
\newtheorem{corollary}[corollary]{Corollary}
\aliascntresetthe{corollary}
\theoremstyle{definition}
\newaliascnt{definition}{theorem}
\newtheorem{definition}[definition]{Definition}
\aliascntresetthe{definition}
\theoremstyle{remark}
\newaliascnt{remark}{theorem}
\newtheorem{remark}[remark]{Remark}
\aliascntresetthe{remark}
\theoremstyle{remark}
\newaliascnt{example}{theorem}
\newtheorem{example}[example]{Example}
\aliascntresetthe{example}
\theoremstyle{plain}
\crefname{theorem}{Theorem}{Theorems}
\Crefname{theorem}{Theorem}{Theorems}
\crefname{lemma}{Lemma}{Lemmas}
\Crefname{lemma}{Lemma}{Lemmas}
\crefname{proposition}{Proposition}{Propositions}
\Crefname{proposition}{Proposition}{Propositions}
\crefname{corollary}{Corollary}{Corollaries}
\Crefname{corollary}{Corollary}{Corollaries}
\crefname{definition}{Definition}{Definitions}
\Crefname{definition}{Definition}{Definitions}
\crefname{remark}{Remark}{Remarks}
\Crefname{remark}{Remark}{Remarks}
\crefname{example}{Example}{Examples}
\Crefname{example}{Example}{Examples}
\newcommand{\fint}{\mathop{\mathchoice{\displaystyle\mathrlap{\mkern3mu-}\int}{\textstyle\mathrlap{\mkern2mu-}\int}{\scriptstyle\mathrlap{\mkern1.5mu-}\int}{\scriptscriptstyle\mathrlap{\mkern1mu-}\int}}\nolimits}
\setlist{nosep}
\AtBeginEnvironment{thebibliography}{\small\raggedright\setlength{\itemsep}{0pt}}
\title[Weighted potential and Green systems]{Weighted potential and Green systems\\with Dini coefficients and primitive growth}
\author[Hongtao Hu]{\bfseries Hongtao Hu$^{*}$}
\author[Zhouyu Long]{\bfseries Zhouyu Long$^{\dagger}$}
\author[Wenming Zou]{\bfseries Wenming Zou$^{\ddagger}$}
\thanks{$^{*}$Department of Mathematical Sciences, Tsinghua University,
Beijing 100084, China.\newline
E-mail: \href{mailto:huht23@mails.tsinghua.edu.cn}{\nolinkurl{huht23@mails.tsinghua.edu.cn}}}
\thanks{$^{\dagger}$Department of Mathematical Sciences, Tsinghua University,
Beijing 100084, China.\newline
E-mail: \href{mailto:longzy25@mails.tsinghua.edu.cn}{\nolinkurl{longzy25@mails.tsinghua.edu.cn}}}
\thanks{$^{\ddagger}$Department of Mathematical Sciences, Tsinghua University,
Beijing 100084, China.\newline
E-mail: \href{mailto:zou-wm@mail.tsinghua.edu.cn}{\nolinkurl{zou-wm@mail.tsinghua.edu.cn}}}

\makeatletter
\patchcmd{\@setauthors}{\centering\footnotesize}{\centering\large}{}%
  {\ClassError{amsart}{Unable to set author font}{Check the amsart author layout.}}
\patchcmd{\@setauthors}{\MakeUppercase{\authors}}{\authors}{}%
  {\ClassError{amsart}{Unable to preserve author case}{Check the amsart author layout.}}
\def\author@andify{\nxandlist{\unskip,\quad}{\unskip,\quad}{\unskip,\quad}}
\let\addresses\@empty
\makeatother
\date{}

\begin{document}
\raggedbottom
\maketitle
\vskip0.6in
\begin{center}
\begin{minipage}{140mm}
\textbf{Abstract:} We prove a nonexistence theorem for positive
potential systems with continuous coefficients satisfying distributional
Euler lower bounds and a uniform Dini modulus on normalized logarithmic
dilations. The nonlinearities have positive lower logarithmic
derivatives and positive primitive-to-endpoint ratios. The common radial kernel is
comparable with a Riesz kernel. A localized
transport identity, a primitive pressure estimate and implicit amplitude
normalization give uniform local energy bounds and a dilation
contradiction. And we treats the genuine Green kernel of
$-\Delta+V$ for nonnegative potentials under annular and Newton
composition budgets and a strict gap corrected by the relative Green dilation
defect. Relative endpoint differences and an exact virtual-energy
cancellation retain local nonlinear energy as the only initial energy
hypothesis. Independent representation arguments give
these consequences, including even integer orders without initial tails.
A compact family of normalized responses also yields universal interior
bounds without regular variation.

\vskip0.23in
\textbf{Keywords:} Lane--Emden systems, Liouville theorems, positive potentials, Green kernels, Dini continuity, transport identities, integral representation.
\vskip0.231in
\textbf{MSC Classification (2020):} Primary 35B53; Secondary 35J48, 35R11, 45G15.
\vskip0.23in
\end{minipage}
\end{center}
\newpage

\section{Introduction}\label{sec:intro}
Let $0<\sigma<n$ and write $L_\sigma=(-\Delta)^{\sigma/2}$ and
\[
 K_\sigma(x)=c_{n,\sigma}|x|^{\sigma-n},\qquad
 c_{n,\sigma}=\frac{\Gamma((n-\sigma)/2)}
 {2^\sigma\pi^{n/2}\Gamma(\sigma/2)},\qquad I_\sigma H=K_\sigma*H.
\]
The Fourier symbol of $I_\sigma$ is $|\xi|^{-\sigma}$.
For the normalization and standard potential estimates, see
\cite[Chapter~V]{Stein1970} and \cite{AdamsHedberg1996}.
All unqualified integrals are over $\mathbb R^n$. The balls $B_R$
are centered at zero. We state a common potential theorem before its PDE consequences.

Fix $e_0\in\mathbb S^{n-1}$ and put
$\mathcal A=\{x:1/2\le |x|\le2\}$. For positive continuous
coefficients $a,b$ on $\mathbb R^n\setminus\{0\}$ write
\[
 a_R(x)=a(Rx)/a(Re_0),\qquad b_R(x)=b(Rx)/b(Re_0).
\]
We use global Euler lower bounds in the distributional sense:
\begin{equation}
 \operatorname{div}(xa)\ge(n+A_-)a\,dx,\qquad
 \operatorname{div}(xb)\ge(n+B_-)b\,dx
 \quad\hbox{on }\mathbb R^n\setminus\{0\}.
 \label{v5:euler-lower}
\end{equation}
For continuous coefficients these are equivalent to
\begin{equation}
 a(tx)\ge t^{A_-}a(x),\qquad b(tx)\ge t^{B_-}b(x)
 \quad(x\ne0,\ t\ge1).
 \label{r6:weak-euler-front}
\end{equation}
The equivalence follows by polar disintegration and continuity;
it is also explained in Section~\ref{vi:section}.
For $C^1$ coefficients, \eqref{v5:euler-lower} is precisely
$\vartheta_a:=x\cdot\nabla\log a\ge A_-$ and
$\vartheta_b:=x\cdot\nabla\log b\ge B_-$.
A modulus means a finite nondecreasing function
$\Omega:[0,4]\to[0,\infty)$ with $\Omega(0)=0$ and
$\Omega(s)\to0$ as $s\downarrow0$.

\begin{theorem}
\label{thm:dini-front}\label{r7:master-front}
\label{r6:master-front}\label{r6:nonlinear-front}
\label{r6:radial-kernel-nonexistence}
Let $n\ge1$, $0<\sigma<n$, and $\lambda=n-\sigma$.
Let $K(z)=k(|z|)$, where $k\in C^1((0,\infty))$ is positive and
\begin{equation}
 c_Kr^{-\lambda}\le k(r)\le C_Kr^{-\lambda},\qquad
 0\le\theta_K(r):=-\frac{rk'(r)}{k(r)}\le\Lambda
 \quad(r>0)
 \label{r6:kernel-class}
\end{equation}
for finite positive constants $c_K,C_K,\Lambda$.
Let $a,b>0$ be continuous off zero and satisfy the weak Euler
bounds \eqref{v5:euler-lower}, where $A_-,B_-$ are arbitrary real
numbers. Suppose that for some $R_0>0$, finite $H_a,H_b$ and a
modulus $\Omega$,
\begin{equation}
 \begin{split}
 \int_0^{1/2}\Omega(s)\,\frac{ds}{s}&<\infty,\\
 |\log a_R(x)-\log a_R(y)|&\le H_a\Omega(|x-y|),\\
 |\log b_R(x)-\log b_R(y)|&\le H_b\Omega(|x-y|)
 \quad(R\ge R_0,\ x,y\in\mathcal A).
 \end{split}
 \label{eq:front-dini-orbit}
\end{equation}
When $n=1$, compare points only within the same sign branch.
Let $F,G$ be continuous on $[0,\infty)$, locally Lipschitz on
$(0,\infty)$, positive there, and zero at zero. Set
$P_F(s)=\int_0^sF(t)\,dt$, $P_G(s)=\int_0^sG(t)\,dt$.
Assume that constants $p_0,q_0,\mu_F,\mu_G>0$ satisfy
\begin{equation}
 \begin{split}
 \frac{sF'(s)}{F(s)}&\ge q_0,\qquad
 \frac{sG'(s)}{G(s)}\ge p_0\quad\hbox{for almost every }s>0,
 \qquad p_0q_0>1,\\
 P_F(s)&\ge\mu_FsF(s),\qquad
 P_G(s)\ge\mu_GsG(s)\quad(s>0),
 \end{split}
 \label{r7:response-assumptions}
\end{equation}
and
\begin{equation}
 \Gamma_P:=(n+B_-)\mu_F+(n+A_-)\mu_G-\Lambda>0.
 \label{r7:primitive-gap}
\end{equation}
There is no finite strictly positive pair continuous off zero
satisfying
\begin{equation}
 u=K*(aG(v)),\qquad v=K*(bF(u))
 \quad\hbox{at every }x\ne0,
 \label{v5:master-system}
\end{equation}
and
\begin{equation}
 \int_{B_1}[bF(u)u+aG(v)v]<\infty.
 \label{eq:main-local-energy}
\end{equation}
\end{theorem}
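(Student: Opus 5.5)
The plan is a Pohozaev-type (Rellich) transport identity for the integral system, localized by a cutoff $\varphi(x/R)$, followed by a dilation argument that uses the gap \eqref{r7:primitive-gap}. Formally, differentiate the family $u_t(x)=u(tx)$ at $t=1$. Pair $x\cdot\nabla u$ with $bF(u)$ and $x\cdot\nabla v$ with $aG(v)$. Use the symmetry of $K$ to write
\[
\int bF(u)\,x\cdot\nabla u+\int aG(v)\,x\cdot\nabla v
\]
as a double integral $\iint bF(u)(x)\,aG(v)(y)\,(x-y)\cdot\nabla K(x-y)$, where $(x-y)\cdot\nabla K=-\theta_K k$.

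On the left-hand side, $bF(u)\,x\cdot\nabla u=b\,x\cdot\nabla P_F(u)$. Integrating by parts in the distributional sense, with \eqref{v5:euler-lower}, gives a contribution bounded above by $-(n+B_-)\int bP_F(u)\le-(n+B_-)\mu_F\int bF(u)u$. Likewise the $v$-term gives $-(n+A_-)\mu_G\int aG(v)v$. Since $\int bF(u)u=\iint bF(u)K\,aG(v)=\int aG(v)v$, the right-hand side is at least $-\Lambda\int bF(u)u$. Combining these yields $\Gamma_P\int bF(u)u\le 0$, which is impossible for a positive pair. The whole proof consists of making this rigorous, because the integrals need not be finite globally.

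\textbf{Step 1 (regularity and a priori structure).} Positivity of $k$ together with finiteness of $u$ at one point forces $\int(1+|y|)^{-\lambda}aG(v)<\infty$. From this we get the lower bounds $u\gtrsim |x|^{-\lambda}$, and similarly for $v$, at infinity. The Dini hypothesis \eqref{eq:front-dini-orbit} makes the normalized coefficients $a_R,b_R$ uniformly comparable to constants on $\mathcal A$, and uniformly Dini there. Since $k\in C^1$ with $|rk'|\le\Lambda k$, $x\cdot\nabla u$ exists as a convergent integral away from the singularity. Dini continuity is what controls the commutator between the dilation and the coefficients in the near-diagonal region.

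\textbf{Step 2 (localized identity).} Apply the computation above with a cutoff $\varphi_R$ supported in $B_{2R}$ and the mollified dilation $x\mapsto tx$. The boundary and commutator terms are of two kinds. Terms involving $\nabla\varphi_R$ are supported on the annulus $R\mathcal A$. The cross term $\iint[\varphi_R(x)-\varphi_R(y)]\cdots$ is bounded by $\iint|x-y|R^{-1}k(x-y)\cdots$. Near the origin, \eqref{eq:main-local-energy} makes everything finite.

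\textbf{Step 3 (dilation contradiction, the main obstacle).} The goal is the inequality
\[
\Gamma_P E(R)\le C\,[E(4R)-E(R/4)]+o(E(R)),\qquad E(R)=\int_{B_R}bF(u)u .
\]
Obtaining a uniform local energy bound requires normalizing the amplitude. Set $\tilde u(x)=\alpha_R u(Rx)$, with $\alpha_R$ chosen implicitly so that the rescaled system again has the form \eqref{v5:master-system}, with coefficients $a_R,b_R$ and nonlinearities $F_R(s)=F(s/\alpha_R)/F(1/\alpha_R)$, and similarly $G_R$. The bounds $sF'/F\ge q_0$ and $sG'/G\ge p_0$ are preserved, and $p_0q_0>1$ makes the implicit choice of $\alpha_R$ solvable by monotonicity. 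The primitive ratios $\mu_F,\mu_G$ are scale invariant.

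Two consequences follow. First, a Serrin–Zou type test of the rescaled equation gives normalized local energies bounded by $O(1)$ uniformly in $R$. Second, $E(R)\ge cR^{\gamma}$ for some $\gamma>0$, again from $p_0q_0>1$ and the lower bounds of Step 1. An energy that grows without bound but has uniformly bounded annular increments after normalization then contradicts the displayed inequality with $\Gamma_P>0$, after summing dyadically and choosing a good radius.

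I expect the hardest part to be controlling the cross term at the annulus by the annular energy alone. I would first try Hardy–Littlewood–Sobolev applied to $k\lesssim r^{-\lambda}$ on $R\mathcal A\times B_{8R}$, combined with Hölder's inequality and the primitive pressure estimate $P_F\ge\mu_F sF$.
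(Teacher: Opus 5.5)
There is a genuine gap, and it sits exactly where you place the ``main obstacle.'' In Step~3 you claim that a Serrin--Zou type test of the rescaled system bounds the normalized local energies uniformly in $R$. Mass arguments of this kind, such as the patch/feedback argument of Lemma~\ref{r7:normalization}, give only uniform source-mass and truncated-tail bounds of the form $\int_{B_4}(f_R+g_R+u_R+v_R)\le M$. They say nothing about $\int\eta f_Ru_R$, which is a product of two functions known only in $L^1$.

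A uniform normalized energy bound is in fact the whole theorem. Once $E_R\le C_*$ is known, the contradiction follows from $\mathcal H_R=R^{-\lambda}/(s_Rt_R)\to\infty$ and $E_R\ge\mathcal H_R\int_{B_R}[bF(u)u+aG(v)v]$, and no Pohozaev identity is needed. Without that bound, your inequality $\Gamma_PE(R)\le C[E(4R)-E(R/4)]+o(E(R))$ is compatible with polynomially growing $E$ (it only forces growth), so it yields no contradiction. It closes only under finite total energy, which is the paper's Theorem~\ref{r6:critical-rigidity}. Your suggestion for the annulus, bounding the cutoff terms by annular energy through Hardy--Littlewood--Sobolev and H\"older, is the classical Souplet-type route. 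That route is only known to close in low dimensions or restricted exponent ranges, not for all $n$, all $\sigma\in(0,n)$ and the full gap $\Gamma_P>0$.

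The missing idea is the pressure estimate of Proposition~\ref{r6:general-pressure}. Let $w=-x\cdot\nabla\eta$ and $W_P=\int w\{bP_F(u)+aP_G(v)\}$. This boundary term is bounded not by annular energy but by $T_K+\xi E+C_\xi$, where $T_K=\iint w(x)\theta_K(|x-y|)D(x,y)\,d\pi$ is the directional part of the interaction. The localized virial reads $\int\eta\{(n+B_-)bP_F(u)+(n+A_-)aP_G(v)\}-W_P\le\Lambda E_f-T_K+\xi E+C_\xi$. So $T_K$ cancels exactly. Combined with $|E_f-E_g|\le\xi E+C_\xi$ and $\kappa_f+\kappa_g=\Lambda+\Gamma_P$, this gives $E_R\le C_*$ for the whole normalized family.

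The proof of the pressure estimate uses four ingredients: interval comparisons on parallel lines for superlevel components of the sources; the layer identity $\int_0^{f(x)}(u-\tau_f)\,dt=bP_F(u)$ with $\tau_f=F^{-1}(t/b(x))$; spherical-cap averaging; and a steep cutoff $\eta=e^{-S}$. This is where the Dini hypothesis enters, not in a dilation--coefficient commutator; the virial itself needs only the distributional Euler bounds. Dini controls the endpoint error $|u_I-\tau_f|\le C\Omega(2\ell(x))u(x)$. The cutoff must also satisfy $\sup(w/\eta)\Omega(2\delta\eta^c)\to0$, which is possible exactly for Dini moduli.

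Two smaller points. First, your coercivity step $(n+B_-)\int bP_F(u)\ge(n+B_-)\mu_F\int bF(u)u$ requires $n+A_-,n+B_->0$. This is not assumed; it must be derived from finiteness of the potentials and $\Gamma_P>0$, as in Lemma~\ref{r7:source-parameters}. Second, for $\sigma\le1$ the quantity $x\cdot\nabla u$ need not exist: $|\nabla K(z)|\lesssim|z|^{-\lambda-1}$ is not locally integrable there, and $g=aG(v)$ is merely continuous. This is why the paper works with flow increments rather than derivatives of $u$.
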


The response assumptions concern their full positive range.
They require a lower logarithmic derivative and a positive lower
ratio between each primitive and its endpoint product, but no upper
logarithmic derivative. Both conditions are preserved by amplitude
normalization. The constants $\mu_F,\mu_G$ may be any verified
positive lower bounds; their optimal values are the infima of the
displayed primitive ratios. Necessarily $\Lambda\ge\lambda$, and
for the Riesz kernel $K_\sigma$ one takes $\Lambda=\lambda$.
For $F(s)=s^q$, $G(s)=s^p$, the choices
$\mu_F=1/(q+1)$, $\mu_G=1/(p+1)$ recover the weighted power
condition. In particular, no separate bound
$A_-,B_->-\sigma$ is imposed. The local energy condition concerns
each putative pair, rather than a uniform bound over a class.
The Dini condition is sufficient; no necessity for nonexistence is
asserted.

Our second theorem concerns the actual Green kernel of
$L_V=-\Delta+V$ on $\mathbb R^n$. Its assumptions control the
positive part of the dilation defect, rather than requiring a small
Newton norm of $V$. For $n\ge3$, put $\lambda=n-2$,
$N(x)=c_{n,2}|x|^{-\lambda}$ and
\begin{equation}
 C_\lambda=2^{\max\{\lambda-1,0\}},\qquad
 \mathcal B(h)=2C_\lambda\|I_2|h|\|_\infty,\qquad
 Q=2V+x\cdot\nabla V.
 \label{r7:newton-budget}
\end{equation}
Let $G_V$ be the minimal positive Green kernel, obtained by increasing
Dirichlet balls. With $Q_+=\max\{Q,0\}$, define
\begin{equation}
 \mathcal N_V=\sup_{x\ne y}
 \frac{\int G_V(x,z)Q_+(z)G_V(z,y)\,dz}{G_V(x,y)}.
 \label{r7:green-defect-budget}
\end{equation}

\begin{theorem}
\label{r7:green-local-energy}\label{r8:longrange-local-energy}
\label{r7:green-primitive-local-energy}
Let $n\ge3$ and $0\le V\in C^1(\mathbb R^n)$ satisfy
\begin{equation}
 A_V:=\sup_x|x|^2V(x)<\infty,\qquad
 \mathcal B(V)<\infty,\qquad \mathcal B(|Q|)<\infty.
 \label{r8:annular-potential-class}
\end{equation}
Let $F,G$ be continuous on $[0,\infty)$, locally Lipschitz
on $(0,\infty)$, positive there, and zero at zero. Assume the
full-range response bounds \eqref{r7:response-assumptions}, with
$p_0q_0>1$ and $\mu_F,\mu_G>0$. Let $a,b>0$ be continuous off zero,
satisfy the global weak Euler bounds \eqref{v5:euler-lower}
with $A_-=A$, $B_-=B$ for real $A,B$, and obey the normalized Dini orbit
condition \eqref{eq:front-dini-orbit}. The Euler inequalities hold
on all of $\mathbb R^n\setminus\{0\}$ with arbitrary real $A,B$;
the Dini condition is required only along sufficiently large dilations. If
\begin{equation}
 \Gamma_G:=(n+B)\mu_F+(n+A)\mu_G-(n-2)>\mathcal N_V,
 \label{r7:green-primitive-gap}
\end{equation}
there is no finite strictly positive pair $u,v$, continuous off zero,
for which
\[
 u(x)=\int G_V(x,y)a(y)G(v(y))\,dy,\qquad
 v(x)=\int G_V(x,y)b(y)F(u(y))\,dy
 \quad(x\ne0),
\]
and
\[
 \int_{B_1}[bF(u)u+aG(v)v]<\infty.
\]
% The representations are required pointwise at every nonzero point.
% Only local nonlinear energy is assumed; no upper response derivative,
% global energy, growth bound or additional source-tail hypothesis is
% imposed on the pair.
\end{theorem}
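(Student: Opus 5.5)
We mirror the transport (Pohozaev-type) argument behind \cref{thm:dini-front}. The Riesz kernel $N$ is replaced by $G_V$, and the kernel constant $\Lambda$ is replaced by $(n-2)$ plus the defect $\mathcal N_V$. The starting point is the exact dilation identity for the Green kernel. For $t>0$ put $V_t(x)=t^2V(tx)$. Scaling gives $G_{V_t}(x,y)=t^{n-2}G_V(tx,ty)$. Differentiating in $t$ at $t=1$ and using the resolvent identity $\partial_t G_{V_t}=-G_{V_t}(\partial_t V_t)G_{V_t}$ with $\partial_tV_t|_{t=1}=Q$, we get
\[
 (x\cdot\nabla_x+y\cdot\nabla_y+(n-2))G_V(x,y)=-\int G_V(x,z)Q(z)G_V(z,y)\,dz .
\]
The right-hand side is at least $-\mathcal N_V\,G_V(x,y)$. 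So the symmetric ``Euler operator'' applied to $G_V$ is bounded below by $-(n-2+\mathcal N_V)G_V$, and this plays the role of $-\theta_K\ge-\Lambda$. To justify the identity we use the Dirichlet-ball approximation defining $G_V$. The hypotheses $A_V<\infty$, $\mathcal B(V)<\infty$ and $\mathcal B(|Q|)<\infty$ give two-sided comparability $G_V\simeq N$ (a Khasminskii/$3G$-type bound with constant governed by $C_\lambda$). They also give absolute convergence of the composition $\int G_VQG_V$. Hence the differentiated identity holds in the distributional sense in $(x,y)$ off the diagonal.

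Next comes the localized transport identity. Take a cutoff $\varphi_R(x)=\varphi(x/R)$ and test the system with $x\cdot\nabla$ applied to the primitives. Formally we compute
\[
 \int\varphi_R\,x\cdot\nabla\bigl[bP_F(u)+aP_G(v)\bigr]
\]
in two ways. Integration by parts and the weak Euler bounds give a lower bound $-(n+B)\int\varphi_R bP_F(u)-(n+A)\int\varphi_R aP_G(v)$ plus cutoff errors. Alternatively, write $x\cdot\nabla u$ through the representation. Because $u$ is only continuous, we do this in the regularized form of the earlier section: take difference quotients in the dilation parameter, $u(tx)-u(x)$, so that no derivative of $u$ or of $a,b$ is needed. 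The symmetric kernel identity above then turns the cross terms into $-(n-2)\int[bF(u)u+aG(v)v]$ plus the defect term, which is bounded by $\mathcal N_V\int[bF(u)u+aG(v)v]$. Combining the two computations with $P_F(s)\ge\mu_F sF(s)$ and $P_G(s)\ge\mu_G sG(s)$ yields the primitive pressure estimate
\[
 \bigl(\Gamma_G-\mathcal N_V\bigr)\int\varphi_R\,[bF(u)u+aG(v)v]\le \mathcal E_R ,
\]
where $\mathcal E_R$ collects the terms in which derivatives fall on the cutoff. Those error terms live on annuli and involve $u$ and $v$ against nonlocal tails.

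The main obstacle is controlling $\mathcal E_R$ using only the local energy hypothesis $\int_{B_1}[bF(u)u+aG(v)v]<\infty$. The plan is as follows.
\begin{itemize}
\item Comparability $G_V\simeq N$ together with the representation makes $u$ and $v$ superharmonic-type potentials bounded below by $cR^{-(n-2)}$ times local masses.
\item We use the implicit amplitude normalization of the proof of \cref{thm:dini-front}. On the annulus $R\mathcal A$, rescale $u_R(x)=u(Rx)/\alpha_R$ and $v_R(x)=v(Rx)/\beta_R$ with amplitudes chosen so that the rescaled system has unit coefficients at $e_0$.
\item Dilating $G_V$ gives $G_{V_R}$, and $V_R$ satisfies the same bounds uniformly, since \eqref{r8:annular-potential-class} is scale invariant.
\item The Dini orbit condition makes $a_R,b_R$ uniformly comparable to constants on $\mathcal A$. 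The lower logarithmic derivatives $q_0,p_0$ with $p_0q_0>1$ then give the standard Serrin-type iteration, hence uniform local energy bounds on annuli for the normalized pair.
\end{itemize}
These bounds, and the relative endpoint differences that cancel the virtual energy at the two ends of the cutoff, show $\mathcal E_R=o\bigl(\int\varphi_R[\cdots]\bigr)$ along a sequence $R\to\infty$. The inequality then forces the energy to vanish, contradicting strict positivity.

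If the tail terms resist direct control, we would instead run the dilation contradiction of \cref{thm:dini-front} verbatim. The only change is to replace $K$ by $G_V$: the uniform comparability and the scale-invariant defect bound are exactly the substitutes for \eqref{r6:kernel-class}, with $\Lambda$ replaced by $n-2+\mathcal N_V$. This reduces the theorem to verifying that every kernel property used there holds for the family $\{G_{V_R}\}$.
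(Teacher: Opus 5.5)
Your dilation identity for $G_V$ is correct; it is \eqref{r7:correction-dilation} rewritten for $G_R=N-H_R$. If the total energy were finite, your cutoff Pohozaev computation would close, and that is essentially Proposition~\ref{r7:green-finite-energy}. Under local energy alone, however, there is a genuine gap.

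\emph{The cutoff terms are not negligible.} Nothing justifies $\mathcal E_R=o\bigl(\int\varphi_R[\cdots]\bigr)$. With $X=x\eta$, your $\mathcal E_R$ contains the primitive term $W_P=\int w\{bP_F(u)+aP_G(v)\}$ and the interaction term $\lambda T_0$, where $T_0=\iint w(x)D(x,y)\,d\pi_0$. Both are of the size of the energy on the transition region. That energy need not be small relative to the localized energy when only local energy is finite. The paper does not make these terms small. It proves the localized pressure estimate $W_P\le\lambda T_0+\xi E+C_\xi$, so that they cancel against each other. The cutoff also breaks $\int\eta fu=\int\eta gv$, so you need the balance estimate $|E_f-E_g|\le\xi E+C_\xi$; your cross-term formula ignores this.

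\emph{The ``Serrin-type iteration'' cannot supply energy bounds.} The mass feedback with $p_0q_0>1$ gives only uniform unit-ball source masses and truncated-potential tails, as in \eqref{r7:green-source-input}. These do not bound $\int\eta f_Ru_R$. A uniform bound $E_R\le C_*$ for the normalized family is the \emph{output} of pressure plus virial, not an input. The contradiction then comes from the amplification factor $\mathcal H_R=R^{-\lambda}/(s_Rt_R)\to\infty$. That step needs the energy-free derivation, from the finite Green potentials, of $\alpha_*,\beta_*>0$ and of $n+A,n+B>0$. Your sketch omits both; the latter is needed for primitive coercivity because $A,B$ are arbitrary.

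\emph{Your fallback fails at the pressure step.} Rerunning Theorem~\ref{r7:master-front} with $K=G_V$ and $\Lambda=n-2+\mathcal N_V$ does not work. Proposition~\ref{r6:general-pressure} rests on the interval identity \eqref{r6:radial-interval-identity}. That identity needs a kernel $k(|x-y|)$ that is translation invariant along lines, even, and nonincreasing in the separation. $G_V(x,y)$ has none of these properties. Moreover, $\mathcal N_V$ is a relative composition bound, not a pointwise slope $\theta_K$ that could sit next to $D$ in both the pressure term and the virial term. Comparability $c_VN\le G_V\le N$ alone would cost a factor $c_V^{-1}$ in the main term and destroy the sharp gap.

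\emph{The missing idea is the Newton-virtual decomposition.} In the pressure step, run the Newton interval inequality on $u_0=N*g=u+P$ and $v_0=v+S$, at the level sets of the true sources. Then bound the endpoint differences $P_I-P(x)$ using three ingredients from Lemma~\ref{r8:annular-correction}: the truncated core charge, the relative Lipschitz estimate \eqref{r8:relative-lipschitz} for the active-annulus correction, and the distant gradient bound. In the virial, write $G_R=N-H_R$ and use \eqref{r7:correction-dilation}. The virtual energy then cancels exactly, since $\lambda E_f^0-\lambda Z_R=\lambda E_f$, and the defect term satisfies $\mathcal D_R\le\mathcal N_VE_f$. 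The non-translation-invariant commutator $\mathcal C_R$ is absorbed via \eqref{r8:longrange-gradient} (Lemma~\ref{r8:longrange-commutator}). Adding pressure and virial cancels $\lambda T_0$ and gives $(\kappa_f-\lambda-\mathcal N_V)E_f+\kappa_gE_g\le2\xi E+C_\xi$. With the balance estimate, this yields the uniform energy bound.
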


The proof is given in Section~\ref{r7:green-section}. The budget
with $|Q|$ ensures absolute integrability, whereas only $Q_+$ enters
the strict nonexistence gap. The hypotheses do not require compact
support or small $\mathcal B(V)$.

For the pure-power Riesz specialization we continue to write
\begin{equation}
 \gamma_-:=\frac{n+A_-}{p+1}+\frac{n+B_-}{q+1}-(n-\sigma).
 \label{v5:master-gap}
\end{equation}
A pair extending continuously and finitely to zero satisfies the local
energy condition once its positive potential identities are known.
The consequences below specify separately the representation inputs
needed for differential equations.

For $s\ge0$ we use the tail class
\begin{equation}
 \mathcal L_s=\left\{w\in L^1_{\rm loc}(\mathbb R^n):
 \int\frac{|w(x)|}{1+|x|^{n+s}}\,dx<\infty\right\}.
 \label{ho:tail}
\end{equation}
For $w\in\mathcal L_\sigma$, the distribution $L_\sigma w$ is
defined by $\langle L_\sigma w,\phi\rangle=\int wL_\sigma\phi$.
The test-function estimates proved below justify this pairing.
At even integer orders it also has the ordinary compact-test meaning,
which requires no tail assumption.

\begin{corollary}
\label{cor:pde-front}
Specialize Theorem~\ref{r7:master-front} to $F(s)=s^q$, $G(s)=s^p$ and
$K=K_\sigma$, so that $\Lambda=n-\sigma$ and
$\gamma_->0$. Under its continuous-coefficient, weak Euler and
Dini hypotheses, there is no nonnegative pair, strictly
positive off zero, solving
\[
 L_\sigma u=av^p,\qquad L_\sigma v=bu^q
 \quad\text{off zero}
\]
in any of the following classes:
\begin{enumerate}[label=(\roman*)]
\item $p,q\ge1$, $u,v\in C(\mathbb R^n)\cap\mathcal L_\sigma$,
with the equations interpreted distributionally;
\item $0<\sigma<\min(2,n)$, $p,q>0$,
$u,v\in C(\mathbb R^n)\cap\mathcal L_\sigma$, with distributional equations;
\item $\sigma=2m<n$, $p,q\ge1$,
$u,v\in C(\mathbb R^n)\cap C^{2m}_{\rm loc}(\mathbb R^n\setminus\{0\})$,
with pointwise equations off zero and no initial tail.
\end{enumerate}
% In all cases $pq>1$. The indicated continuity, tails and classical
% regularity are retained.
\end{corollary}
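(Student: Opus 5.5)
The plan is to reduce each of the three classes to Theorem~\ref{r7:master-front} with $K=K_\sigma$, $F(s)=s^q$, $G(s)=s^p$. For this kernel $\theta_K\equiv\lambda$, so $\Lambda=\lambda$. For the powers, $sF'/F=q$ and $sG'/G=p$, so $q_0=q$, $p_0=p$ and $p_0q_0=pq>1$. The choices $\mu_F=1/(q+1)$ and $\mu_G=1/(p+1)$ give $\Gamma_P=\gamma_->0$. It therefore suffices to show, in each class, that a putative pair satisfies $u=I_\sigma(av^p)$ and $v=I_\sigma(bu^q)$ pointwise off zero, together with the local energy bound \eqref{eq:main-local-energy}.

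The local energy is the easy part. In every class $u,v\in C(\mathbb R^n)$ are finite at zero, so $u^{q+1}$ and $v^{p+1}$ are bounded on $B_1$. I also need $a,b\in L^1(B_1)$. This follows from \eqref{r6:weak-euler-front} read downward: $a(x)\le |x|^{A_-}a(x/|x|)$ for $|x|\le1$, which is integrable when $A_-> -n$. The gap $\gamma_->0$ alone does not force $A_->-n$, so I would instead take $a,b$ integrable near zero directly from the representation. Once $u=I_\sigma(av^p)+h$ with $h\ge0$, finiteness of $u$ at one point forces $\int av^p(1+|y|)^{-\lambda}<\infty$. Combined with positivity of $v$ near zero, this gives the needed integrability on $B_1$ after multiplying by the bounded factors $u,v$.

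For the representation, set $\mu=av^p\ge0$. In classes (i) and (ii) the distribution $L_\sigma u-\mu$ is supported at the origin; I also have to rule out a point mass there. I would test against $\phi_\varepsilon$ concentrating at zero and use continuity of $u$: a Dirac component $c\delta$ would make $u$ behave like $c|x|^{\sigma-n}$, contradicting continuity. Then $w=u-I_\sigma\mu$ satisfies $L_\sigma w=0$ in $\mathcal L_\sigma$. For this I first show that $I_\sigma\mu$ is finite and lies in $\mathcal L_\sigma$, by a monotone truncation $\mu_R=\mu\chi_{B_R}$: the functions $u-I_\sigma\mu_R$ are $\sigma$-superharmonic and bounded below in the right sense. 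A Liouville theorem for $\sigma$-harmonic functions in $\mathcal L_\sigma$ then makes $w$ constant (the tail excludes nonconstant polynomials), and $w\ge0$. Removing the constant is the delicate point. In (ii), with $\sigma<2$, the maximum principle for the fractional Laplacian and the superharmonic truncation argument give $u\ge I_\sigma\mu+c$; substituting back into $v$ and iterating the system shows that $c>0$ produces $v\ge c'I_\sigma(b)$, and then $u=\infty$. For $p,q\ge1$ in (i), the same iteration works via convexity and Jensen.

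Class (iii) is the polyharmonic case without tails. Here I would follow the standard superharmonic route: show $(-\Delta)^ju>0$ for $j=1,\dots,m-1$ by spherical averages, which uses $p,q\ge1$ and forbids a negative sign by a blow-up of $\bar u$. This yields $u\in\mathcal L_{2m}$ a posteriori, and we then proceed as in (i) with the singularity at zero removed by continuity. I expect the main obstacle to be this sign propagation together with the elimination of the additive constant (more generally, the harmonic remainder). At those steps the nonlinear system, not merely the linear theory, must be used, so that is where I would concentrate the work, trying the averaged iteration first.
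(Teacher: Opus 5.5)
Your reduction to Theorem~\ref{r7:master-front} is fine: $\Lambda=\lambda$, $q_0=q$, $p_0=p$, $\mu_F=1/(q+1)$, $\mu_G=1/(p+1)$, so $\Gamma_P=\gamma_-$. Your local-energy step is also fine. The representation step, however, has a genuine gap in class~(i) for $\sigma>2$ and in class~(iii) for $m\ge2$.

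\emph{Where it fails.} Three claims rest on a comparison principle (``$u-I_\sigma\mu_R$ is $\sigma$-superharmonic and bounded below''): that $I_\sigma(av^p)\le u$, that this potential is finite, and that the remainder $w$ is nonnegative. No comparison principle of this type is available for $L_\sigma$ with $\sigma>2$. Your Liouville step is also misstated. An $L_\sigma$-harmonic function in $\mathcal L_\sigma$ is a polynomial of degree $<\sigma$. So nonconstant affine functions survive for $\sigma\ge1$, and functions such as $1+|x|^2$ survive for $\sigma>2$, including $\sigma=2m\ge4$. Only nonnegativity of $w$ would reduce these to constants, and that is exactly what is unavailable at higher order. ``Convexity and Jensen'' gives neither finiteness of $I_\sigma(av^p)$ nor control of such remainders for a nonlocal operator. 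In class~(iii), the sign propagation $(-\Delta)^ju>0$ is assumed rather than proved. The Euler exponents are arbitrary reals, so $a$ or $b$ may decay fast at infinity, and the equations hold only off a possibly singular origin; under these hypotheses the spherical-average blow-up argument is not available as stated. Even if granted, it would not give $u\in\mathcal L_{2m}$ or exclude polynomial remainders of degree $<2m$.

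\emph{The missing idea.} You need a priori source-mass bounds obtained from the equations themselves, before any representation or maximum principle.
\begin{itemize}
\item In (iii), first apply the origin extension of Lemma~\ref{ho:origin-extension}. By Fatou with nonnegative punctured tests, it also gives $av^p,bu^q\in L^1_{\mathrm{loc}}$ near zero, a point your ``no Dirac mass'' step skips. Then test with cutoff powers $\chi_R^{k_g},\chi_R^{k_f}$. By Leibniz their images are supported in $R\le|x|\le2R$ and bounded by $CR^{-\sigma}\chi_R^{k-\sigma}$.
\item The Euler bounds give exterior floors $a\ge m_a|x|^{A_-}$ and $b\ge m_b|x|^{B_-}$. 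Combined with H\"older (this is where $p,q\ge1$ enters), the tests give coupled inequalities. Eliminating them yields $\int_{B_R}av^p\le CR^{\lambda-\alpha_-}$ and $\int_{B_R}bu^q\le CR^{\lambda-\beta_-}$ (Lemma~\ref{ui:source-bounds}).
\item In (i), the positive-test moment iteration of Lemmas~\ref{tb:positive-test}, \ref{gc:source-transfer}, \ref{tb:cutoff-obstruction} and~\ref{gc:moment-bootstrap} plays the same role.
\end{itemize}
Either way $\alpha_-,\beta_-\le\lambda$. The identity $(pq-1)(\alpha_-+\beta_--\lambda)=(p+1)(q+1)\gamma_->0$ then forces both exponents to be positive. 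This gives strict source moments below $\lambda$ and $u,v\in\mathcal L_0$. Since $\mathcal L_0$ contains no nonzero polynomial, not even a constant, Proposition~\ref{gc:source-inversion} yields the exact identities at every point. No comparison principle and no constant-removal step is needed.

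\emph{Class~(ii).} Your route matches the paper's: low-order minorants, Fall's theorem, and a constant remainder. But your iteration ``$v\ge c'I_\sigma b$, then $u=\infty$'' does not always close. For example, when $-n<B_-<-\sigma$ it closes only if $\alpha_-\ge0$, and $\gamma_->0$ alone does not give this. The paper obtains the signs of $\alpha_-,\beta_-$ from the patch-mass scaling Lemma~\ref{gp:semiorbit-source}. It then compares $\int_{B_R}bu^q\ge cR^{n+B_-}$ with the upper bound $CR^{\lambda-\beta_-}$.
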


\begin{corollary}
\label{thm:integer-front}
Let $m\ge1$ be an integer, $2m<n$, $A,B\in\mathbb R$, and
$p,q\ge1$ with $pq>1$. If
\begin{equation}
 \frac{n+A}{p+1}+\frac{n+B}{q+1}>n-2m,
 \label{eq:front-hyperbola}
\end{equation}
there are no functions
\[
 u,v\in C(\mathbb R^n)\cap
 C^{2m}_{\rm loc}(\mathbb R^n\setminus\{0\})
\]
that are nonnegative on $\mathbb R^n$, strictly positive off zero,
and solve
\[
 (-\Delta)^m u=|x|^A v^p,\qquad
 (-\Delta)^m v=|x|^B u^q
 \quad\text{on }\mathbb R^n\setminus\{0\}.
\]
% No tail, global boundedness, growth condition, global nonlinear
% energy bound, or sign assumption on intermediate iterates is imposed.
\end{corollary}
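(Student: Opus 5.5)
The plan is to deduce Corollary~\ref{thm:integer-front} from Corollary~\ref{cor:pde-front}(iii) with $\sigma=2m$, $a(x)=|x|^A$, $b(x)=|x|^B$. First I check the coefficient hypotheses of Theorem~\ref{r7:master-front}. The functions $a,b$ are positive and continuous off zero. They satisfy $\operatorname{div}(x|x|^A)=(n+A)|x|^A$ exactly, so \eqref{v5:euler-lower} holds with $A_-=A$, $B_-=B$. The normalized dilations are independent of $R$: $a_R(x)=|x|^A$. Hence $\log a_R=A\log|x|$ is Lipschitz on $\mathcal A$ (on each sign branch when $n=1$, though here $n>2m\ge2$). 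So \eqref{eq:front-dini-orbit} holds with $\Omega(s)=s$, which is Dini, and $H_a=2|A|$, $H_b=2|B|$. For $F(s)=s^q$, $G(s)=s^p$ the gap $\gamma_-$ in \eqref{v5:master-gap} with $\sigma=2m$ is exactly the left side of \eqref{eq:front-hyperbola} minus $n-2m$, hence positive. Also $p,q\ge1$ and $pq>1$. Thus every hypothesis of case (iii) is met, and the function class in the corollary coincides with that of (iii).

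The real content therefore sits in case (iii) of Corollary~\ref{cor:pde-front}, which I take as given. If one wants to see why it holds, the route is an integral representation without tails. For a nonnegative $C^{2m}$ solution off zero, continuous at zero, one shows by the standard superharmonicity argument that all intermediate iterates $(-\Delta)^j u$, $1\le j<m$, are nonnegative. For $p,q\ge1$ this uses averaging over spheres and the blow-up of spherical means of a hypothetical negative iterate, in the manner of Wei--Xu and Chen--Li. The representation $u=K_{2m}*(av^p)+c$ then follows, with a constant $c\ge0$ forced to vanish. Continuity at zero shows the origin is removable, so $u=K_{2m}*(av^p)$ and $v=K_{2m}*(bu^q)$ hold at every $x\ne0$. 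Continuity of $u,v$ on $\overline{B_1}$ together with local integrability of $|x|^Av^p$, which is implied by the finiteness of the potential at some point, gives \eqref{eq:main-local-energy}. Theorem~\ref{r7:master-front} then yields the contradiction.

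The main obstacle in that chain is the representation step near the singular point $0$, where $|x|^A$ may fail to be locally integrable if $A\le -n$. In the reduction itself, however, nothing beyond verifying hypotheses is needed, and I expect no difficulty once Corollary~\ref{cor:pde-front}(iii) is available.
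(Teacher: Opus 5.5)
Your reduction is correct and is essentially the paper's proof. The paper runs exactly the chain behind Corollary~\ref{cor:pde-front}(iii) for $a=|x|^A$, $b=|x|^B$: Lemma~\ref{ui:source-bounds} (no initial tail) gives $\alpha_\infty,\beta_\infty\le n-2m$, the gap identity makes both positive, Proposition~\ref{ui:representation} gives the exact Riesz identities and local energy, and Theorem~\ref{thm:dini-front} with $\Omega(s)=s$ concludes. Your optional sketch of why (iii) holds is not the paper's mechanism, though: the paper deliberately avoids the super-polyharmonic spherical-average argument, which would need extra justification with iterates possibly singular at zero and arbitrary real $A,B$, and instead obtains source masses by compact integer-order tests followed by linear source inversion.
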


Continuity at the origin and the restrictions $p,q\ge1$ are part of
this corollary. In particular, its statement does not cover singular
origins or a sublinear component. The proof uses compact tests at the
integer order to obtain source estimates before a potential
representation is invoked. It therefore does not rely on a maximum
principle for a higher-order operator.

% The hypotheses retained by the different conclusions are summarized
% below. Exact potential identities in both main theorems hold at every
% nonzero point. The PDE representation results provide those identities
% only in their stated differential solution classes.
% \begin{center}
% \small
% \setlength{\tabcolsep}{4pt}
% \renewcommand{\arraystretch}{1.15}
% \begin{tabular}{@{}p{.25\linewidth}p{.69\linewidth}@{}}
% \toprule
% Conclusion & Retained hypotheses and scope\\
% \midrule
% Theorem~\ref{r7:master-front}
% & Radial kernel class, global weak Euler bounds, a Dini orbit at large
% scales, exact potential identities and finite local nonlinear energy;
% no upper response derivative bound.\\
% Theorem~\ref{r7:green-local-energy}
% & The same coefficient and response conditions, actual Green
% representations and local energy, with the potential budgets and
% $\Gamma_G>\mathcal N_V$.\\
% Corollary~\ref{cor:pde-front}
% & The three stated power-source PDE classes retain their respective
% origin continuity, tail, order and exponent restrictions.\\
% Theorem~\ref{r7:nonlinear-pde-representation}
% & General-response representation allows unequal orders and requires
% $A>-\sigma_1$, $B>-\sigma_2$ and one of its three PDE classes.
% Corollary~\ref{r7:nonlinear-pde-nonexistence} uses equal orders.\\
% Theorem~\ref{r7:universal-interior-bound}
% & Even integer order and fixed two-sided elasticity windows, with
% lower endpoints at least one and a strict window gap; the upper
% bounds supply compactness of the normalized response family.\\
% \bottomrule
% \end{tabular}
% \end{center}

\subsection{Previous results and the present assumptions}
The classical Lane--Emden problem has a long history. Rellich-type
identities and early nonexistence results appear in
\cite{Mitidieri1993,SerrinZou1996}; the connection with a priori
estimates was developed in \cite{PolacikQuittnerSouplet2007}, and
Souplet~\cite{Souplet2009} proved the four-dimensional conjecture.
For power weights, the radial problem was studied by
Bidaut-V\'eron and Giacomini~\cite{BidautVeronGiacomini2010}.
Phan and Souplet~\cite{PhanSouplet2012} treated
the scalar Hardy--H\'enon equation, whereas
Phan~\cite{Phan2012Systems} established system results with explicit
dimension, growth and weight restrictions.
Fazly and Ghoussoub~\cite{FazlyGhoussoub2014} proved
three-dimensional bounded-solution results and studied stable
solutions of the weighted system. Further three-dimensional
H\'enon--Lane--Emden results were obtained by Li and
Zhang~\cite{LiZhang2019}. For solutions stable outside a compact set,
Huang and Zou~\cite{HuangZou2026} established further
Liouville-type theorems under dimensional or parameter restrictions.

At integer higher orders, radial and componentwise subcritical
results were obtained by Liu, Guo and Zhang~\cite{LiuGuoZhang2006}.
Arthur, Yan and Zhao~\cite{ArthurYanZhao2014} established the full
unweighted strict-subcritical range for $p,q\ge1$ in dimensions
$n=2m+1$ and $n=2m+2$, with an additional scaling condition in
higher dimensions. Weighted results include \cite{Fazly2014,Phan2015,ArthurYan2016}.
These earlier results use different exponent, weight and solution-class
conditions. Several already dispense with prescribed decay or
boundedness. The power applications below are consequences of the
common potential theorem. The principal extensions here concern the
Euler--Dini coefficient and primitive-response conditions, and the
localized pressure argument for the actual Green kernel.

For fractional systems, extension and moving-plane arguments give
nonexistence in exponent subregions \cite{QuaasXia2016}, while
nonlocal average estimates and scaling spheres give further results
for fractional and higher-order systems \cite{CaoQin2021}.
The latter results include individual scaling-index criteria and
separate upper bounds on the two exponents; they do not give the
entire strict-subcritical hyperbola.
The scalar H\'enon--Hardy theory, including classes of variable
nonlinearities, was developed through scaling spheres in
\cite{DaiQin2023}; see also \cite{Yang2021,Yang2026} for higher-order
fractional scalar equations. Xu and Luo~\cite{XuLuo2026} study mixed-power higher-order systems;
their integer-order Liouville result for all positive powers assumes
radial symmetry and $u,v=O(|x|^2)$ at infinity.

The passage from differential to integral equations also has
substantial precedents. In particular, \cite[Theorems~2 and~3]{ChenLi2013}
already treat unweighted higher-order system representation without
an initial decay assumption, including unequal integer orders under
their exponent and super-polyharmonic hypotheses. Super-polyharmonic and representation
arguments are developed in
\cite{CaristiDAmbrosioMitidieri2008,ChenLi2013,CaoDaiQin2019,Le2021}.
Li and Zhang~\cite{LiZhang2021} study higher-order Hardy--H\'enon
systems through origin regularity and PDE--integral equivalence.
For fractional Hardy--H\'enon systems,
\cite[Lemmas~2.1, 2.2 and~3.1]{LiYuZhang2024} establish
distributional extension, Green minorants and exact representation.
Their solution nonexistence theorem
\cite[Theorem~1.7]{LiYuZhang2024} uses separate upper bounds on the
two powers, whereas the equal-order criterion here uses the weighted
hyperbola. The representation statements below display the inputs
retained in each of the three PDE classes.

Villavert~\cite[Theorem~2 and Corollary~1]{Villavert2015} gives sharp
existence criteria for some double-bounded coefficients, together with
PDE--integral equivalence under suitable hypotheses.  The integral
system in that work places the variable coefficients outside the
potentials, while its differential system has coefficients multiplying
the nonlinear sources.  Its existence quantifier is important: those
criteria do not give the Euler--Dini nonexistence statement for a fixed
coefficient pair considered here.  Related Pohozaev identities obtained
from deformations of fractional Dirichlet energies appear in
\cite{DjitteFallWeth2024}.  Our compact transport calculation is made
for positive potential pairs and uses radial coefficient variation;
its one-dimensional continuous BV chain rules are instances of the
standard theory in \cite[Chapter~3]{AFP2000}.

Non-power nonlinearities and the passage from Liouville theorems to
universal estimates have substantial precedents.
Souplet~\cite[Theorems~2 and~4 in arXiv:2202.02955v2]{Souplet2023}
develops generalized doubling and rescaling for scalar problems;
Theorem~2 includes nonlinearities with controlled variation beyond
regular variation. Quittner and
Souplet~\cite[Theorems~4.1 and~4.2]{QuittnerSouplet2025} extend this
approach to Lane--Emden type systems under strengthened regular
variation assumptions; their Theorem~4.3 treats systems whose
normalized nonlinearities have homogeneous limits.  Their
Liouville results in Section~2 also include nonhomogeneous gradient
systems and special noncooperative systems.
The role of the primitive ratios in the present potential-system
argument is therefore a structural sufficient condition, rather than
a claim that nonhomogeneous nonlinearities or nonlinear rescaling are
new subjects.

For anisotropic nonlocal operators, Pohozaev identities with local
Dirichlet boundary terms were established by Ros-Oton, Serra and
Valdinoci~\cite[Theorem~1.1]{RosOtonSerraValdinoci2017}.
Those bounded-domain identities do not supply the local interval
pressure estimate used here for a potential pair of possibly infinite
total interaction energy.  For Green kernels, the three-kernel
perturbation budget is classical: Hirata~\cite[Theorems~3.4 and~4.1]{Hirata2006}
relates it to Green-operator bounds and generalized
Cranston--McConnell inequalities in uniform cones.  Such bounds
control positive potential compositions, but do not by themselves
control the spatial derivative of a kernel under a compact dilation
or prove the local pressure cancellation needed below.

\subsection{Proof Strategy and Outline}
The localized interaction identity and the interval estimates for
source superlevel sets are inherited from \cite{OpenAI2026}, which treats the
Newtonian pure-power weighted problem, and from \cite{MeiWei2026},
which implements this framework for the unweighted Riesz kernel of
order $0<\sigma<2$. In particular, the case $m=1$ with power
weights is already covered by the former work. We prove the
coefficient and representation statements needed for the Dini
orbits and the orders $0<\sigma<n$ used here, while retaining the
inherited interval mechanism.

The coefficient modulus enters through a uniform bound on the error
from interval endpoints. Positive smoothing and an inverse-function
construction produce radial cutoffs satisfying this bound for every
Dini modulus. On the PDE side, compact tests at
integer orders and a finite source-moment iteration at general
orders provide the representation inputs independently of pressure.
The higher-order power-tail cutoff estimate originates in
\cite{AoGonzalezHyderWei2022}; we use the formulation in
\cite[Lemma~2.3]{Yang2021}, with its rescaling and limiting argument
made explicit below. The strict gap is retained: the unweighted
diagonal critical equation has the familiar integral bubbles studied
in \cite{Li2004,ChenLiOu2006}, so a general nonexistence assertion on
the critical hyperbola is not intended.

% \paragraph{Proof architecture and the two kernel settings.}
Theorem~\ref{r7:master-front} is the common radial-potential
theorem. The transport identity in Lemma~\ref{r6:transport-virial}
requires neither differentiable solutions nor differentiable
coefficients. Lemma~\ref{r7:geometric-budgets} isolates the geometric
error bounds; Proposition~\ref{r6:general-pressure} supplies the
response primitive and the exact kernel direction term.
Section~\ref{gd:section} realizes the endpoint budget for every
Dini modulus. Lemma~\ref{r7:source-parameters} first obtains the
parameter signs from finite positive potentials, and
Lemma~\ref{r7:normalization} gives the amplitude and source
normalization. One energy amplification then completes the proof.
Power responses, mixtures, nonhomogeneous radial kernels and the
coefficient examples follow from this single theorem.

The second main result, Theorem~\ref{r7:green-local-energy},
treats the actual Green kernel of $-\Delta+V$ under
\eqref{r8:annular-potential-class}. It is proved in
Section~\ref{r7:green-section}, before the PDE representation
arguments. The passage to this non-translation-invariant kernel
requires more than comparison with the Newton kernel: a relative
short-interval endpoint difference restores the true primitive
pressure, while the exact dilation correction cancels the virtual
Newton-energy excess. The remaining positive defect is measured by
the relative Green composition budget for
$Q_+=(2V+x\cdot\nabla V)_+$. Its power, small-potential and
entire PDE consequences are given with their respective assumptions.

Section~\ref{sec:paper1-pde-representation} collects the independent
PDE reduction in one place: linear source inversion, the three
power-source representation classes, and the general-response
representation theorem. The power routes include wider exterior-weight
and unequal-order ranges; the nonlinear theorem has its own global
Euler and Dini hypotheses.
Theorem~\ref{r7:nonlinear-pde-representation} allows unequal
orders at the representation stage;
Corollary~\ref{r7:nonlinear-pde-nonexistence} uses equal orders
for nonexistence. Finally,
Theorem~\ref{r7:universal-interior-bound} gives interior bounds
for a compact normalization family without regular variation.
Its upper response derivative bound is needed for compactness
and is absent from the potential and Green nonexistence theorems.

\section{Source mass estimates and local regularity away from zero}
\label{sec:regularity}
\begin{lemma}
\label{so:automatic-weights}\label{so:unit-mass}
Let $0<\sigma_1,\sigma_2<n$, $p,q>0$, $g=av^p$ and $f=bu^q$,
with positive coefficients and finite positive functions on
$\mathbb R^n\setminus\{0\}$ satisfying
$u\geq I_{\sigma_1}g$, $v\geq I_{\sigma_2}f$ there.
Then $f,g\in L^1_{\mathrm{loc}}(\mathbb R^n)$ and the infima of $u,v$
on every $B_R\setminus\{0\}$ are positive.
For pure powers $a=|x|^A$, $b=|x|^B$, necessarily $A,B>-n$.
If the pair extends continuously and finitely to zero, then its local
nonlinear energy is finite and, for pure powers,
$u(0),v(0)>0$, $A>-\sigma_1$, $B>-\sigma_2$.

Suppose additionally that $pq>1$ and a fixed patch $D\subset B_1$
has $\int_Da\geq m_a>0$, $\int_Db\geq m_b>0$.
Then $\int_{B_1}(f+g)\leq C$, with $C$ depending only on these
lower bounds, the orders, dimension and powers. 
% No upper coefficient
% bound or nonlinear energy bound is required for this mass estimate.
\end{lemma}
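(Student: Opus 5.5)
The plan rests on the elementary Riesz-kernel fact that a finite positive potential forces integrability of its source against the kernel. Since $u(x_0)\ge I_{\sigma_1}g(x_0)$ is finite at some $x_0\ne0$, we get $\int g(y)|x_0-y|^{\sigma_1-n}\,dy<\infty$. On any ball $B_R$ the kernel $|x_0-y|^{\sigma_1-n}$ is bounded below by $(|x_0|+R)^{\sigma_1-n}$, so $g\in L^1(B_R)$; the same argument gives $f\in L^1_{\rm loc}$.

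\textbf{Positivity of infima.} Positivity of the infima follows from the same lower bound used in reverse. The function $f$ is positive off zero, so $\int_{B_1}f>0$. For $x\in B_R\setminus\{0\}$ this gives
\[
 v(x)\ge c_{n,\sigma_2}(R+1)^{\sigma_2-n}\int_{B_1}f>0,
\]
and similarly for $u$.

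\textbf{Pure powers and continuous extension.} For pure powers, the bound $v\ge c>0$ on $B_1\setminus\{0\}$ gives $g=|x|^Av^p\ge c^p|x|^A$. Local integrability of $g$ therefore forces $A>-n$, and likewise $B>-n$. If the pair extends continuously and finitely to zero, then $u,v$ are bounded on $B_1$. The local energy $\int_{B_1}(fu+gv)$ is then at most $\sup_{B_1}(u+v)\int_{B_1}(f+g)<\infty$. Moreover $u(0)\ge\inf_{B_1\setminus\{0\}}u>0$. For the sharper exponent bound I would evaluate the inequality near zero: for $|x|$ small,
\[
 u(x)\ge c\int_{B_1}|y|^A|x-y|^{\sigma_1-n}\,dy,
\]
using $v\ge c$. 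Let $x\to0$ and apply Fatou's lemma, together with continuity and finiteness of $u(0)$. This yields $\int_{B_1}|y|^{A+\sigma_1-n}\,dy<\infty$, i.e.\ $A>-\sigma_1$, and similarly $B>-\sigma_2$.

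\textbf{Uniform mass bound.} This is the main step, and I would run a bootstrap. Since $D\subset B_1$, for every $x\in B_2$ we have $|x-y|\le3$ when $y\in B_1$. Hence
\[
 u(x)\ge c\int_{B_1}g=:c\,M_g \quad\text{on } B_2\setminus\{0\},
\]
and similarly $v\ge c\,M_f$ with $M_f=\int_{B_1}f$. This gives $M_f=\int_{B_1}bu^q\ge m_b(cM_g)^q$, and likewise $M_g\ge m_a(cM_f)^p$. Combining the two, $M_f\ge c'\,m_bm_a^q M_f^{pq}$, which alone gives an upper bound on $M_f$ only after we know $M_f<\infty$ (already known) and $M_f>0$.

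Since $pq>1$, the inequality $M_f\ge C M_f^{pq}$ with $0<M_f<\infty$ gives $M_f\le C^{-1/(pq-1)}$. The constant $C$ depends only on $m_a,m_b,n,\sigma_i,p,q$. Then $M_g$ is bounded by reinserting this into the second inequality the other way round. Concretely, from $M_f\ge m_b(cM_g)^q$ we get $M_g\le c^{-1}(M_f/m_b)^{1/q}$, which is bounded. The only care needed is that the kernel lower bound constant uses $|x-y|\le2$ for $x,y\in B_1$. To keep the roles straight, integrate the pointwise bounds over $D$ rather than $B_1$, since only $\int_D a,\int_D b$ are controlled. The obstacle, mild, is keeping the constants independent of upper coefficient bounds, which the argument above achieves because only lower bounds on the coefficients are ever used.
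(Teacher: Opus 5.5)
Your proposal is correct and follows the paper's proof essentially step for step. Kernel lower bounds from a finite potential at a nonzero point give local integrability and positive infima, Fatou at the origin gives $A>-\sigma_1$ and $B>-\sigma_2$, and the coupled inequalities $M_f\ge m_b(cM_g)^q$, $M_g\ge m_a(cM_f)^p$ with $pq>1$ give the uniform mass bound; the only cosmetic difference is that you get $u(0),v(0)>0$ directly from continuity and the positive punctured infima, whereas the paper uses Fatou and annular source mass.
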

\begin{proof}
At a fixed $x_*\ne0$, each kernel is bounded below on $B_R$ by a
positive constant; finite minorants therefore give finite positive
source masses on every ball. The lower kernel bound on
$B_R\times B_R$ then gives strictly positive infima on $B_R\setminus\{0\}$.
For pure powers this bounds $g,f$ below by constant multiples of
$|x|^A,|x|^B$ on a ball, forcing $A,B>-n$.
If the pair is bounded near zero, multiply the locally integrable sources
by these bounded factors to obtain local energy.
Fatou at zero gives $I_{\sigma_1}g(0)\leq u(0)<\infty$ and the
analogous bound for $v$; annular source mass makes both potentials
strictly positive. The positive continuous values of $v,u$ near zero
then imply finiteness of
$\int_0^1r^{\sigma_1+A-1}dr$ and
$\int_0^1r^{\sigma_2+B-1}dr$, respectively.

For the source mass bound put $F=\int_{B_1}f$, $G=\int_{B_1}g$.
Let $k_i>0$ be the lower bound for $K_{\sigma_i}$ on distances at most
two. The minorants give $u\geq k_1G$, $v\geq k_2F$ on $B_1$.
Thus $F\geq m_bk_1^qG^q$ and $G\geq m_ak_2^pF^p$.
Substitution in both orders bounds $F^{pq-1}$ and $G^{pq-1}$ above,
which proves the assertion. The dependence is uniform on compact
parameter sets with positive lower bounds and $pq-1$ separated from zero.
\end{proof}

The representation arguments will only need continuity of positive
potentials with locally bounded sources. Standard potential estimates
are treated in \cite[Chapter~V]{Stein1970} and \cite{AdamsHedberg1996}.

\begin{lemma}
\label{pd:local-gain}
Let $F\ge0$ be locally integrable, $0<\sigma<n$, and suppose
$P=I_\sigma F$ is finite at one point. If $F$ is locally bounded
on an open set $U$, then $P$ is finite and continuous on $U$.
\end{lemma}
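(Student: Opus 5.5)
We split the source into a near part, which is bounded, and a far part, whose kernel is smooth near the point. Fix $x_0\in U$ and choose $r>0$ with $\overline{B_{2r}(x_0)}\subset U$, so that $\|F\|_{L^\infty(B_{2r}(x_0))}=:M<\infty$. Write
\[
 P=P_1+P_2,\qquad P_1=K_\sigma*(F\chi_{B_{2r}(x_0)}),\qquad P_2=K_\sigma*(F\chi_{\mathbb R^n\setminus B_{2r}(x_0)}).
\]
We show that each piece is finite and continuous on $B_r(x_0)$. Since $x_0\in U$ is arbitrary, this proves the lemma.

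\textbf{The near part.} The density $F\chi_{B_{2r}(x_0)}$ is bounded and compactly supported, hence lies in $L^1\cap L^\infty$. The kernel splits as
\[
 K_\sigma=K_\sigma\chi_{B_1}+K_\sigma\chi_{B_1^c},\qquad K_\sigma\chi_{B_1}\in L^1,\quad K_\sigma\chi_{B_1^c}\in L^\infty .
\]
Here $\sigma>0$ gives the $L^1$ claim and $\sigma<n$ gives the $L^\infty$ claim. Convolving $L^1$ with $L^\infty$ gives a bounded, uniformly continuous function, by continuity of translation in $L^1$. Applying this to both terms shows that $P_1$ is bounded and continuous on all of $\mathbb R^n$.

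\textbf{The far part.} Here we use finiteness of $P$ at one point $x_*$. The first step is to show
\[
 \int\frac{F(y)}{1+|y|^{n-\sigma}}\,dy<\infty .
\]
For $|y-x_*|\ge1$ we have $|x_*-y|^{\sigma-n}\ge c(1+|y|)^{\sigma-n}$, so $P(x_*)<\infty$ controls the integral of $F(1+|y|)^{\sigma-n}$ outside $B_1(x_*)$. On the compact remainder $B_1(x_*)$, local integrability of $F$ gives the bound.

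Now let $x\in B_r(x_0)$ and $|y-x_0|\ge2r$. Then $|x-y|\ge r$ and $|x-y|\ge|y-x_0|/2$, so
\[
 |x-y|^{\sigma-n}\le C_r(1+|y|)^{\sigma-n},
\]
uniformly in $x$. The integrand of $P_2(x)$ is continuous in $x$ for each fixed $y$ and is dominated by this integrable majorant times $F(y)$. Dominated convergence therefore gives finiteness and continuity of $P_2$ on $B_r(x_0)$.

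\textbf{Main obstacle.} The only delicate step is the passage from finiteness at the single point $x_*$ to the global weighted integrability of $F$. In particular, $x_*$ may lie outside $U$ and $F$ may be unbounded near $x_*$. This is handled by treating $B_1(x_*)$ with local integrability and the rest with the lower kernel bound, as above. Everything else is routine.
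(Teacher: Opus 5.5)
Your proof is correct and follows the same near/far decomposition as the paper: continuity of translations in $L^1$ handles the bounded near part, and dominated convergence handles the far part, using a majorant obtained from finiteness of the potential at one point. The differences are cosmetic. You use a sharp indicator where the paper uses a smooth cutoff. You also pass through the weighted bound $\int F(1+|y|)^{\sigma-n}\,dy<\infty$ instead of comparing directly with $K_\sigma(x_*-y)$. Since $K_\sigma(x_*-y)\ge c\,(1+|y|)^{\sigma-n}$ already holds for every $y\ne x_*$, your separate treatment of $B_1(x_*)$ is unnecessary, though harmless.
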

\begin{proof}
Localize the source with a smooth compact cutoff in $U$.
For the bounded near part, local integrability of the Riesz kernel
and continuity of translations in $L^1$ give finiteness and continuity.
On a compact set separated from the remaining source, the kernel is
smooth. Its tails, uniformly on that compact set, are bounded by a
constant times the kernel based at the point of finite potential.
Dominated convergence gives continuity of the remaining part.
\end{proof}

The nonexistence proof below uses transport increments rather than
derivatives of the solution. 
% No $C^1$ bootstrap is needed.

\subsection{Normalized source families}
\label{v5:master-section}\label{sec:closure}\label{sec:sources}

The source bounds in this subsection use potential minorants, but not an energy upper bound or a pressure estimate.

\begin{lemma}
\label{gp:semiorbit-source}
Let $n\geq1$, $0<\sigma<n$, $p,q>0$, $pq>1$, and let
$a,b>0$ be continuous away from zero with the global weak Euler lower bounds
\eqref{v5:euler-lower}.  Let $u,v$ be finite, strictly positive and
continuous off zero, set $g=av^p$, $f=bu^q$, and suppose
$u\geq I_\sigma g$, $v\geq I_\sigma f$ at every nonzero point.
Fix $e_0\in\mathbb S^{n-1}$, $R_*\geq1$ and
$D=B_{1/8}(3e_0/4)$.  Suppose
\begin{equation}
 \inf_{R\geq R_*}\int_D\frac{a(Rx)}{a(Re_0)}\,dx>0,
 \qquad
 \inf_{R\geq R_*}\int_D\frac{b(Rx)}{b(Re_0)}\,dx>0.
 \label{gp:patch-input}
\end{equation}
Then $f,g\in L^1_{\rm loc}$, the normalized sources below have
uniform unit-ball masses, and their large-ball bounds are
\eqref{v5:large-source-masses}.  Every putative pair forces
$\alpha_-,\beta_-\leq\lambda=n-\sigma$, where
\begin{equation}
 \alpha_-:=\frac{\sigma+A_-+p(\sigma+B_-)}{pq-1},\qquad
 \beta_-:=\frac{\sigma+B_-+q(\sigma+A_-)}{pq-1}.
 \label{gp:floor-scales}
\end{equation}
If the strict gap \eqref{v5:master-gap} holds, both exponents are positive.
In original variables, for $R\geq R_*$,
\begin{equation}
 \int_{B_R}g\leq CR^{\lambda-\alpha_-},\qquad
 \int_{B_R}f\leq CR^{\lambda-\beta_-}.
 \label{gp:original-source-bounds}
\end{equation}
% The constant here may depend on the original coefficients and starting
% normalization.  No energy, exact-potential equality or orbit modulus
% is an assumption of this lemma.
\end{lemma}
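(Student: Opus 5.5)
The plan is to rescale, apply the unit-mass part of Lemma~\ref{so:automatic-weights} at every large scale, and then use the Euler lower bounds to pass from coefficient normalizations to power laws in $R$.

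\emph{Rescaling.} For $R\ge R_*$ put
\[
 u_R(x)=\alpha_R u(Rx),\qquad v_R(x)=\beta_R v(Rx),
\]
with positive amplitudes $\alpha_R,\beta_R$ to be chosen. A change of variables in the minorant inequalities gives $u_R\ge I_\sigma g_R$ and $v_R\ge I_\sigma f_R$. Here
\[
 g_R(x)=\alpha_R R^{\sigma}a(Re_0)\beta_R^{-p}\,a_R(x)v_R(x)^p,\qquad
 f_R(x)=\beta_R R^{\sigma}b(Re_0)\alpha_R^{-q}\,b_R(x)u_R(x)^q .
\]
Since $pq>1$, the linear system in $\log\alpha_R,\log\beta_R$ that makes both prefactors equal to one is uniquely solvable. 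With this choice, $(u_R,v_R)$ satisfies the same minorant system with coefficients $a_R,b_R$. By \eqref{gp:patch-input}, $\int_D a_R$ and $\int_D b_R$ are bounded below uniformly in $R\ge R_*$. The last part of Lemma~\ref{so:automatic-weights} then gives $\int_{B_1}(f_R+g_R)\le C$ uniformly in $R$; this is the uniform unit-ball mass. Local integrability of $f,g$ is the first part of that lemma.

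\emph{Back to original variables.} Undoing the scaling gives
\[
 \int_{B_R}g = R^{n}\alpha_R^{-1}R^{-\sigma}\int_{B_1}g_R,
\]
and similarly for $f$. Solving the linear system, $\alpha_R$ is an explicit monomial in $R^\sigma$, $a(Re_0)$ and $b(Re_0)$. Concretely, $\alpha_R^{pq-1}=R^{\sigma(p+1)}a(Re_0)\,b(Re_0)^p$, and the analogous formula holds for $\beta_R$. Now use \eqref{r6:weak-euler-front} with $x=R_*e_0$ and $t=R/R_*$:
\[
 a(Re_0)\ge c\,R^{A_-},\qquad b(Re_0)\ge c\,R^{B_-}.
\]
This yields $\alpha_R\ge cR^{\alpha_-}$ and $\beta_R\ge cR^{\beta_-}$, and hence \eqref{gp:original-source-bounds}. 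These are the large-ball bounds \eqref{v5:large-source-masses}. The constants depend on $a(R_*e_0)$, $b(R_*e_0)$ and the patch infima, which is permitted.

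\emph{The inequality $\alpha_-,\beta_-\le\lambda$.} The sources $g,f$ are nonzero, so $\int_{B_R}g\ge c_0>0$ for $R\ge R_*$. Combined with $\int_{B_R}g\le CR^{\lambda-\alpha_-}$ and letting $R\to\infty$, this forces $\lambda-\alpha_-\ge0$, and likewise for $\beta_-$. This step is easy. Finally, positivity under the gap \eqref{v5:master-gap} is pure algebra. One computes
\[
 (pq-1)(\alpha_- -\lambda)\quad\text{and}\quad(pq-1)(\beta_- -\lambda)
\]
and combines them with weights $1/(q+1)$ and $1/(p+1)$ (or uses the Lane--Emden identity) to express $\alpha_-,\beta_-$ in terms of $\gamma_-$. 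The identity I would verify is
\[
 \frac{\alpha_-}{p+1}+\frac{\beta_-}{q+1}\ \text{is proportional to}\ \gamma_-+\lambda-\lambda\frac{pq-1}{(p+1)(q+1)} .
\]
This alone does not give positivity of each exponent separately. I would therefore combine it with $\alpha_-,\beta_-\le\lambda$ and the relation $\alpha_-=\bigl(\sigma+A_-+p\beta_-\bigr)/\ldots$ to conclude that each exponent is positive. I expect this bookkeeping to be the main obstacle, since a sign error there would flip a conclusion. The step I am least sure of is exactly how the upper bounds enforce positivity of both exponents, rather than only of a weighted combination.
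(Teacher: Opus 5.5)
Your rescaling is the paper's own: your $\alpha_R,\beta_R$ are its amplitudes $C_R,D_R$. The unit-mass step, local integrability, the original-variable bounds \eqref{gp:original-source-bounds} and the inequalities $\alpha_-,\beta_-\le\lambda$ all go through as you describe.

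The genuine gap is the positivity step, because the weighted identity you plan to verify is false. Write $x_0=\sigma+A_-$ and $y_0=\sigma+B_-$. In $\frac{\alpha_-}{p+1}+\frac{\beta_-}{q+1}$ the coefficients of $x_0$ and $y_0$ are proportional to $pq+2q+1$ and $pq+2p+1$. In $\gamma_-$ they are $\frac1{p+1}$ and $\frac1{q+1}$. These ratios agree only when $(p-q)(pq-1)=0$, that is, only when $p=q$.

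The relation that works is the unweighted sum. One has $\alpha_-+\beta_-=\frac{(q+1)x_0+(p+1)y_0}{pq-1}$ and $\gamma_-=\frac{x_0}{p+1}+\frac{y_0}{q+1}-\lambda\frac{pq-1}{(p+1)(q+1)}$, hence
\[
 \alpha_-+\beta_--\lambda=\frac{(p+1)(q+1)}{pq-1}\,\gamma_- .
\]
If $\gamma_->0$, this gives $\alpha_->\lambda-\beta_-\ge0$ and $\beta_->\lambda-\alpha_-\ge0$. It uses only the upper bounds you have already proved, and this is exactly how the paper concludes.

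A smaller inaccuracy: \eqref{gp:original-source-bounds} is not the same statement as \eqref{v5:large-source-masses}. The latter asks for $\int_{B_t}g_R\le Ct^{\lambda-\alpha_-}$ for all $t\ge1$, with $C$ independent of $R\ge R_*$. Converting your original-variable bound back only gives $\int_{B_t}g_R\le C\,\alpha_RR^{-\alpha_-}\,t^{\lambda-\alpha_-}$. The Euler inequality controls $\alpha_RR^{-\alpha_-}$ from below, not from above, so this constant is not uniform in $R$.

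The fix is to anchor \eqref{r6:weak-euler-front} at $Re_0$ with dilation factor $t$, rather than at $R_*e_0$. This gives $\alpha_{Rt}/\alpha_R\ge t^{\alpha_-}$. The uniform unit-ball mass at scale $Rt$ then yields
\[
 \int_{B_t}g_R=t^{\lambda}\,\frac{\alpha_R}{\alpha_{Rt}}\int_{B_1}g_{Rt}\le Ct^{\lambda-\alpha_-},
\]
which is the paper's relative-amplitude argument with $c_R(t)$. The original-variable bounds \eqref{gp:original-source-bounds} are then just the member $R=R_*$ written in original variables.
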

\begin{proof}
Write $d_0=pq-1>0$.  Lemma~\ref{so:automatic-weights} gives source
local integrability from the finite potential minorants.
\medskip
\noindent\emph{The normalized dilation family.}
Use the fixed $e_0,R_*$ of the lemma.
For every $R\geq R_*$ define
\begin{equation}
 a_R(x)=\frac{a(Rx)}{a(Re_0)},\qquad
 b_R(x)=\frac{b(Rx)}{b(Re_0)},
 \label{v5:normalized-coefficients}
\end{equation}
\begin{equation}
 C_R^{d_0}=R^{\sigma(p+1)}a(Re_0)b(Re_0)^p,
 \qquad
 D_R^{d_0}=R^{\sigma(q+1)}b(Re_0)a(Re_0)^q,
 \label{v5:amplitudes}
\end{equation}
and $U_R=C_Ru(R\cdot)$, $V_R=D_Rv(R\cdot)$.
Then $a_R(e_0)=b_R(e_0)=1$, and the finite-dilation Euler inequalities
\eqref{r6:weak-euler-front} are unchanged.
The balances $D_R^p=C_RR^\sigma a(Re_0)$ and
$C_R^q=D_RR^\sigma b(Re_0)$ give exact source scaling
\begin{equation}
 g_R:=a_RV_R^p=C_RR^\sigma g(R\cdot),\qquad
 f_R:=b_RU_R^q=D_RR^\sigma f(R\cdot).
 \label{v5:source-scaling}
\end{equation}
Thus $(U_R,V_R)$ retains both potential minorants with $(a_R,b_R)$.
This scaling statement uses no local-energy input.

Only further scales $t\geq1$ are used.  They preserve this family since
$Rt\geq R_*$, and
\[
 (a_R)_t=a_{Rt},\qquad (b_R)_t=b_{Rt}.
\]
The relative amplitudes obtained from $(a_R,b_R)$ are exactly
\begin{equation}
 c_R(t):=C_t[a_R,b_R]=\frac{C_{Rt}}{C_R},\qquad
 d_R(t):=D_t[a_R,b_R]=\frac{D_{Rt}}{D_R},
 \qquad c_R(1)=d_R(1)=1.
 \label{v5:relative-amplitudes}
\end{equation}
The original amplitudes $C_{R_*},D_{R_*}$ need not be one.  It is the
relative amplitudes at $t=1$, and the normalized coefficient values at
$e_0$, that are one.  
% This distinction avoids an unauthorized normalization
% of the original coefficient values.

\medskip
\noindent\emph{Source mass bounds and scaling exponents.}
The assumed patch integrals and Lemma~\ref{so:unit-mass} give a
uniform bound for both normalized unit-ball source masses.
The lower kernel bound controls the entire ball even if $n=1$ and
the patch is in only one branch. 
% No comparison of branch values
% or local energy is needed.

Use the scaling exponents \eqref{gp:floor-scales}, without assuming their signs.
For $t\geq1$, the finite-dilation inequalities
\eqref{r6:weak-euler-front} and the explicit amplitudes give
\[
 c_R(t)\geq t^{\alpha_-},\qquad
 d_R(t)\geq t^{\beta_-}\quad(R\ge R_*).
\]
These comparisons require no coefficient derivative.
Apply the unit-ball mass bound to the member at scale $Rt$ and use the
relative source identities.  It follows that
\begin{equation}
 \int_{B_t}g_R\leq Ct^{\lambda-\alpha_-},\qquad
 \int_{B_t}f_R\leq Ct^{\lambda-\beta_-}
 \quad(R\geq R_*,\ t\geq1).
 \label{v5:large-source-masses}
\end{equation}
% No small-scale mass estimate or upper bound on the Euler derivatives has been used.

Positive, nondecreasing source masses in the preceding bounds force
$\alpha_-,\beta_-\leq\lambda$: a larger exponent makes the associated
upper bound tend to zero.  The exact identity
\[
 \alpha_-+\beta_--\lambda
 =\frac{(p+1)(q+1)}{pq-1}\gamma_-
\]
therefore makes both exponents positive when $\gamma_->0$.
Changing variables in the member at $R_*$ gives
\eqref{gp:original-source-bounds}.  This completes the lemma.
% without summing any potential tail or invoking pressure.
\end{proof}

\section{Interaction measure and a transport virial identity}
\label{sec:virial}\label{vi:section}

The proof uses radial flows and one-dimensional bounded variation;
see \cite{AFP2000} for the underlying BV facts.
Deformation-based fractional Pohozaev identities in a Dirichlet setting
are developed in \cite{DjitteFallWeth2024}. We prove the finite
interaction variation needed here, including when the total energy
is infinite.

Throughout this section, $n\ge1$, $0<\sigma<n$, and
$\lambda=n-\sigma$. We first allow a radial kernel $K(z)=k(|z|)$,
where $k\in C^1((0,\infty))$ is positive and
\begin{equation}
 c_0r^{-\lambda}\le k(r)\le C_0r^{-\lambda},\qquad
 0\le\theta_K(r):=-\frac{rk'(r)}{k(r)}\le\Lambda
 \quad(r>0).
 \label{r6:kernel-transport}
\end{equation}
The Riesz kernel has $\theta_K=\lambda$. Let $F,G$ be positive,
strictly increasing locally Lipschitz functions on $(0,\infty)$, and put
\[
 P_F(t)=\int_0^tF(s)\,ds,\qquad P_G(t)=\int_0^tG(s)\,ds.
\]
These primitives are finite, and $0\le P_F(t)\le tF(t)$,
$0\le P_G(t)\le tG(t)$. 
% No pointwise differentiability of the responses is required.

Let $a,b$ be positive continuous functions off zero. Their Euler
lower bounds will be understood in the distributional sense:
\begin{equation}
 \operatorname{div}(xa)\ge(n+A)a\,dx,\qquad
 \operatorname{div}(xb)\ge(n+B)b\,dx
 \quad\hbox{on }\mathbb R^n\setminus\{0\}.
 \label{r6:weak-euler}
\end{equation}
Equivalently, $r^{-A}a(r\omega)$ and $r^{-B}b(r\omega)$ are
nondecreasing along every ray. Indeed, polar disintegration gives this
monotonicity for almost every direction, and continuity extends it to
all directions. In dimension one the two rays are treated separately.
In particular,
\[
 D_Ea:=\operatorname{div}(xa)-na\,dx,\qquad
 D_Eb:=\operatorname{div}(xb)-nb\,dx
\]
are locally finite signed Radon measures off zero, with
$D_Ea\ge Aa\,dx$ and $D_Eb\ge Bb\,dx$. 
% No angular bounded variation is assumed.

Suppose that $u,v$ are finite, strictly positive and continuous off
zero, and satisfy
\begin{equation}
 u=K*g,\qquad v=K*f,\qquad f=bF(u),\qquad g=aG(v)
 \quad\hbox{at every nonzero point}.
 \label{vi:potentials}
\end{equation}
Assume $\int_{B_1}(fu+gv)<\infty$. Finiteness of a positive
potential implies local integrability of its source; hence $f,g$ are
locally integrable on the whole space. The energy densities are also
locally integrable, by the hypothesis and continuity away from zero.

\subsection{The interaction measure and the cutoff}

Define
\begin{equation}
 d\pi(x,y)=f(x)g(y)K(x-y)\,dx\,dy.
 \label{vi:interaction}
\end{equation}
Tonelli's theorem gives
\begin{equation}
 \pi(dx,\mathbb R^n)=f(x)u(x)\,dx,\qquad
 \pi(\mathbb R^n,dy)=g(y)v(y)\,dy.
 \label{vi:marginals}
\end{equation}
Thus both marginals have finite mass on compact sets; the total
interaction mass need not be finite.

Let $\eta\in C_c^2(\mathbb R^n)$ be nonnegative and radial, equal
to one on $B_1$ and supported in $B_2$, and set
$w=-x\cdot\nabla\eta$, $X=x\eta$. Write
\begin{equation}
 \begin{split}
 E_f&=\int\eta fu,\qquad E_g=\int\eta gv,\qquad E=E_f+E_g,\\
 W_f&=\int wfu,\qquad W_g=\int wgv.
 \end{split}
 \label{vi:energies}
\end{equation}
For $x\ne0$ set $\omega_x=x/|x|$, and define
\begin{equation}
 D(x,y)=\frac{((x-y)\cdot\omega_x)^2}{|x-y|^2},\qquad
 \Phi_X(x,y)=\frac{(X(x)-X(y))\cdot(x-y)}{|x-y|^2}.
 \label{vi:directional-factors}
\end{equation}
We also write $\Phi=\Phi_X$. Values on the null exceptional sets
are immaterial. Since $\Phi_X$ is bounded by the Lipschitz constant
of $X$ and vanishes when both endpoints are outside its support,
$\Phi_X\in L^1(\pi)$.

\subsection{Increment estimates along a radial flow}

\begin{lemma}
\label{r6:transport-increments}
Let $Y=x\zeta(x)$, with $\zeta\in C_c^2(\mathbb R^n\setminus\{0\})$
radial, and let $T_t$ be its flow. Choose a compact annulus $Q$
containing the support of $Y$ in its interior, so that $T_t(Q)=Q$.
For $\delta_th=h\circ T_t-h$, one has, as $t\to0$,
\begin{equation}
 \|\delta_tu\|_{L^2(Q)}+\|\delta_tv\|_{L^2(Q)}
 \le C_Y\bigl(\|\delta_ta\|_{L^2(Q)}
              +\|\delta_tb\|_{L^2(Q)}+|t|\bigr).
 \label{r6:transport-absorption}
\end{equation}
Consequently, the squares of the $L^2(Q)$ increments of $a,b,u,v,f,g$
are $o(|t|)$. The same conclusion holds for the density increments
$\Delta_tf=(T_t)_\#f-f$ and $\Delta_tg=(T_t)_\#g-g$ of their
mass pushforwards.
\end{lemma}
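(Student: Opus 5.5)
We write the increments through the flow and the potential identities \eqref{vi:potentials}. Since $Y=x\zeta$ is radial with $\zeta$ compactly supported off zero, $T_t(x)=\rho_t(|x|)\,x/|x|$ is a radial reparametrization, equal to the identity outside $\operatorname{supp}\zeta$. It is a $C^2$ diffeomorphism with $|T_t(x)-x|\le C|t|$ and Jacobian $1+O(|t|)$. For $x\in Q$ we write
\[
 \delta_tu(x)=\int\bigl[K(T_tx-y)-K(x-y)\bigr]g(y)\,dy .
\]
We split $y$ into a neighbourhood $Q'$ of $Q$ and its complement.

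\emph{Far part.} For $y\notin Q'$ the kernel is $C^1$ in $x$, and $|\nabla K(z)|\le\Lambda k(|z|)/|z|$ by \eqref{r6:kernel-transport}. Hence the far contribution is bounded by $C|t|\int_{Q'^c}k(|x-y|)g(y)\,dy/\operatorname{dist}$. This is at most $C|t|u(x_*)$ for a fixed $x_*$, by comparability of $k$ with $r^{-\lambda}$. So it is $O(|t|)$ uniformly on $Q$.

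\emph{Near part.} Here we change variables $y=T_t y'$ in the shifted term. Translation-type invariance is not available, but radial flows satisfy $|T_tx-T_ty'|$ comparable to $|x-y'|$ with ratio $1+O(|t|)$. This lets us write
\[
 \delta_tu=K*(\delta_t g\,\mathbf 1_{Q'})+\mathcal E_t ,
\]
where $\mathcal E_t$ collects the kernel distortion $K(T_tx-T_ty')-K(x-y')=O(|t|)K(x-y')$, from $\theta_K\le\Lambda$, and the Jacobian error. Both are $O(|t|)$ times $u$, hence $O(|t|)$ in $L^2(Q)$ because $u,v$ are continuous and bounded on $Q$.

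\emph{Source increments.} The term $K*(\delta_tg\mathbf 1_{Q'})$ is bounded in $L^2(Q)$ by $C\|\delta_tg\|_{L^2(Q')}$, since the kernel restricted to a bounded set is integrable (Young). We then expand
\[
 \delta_tg=\delta_ta\,G(v\circ T_t)+a\,\bigl(G(v\circ T_t)-G(v)\bigr).
\]
On $Q'$ the functions $u,v$ are continuous and bounded between positive constants, and $a,b$ are bounded, so local Lipschitzness of $G$ gives
\[
 \|\delta_tg\|_{L^2}\le C\bigl(\|\delta_ta\|_{L^2}+\|\delta_tv\|_{L^2}\bigr),
\]
and similarly for $f$. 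Taking $Q'$ itself as the working annulus, this closes to a coupled inequality
\[
 X_t\le C_1\bigl(\|\delta_ta\|+\|\delta_tb\|+|t|\bigr)+C_2\,\epsilon\, X_t,\qquad X_t=\|\delta_tu\|+\|\delta_tv\|.
\]
Here $\epsilon$ must be made small to absorb the last term.

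\emph{Main obstacle: absorption.} The plain Young bound gives no smallness. We obtain it by splitting the near kernel again into $|x-y|<r$ and $|x-y|\ge r$. The short-range piece has $L^1$ norm $\le Cr^{\sigma}$, which is small. The long-range piece with $r\le|x-y|$ is Lipschitz, and it is treated like the far part. Its increment is $O(|t|/r)$ times bounded masses, and this is reorganized through the undeformed variables so that it contributes only $C_r|t|$. Choosing $r$ small and then $|t|$ small yields \eqref{r6:transport-absorption}.

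\emph{Consequence.} Since $a,b$ are continuous on the compact set $Q$ and $|T_tx-x|\le C|t|$, uniform continuity gives $\|\delta_ta\|_{L^2(Q)}\to0$. To get $o(|t|)$ for the squares, it suffices that
\[
 \|\delta_ta\|_{L^2(Q)}^2\le\|\delta_ta\|_{L^\infty(Q)}\,\|\delta_ta\|_{L^1(Q)} .
\]
The first factor is $o(1)$. The second factor is $O(|t|)$: along each ray $\delta_ta$ is controlled by the BV variation of $a$ on the radial segment, which is locally finite by the Euler measure $D_Ea$. Squaring \eqref{r6:transport-absorption} then gives the $o(|t|)$ bounds for $u,v$, and for $f,g$ via the expansion above. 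For the pushforward densities we use
\[
 (T_t)_\#f=(f\circ T_t^{-1})\,J_{T_t^{-1}} ,
\]
which equals $\delta_{-t}$ of $f$ plus $O(|t|)f$. The same bounds apply, with $O(|t|)^2=o(|t|)$.
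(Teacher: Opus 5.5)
Your proposal is correct and follows the paper's proof essentially step by step. It uses the same ingredients: the BV-along-rays $L^1$ bound combined with uniform continuity for the coefficient increments; the change of variables with the distortion bound $|K(T_tx-T_ty)J_t(y)-K(x-y)|\le C|t|K(x-y)$; the split $K=K_r+S_r$, with $\|K_r\|_1\le Cr^\sigma$ absorbed via Young and the local Lipschitz bounds while the flow is moved onto the smooth piece; and the pushforward formula $(T_t)_\#f=(f\circ T_{-t})J_{-t}$.

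One small correction: with a sharp split, the long-range piece $K\mathbf 1_{\{|z|\ge r\}}$ is not Lipschitz. Either use a smooth cutoff $K_r=K\chi(|z|/r)$ as the paper does, or note that the jump shell costs only $O_r(|t|)$ because $g$ is bounded on $Q$. Your preliminary near/far split in $y$ is also unnecessary: $T_t-\mathrm{id}$ is globally $O(|t|)$-Lipschitz, so the change of variables can be made on all of $\mathbb R^n$ at once.
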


\begin{proof}
All constants in this proof may depend on the pair and on $Q$.
Unsubscripted $L^2$ norms below are taken on this invariant annulus;
source increments supported there are extended by zero outside $Q$.
The coefficients have finite variation along radial lines, with finite
integrated radial variation on compact annuli. The one-dimensional BV
translation estimate, applied to the radial flow, therefore gives
\[
 \|\delta_ta\|_{L^1(Q)}+\|\delta_tb\|_{L^1(Q)}\le C_Y|t|.
\]
One can obtain the same estimate by integrating the variation measures
along the flow; its radial Jacobians are uniformly bounded on a fixed
slightly larger annulus. Uniform continuity there gives
$\|\delta_ta\|_\infty+\|\delta_tb\|_\infty=o(1)$, so
\begin{equation}
 \|\delta_ta\|_2^2+\|\delta_tb\|_2^2=o(|t|).
 \label{r6:coefficient-square-increments}
\end{equation}

Write $J_t=\det DT_t$. The flow satisfies
$J_t=1+O(|t|)$ and
$|T_tx-T_ty|/|x-y|=1+O(|t|)$ uniformly in $x,y$.
The logarithmic kernel bound in \eqref{r6:kernel-transport} yields
\[
 |K(T_tx-T_ty)J_t(y)-K(x-y)|\le C_Y|t|K(x-y).
\]
Changing variables in the potential identity and comparing these
positive kernels gives, uniformly for $x\in Q$,
\begin{equation}
 \delta_tu=K*(\delta_tg)+O_Y(|t|),\qquad
 \delta_tv=K*(\delta_tf)+O_Y(|t|).
 \label{r6:flow-commutator}
\end{equation}
For example, the first error is bounded by
$C|t|K*(g\circ T_t)(x)\le C'|t|u(T_tx)$, which is bounded on $Q$.

Fix a small $r>0$ and split $K=K_r+S_r$, with
$K_r(z)=K(z)\chi(|z|/r)$, where $\chi$ is smooth, equals one near
zero and vanishes outside $B_2$. Then
$\|K_r\|_1\le Cr^\sigma$, while $S_r\in C^1(\mathbb R^n)$.
The increments of the sources vanish outside $Q$. For $x\in Q$,
changing variables once more gives
\[
 S_r*(\delta_tg)(x)=\int g(y)
 \bigl[S_r(x-T_{-t}y)J_{-t}(y)-S_r(x-y)\bigr]dy.
\]
The bracket vanishes outside $Q$, and on $Q$ it is $O_{Y,r}(|t|)$,
uniformly in $x$. Thus this expression is $O_{Y,r}(|t|)$.
No derivative of $g$ is used. The same holds for $f$.
The positive functions $a,b,u,v$ have positive lower bounds and finite
upper bounds on $Q$. Local Lipschitz continuity of $F,G$ consequently
gives
\[
 \|\delta_tg\|_2\le C(\|\delta_tv\|_2+\|\delta_ta\|_2),\qquad
 \|\delta_tf\|_2\le C(\|\delta_tu\|_2+\|\delta_tb\|_2).
\]
Apply Young's inequality to the near kernels in
\eqref{r6:flow-commutator}, and choose $r$ so that the two terms
$Cr^\sigma\|\delta_tu\|_2$, $Cr^\sigma\|\delta_tv\|_2$ can be
absorbed. This proves \eqref{r6:transport-absorption};
\eqref{r6:coefficient-square-increments} proves the assertions for
composition increments. Finally,
$(T_t)_\#f=(f\circ T_{-t})J_{-t}$, and similarly for $g$;
$J_{-t}=1+O(|t|)$ proves the pushforward assertions.
\end{proof}

\subsection{The primitive virial inequality}

\begin{lemma}
\label{r6:transport-virial}
Under the standing assumptions,
\begin{equation}
 \begin{split}
 &\int\bigl[(n+B)\eta-w\bigr]bP_F(u)
  +\int\bigl[(n+A)\eta-w\bigr]aP_G(v)\\
 &\hspace{20mm}\le
 \iint\theta_K(|x-y|)\Phi_X(x,y)\,d\pi(x,y).
 \end{split}
 \label{r6:primitive-virial}
\end{equation}
If $Y=x\zeta$ is supported away from zero, there is the exact
measure identity
\begin{equation}
 \int P_F(u)\,d\operatorname{div}(bY)
 +\int P_G(v)\,d\operatorname{div}(aY)
 =\iint\theta_K(|x-y|)\Phi_Y(x,y)\,d\pi.
 \label{r6:measure-virial}
\end{equation}
% In particular, no differentiability of $u,v$ is required.
\end{lemma}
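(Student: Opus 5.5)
We first prove the exact identity \eqref{r6:measure-virial} for a compactly supported radial field $Y=x\zeta$, and then obtain \eqref{r6:primitive-virial} by letting $\zeta$ approximate $\eta$ and passing to the limit.

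\emph{Step 1: the exact identity.} We compute the first variation of the localized interaction along the flow $T_t$ of $Y$ in two ways. On one side, change variables $x\mapsto T_tx$, $y\mapsto T_ty$ in
\[
\iint f(x)g(y)K(x-y)\,dx\,dy .
\]
Since $K(T_tx-T_ty)=K(x-y)\bigl[1-t\,\theta_K(|x-y|)\Phi_Y(x,y)+o(t)\bigr]$ by the chain rule for the radial profile $k$, this side produces the kernel direction term $\iint\theta_K\Phi_Y\,d\pi$. On the other side, the same variation is computed through the potentials. Pair $\Delta_tf$ with $u$ and $\Delta_tg$ with $v$, and use the symmetric bilinear form $\langle h_1,K*h_2\rangle$, so that, for instance,
\[
\int(\delta_tu)\,f=\int g\,\delta_t(K*f)+\text{kernel terms}.
\]
The quadratic cross terms $\langle\Delta_tf,K*\Delta_tg\rangle$ are $o(|t|)$ by Lemma~\ref{r6:transport-increments} together with Young's inequality for the near part of the kernel and the smoothness of the far part. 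What remains is a first-order expression
\[
\int \delta_t u\; b F(u)+\int \delta_t v\; aG(v).
\]
Here we use the increment inequality for the convex primitive, $P_F(u\circ T_t)-P_F(u)=F(u)\,\delta_tu+O(|\delta_tu|^2)$ with $F$ locally Lipschitz, and the square remainders are $o(|t|)$. The increments then reassemble into
\[
\frac{d}{dt}\Big|_{t=0}\int b\,P_F(u\circ T_t)=-\int P_F(u)\,d\operatorname{div}(bY)
\]
up to $o(t)$, after the change of variables $\int b\,P_F(u\circ T_t)=\int (b\circ T_{-t})J_{-t}P_F(u)$. The analogous statement holds for $a,P_G$. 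The coefficient increments are handled only through the BV structure along rays: $\operatorname{div}(bY)$ is a measure, $P_F(u)$ is continuous, and $t^{-1}\bigl((b\circ T_{-t})J_{-t}-b\bigr)\to-\operatorname{div}(bY)$ weakly-$*$ on the compact annulus. Dividing by $t$ and letting $t\to0^\pm$ yields \eqref{r6:measure-virial}.

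\emph{Step 2: the inequality via the Euler measures.} Write
\[
\operatorname{div}(bY)=\zeta\,\operatorname{div}(xb)+b\,x\cdot\nabla\zeta
=n\zeta b+\zeta D_Eb-w_\zeta b ,
\qquad w_\zeta=-x\cdot\nabla\zeta .
\]
Since $D_Eb\ge Bb\,dx$ and $P_F\ge0$, we get
\[
\int P_F(u)\,d\operatorname{div}(bY)\ge\int[(n+B)\zeta-w_\zeta]\,bP_F(u),
\]
and similarly for $a$. This gives \eqref{r6:primitive-virial} with $\zeta$ in place of $\eta$.

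\emph{Step 3: removing the cutoff at the origin, the main obstacle.} Take $\zeta_\varepsilon=\eta\,(1-\chi(x/\varepsilon))$, with $\chi$ a radial bump equal to $1$ near $0$. The left-hand side converges by dominated convergence: $bP_F(u)\le fu$ is locally integrable by the local energy hypothesis, and the extra term $x\cdot\nabla\chi(x/\varepsilon)$ is bounded and supported in $B_{2\varepsilon}$. The delicate point is the right-hand side. The quotient $\Phi_{Y_\varepsilon}$ is bounded uniformly in $\varepsilon$ by the Lipschitz constant of $x\zeta_\varepsilon$, which is $O(1)$ because $|x\nabla\chi(x/\varepsilon)|\lesssim1$. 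Moreover, $\Phi_{Y_\varepsilon}\to\Phi_X$ pointwise off the diagonal and the origin, and it vanishes unless one endpoint lies in $B_2$. Since $\theta_K\le\Lambda$, the integrand is dominated by $C\bigl(\mathbf 1_{B_2}(x)+\mathbf 1_{B_2}(y)\bigr)$, which is $\pi$-integrable by the marginal identities \eqref{vi:marginals} and finite local energy. Dominated convergence therefore applies to both sides and yields \eqref{r6:primitive-virial}. The step I expect to be hardest is the rigorous $o(|t|)$ control of the cross terms in Step 1 when the total interaction is infinite; I would localize by writing $f=f\mathbf 1_Q+f\mathbf 1_{Q^c}$ and note that the far part contributes only through the smooth $S_r$ kernel, whose variation is exactly of first order.
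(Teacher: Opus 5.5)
Your proposal is correct and follows the paper's proof essentially step for step. The common steps are: the finite interaction increment $\mathcal D_t$ and its expansion into $\int u\,\Delta_tf+\int v\,\Delta_tg$ plus a quadratic cross term; the $C^{1,1}$ Taylor estimate for $P_F,P_G$; weak-$*$ convergence of the coefficient quotients to $-\operatorname{div}(bY)$; and removal of the origin cutoff, with the Euler lower bounds inserted before the limit and dominated convergence against the marginals.

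The step you flag as hardest is easier than you expect. Both $\Delta_tf$ and $\Delta_tg$ are supported in the invariant annulus $Q$, so the cross term only sees $K\mathbf 1_{Q-Q}\in L^1$. Also, $\mathcal D_t$ is absolutely convergent by the marginals, because its integrand is bounded by $C|t|K(x-y)$ times the indicator that an endpoint lies in $Q$. You should therefore state Step~1 in terms of this increment rather than the possibly infinite total interaction. Likewise, replace your heuristic identity $\int(\delta_tu)f=\int g\,\delta_t(K*f)+\dots$ by the exact pushforward relation $\int u\,\Delta_tf=\int f\,\delta_tu$.
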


\begin{proof}
First fix such a field $Y$ and its flow $T_t$. Use the finite
interaction increment
\[
 \mathcal D_t=\iint f(x)g(y)
       \bigl[K(T_tx-T_ty)-K(x-y)\bigr]dxdy.
\]
Its integrand is bounded in absolute value by
$C|t|K(x-y)$ times the indicator that at least one endpoint is in
$Q$. The marginals therefore make $\mathcal D_t$ absolutely
convergent, even if the total interaction mass is infinite.
Dominated convergence yields
\begin{equation}
 \lim_{t\to0}\frac{\mathcal D_t}{t}
 =-\iint\theta_K(|x-y|)\Phi_Y(x,y)\,d\pi.
 \label{r6:interaction-variation}
\end{equation}
Let $\Delta_tf=(T_t)_\#f-f$, $\Delta_tg=(T_t)_\#g-g$.
Changing variables gives the finite expansion
\begin{equation}
 \mathcal D_t=\int u\Delta_tf+\int v\Delta_tg
                 +\int\Delta_tf\,(K*\Delta_tg).
 \label{r6:finite-interaction-expansion}
\end{equation}
For rigor, one may first perform the changes of variables in a large
ball containing $Q$, which is invariant under the flow, and then use
the compact-endpoint marginals to pass to the whole space.
All three terms on the right are absolutely convergent. Since the
source increments are supported in $Q$, the kernel can be restricted to
$Q-Q$, where $\|K\mathbf1_{Q-Q}\|_1<\infty$. Young's inequality and
Lemma~\ref{r6:transport-increments} bound the last term by
\[
 C_Q\|\Delta_tf\|_2\|\Delta_tg\|_2=o(|t|).
\]

For the first term, the definition of a pushforward gives
$\int u\Delta_tf=\int f(u\circ T_t-u)$. The values of $u$ and
$u\circ T_t$ on $Q$ lie in a fixed positive compact interval.
The quadratic Taylor estimate for the locally $C^{1,1}$ primitive $P_F$ gives
\[
 \int u\Delta_tf
 =\int b\bigl[P_F(u\circ T_t)-P_F(u)\bigr]
       +O(\|\delta_tu\|_2^2).
\]
The error is $o(|t|)$. After changing variables, the remaining
integral is
\[
 \int P_F(u(y))
       \bigl[b(T_{-t}y)J_{-t}(y)-b(y)\bigr]dy.
\]
The measure difference quotients in brackets converge weakly* to
$-\operatorname{div}(bY)$. To justify continuous, rather than smooth,
test functions here, the radial BV estimate provides uniform total
variation bounds for the quotients; distributional convergence then
extends by uniform approximation on $Q$. Thus
\[
 \lim_{t\to0}\frac1t\int u\Delta_tf
       =-\int P_F(u)\,d\operatorname{div}(bY).
\]
The second term in \eqref{r6:finite-interaction-expansion} is treated
identically. Combining these limits with
\eqref{r6:interaction-variation} proves
\eqref{r6:measure-virial}. This argument uses finite interaction
increments throughout, not a subtraction of two infinite energies.

To remove the origin cutoff, choose smooth $\chi:[0,\infty)\to[0,1]$
equal to zero on $[0,1]$ and one on $[2,\infty)$, and set
$\chi_\rho(x)=\chi(|x|/\rho)$, $X_\rho=x\eta\chi_\rho$.
Since $X=O(|x|)$ near zero,
\begin{equation}
 \|\nabla X_\rho\|_\infty+
 \|\nabla(X_\rho-X)\|_\infty\le C.
 \label{vi:uniform-lipschitz}
\end{equation}
Apply \eqref{r6:measure-virial} to $X_\rho$. The product rule for
radial derivative measures gives
\begin{equation}
 \operatorname{div}(bX_\rho)
 =\eta\chi_\rho D_Eb
 +b\bigl[(n\eta-w)\chi_\rho
            +\eta x\cdot\nabla\chi_\rho\bigr]dx,
 \label{vi:weighted-ibp}
\end{equation}
with the analogous formula for $a$. Substitute the measure lower
bounds before taking $\rho\downarrow0$. This is legitimate because
$\eta\chi_\rho P_F(u)$ and $\eta\chi_\rho P_G(v)$ are nonnegative.
The resulting origin errors obey
\begin{equation}
 \left|\int\eta\bigl[bP_F(u)+aP_G(v)\bigr]
                         x\cdot\nabla\chi_\rho\right|
 \le C\int_{B_{2\rho}}(fu+gv)\longrightarrow0.
 \label{vi:origin-ibp-error}
\end{equation}
The other marginal terms converge by $P_F(u)\le uF(u)$ and
$P_G(v)\le vG(v)$. Finally,
\begin{equation}
 \iint\theta_K(|x-y|)|\Phi_{X_\rho}-\Phi_X|\,d\pi
 \le C\Lambda\int_{B_{2\rho}}(fu+gv)\longrightarrow0,
 \label{vi:origin-pair-error}
\end{equation}
because $X_\rho-X$ is supported in $B_{2\rho}$ and has a uniformly
bounded Lipschitz norm. These limits prove
\eqref{r6:primitive-virial}. They do not require the Euler measures
themselves to have finite mass at the origin.
\end{proof}

\begin{lemma}
\label{vi:virial-lemma}
Let $K=K_\sigma=c_{n,\sigma}|z|^{\sigma-n}$,
$F(t)=t^q$, $G(t)=t^p$, with $p,q>0$. Under the above positive
continuity, distributional Euler lower bounds and local energy
assumptions, $\Phi\in L^1(\pi)$ and
\begin{equation}
 \frac{n+B}{q+1}E_f+\frac{n+A}{p+1}E_g
 -\frac{W_f}{q+1}-\frac{W_g}{p+1}
 \le(n-\sigma)\iint\Phi(x,y)\,d\pi(x,y).
 \label{vi:virial-identity}
\end{equation}
For $a=|x|^A$, $b=|x|^B$, this is an equality. The same conclusion
holds for dilations of the stated cutoff. In particular, the lemma
applies to the original $C^1$ coefficients and pointwise Euler bounds.
\end{lemma}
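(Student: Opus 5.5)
This is the special case of Lemma~\ref{r6:transport-virial} with $K=K_\sigma$, $F(t)=t^q$, $G(t)=t^p$. The plan is to verify its standing hypotheses and then rewrite the primitive terms using the power structure.

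\emph{Step 1: the kernel.} For the Riesz kernel, $k(r)=c_{n,\sigma}r^{-\lambda}$. So \eqref{r6:kernel-transport} holds with $c_0=C_0=c_{n,\sigma}$, and $\theta_K\equiv\lambda=n-\sigma$. We may take $\Lambda=\lambda$.

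\emph{Step 2: the responses.} The responses $t^q,t^p$ are positive, strictly increasing and locally Lipschitz on $(0,\infty)$. Their primitives are
\[
P_F(u)=\frac{u^{q+1}}{q+1},\qquad P_G(v)=\frac{v^{p+1}}{p+1}.
\]
Hence $bP_F(u)=fu/(q+1)$ and $aP_G(v)=gv/(p+1)$.

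\emph{Step 3: integrability of $\Phi$.} Since $X=x\eta$ is Lipschitz with compact support, $\Phi$ is bounded. It vanishes unless at least one endpoint lies in $B_2$. Integrating against $\pi$ and using the marginals \eqref{vi:marginals} bounds $\int|\Phi|\,d\pi$ by $C\int_{B_2}(fu+gv)$.

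This quantity is finite. On $B_1$ it is finite by the local energy hypothesis. On the compact annulus $\overline{B_2}\setminus B_1$, the functions $u,v,a,b$ are continuous and positive, so the integrand is bounded there.

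\emph{Step 4: the inequality.} Insert Step 2 into \eqref{r6:primitive-virial}, with $\theta_K\equiv n-\sigma$ pulled out of the double integral. The left side becomes
\[
\frac{1}{q+1}\int[(n+B)\eta-w]fu+\frac{1}{p+1}\int[(n+A)\eta-w]gv .
\]
By \eqref{vi:energies}, this is exactly the left side of \eqref{vi:virial-identity}. All terms are finite by Step 3, because $w$ is bounded and supported in $B_2$.

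\emph{Step 5: equality for pure powers.} The only inequality in the proof of Lemma~\ref{r6:transport-virial} enters when the measure lower bounds $D_Eb\ge Bb\,dx$ and $D_Ea\ge Aa\,dx$ are substituted into \eqref{vi:weighted-ibp}. For $b=|x|^B$ one computes
\[
\operatorname{div}(xb)=(n+B)b
\]
exactly off zero, so $D_Eb=Bb\,dx$, and likewise $D_Ea=Aa\,dx$. With these equalities, every subsequent step is an identity plus a limit. The origin errors \eqref{vi:origin-ibp-error} and \eqref{vi:origin-pair-error} tend to zero. The remaining terms converge by dominated convergence, using $\eta\chi_\rho\to\eta$ and integrable majorants $fu,gv$ on $B_2$. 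Hence equality holds.

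\emph{Step 6: dilated cutoffs and $C^1$ coefficients.} If $\eta$ is replaced by $\eta(\cdot/R)$, it remains nonnegative, radial and $C_c^2$, equal to one on $B_R$ and supported in $B_{2R}$. The only change is that Step 3 is run on $B_{2R}$, which is handled by the same local energy and continuity argument. For $C^1$ coefficients with pointwise bounds $\vartheta_a\ge A$ and $\vartheta_b\ge B$, the distributional bounds \eqref{r6:weak-euler} hold, since $\operatorname{div}(xa)=(n+\vartheta_a)a$ classically.

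\emph{Main obstacle.} Steps 1, 2, 4 and 6 are direct substitutions. The only point needing care is Step 5: one must check that the passage $\rho\downarrow0$ in the proof of Lemma~\ref{r6:transport-virial} preserves equality, rather than only the one-sided limit that positivity justified there. This uses integrable majorants for all terms, which come from local energy.
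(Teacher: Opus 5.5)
Your proposal is correct and follows the paper's own proof. Like the paper, you specialize Lemma~\ref{r6:transport-virial} with $P_F(t)=t^{q+1}/(q+1)$, $P_G(t)=t^{p+1}/(p+1)$ and $\theta_K\equiv n-\sigma$, and for pure powers you note that the Euler measure bounds are equalities, so the $\rho\downarrow0$ limits (dominated via local energy) yield an identity; your checks of $\Phi\in L^1(\pi)$, dilated cutoffs and $C^1$ coefficients just make explicit what the paper leaves implicit.
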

\begin{proof}
Use $P_F(t)=t^{q+1}/(q+1)$,
$P_G(t)=t^{p+1}/(p+1)$ and $\theta_K=n-\sigma$ in
Lemma~\ref{r6:transport-virial}. For the coefficient powers
$a=|x|^A$ and $b=|x|^B$, the measure lower bounds are equalities,
and every limiting estimate above removes an
error tending to zero, so the resulting formula is exact.
\end{proof}

\section{Interval estimates and source superlevel sets}
\label{sec:pr-pressure}\label{sec:pressure}

We adapt the interval pressure argument of
\cite{OpenAI2026,MeiWei2026} to normalized coefficient orbits.
The interval comparison, directional averaging and layer aggregation
are proved here, with the coefficient error isolated in
\eqref{pr:endpoint-error}. Section~\ref{gp:section} applies these
estimates to an adapted radial cutoff.
Throughout, $n\geq1$, $0<\sigma<n$, $\lambda=n-\sigma$, and
$K_\sigma(z)=c_{n,\sigma}|z|^{-\lambda}$, with $c_{n,\sigma}>0$.
The nonlinear and source assumptions are
\begin{equation}
 f=bu^q,\qquad g=av^p,\qquad u=K_\sigma*g,\qquad v=K_\sigma*f
 \quad\hbox{at every nonzero point},
 \label{eq:pr-potentials}
\end{equation}
with positive coefficients continuous away from zero, $p,q>0$, finite
positive $u,v$ continuous away from zero, and $f,g,fu,gv\in L^1_{\rm loc}$.
When required, a common $M\geq1$ supplies
\begin{equation}
 \int_{B_4}(f+g+u+v)\leq M,
 \label{eq:pr-source-input}
\end{equation}
\begin{equation}
 \int_{|x-y|\geq s}K_\sigma(x-y)(f(y)+g(y))\,dy
 \leq Ms^{-\lambda}\quad(x\in B_2,\ 0<s\leq1).
 \label{eq:pr-tail-input}
\end{equation}
\subsection{Interval comparison, including coincident lines}

For $\rho\geq0$ define, off its possible diagonal singularity,
\begin{equation}
 \mathcal K_\rho(s,s')
 =c_{n,\sigma}(\rho^2+(s-s')^2)^{-\lambda/2},\qquad
 \mathcal D_\rho(s,s')
 =\frac{(s-s')^2}{\rho^2+(s-s')^2}.
 \label{eq:pr-line-kernel}
\end{equation}
For $\rho=0$, put $\mathcal D_0=1$ off the diagonal. For bounded
$I=(s_-,s_+)$ let
$\mathcal K_{\rho,I}(s')=
[\mathcal K_\rho(s_-,s')+\mathcal K_\rho(s_+,s')]/2$, and define
$\mathcal K_{\rho,J}$ analogously.

\begin{lemma}
\label{lem:pr-interval}
\label{lem:pr-interval-one}
Assume $\lambda>0$, $\rho\geq0$, with the additional restriction
$\lambda<1$ if $\rho=0$. Let $I,J$ be nonempty open intervals, possibly
unbounded, and $a_I,a_J\geq0$, with each interval bounded whenever its
coefficient is positive. Then
\begin{equation}
 \begin{split}
 \int_{I\times J}(a_I+a_J)\mathcal K_\rho
 \leq\int_{I\times J}\bigl[&a_I\mathcal K_{\rho,I}
             +a_J\mathcal K_{\rho,J}\\
 &+(\lambda\min\{a_I,a_J\}\mathcal D_\rho
                         +|a_I-a_J|)\mathcal K_\rho\bigr].
 \end{split}
 \label{eq:pr-interval}
\end{equation}
% All terms are nonnegative extended integrals. A zero-coefficient endpoint
% term is defined as zero without evaluating its endpoints.
\end{lemma}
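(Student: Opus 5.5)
The plan is to reduce \eqref{eq:pr-interval} to a balanced inequality with equal coefficients, and then prove that inequality by a one-dimensional integration by parts in each variable separately. Write $a=\min\{a_I,a_J\}$, so that $a_I+a_J=2a+|a_I-a_J|$.

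\emph{Reduction.} The contribution $|a_I-a_J|\mathcal K_\rho$ on the left appears verbatim on the right. The endpoint terms $(a_I-a)\mathcal K_{\rho,I}$ and $(a_J-a)\mathcal K_{\rho,J}$ are nonnegative and may be discarded. If $a=0$, nothing remains to prove. This also covers every configuration with an unbounded interval, since an unbounded interval carries coefficient zero; zero-coefficient endpoint terms are simply set to zero. So we may take $a=1$ and $I,J$ bounded, and it suffices to show
\[
 2\int_{I\times J}\mathcal K_\rho\le\int_{I\times J}\bigl[\mathcal K_{\rho,I}+\mathcal K_{\rho,J}+\lambda\mathcal D_\rho\mathcal K_\rho\bigr].
\]

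\emph{Integration by parts.} Let $m_I,m_J$ be the midpoints and $\ell_I=|I|/2$. For fixed $s'$, with $h(s)=\mathcal K_\rho(s,s')$, use the midpoint formula
\[
 \int_I h\,ds=|I|\,\frac{h(s_-)+h(s_+)}2-\int_I(s-m_I)h'(s)\,ds .
\]
The kernel satisfies
\[
 \partial_s\mathcal K_\rho=-\lambda\,\frac{s-s'}{\rho^2+(s-s')^2}\,\mathcal K_\rho .
\]
Integrating over $s'\in J$, and doing the same in $s'$ for $J$, the two remainder terms add to
\[
 \lambda\int_{I\times J}\frac{(s-s')\bigl[(s-m_I)-(s'-m_J)\bigr]}{\rho^2+(s-s')^2}\,\mathcal K_\rho .
\]
Writing $(s-m_I)-(s'-m_J)=(s-s')-(m_I-m_J)$, this splits into two pieces. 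The first is exactly $\lambda\int\mathcal D_\rho\mathcal K_\rho$. The second is the cross term
\[
 -\lambda(m_I-m_J)\int_{I\times J}\frac{(s-s')\mathcal K_\rho}{\rho^2+(s-s')^2}
 =(m_I-m_J)\int_J\bigl[\mathcal K_\rho(s_+,s')-\mathcal K_\rho(s_-,s')\bigr]\,ds' ,
\]
again using $\partial_s\mathcal K_\rho$.

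\emph{Sign of the cross term.} This is the key point. The function $\Psi(t)=\int_J\mathcal K_\rho(t,s')\,ds'$ depends only on $|t-m_J|$ and is nonincreasing in it, because $J$ is symmetric about $m_J$ and the radial kernel is decreasing. We have $s_\pm-m_J=(m_I-m_J)\pm\ell_I$. Hence if $m_I\ge m_J$ then $|s_+-m_J|\ge|s_--m_J|$, so $\Psi(s_+)\le\Psi(s_-)$. The reverse holds when $m_I<m_J$. In either case the cross term is $\le0$, which gives the reduced inequality.

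\emph{Main obstacle: justification.} The technical work is justifying the integration by parts when $\rho=0$. There $\mathcal K_0$ is singular on the diagonal, the endpoint traces may be singular when $I,J$ touch or overlap, and $\mathcal D_0=1$. I would truncate the kernel to $\min\{\mathcal K_\rho,N\}$ or excise $|s-s'|<\varepsilon$, perform the computation, and pass to the limit by monotone convergence. The restriction $\lambda<1$ is exactly what makes $\int|s-s'|^{-\lambda}$, and the endpoint integrals $\int_J|s_\pm-s'|^{-\lambda}\,ds'$, locally finite. For $\rho>0$ everything is smooth and the computation is direct.
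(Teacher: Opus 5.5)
Your proposal is correct and follows the paper's proof: the same reduction to the balanced weight $\min\{a_I,a_J\}$, the same integration-by-parts identity $\int_{I\times J}(2\mathcal K_\rho-\mathcal K_{\rho,I}-\mathcal K_{\rho,J})=\lambda\int_{I\times J}\frac{h^2-(m_I-m_J)h}{\rho^2+h^2}\mathcal K_\rho$ with $h=s-s'$, and the same symmetric-decreasing sign argument for the cross term (you phrase it through the endpoint potential $\Psi$ of $J$, while the paper pairs $\pm h$ against the density $|I\cap(J+h)|$, which is symmetric and nonincreasing about $m_I-m_J$). For $\rho=0$ the paper avoids truncation or excision altogether: it applies the proved inequality with $\rho=r>0$ and lets $r\downarrow0$, using that $\mathcal K_r$, both endpoint averages and $\mathcal D_r\mathcal K_r$ increase to their $\rho=0$ limits, which is cleaner than truncating to $k_N=\min\{\mathcal K_0,N\}$, where your derivative identity survives only as the favorable inequality $-h\,k_N'(h)\le\lambda k_N(h)$.
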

\begin{proof}
If at most one coefficient is positive, the mismatch term already equals
the left integrand. Suppose both are positive, so $I,J$ are bounded.
First take $\rho>0$, for which all the following integrals are finite.
Let $m_I,m_J$ be the interval midpoints, $d_0=m_I-m_J$, and $h=s-s'$.
Integration by parts gives
$\int_I(\mathcal K_\rho-\mathcal K_{\rho,I})\,ds
=-\int_I(s-m_I)\partial_s\mathcal K_\rho\,ds$.
Adding the counterpart for $J$, using the two kernel derivatives, yields
\begin{equation}
 \int_{I\times J}(2\mathcal K_\rho-
               \mathcal K_{\rho,I}-\mathcal K_{\rho,J})
 =\lambda\int_{I\times J}
       \frac{h^2-d_0h}{\rho^2+h^2}\mathcal K_\rho.
 \label{eq:pr-interval-identity}
\end{equation}
The density of $h$ is $\varrho(h)=|I\cap(J+h)|$. It is a function
of $|h-d_0|$, nonincreasing in that variable, because it is the translated
convolution of two centered interval indicators. If $d_0\geq0$ and
$h>0$, then $\varrho(h)\geq\varrho(-h)$; pairing the two signs
against the odd function $h\mathcal K_\rho(h)/(\rho^2+h^2)$ gives
\[
 d_0\int_{I\times J}\frac{h}{\rho^2+h^2}\mathcal K_\rho\geq0.
\]
The case $d_0\leq0$ follows by reversing the variables. Hence the
right side of \eqref{eq:pr-interval-identity} is at most
$\lambda\int\mathcal D_\rho\mathcal K_\rho$.
Multiply by $\min\{a_I,a_J\}$. Each remaining defect is at most
$\int\mathcal K_\rho$, since its endpoint term is nonnegative;
the remaining coefficients sum to $|a_I-a_J|$. This proves the claim
for $\rho>0$.

For $\rho=0$ and $0<\lambda<1$, apply the proved inequality with
$\rho=r>0$ and let $r\downarrow0$. The kernel, both endpoint
averages, and $\mathcal D_r\mathcal K_r$ increase to their stated
limits almost everywhere. Monotone convergence proves the claim. The
condition $\lambda<1$ makes diagonal and endpoint singularities locally
integrable.
% no singular kernel is differentiated and no integrability
% of the line kernel at infinity is used.
\end{proof}

In the actual slicing below, $\rho>0$ occurs on distinct parallel lines
when $n\geq2$. In dimension one there is a single line and
$\rho=0$, with $\lambda=1-\sigma\in(0,1)$, so the same lemma applies.

\subsection{The exact directional normalizer}

Use normalized surface probability on $\mathbb S^{n-1}$; when $n=1$
this is the measure assigning probability $1/2$ to each of $e=\pm1$.
Write $\mathbb E_e,\mathbb P_e$ for expectation and probability.
For $n\geq2$, choose a fixed smooth $0\leq\psi\leq1$ that equals
one near zero and vanishes outside $[0,1]$, and put
\begin{equation}
 Z(r)=\mathbb E_e\psi(|e-\vartheta|/r),\qquad
 cr^{n-1}\leq Z(r)\leq Cr^{n-1},\qquad |Z'(r)|\leq Cr^{n-2}.
 \label{eq:pr-normalizer}
\end{equation}
Rotations make $Z$ independent of $\vartheta\in\mathbb S^{n-1}$.
The bounds follow in chordal polar coordinates: for $0<r\leq1$ the
spherical density is comparable to $s^{n-2}\,ds$ on $0<s<r$;
differentiation of $\psi(s/r)$ costs at most $Cs/r^2$.

\subsection{Measurability and integration over superlevel sets}

Let $0<\ell\leq1/4$ on $B_2$ be continuous, extended by zero
outside, and let $h_e\geq0$ be a jointly Borel measurable family of
continuous bounded weights vanishing outside $1<|x|<2$.
For a component $I(e,x,t)$ of the open superlevel set
$\{z\ne0:f(z)>t\}$ on $x+\mathbb Re$, put
\begin{equation}
 \chi_I=
 \begin{cases}
 \inf_Ih_e,&I\text{ bounded and }|I|\leq2\inf_I\ell,\\
 0,&\text{otherwise}.
 \end{cases}
 \label{pr:retention}
\end{equation}
Write $\chi_f(e,x,t)=\chi_{I(e,x,t)}$, and similarly $\chi_g$.
Each forward exit distance exceeds $a\geq0$ exactly when the compact
segment of length $a$ lies in the open superlevel set.  This is an open condition
in $(e,x,t)$, since that segment avoids zero and has a positive minimum
of $f-t$.  Both exit distances, their sum and finite endpoints are
therefore measurable.  Component infima are countable infima over
rational offsets between the exits.  This proves joint measurability
without enumerating components.  Zero-weight endpoint products are
zero without selecting an endpoint.

If $\chi_I>0$, continuity makes $h_e\geq\chi_I$ on its closure,
so the entire closed component lies in $1<|x|<2$.
Its endpoint source values are $t$, by maximality and continuity.
For a finite nondecreasing modulus $\Omega$ assume
\begin{equation}
 \begin{split}
 |\log a(x)-\log a(z)|&\leq H_a\Omega(|x-z|),\\
 |\log b(x)-\log b(z)|&\leq H_b\Omega(|x-z|)
 \end{split}
 \label{pr:annular-modulus}
\end{equation}
\label{ld:modulus}
within each annular component.  With
$u_I=[u(x_-)+u(x_+)]/2$ and $\tau_f=(t/b(x))^{1/q}$,
retention gives $|x_\pm-x|\leq2\ell(x)$ and hence
\begin{equation}
 |u_I-\tau_f|\leq C\Omega(2\ell(x))u(x),\qquad u_I\leq Cu(x).
 \label{pr:endpoint-error}
\end{equation}
Indeed the endpoint value is $(t/b(x_\pm))^{1/q}$ and
$e^s-1\leq se^s$; the exponential constant is bounded using
$H_b\Omega(1/2)/q$.  The analogous estimate holds for $g,p,a,v_J$.
In dimension one each retained interval stays within one sign branch.

For a fixed direction write
\begin{equation}
 \begin{split}
 H_f(e)&=\int h_efu,\qquad H_g(e)=\int h_egv,\\
 L_f(e)&=\int u(x)\int_0^{f(x)}(h_e(x)-\chi_f(e,x,t))\,dt\,dx,\\
 L_g(e)&=\int v(y)\int_0^{g(y)}(h_e(y)-\chi_g(e,y,t'))\,dt'\,dy.
 \end{split}
 \label{eq:pr-losses}
\end{equation}
They are finite, since the weight is bounded and the local energies
are integrable.  Define $D_e=((x-y)\cdot e)^2/|x-y|^2$ off null sets.

\begin{lemma}
\label{pr:layer-aggregation}
Under the preceding assumptions and \eqref{eq:pr-potentials},
\begin{equation}
 \begin{split}
 \frac{H_f(e)}{q+1}+\frac{H_g(e)}{p+1}
 \leq&\ \lambda\iint h_e(x)D_e(x,y)\,d\pi
      +\iint|h_e(x)-h_e(y)|\,d\pi\\
 &+2L_f(e)+2L_g(e)
      +C\int\Omega(2\ell)h_e(fu+gv).
 \end{split}
 \label{pr:aggregation-bound}
\end{equation}
Here $d\pi=f(x)g(y)K_\sigma(x-y)\,dx\,dy$; its marginals are
$fu$ and $gv$.  The constant depends on the coefficient modulus
constants and positive powers, not on absolute coefficient values.
\end{lemma}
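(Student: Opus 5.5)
We prove \eqref{pr:aggregation-bound} by slicing the interaction measure along lines parallel to $e$, writing the energies by the layer-cake formula, and applying Lemma~\ref{lem:pr-interval} to each pair of retained superlevel components.

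\emph{Layer-cake form.} Since $f(x)=\int_0^{f(x)}dt$, we have
\[
 H_f(e)=\int u(x)\int_0^{f(x)}h_e(x)\,dt\,dx
 =L_f(e)+\int u(x)\int_0^{f(x)}\chi_f(e,x,t)\,dt\,dx .
\]
In the last integral we insert $u=K_\sigma*g$ and write $g(y)=\int_0^{g(y)}dt'$. The retained part of $H_f$ becomes an integral over $(x,t,y,t')$ of $\chi_f(e,x,t)K_\sigma(x-y)$, restricted to $t<f(x)$ and $t'<g(y)$. The same is done for $H_g$.

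Write $x=\xi+se$ and $y=\xi'+s'e$ with $\xi,\xi'\in e^\perp$, and put $\rho=|\xi-\xi'|$. Then $K_\sigma(x-y)=\mathcal K_\rho(s,s')$ and $D_e=\mathcal D_\rho$. For fixed $(\xi,\xi',t,t')$ the superlevel sets split into components $I$ of $\{f>t\}$ on the line through $\xi$ and $J$ of $\{g>t'\}$ on the line through $\xi'$.

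\emph{Interval step.} Apply Lemma~\ref{lem:pr-interval} with $a_I=\chi_I$ and $a_J=\chi_J$; when $n=1$ we have $\rho=0$ and $\lambda<1$. Retained intervals are bounded, so the hypotheses hold.

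\emph{Recombining the right side.}
\begin{itemize}
\item The $\mathcal D_\rho$ term, after integrating back over levels, is bounded by $\lambda\iint h_e(x)D_e\,d\pi$, since $\min\{\chi_I,\chi_J\}\le h_e(x)$ on $I$.
\item For the mismatch term, we bound $|\chi_I-\chi_J|$ by $|h_e(x)-h_e(y)|$ plus the losses $h_e-\chi$ at each endpoint. This gives the $d\pi$-integral of $|h_e(x)-h_e(y)|$ plus at most $2L_f+2L_g$ after unfolding the layers; the factor $2$ accounts for the two losses appearing in both mismatch and left-side comparisons.
\item The endpoint terms $\int_I\chi_I\mathcal K_{\rho,I}$ integrate over $J$ and $t'$ into $\chi_I|I|\,u_I$ summed over retained $I$, by the potential identity at the endpoints $x_\pm$.
\end{itemize}

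\emph{Where the powers enter.} This last point is the key step. By \eqref{pr:endpoint-error}, $u_I=\tau_f+O(\Omega(2\ell(x))u(x))$ with $\tau_f=(t/b(x))^{1/q}$ for $x\in I$. Integrating $\chi_f(e,x,t)\,(t/b(x))^{1/q}$ over $t<f(x)$ gives
\[
 \frac{q}{q+1}\,b^{-1/q}f^{1+1/q}\chi\le\frac{q}{q+1}\,fu\,h_e ,
\]
since $u=(f/b)^{1/q}$. The error contributes $C\int\Omega(2\ell)h_e\,fu$.

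\emph{Conclusion.} The left side retains $\int u\int\chi f$, of the form $H_f-L_f$. Subtracting the endpoint contribution $\tfrac{q}{q+1}H_f$ leaves the coefficient $1-\tfrac{q}{q+1}=\tfrac1{q+1}$ on $H_f$, and likewise $\tfrac1{p+1}$ on $H_g$. The loss terms are absorbed into $2L_f+2L_g$.

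\emph{Main obstacle.} The delicate step is the bookkeeping when $I$ and $J$ are components at different levels on different lines. The coefficient $\chi$ of the $I$-component must be paired symmetrically so that each endpoint term reproduces exactly one potential value, and the replacement of $\chi_I$ by $h_e$ must cost only $L$ and the $|h_e(x)-h_e(y)|$ term. To handle this I would integrate $t'$ out first, so that the $J$-side becomes $v$ or $u$ by Tonelli, and verify measurability via the joint-measurability discussion preceding \eqref{pr:retention}.
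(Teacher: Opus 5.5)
Your proposal is correct and is essentially the paper's proof. The shared steps are slicing along $e$, applying Lemma~\ref{lem:pr-interval} to retained component pairs with $\min\{\chi_I,\chi_J\}\le h_e(x)$, the mismatch bound $|\chi_I-\chi_J|\le|h_e(x)-h_e(y)|+(h_e(x)-\chi_I)+(h_e(y)-\chi_J)$, reassembling the endpoint kernels into $u_I$ by the pointwise potential identity, and the layer-cake identity with $\tau_f=(t/b(x))^{1/q}$. The paper only packages your direct subtraction as the signed quantity $S_e$ plus a recovery expansion, and it states explicitly that every aggregate is finite before subtraction. Two bookkeeping points: apply $\chi_f\tau_f\le h_e\tau_f$ before integrating in $t$, since $\chi_f$ depends on $t$. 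Also, exactly one copy of $L_f+L_g$ comes from the mismatch term and the other from the retained left side $H_f-L_f$, which is what produces the factor $2$.
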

\begin{proof}
Fix $e,t,t'$.  In orthogonal coordinates $x=z+se$, $y=z'+s'e$,
apply Lemma~\ref{lem:pr-interval} to every component pair with
weights $\chi_I,\chi_J$.  For $n\geq2$ ignore the null coincident-line
set $z=z'$ and take $\rho=|z-z'|>0$.  For $n=1$, the transverse
space has one atom of mass one: retain it, take $\rho=0$ and use
$\lambda=1-\sigma<1$.  The absolute Jacobian is one in both cases.
Bound $\min(\chi_I,\chi_J)$ by $h_e(x)$.
Each superlevel set on a line has countably many components.  Their nonnegative
sums reassemble into jointly measurable retained fields and weighted
endpoint kernels, so Tonelli applies without measurable enumeration
or a global choice of a basis for varying $e^\perp$.

For a retained component $I$ with endpoints $x_-,x_+$, integrating
out the opposite source gives the nonnegative identity
\begin{equation}
 \begin{split}
 \int_{\mathbb R^n}g(y)\,
 \frac{K_\sigma(x_--y)+K_\sigma(x_+-y)}2\,dy
 &=\frac{u(x_-)+u(x_+)}2=u_I,\\
 \int\!\int_0^{f(x)}\chi_f(e,x,t)u_I\,dt\,dx
 &\le C\int h_e(x)f(x)u(x)\,dx=CH_f(e).
 \end{split}
 \label{r5:endpoint-reassembly}
\end{equation}
The first equality uses the pointwise potential identities at the
nonzero endpoints. The second line follows from
\eqref{pr:endpoint-error} and $0\le\chi_f\le h_e$.
Products with a zero retention weight are interpreted as zero, as above.
Tonelli applies before either expression is subtracted; the analogous
identities hold for $g,v_J,H_g$.
For $n\geq2$ an omitted affine line is null for full-dimensional
Lebesgue integration; for $n=1$ no positive-measure set is omitted.
The two left aggregates are bounded by $H_f,H_g$, and
\eqref{r5:endpoint-reassembly} proves finiteness of the endpoint
aggregates.
Pressure is bounded by $\lambda H_f$ and mismatch by $H_f+H_g$.
All are finite before subtraction, even if an unweighted line kernel
is not integrable at infinity.  Thus the signed quantity
\[
 S_e=\int\!\int_0^f\chi_f(u-u_I)\,dt\,dx
       +\int\!\int_0^g\chi_g(v-v_J)\,dt'\,dy
\]
obeys
\begin{equation}
 S_e\leq\lambda\iint h_e(x)D_e\,d\pi
       +\iint|h_e(x)-h_e(y)|\,d\pi+L_f+L_g.
 \label{eq:pr-signed-pressure}
\end{equation}
Here one uses
$|\chi_f-\chi_g|\leq|h_e(x)-h_e(y)|+
(h_e(x)-\chi_f)+(h_e(y)-\chi_g)$.
% There is no assertion that $S_e\geq0$ or $u_I\leq u$.

The layer-cake identities for the nonlinear terms are
\begin{equation}
 \int_0^{f(x)}(u-\tau_f)\,dt=\frac{fu}{q+1},\qquad
 \int_0^{g(y)}(v-\tau_g)\,dt'=\frac{gv}{p+1}.
 \label{eq:pr-layer-identities}
\end{equation}
They hold for every $p,q>0$.
The pointwise recovery expansion
$h_e(u-\tau_f)-\chi_f(u-u_I)
=(h_e-\chi_f)(u-\tau_f)+\chi_f(u_I-\tau_f)$,
$0\leq\tau_f\leq u$, and \eqref{pr:endpoint-error} give
\[
 H_f/(q+1)+H_g/(p+1)
 \leq S_e+L_f+L_g+C\int\Omega(2\ell)h_e(fu+gv).
\]
Combining this with \eqref{eq:pr-signed-pressure} proves the result,
including its two copies of the layer loss.
If $\mathbb E_eh_e=w$ with bounded compactly supported $w$, all
finite-aggregate and signed-average assertions are also absolutely
integrable in $e$, by the marginal bounds above.
\end{proof}

\begin{lemma}
\label{pr:polar-event}
If $x\in B_2$, $0<\ell(x)\leq1/4$, and
$\int_{B_4}f\leq M$, then
\begin{equation}
 \mathbb P_e\{|I(e,x,t)|>\ell(x)\}
       \leq CM/[t\ell(x)^n].
 \label{eq:pr-long-event}
\end{equation}
\end{lemma}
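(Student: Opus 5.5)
The estimate is a Chebyshev-type bound for the directional probability that the superlevel component of $\{f>t\}$ through $x$ along $x+\mathbb Re$ is long. The plan is to convert the directional event into a volume statement and then integrate over directions. If $|I(e,x,t)|>\ell(x)$, the component contains a segment of length at least $\ell(x)$ through $x$ in direction $e$ or $-e$. Hence there is a subsegment $\{x+se: s\in S_e\}$ with $S_e\subset[-\ell,\ell]$, $|S_e|\ge\ell/2$, on which $f>t$. We write $\ell=\ell(x)$. Since $\ell\le1/4$ and $x\in B_2$, these segments stay inside $B_{5/2}\subset B_4$. By continuity of $f$ off zero, the event is measurable in $e$, in line with the measurability discussion preceding the lemma.

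Next we average over directions in polar coordinates centred at $x$. For $n\ge2$,
\[
 \int_{B_\ell(x)}\mathbf 1_{\{f>t\}}(z)\,|z-x|^{1-n}\,dz
 = c_n\,\mathbb E_e\int_0^{\ell}\bigl[\mathbf 1_{\{f>t\}}(x+se)+\mathbf 1_{\{f>t\}}(x-se)\bigr]\,ds
\]
up to a dimensional constant. On the event, the inner integral is at least $\ell/2$. Therefore
\[
 \mathbb P_e\{|I|>\ell\}\le \frac{C}{\ell}\int_{B_\ell(x)}\mathbf 1_{\{f>t\}}(z)|z-x|^{1-n}\,dz .
\]
The weight $|z-x|^{1-n}$ is the obstacle: Chebyshev on $\int f$ alone gives $|\{f>t\}\cap B_\ell(x)|\le M/t$, but the singular weight cannot simply be bounded by $\ell^{1-n}$.

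To handle the singularity, we use that a long component is a full segment, not merely a set of positive one-dimensional measure. On the event, $f>t$ on the whole segment $x+[\ell/2,\ell]e$ or $x-[\ell/2,\ell]e$, since a component of length more than $\ell$ containing $x$ covers at least one of these two half-segments. Intersecting with the spherical shell, we get $\mathbf 1_{\{f>t\}}(x+se)=1$ for all $s\in[\ell/2,\ell]$ in the relevant direction. Integrating instead over the shell $\ell/2\le|z-x|\le\ell$, where $|z-x|^{1-n}\le C\ell^{1-n}$, gives
\[
 \mathbb P_e\{|I|>\ell\}\le \frac{C}{\ell}\cdot\ell^{1-n}\,\bigl|\{f>t\}\cap B_\ell(x)\bigr|
 \le \frac{C}{\ell^{n}}\cdot\frac1t\int_{B_4}f\le \frac{CM}{t\ell^n}.
\]

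For $n=1$ the directional measure is the two-point average. The event forces $f>t$ on an interval of length at least $\ell/2$ inside $B_4$, so Chebyshev gives $\ell/2\le M/t$, and the probability is at most $1\le 2M/(t\ell)$. The constant $C$ depends only on $n$, which is uniform as the lemma requires.
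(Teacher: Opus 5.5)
Your key step fails as stated. You claim that on the event, $f>t$ on all of $x+[\ell/2,\ell]e$ or all of $x-[\ell/2,\ell]e$, because a component of length more than $\ell$ containing $x$ covers one of these. It need not. The component $x+(-0.6\ell,0.6\ell)e$ has length $1.2\ell>\ell$ but reaches distance $\ell$ in neither direction, and beyond its endpoints $f$ may be $\le t$. For such directions the shell $\ell/2\le|z-x|\le\ell$ carries no lower bound, so your final inequality $\mathbb P_e\{|I|>\ell\}\le C\ell^{-n}|\{f>t\}\cap B_\ell(x)|$ is not justified. What is true is weaker: an interval containing $x$ of length $>\ell$ contains $x+[0,\ell/2]e$ or $x-[0,\ell/2]e$.

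The repair is immediate: use the shell $\ell/4\le|z-x|\le\ell/2$ instead. On the event, $\int_{\ell/4}^{\ell/2}[\mathbf 1_{\{f>t\}}(x+se)+\mathbf 1_{\{f>t\}}(x-se)]\,ds\ge\ell/4$. On that shell $|z-x|^{1-n}\le4^{n-1}\ell^{1-n}$. Your chain then gives $\mathbb P_e\{|I|>\ell\}\le C\ell^{-n}|\{f>t\}\cap B_{\ell/2}(x)|\le CM/(t\ell^n)$.

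The paper's argument is simpler still, since it never introduces the weight $|z-x|^{1-n}$. Let $\mathcal F$ be the event that the forward segment $x+[0,\ell/2]e$ lies in $\{f>t\}$. Ordinary polar coordinates about $x$ and $f\ge0$ give $\int_{B_{\ell/2}(x)}f\ge|\mathbb S^{n-1}|\,t\,(\ell/2)^n\,\mathbb P_e(\mathcal F)/n$, because the Jacobian $s^{n-1}$ integrates to $(\ell/2)^n/n$ over a full segment anchored at $x$. Antipodal symmetry and a union bound handle the backward event. The singular weight you worried about appears only because you integrated a set of positive one-dimensional measure rather than a segment starting at $x$. Once the shell is corrected, your route is essentially the paper's, and your $n=1$ argument is fine and matches the paper's interval mass bound.
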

\begin{proof}
A long component contains either the forward or backward segment
of length $\ell(x)/2$ from $x$.  Let $\mathcal F$ be the event that
the forward segment is in $f>t$.  Polar integration about $x$ gives
\[
 \int_{B_{\ell(x)/2}(x)}f
 \geq\frac{|\mathbb S^{n-1}|}{n}t(\ell(x)/2)^n
       \mathbb P_e(\mathcal F).
\]
Antipodal symmetry and the union of the two events prove the bound.
The formula is also valid on $\mathbb S^0$, with its two orientations
of probability $1/2$; it is just the necessary interval mass bound
$t\ell(x)/2\leq M$ in dimension one.
\end{proof}

\section{Cutoff estimates and pressure}
\label{gp:section}

We first prove localization and pressure estimates under explicit
conditions on a radial cutoff.  The interval and measurable
layer tools are proved in Section~\ref{sec:pr-pressure};
none of those tools invokes this theorem or a nonexistence result.
Throughout, $n\geq1$, $0<\sigma<n$, and $\lambda=n-\sigma$.

\subsection{Conditions on the radial cutoff}

Let $\eta\in C_c^2(\mathbb R^n)$ be radial and nonincreasing in
radius, equal to one on $B_1$, strictly positive on $B_2$, and zero
outside $B_2$.  Set
\begin{equation}
 d(x)=\min\{1,(2-|x|)_+\},\qquad
 w(x)=-x\cdot\nabla\eta(x),\qquad X(x)=x\eta(x).
 \label{gp:cutoff}
\end{equation}
Let $H\geq1$ belong to $C^1(B_2)$ and, with fixed constants, satisfy
\begin{equation}
 |\nabla H|\leq CH^2,\qquad |\nabla\eta|\leq C\eta H,
 \qquad |\nabla^2\eta|\leq C\eta H^2.
 \label{gp:derivatives}
\end{equation}
For some $c>0$, require $j=\eta^c$ to be globally Lipschitz, and
require the following uniform bounds and boundary limit:
\begin{equation}
 \begin{split}
 \sup_{B_2}jH^{n+1}&<\infty,\qquad
 \sup_{B_2}\eta^{1-c\lambda}H^n<\infty,\\
 \sup_{B_2}\eta^{1-cn}H^{n+1}&<\infty,\qquad
 \sup_{B_2}\eta^{1-c(\lambda+1)}<\infty,\\
 \eta(x)H(x)^{n+1}&\longrightarrow0\quad(|x|\uparrow2).
 \end{split}
 \label{gp:finite-budgets}
\end{equation}
Only the displayed bounds are required.
Several useful bounds are consequences rather than extra hypotheses:
$H^{-1}$ is Lipschitz on the convex ball $B_2$ by
\eqref{gp:derivatives}, $j\leq Cd$ since $j$ is Lipschitz and
zero outside $B_2$, and
\begin{equation}
 w\geq0,\quad w/H\leq C\eta,\quad
 |\nabla w|+|\nabla^2X|\leq C\eta H^2.
 \label{gp:derived-bounds}
\end{equation}

\begin{definition}
\label{gp:admissible-modulus}
A finite nondecreasing $\Omega:[0,4]\to[0,\infty)$, with
$\Omega(0)=0$ and $\Omega(s)\to0$ as $s\downarrow0$, is admissible
for $(n,\sigma)$ if some cutoff triple $(\eta,H,c)$ satisfying
\eqref{gp:cutoff}--\eqref{gp:finite-budgets} also satisfies
\begin{equation}
 R_\delta:=\sup_{1<|x|<2}
       \frac{w(x)}{\eta(x)}\Omega(2\delta j(x))<\infty,
 \qquad R_\delta\longrightarrow0\quad(\delta\downarrow0).
 \label{gp:endpoint-budget}
\end{equation}
\end{definition}

\begin{lemma}
\label{gp:geometry}
For a fixed sufficiently small $\delta_*>0$ and every
$0<\delta\leq\delta_*$, $\ell=\delta j$ obeys
$\ell\leq d/4$ and $\operatorname{Lip}(\ell)\leq1/4$.
If $|x-y|\leq\ell(z)$ for either endpoint $z\in B_2$, the joining
segment stays in $B_2$ and $d,\eta,H$ there are comparable to their
values at $z$, uniformly in $\delta\leq\delta_*$.
\end{lemma}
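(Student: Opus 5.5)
The plan is to use only two facts about $j=\eta^c$: it is globally Lipschitz, and it vanishes outside $B_2$. Let $L=\operatorname{Lip}(j)$. We already know $j\le Cd$, with $d=\min\{1,(2-|x|)_+\}$. First, $\ell=\delta j$ has $\operatorname{Lip}(\ell)=\delta L\le1/4$ once $\delta_*\le 1/(4L)$. Second, $\ell\le \delta C d\le d/4$ once $\delta_*\le 1/(4C)$. Together with $d\le1$, this also gives $\ell\le1/4$, which is the bound used in the pressure section.

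For the segment statement, take $z\in B_2$ and $|x-y|\le\ell(z)\le d(z)/4$, with $z\in\{x,y\}$. Every point $\xi$ on the segment satisfies $|\xi-z|\le d(z)/4$. Since $2-|z|\ge d(z)$, we get $2-|\xi|\ge \tfrac34 d(z)>0$, so the segment stays in $B_2$. Because $d$ is $1$-Lipschitz, $d(\xi)\in[\tfrac34 d(z),\tfrac54 d(z)]$. This gives comparability of $d$ with constants independent of $\delta$.

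For $H$, I would use that $H^{-1}$ is Lipschitz on $B_2$ (from $|\nabla H|\le CH^2$), with constant $C_1$. Then $|H(\xi)^{-1}-H(z)^{-1}|\le C_1|\xi-z|\le C_1\ell(z)=C_1\delta j(z)$. The needed input is $j\le C_2H^{-1}$, which follows from $\sup jH^{n+1}<\infty$ and $H\ge1$: $jH\le jH^{n+1}\le C$. Hence $|H(\xi)^{-1}-H(z)^{-1}|\le C_1C_2\delta H(z)^{-1}\le \tfrac12 H(z)^{-1}$ for $\delta_*$ small, so $H(\xi)/H(z)\in[2/3,2]$.

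For $\eta$, integrate $|\nabla\log\eta|\le CH$ along the segment, noting that $\eta>0$ on $B_2$ and the segment lies in $B_2$. By the $H$-comparability just shown, this gives $|\log\eta(\xi)-\log\eta(z)|\le 2CH(z)|\xi-z|\le 2CH(z)\delta j(z)\le C'\delta$, again using $jH\le C$. So $\eta(\xi)/\eta(z)\in[e^{-C'},e^{C'}]$ uniformly for $\delta\le\delta_*$.

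The step I expect to need the most care is the $H$ bound, since it needs $\ell\lesssim H^{-1}$ and not just $\ell\lesssim d$. The budget $\sup jH^{n+1}<\infty$ supplies exactly this. The $\eta$ bound then follows immediately.
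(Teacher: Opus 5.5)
Your proof is correct and follows the paper's argument. In both, $\ell\le d/4$ and $\operatorname{Lip}(\ell)\le 1/4$ come from $j\le Cd$ and the Lipschitz bound on $j$, and the $H$ comparison comes from the Lipschitz bound on $H^{-1}$ together with $jH\le\sup_{B_2}jH^{n+1}$.

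The only variation is for $\eta$. The paper uses $|j(\xi)-j(z)|\le\operatorname{Lip}(j)\,\delta j(z)$ directly to get $j(\xi)/j(z)=1+O(\delta)$, which does not depend on the $H$ step. You instead integrate $|\nabla\log\eta|\le CH$ along the segment using the $H$ comparison. Both arguments are valid.
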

\begin{proof}
The bounds $j\leq Cd$ and $\operatorname{Lip}(j)<\infty$ give the
first two assertions and the $d$ comparison.  Along a close segment,
\[
 |j(\xi)-j(z)|\leq\operatorname{Lip}(j)\delta j(z).
\]
Thus $j(\xi)/j(z)=1+O(\delta)$ and $\eta=j^{1/c}$ is comparable.
Further,
\[
 \frac{|H(\xi)^{-1}-H(z)^{-1}|}{H(z)^{-1}}
 \leq C\delta j(z)H(z)
 \leq C\delta\sup_{B_2}jH^{n+1}.
\]
After reducing $\delta_*$ this proves the $H$ comparison.
This proof works for polynomial, exponential and adaptive cutoffs;
it never infers exponential comparability from the distance $d$ alone.
\end{proof}

\subsection{Geometric localization and discarded layer bounds}

The next lemma contains the geometry shared by all response and kernel
classes. It involves no inverse response and no Euler inequality.

\begin{lemma}
\label{r7:geometric-budgets}
Let $K$ be a positive even kernel with
$K(z)\le C_K|z|^{-\lambda}$, $0<\lambda<n$. Suppose
$u=K*g$, $v=K*f$, where $f,g\ge0$ are locally integrable and
continuous off zero, and the energy densities $fu,gv$ are locally
integrable. Let the cutoff obey
\eqref{gp:cutoff}--\eqref{gp:finite-budgets}. If
$\int_{B_4}(f+g)\le M$ and the truncated-potential bound in
\eqref{r6:general-pressure-input} holds, then for every $\xi>0$,
\begin{equation}
 |E_f-E_g|+\iint|\Phi-\eta(x)+w(x)D|\,d\pi
       \le\xi E+C_\xi,
 \label{gp:localization-bound}
\end{equation}
where $d\pi=f(x)g(y)K(x-y)dxdy$ and the marginals and cutoff
energies have their previous meaning.
Write $T=\iint w(x)D(x,y)\,d\pi$.
There are normalized cap weights $h_e$ satisfying
\eqref{gp:cap-bounds}, with the oscillation bound
\eqref{gp:oscillation}. If also
$\int_{B_4}(u+v)\le M$, their discarded layer losses obey
\eqref{gp:layer-loss}, for the retention rule
\eqref{pr:retention} and level threshold \eqref{gp:threshold}.
All constants are independent of the pair.
\end{lemma}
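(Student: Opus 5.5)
The plan has three parts, taken in order: the energy difference and the localization error, then the cap weights and their oscillation, then the discarded layer losses. Throughout, $\xi>0$ is absorbed by splitting each error into a part near $\partial B_2$, where $\eta$ is small relative to the cutoff budgets, and a part at distance bounded below from $\partial B_2$, where all quantities are bounded by $M$ and the truncated-potential input.

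\emph{Energy difference.} Since $\pi$ has marginals $fu$ and $gv$,
\[
E_f-E_g=\iint[\eta(x)-\eta(y)]\,d\pi .
\]
I split $\pi$ according to whether $|x-y|\le\ell(x)$, with $\ell=\delta j$ as in Lemma~\ref{gp:geometry}.

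On the near part, Lemma~\ref{gp:geometry} gives comparability of $\eta,H$ along the segment. Together with $|\nabla\eta|\le C\eta H$ this yields
\[
|\eta(x)-\eta(y)|\le C\eta(x)H(x)|x-y| .
\]
Integrating $K(x-y)|x-y|$ against $g$ over $|x-y|\le\ell(x)$ gives $C\ell(x)^{1-\lambda+\dots}$ times a local source mass. This is the step where the truncated-potential bound \eqref{r6:general-pressure-input} enters. The resulting factor $\eta H\cdot\ell\,(\cdot)$ is controlled by $\sup jH^{n+1}<\infty$ and the boundary limit $\eta H^{n+1}\to0$. That control turns the near part into $\xi E+C_\xi$.

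On the far part, $|x-y|>\ell(x)$, the kernel is at most $C\ell^{-\lambda}$. The source masses are bounded by $M$, and the weights $\eta^{1-c\lambda}H^n$ are bounded by \eqref{gp:finite-budgets}. So the far part is a constant plus a small multiple of $E$.

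\emph{Localization error.} For $\Phi-\eta(x)+w(x)D$, I Taylor expand $X=x\eta$ at $x$:
\[
X(x)-X(y)=\eta(x)(x-y)+x\,\nabla\eta(x)\cdot(x-y)+O\bigl(|\nabla^2X|\,|x-y|^2\bigr).
\]
The second term, divided by $|x-y|^2$ and dotted with $x-y$, is exactly $-w(x)D(x,y)$, since $\eta$ is radial and $\nabla\eta$ is parallel to $\omega_x$. The remainder is $C\eta H^2|x-y|$ on the near region, by \eqref{gp:derived-bounds}. On the far region $\Phi$ is bounded by $\mathrm{Lip}(X)$, and the remainder is handled by the same near/far splitting, now using the $\eta^{1-cn}H^{n+1}$ budget.

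\emph{Cap weights.} I take
\[
h_e(x)=w(x)\,\psi\bigl(|e-\omega_x|/r(x)\bigr)/Z(r(x)),
\]
with the normalizer $Z$ of \eqref{eq:pr-normalizer} and an aperture $r$ comparable to $H^{-1}$ (or a fixed small constant). This gives $\mathbb E_eh_e=w$ and $D_e\approx D$ on the support, up to $O(r^2)$ errors; these are the cap bounds \eqref{gp:cap-bounds}. For the oscillation bound \eqref{gp:oscillation}, I would estimate
\[
\mathbb E_e|h_e(x)-h_e(y)|\le C\,\eta H^2|x-y|\quad\text{for close pairs},
\]
using $|Z'|\le Cr^{n-2}$, $|\nabla H|\le CH^2$, and $\nabla w$ from \eqref{gp:derived-bounds}. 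Far pairs are then treated as in the first step.

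\emph{Layer losses.} The losses come from retention failing, that is, components that are long or unbounded. I would bound these by Lemma~\ref{pr:polar-event}: above the threshold \eqref{gp:threshold}, the probability of a long component is at most $CM/(t\ell^n)$. Integrating in $t$ against $u$, and using $\int_{B_4}u\le M$, produces the layer-loss budget \eqref{gp:layer-loss}. The levels below the threshold contribute at most $h_e\cdot\text{threshold}\cdot u$, which is bounded by $M$.

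I expect the main obstacle to be the bookkeeping of powers of $H$ and $\eta$ near $\partial B_2$: making every error term fit one of the four suprema in \eqref{gp:finite-budgets}, with the vanishing boundary limit supplying the factor $\xi$.
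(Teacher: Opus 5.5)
Your architecture matches the paper's: a close/far split at the adaptive scale $\ell=\delta j$, a Taylor expansion of $X$ at $x$, caps of aperture about $1/H$ with the normalizer $Z$, the polar long-component event, and the threshold $t_0$. However, several steps fail as written.

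\emph{Localization, close pairs.} For $\lambda>1$ the quantity $\int_{|x-y|\le\ell(x)}K(x-y)|x-y|\,g(y)\,dy$ cannot be bounded by a power of $\ell$ times a source mass. The factor $|z|^{1-\lambda}$ is singular and $g$ is only locally integrable, so concentrating $g$ in a small ball keeps the mass fixed and makes the integral blow up. Also, the truncated-potential input only controls $|x-y|\ge s$, so it cannot enter here. Smallness does not come from the region near $\partial B_2$ either: $E$ is itself $\eta$-weighted, so small $\eta$ there gives no relative gain, and the boundary limit $\eta H^{n+1}\to0$ plays no role in localization. The working mechanism is pointwise. On close pairs, Lemma~\ref{gp:geometry} and \eqref{gp:derived-bounds} give $|\eta(x)-\eta(y)|+|\Phi-\eta(x)+w(x)D|\le C\eta(z)H(z)^2\ell(z)\le C\delta\,\eta(z)$, because $jH^2\le jH^{n+1}$ is bounded. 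The marginal identities then turn this into $C\delta E$. So $\delta$ plays the role of $\xi$, and the $y$-integration produces $u(x)$, not a source mass.

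\emph{Localization, far pairs.} Your bound $|\Phi|\le\operatorname{Lip}(X)$ loses the cutoff weight. Truncated-potential integration then produces $M\int_{B_2}f\,\ell^{-\lambda}$, which the data do not control because $\ell\to0$ at $|x|=2$. You need $|\Phi|\le(|X(x)|+|X(y)|)/|x-y|\le\sum_z 2\eta(z)/\ell(z)$, summed over endpoints $z\in B_2$. This gives the weight $\eta\ell^{-(\lambda+1)}=\delta^{-(\lambda+1)}\eta^{1-c(\lambda+1)}$, i.e.\ the budget $\sup\eta^{1-c(\lambda+1)}<\infty$, which you never invoke. The budget $\eta^{1-cn}H^{n+1}$ you cite belongs to the low-level layer estimate instead.

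\emph{Caps.} The aperture must be $\kappa/H$ with a free small $\kappa$. That is what makes the angular error $C\kappa E$, via $w/H\le C\eta$. A fixed aperture leaves an error proportional to $\int wfu$, which $E$ does not control, since only $w\le C\eta H$ holds.

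\emph{Layer losses.} You omit the loss on \emph{retained} short components, $h_e(x)-\inf_I h_e\le C\Lambda_1\delta\,\eta(x)$; this is the $\Lambda_1\delta E$ term of \eqref{gp:layer-loss}. You also need to show that $|I|\le\ell(x)$ implies the retention criterion $|I|\le 2\inf_I\ell$, which follows from $\operatorname{Lip}(\ell)\le1/4$. At high levels, the bound $\int_{B_4}u\le M$ is not what closes the estimate. You must use $1/t\le1/t_0$ in Lemma~\ref{pr:polar-event}, so that $t_0=Pj^{-n}H^n$ cancels both $\ell^{-n}=\delta^{-n}j^{-n}$ and the $H^n$ in $\sup_e h_e\le C\Lambda_0\eta H^n$. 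This leaves $CM\Lambda_0\delta^{-n}P^{-1}\eta fu$, an energy term made small by choosing $P$ large after $\delta$. Only the low levels use $\int_{B_4}u\le M$. There you must average with $\mathbb E_e h_e=w$ rather than use the cap supremum, so that $t_0 w\le CP\eta^{1-cn}H^{n+1}\le CP$ and the contribution is $CMP$, not $M$. The supremum would give $P\eta^{1-cn}H^{2n}$, which \eqref{gp:finite-budgets} does not bound.
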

\begin{proof}
\emph{Localization.}
Call a pair close when $|x-y|\leq\ell(z)$ for either endpoint
in $B_2$, and far otherwise.  These sets are measurable.
The close geometry, Taylor's formula and
$\nabla X=\eta\operatorname{Id}-w\omega_x\otimes\omega_x$ give
\[
 |\Phi-\eta(x)+w(x)D|+|\eta(x)-\eta(y)|
 \leq C\delta j(z)\eta(z)H(z)^2\leq C\delta\eta(z),
\]
since $H\geq1$ and $\sup jH^{n+1}<\infty$.
The marginals integrate this to $C\delta E$.
On far pairs the pointwise loss is bounded by
$C\sum_z[(\eta(z)+w(z))+\eta(z)/\ell(z)]$, summed over
the endpoints in $B_2$.  After truncated-potential integration the
weights are
\begin{equation}
 \begin{split}
 (\eta+w)\ell^{-\lambda}
 &\leq C\delta^{-\lambda}\eta^{1-c\lambda}H^n,\\
 \eta\ell^{-(\lambda+1)}
 &=\delta^{-(\lambda+1)}\eta^{1-c(\lambda+1)}.
 \end{split}
 \label{gp:far-localization}
\end{equation}
The cutoff bounds and source masses give a finite uniform
$C_\delta$.  Both localized marginals are finite, so
$E_f-E_g=\iint(\eta(x)-\eta(y))d\pi$.
This proves \eqref{gp:localization-bound} from
$C\delta E+C_\delta$.  This part uses neither an annular modulus
nor the potential $L^1$ input.

\medskip
\noindent\emph{Spherical cap weights and their oscillation.}
Fix $0<\kappa<1$.  For $n\geq2$, use the exact normalizer
\eqref{eq:pr-normalizer} and set
\begin{equation}
 \rho_\kappa=\kappa/H,\qquad
 h_e=w\frac{\psi(|e-\omega_x|/\rho_\kappa)}{Z(\rho_\kappa)}
       \quad(1<|x|<2),
 \label{gp:cap}
\end{equation}
extended by zero.  For $n=1$, set $h_e=w$.
Put $(\Lambda_0,\Lambda_1,\Theta)=(1,1,0)$ for $n=1$ and
$(\kappa^{-(n-1)},\kappa^{-n},\kappa)$ otherwise.
The normalizer and derivative assumptions give
\begin{equation}
 \mathbb E_eh_e=w,\quad
 h_e\leq C\Lambda_0\eta H^n,\quad
 |\nabla h_e|\leq C\Lambda_1\eta H^{n+1}.
 \label{gp:cap-bounds}
\end{equation}
For the last bound, profile differentiation costs
$C[H/\kappa+|\nabla H|/H]\leq CH/\kappa$ on its support,
and the normalizer's relative derivative costs at most $CH$.
The boundary limit in \eqref{gp:finite-budgets} gives a $C^1$
zero extension at $|x|=2$.  At $|x|=1$, the $C^2$ constant
inner cutoff gives $w=\nabla w=0$, while $H$ is locally bounded.
The profile is constant near its center, avoiding a chordal norm
singularity.  All fields are jointly measurable.

Close geometry and $\sup jH^{n+1}<\infty$ give cap oscillation
at most $C\Lambda_1\delta\eta(z)$.
On far pairs the endpoint factor is bounded by
$C\Lambda_0\delta^{-\lambda}\eta^{1-c\lambda}H^n$.
Thus
\begin{equation}
 \mathbb E_e\iint|h_e(x)-h_e(y)|d\pi
 \leq C\Lambda_1\delta E+CM^2\Lambda_0\delta^{-\lambda}.
 \label{gp:oscillation}
\end{equation}
On cap support $|D_e-D|\leq2\kappa/H$; exact normalization
and $w/H\leq C\eta$ imply
\begin{equation}
 \mathbb E_e\iint h_e(x)D_e(x,y)d\pi\leq T+C\Theta E.
 \label{gp:angular}
\end{equation}
For $n=1$ the factors coincide and this error vanishes.

\medskip
\noindent\emph{Errors in the superlevel-set decomposition.}\par
\noindent Use the retention \eqref{pr:retention} with $\ell=\delta j$.
If $|I|\leq\ell(x)$, then
$\inf_I\ell\geq3\ell(x)/4$ by its Lipschitz bound, so the
length criterion holds.  The gradient estimate and close geometry
bound the short-component loss by $C\Lambda_1\delta\eta(x)$,
including when its infimum weight is zero.
Long components obey Lemma~\ref{pr:polar-event}.
Split levels at
\begin{equation}
 t_0(x)=Pj(x)^{-n}H(x)^n=P\eta(x)^{-cn}H(x)^n,
       \qquad P\geq1.
 \label{gp:threshold}
\end{equation}
At a level $t\ge t_0(x)$, the cap bound and
Lemma~\ref{pr:polar-event} give
\begin{equation}
 \begin{split}
 \mathbb E_e\bigl[h_e(x)\mathbf1_{\{|I(e,x,t)|>\ell(x)\}}\bigr]
 &\le \bigl(\sup_eh_e(x)\bigr)
       \mathbb P_e\{|I(e,x,t)|>\ell(x)\}\\
 &\le C\Lambda_0\eta(x)H(x)^n
          \frac{M}{t\delta^n j(x)^n}\\
 &\le CM\Lambda_0\delta^{-n}P^{-1}\eta(x).
 \end{split}
 \label{r5:high-level-cancellation}
\end{equation}
The choice of $t_0$ cancels both $H^n$ and $j^{-n}$.
Integrating this bound over $t_0<t<f(x)$ and multiplying by $u(x)$
gives a contribution at most
$CM\Lambda_0\delta^{-n}P^{-1}\eta(x)f(x)u(x)$;
the other source is identical. 
% No independence of the cap and the
% long-component event is used.
At lower levels use normalization rather than the cap supremum:
\[
 t_0w\leq CP\eta^{1-cn}H^{n+1}\leq CP.
\]
The potential $L^1$ input integrates this to $CMP$.
Together these estimates yield
\begin{equation}
 \mathbb E_e(L_f+L_g)
 \leq C[\Lambda_1\delta+M\Lambda_0\delta^{-n}P^{-1}]E+CMP.
 \label{gp:layer-loss}
\end{equation}

\end{proof}

\subsection{A common pressure estimate for radial kernels and increasing nonlinearities}

The preceding argument has a form in which the kernel homogeneity and
power nonlinearities can both be separated from the geometric estimates.
For $0<\lambda<n$, let
\begin{equation}
 K(z)=k(|z|),\qquad
 c_0r^{-\lambda}\le k(r)\le C_0r^{-\lambda},\qquad
 0\le\theta_K(r):=-\frac{rk'(r)}{k(r)}\le\Lambda
 \quad(r>0),
 \label{r6:radial-kernel-class}
\end{equation}
where $k>0$ belongs to $C^1((0,\infty))$ and
$0<c_0\le C_0<\infty$, $\Lambda<\infty$.
The same kernel is used in the two potential identities.
Let $F,G\in C([0,\infty))$ be locally Lipschitz on $(0,\infty)$, vanish at zero,
be positive on $(0,\infty)$, and satisfy
\begin{equation}
 \frac{sF'(s)}{F(s)}\ge q_0>0,\qquad
 \frac{sG'(s)}{G(s)}\ge p_0>0\quad\hbox{for almost every }s>0.
 \label{r6:pressure-elasticity}
\end{equation}
Set $P_F(s)=\int_0^sF(t)\,dt$ and
$P_G(s)=\int_0^sG(t)\,dt$.
These assumptions make $F,G$ strictly increasing onto $[0,\infty)$.

\begin{proposition}
\label{r6:general-pressure}
Suppose \eqref{r6:radial-kernel-class} and
\eqref{r6:pressure-elasticity} hold. Let $a,b,u,v>0$ be continuous
on $\mathbb R^n\setminus\{0\}$, put
\[
 f=bF(u),\qquad g=aG(v),\qquad u=K*g,\qquad v=K*f,
\]
where both identities hold finitely at every nonzero point. Assume
$f,g,fu,gv\in L^1_{\rm loc}(\mathbb R^n)$ and, for a common $M\ge1$,
\begin{equation}
 \begin{split}
 \int_{B_4}(f+g+u+v)&\le M,\\
 \int_{|x-y|\ge s}K(x-y)(f(y)+g(y))\,dy
   &\le Ms^{-\lambda}\quad(x\in B_2,\ 0<s\le1).
 \end{split}
 \label{r6:general-pressure-input}
\end{equation}
Let $(\eta,H,c)$ satisfy
\eqref{gp:cutoff}--\eqref{gp:finite-budgets}, with the present
$\lambda$, and suppose \eqref{pr:annular-modulus} and
\eqref{gp:endpoint-budget} hold for the coefficient modulus.
Define $w,X,D,\Phi$ as before and set
\[
 d\pi=f(x)g(y)K(x-y)\,dx\,dy,\quad
 E_f=\int\eta fu,\quad E_g=\int\eta gv,\quad E=E_f+E_g,
\]
\[
 T_K=\iint w(x)\theta_K(|x-y|)D(x,y)\,d\pi(x,y),\qquad
 W_P=\int w\{bP_F(u)+aP_G(v)\}.
\]
For every $\xi>0$,
\begin{equation}
 \begin{split}
 |E_f-E_g|+
 \iint|\Phi-\eta(x)+w(x)D|\,d\pi&\le\xi E+C_\xi,\\
 W_P&\le T_K+\xi E+C_\xi.
 \end{split}
 \label{r6:general-pressure-estimates}
\end{equation}
% Constants depend only on the displayed kernel and elasticity bounds,
% the cutoff and modulus bounds, $n,\lambda,M,\xi$; they do not depend
% on absolute coefficient values or on the particular pair.
% The first estimate does not require the modulus condition or the
% $L^1$ bounds for $u,v$.
\end{proposition}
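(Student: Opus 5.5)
The first estimate in \eqref{r6:general-pressure-estimates} is exactly \eqref{gp:localization-bound} of Lemma~\ref{r7:geometric-budgets}. That lemma uses only $K\le C_0|z|^{-\lambda}$, evenness, the source masses and the truncated-potential bound, all of which are included in \eqref{r6:radial-kernel-class} and \eqref{r6:general-pressure-input}. So the work is the pressure bound $W_P\le T_K+\xi E+C_\xi$. I would redo the layer aggregation of Lemma~\ref{pr:layer-aggregation} with the general kernel and responses, average in $e$ with the cap weights $h_e$ of \eqref{gp:cap}, and absorb the errors using \eqref{gp:oscillation}, \eqref{gp:angular} and \eqref{gp:layer-loss}.

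\emph{Step 1: interval comparison for $k$.} Along parallel lines the line kernel is $\mathcal K_\rho(s,s')=k((\rho^2+(s-s')^2)^{1/2})$. Repeating the integration by parts of Lemma~\ref{lem:pr-interval}, the identity \eqref{eq:pr-interval-identity} becomes
\[
 \int_{I\times J}(2\mathcal K_\rho-\mathcal K_{\rho,I}-\mathcal K_{\rho,J})
 =\int_{I\times J}\theta_K\,\frac{h^2-d_0h}{\rho^2+h^2}\,\mathcal K_\rho .
\]
The sign argument for the $d_0h$ term needs only that $h\mapsto \theta_K\mathcal K_\rho\, h/(\rho^2+h^2)$ is odd and nonnegative for $h>0$, which holds since $\theta_K\ge0$. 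Hence the defect is at most $\int\theta_K\mathcal D_\rho\mathcal K_\rho$. The case $n=1$, $\rho=0$ follows by the same monotone limit, because $k\le C_0r^{-\lambda}$ with $\lambda<1$. The mismatch term is unchanged.

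\emph{Step 2: layer identities for general responses.} For $f=bF(u)$, the level $t$ corresponds to $\tau_f=F^{-1}(t/b(x))$. Then
\[
 \int_0^{f}(u-\tau_f)\,dt=b\Bigl(uF(u)-\int_0^uF^{-1}\text{-layers}\Bigr)=bP_F(u),
\]
since $\int_0^{F(u)}F^{-1}(s)\,ds=uF(u)-P_F(u)$. This replaces $fu/(q+1)$ by $bP_F(u)$. The endpoint error \eqref{pr:endpoint-error} becomes $|u_I-\tau_f|\le C\Omega(2\ell)u$ with $C$ depending on $q_0$. Indeed, the lower elasticity gives $F^{-1}(\lambda s)\le\lambda^{1/q_0}F^{-1}(s)$ for $\lambda\ge1$ (and the reverse for $\lambda\le1$), so a ratio $b(x_\pm)/b(x)=e^{O(\Omega)}$ perturbs $\tau$ by a factor $e^{O(\Omega)/q_0}$. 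No upper elasticity is needed.

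\emph{Step 3: assembly.} Aggregating as in Lemma~\ref{pr:layer-aggregation} gives, for each $e$,
\[
 \int h_e\{bP_F(u)+aP_G(v)\}\le\iint h_e(x)\theta_KD_e\,d\pi+\iint|h_e(x)-h_e(y)|\,d\pi+2(L_f+L_g)+C\int\Omega(2\ell)h_e(fu+gv).
\]
Taking $\mathbb E_e$ yields $W_P$ on the left. On the right, \eqref{gp:angular} holds with $\theta_K$ inserted at cost $C\Lambda\kappa E$, \eqref{gp:oscillation} holds, and \eqref{gp:layer-loss} holds. The modulus term is at most $CR_\delta E$ by \eqref{gp:endpoint-budget}, since $\mathbb E_eh_e=w$ and $\ell=\delta j$. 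Choosing in order $\kappa$ small, then $\delta$ small (making $\Lambda_1\delta$ and $R_\delta$ small), then $P$ large makes the total coefficient of $E$ at most $\xi$, and the remaining constants form $C_\xi$.

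\emph{Main obstacle.} The delicate point is the finiteness bookkeeping in Step 3 with a non-homogeneous kernel. One must check that the endpoint reassembly \eqref{r5:endpoint-reassembly} still holds, which uses the exact identity $u=K*g$ at the endpoints, and that all aggregates are finite before subtraction, using $\theta_K\le\Lambda$. The uniformity of the endpoint-error constant in Step 2, which depends only on $q_0,p_0$ and $H_a,H_b\Omega(1/2)$, also needs care.
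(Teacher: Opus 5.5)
Your proposal is correct and follows the paper's proof essentially step for step. The shared steps are: the localization bound taken from Lemma~\ref{r7:geometric-budgets}; the radial interval identity with $\theta_K$ in place of $\lambda$, with the same $\pm h$ pairing against $\varrho$; the $q_0^{-1}$-log-Lipschitz bound for $F^{-1}$ giving the endpoint error; the primitive layer identity; and cap averaging with the parameters chosen in the order cap, $\delta$, $P$.

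The one slip is in the $n=1$, $\rho\downarrow0$ limit. The pressure density $\theta_K\mathcal D_\rho\mathcal K_\rho$ need not be monotone in $\rho$, since $\theta_K$ can oscillate, for instance for the log-periodic kernels. You should therefore pass to the limit by dominated convergence on bounded interval pairs, with majorants $C_0|h|^{-\lambda}$ and $\Lambda C_0|h|^{-\lambda}$, as the paper does, rather than by the monotone limit used in the Riesz case.
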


\begin{proof}
The kernel is even. Its interaction measure therefore has the same
marginals $fu\,dx$ and $gv\,dy$ as in \eqref{vi:marginals}.
All localized energies and $T_K$ are finite, since $w$ is bounded,
compactly supported, and $0\le\theta_KD\le\Lambda$.
Also $0\le P_F(u)\le uF(u)$ and
$0\le P_G(v)\le vG(v)$, so $W_P$ is finite.

\emph{Localization.}
The first estimate is Lemma~\ref{r7:geometric-budgets}, applied to
the present common kernel and its marginals. Its proof requires no
coefficient modulus or response derivative.

\emph{The radial interval identity.}
On two parallel lines write
\[
 k_\rho(h)=k\bigl((\rho^2+h^2)^{1/2}\bigr),\qquad
 Q_\rho(h)=\theta_K\bigl((\rho^2+h^2)^{1/2}\bigr)
                  \frac{h^2}{\rho^2+h^2}.
\]
For bounded intervals $I,J$, let $d_0$ be their midpoint difference,
and let $k_{\rho,I},k_{\rho,J}$ denote the endpoint averages as
in Section~\ref{sec:pr-pressure}. For $\rho>0$, integration by
parts gives the exact identity
\begin{equation}
 \int_{I\times J}(2k_\rho-k_{\rho,I}-k_{\rho,J})
 =\int_{I\times J}Q_\rho k_\rho
       +d_0\int_{I\times J}k_\rho'(h).
 \label{r6:radial-interval-identity}
\end{equation}
The line kernel is even and nonincreasing in $|h|$.
Pairing $h$ and $-h$ against
$\varrho(h)=|I\cap(J+h)|$, which is nonincreasing in
$|h-d_0|$, gives $d_0\int k_\rho'(h)\le0$.
Multiplication by the smaller interval weight, followed by the
nonnegative endpoint bound for each remaining defect, proves
\eqref{eq:pr-interval} with $\lambda\mathcal D_\rho$ replaced
by $Q_\rho$. If only one interval weight is positive, the mismatch
term alone bounds the left side, as before.
For coincident lines in dimension one, $0<\lambda<1$.
Letting $\rho\downarrow0$ is justified on bounded interval pairs
by $k_\rho(h)\le C_0|h|^{-\lambda}$ and
$0\le Q_\rho k_\rho\le\Lambda C_0|h|^{-\lambda}$;
the endpoint averages have the same integrable one-dimensional
majorants. Thus dominated convergence gives the required inequality.
No monotonicity of $Q_\rho$ in $\rho$ is asserted.

\emph{Inverse endpoint values and the nonlinear layer identity.}
Use the retained components and weights $\chi_f,\chi_g$ in
\eqref{pr:retention}. Put
$\tau_f=F^{-1}(t/b(x))$ for $0<t<f(x)$.
The first inequality in \eqref{r6:pressure-elasticity} implies
\[
 |\log F^{-1}(r_1)-\log F^{-1}(r_2)|
       \le q_0^{-1}|\log r_1-\log r_2|\quad(r_1,r_2>0).
\]
At a retained endpoint $x_\pm$, the source value is $t$, so its
potential value is $F^{-1}(t/b(x_\pm))$.
Consequently \eqref{pr:annular-modulus} and
$|x_\pm-x|\le2\ell(x)$ give
\begin{equation}
 |u_I-\tau_f|\le C\Omega(2\ell(x))u(x),\qquad u_I\le Cu(x).
 \label{r6:inverse-endpoint}
\end{equation}
Here one uses $e^s-1\le se^s$ and
$\tau_f\le u(x)$; the exponential constant is bounded by
$H_b\Omega(1/2)/q_0$. The other source uses $G,p_0,H_a$.
Changing variables $t=b(x)F(s)$, or using the elementary area identity
for a strictly increasing function and its inverse, gives
\begin{equation}
 \int_0^{f(x)}(u(x)-\tau_f)\,dt=b(x)P_F(u(x)),\qquad
 \int_0^{g(y)}(v(y)-\tau_g)\,dt'=a(y)P_G(v(y)).
 \label{r6:primitive-layer}
\end{equation}

\emph{Finite layer aggregation.}
Apply \eqref{r6:radial-interval-identity} to every retained component
pair. The measurability argument, unit-Jacobian slicing and
nonnegative Tonelli reassembly in Lemma~\ref{pr:layer-aggregation}
are unchanged: they use continuity of the sources, not their power
form. In particular, integrating an endpoint kernel against the
opposite source gives exactly $u_I$ or $v_J$.
Estimate \eqref{r6:inverse-endpoint} bounds these endpoint aggregates
by $CH_f(e)$ and $CH_g(e)$ before subtraction. The new pressure
aggregate is bounded by $\Lambda H_f(e)$, and mismatch by
$H_f(e)+H_g(e)$. Hence all signed operations involve finite
quantities. With the losses from \eqref{eq:pr-losses}, the signed
pressure bound is
\[
 S_e\le\iint h_e(x)\theta_K(|x-y|)D_e\,d\pi
       +\iint|h_e(x)-h_e(y)|\,d\pi+L_f(e)+L_g(e).
\]
The exact recovery expansion used after
\eqref{eq:pr-layer-identities}, now combined with
\eqref{r6:inverse-endpoint}--\eqref{r6:primitive-layer}, yields
\begin{equation}
 \begin{split}
 \int h_e\{bP_F(u)+aP_G(v)\}
 \le{}&\iint h_e(x)\theta_K(|x-y|)D_e\,d\pi
       +\iint|h_e(x)-h_e(y)|\,d\pi\\
 &+2L_f(e)+2L_g(e)
       +C\int\Omega(2\ell)h_e(fu+gv).
 \end{split}
 \label{r6:primitive-aggregation}
\end{equation}
Products with zero retention weight are defined as zero, so no
endpoint value is required there.

\emph{Directional averaging and the error budget.}
Use exactly the cap weights \eqref{gp:cap}, with the same
$(\Lambda_0,\Lambda_1,\Theta)$ and exact average
$\mathbb E_eh_e=w$. Their derivative bounds, close/far oscillation
bound \eqref{gp:oscillation}, polar long-component event and
high/low level split \eqref{gp:threshold} are independent of both
the kernel derivative and the nonlinearities. The tail input and
marginals therefore prove \eqref{gp:layer-loss} unchanged.
Since $\theta_K(|x-y|)$ does not depend on $e$, cap support gives
\[
 \mathbb E_e\iint h_e(x)\theta_K(|x-y|)D_e\,d\pi
 \le T_K+C\Lambda\Theta E.
\]
The endpoint average in \eqref{r6:primitive-aggregation} is at most
$CR_\delta E$. Thus its averaged right side is bounded by
\[
 T_K+C[\Lambda\Theta+\Lambda_1\delta+R_\delta
       +M\Lambda_0\delta^{-n}/P]E
       +C[M^2\Lambda_0\delta^{-\lambda}+MP].
\]
First choose the cap parameter, then $\delta$, and finally a finite
$P$ to make each displayed energy coefficient small. This makes the energy
coefficient at most $\xi$ and proves the second line of
\eqref{r6:general-pressure-estimates}.
\end{proof}

\begin{corollary}
\label{gp:interface}\label{gp:localization}\label{gp:pressure}
For the Riesz kernel and power responses $F(s)=s^q$, $G(s)=s^p$,
the geometric input of Lemma~\ref{r7:geometric-budgets} gives
\eqref{gp:localization-bound}. Under all the additional hypotheses of
Proposition~\ref{r6:general-pressure}, including the coefficient
modulus \eqref{pr:annular-modulus}, endpoint budget
\eqref{gp:endpoint-budget} and full source/potential inputs,
\begin{equation}
 \frac{W_f}{q+1}+\frac{W_g}{p+1}
       \le\lambda T+\xi E+C_\xi.
 \label{gp:pressure-bound}
\end{equation}
Here $p,q>0$; their product need not exceed one.
\end{corollary}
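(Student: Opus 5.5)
This follows by specializing Proposition~\ref{r6:general-pressure}; no new estimate is required. For the Riesz kernel $K_\sigma$ we have $k(r)=c_{n,\sigma}r^{-\lambda}$, so \eqref{r6:radial-kernel-class} holds with $c_0=C_0=c_{n,\sigma}$ and $\theta_K\equiv\lambda$. Hence $\Lambda=\lambda$ is admissible. For $F(s)=s^q$ and $G(s)=s^p$ the logarithmic derivatives are exactly $q$ and $p$, so \eqref{r6:pressure-elasticity} holds with $q_0=q$ and $p_0=p$, for any $p,q>0$. The proposition never uses $pq>1$: that condition enters only through the source-mass lemmas, which are not invoked here.

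The localization statement \eqref{gp:localization-bound} is Lemma~\ref{r7:geometric-budgets}, applied with $K=K_\sigma$. It needs only $K\le C_K|z|^{-\lambda}$ (true with equality), the mass bound $\int_{B_4}(f+g)\le M$, the truncated-potential tail bound, and the cutoff conditions \eqref{gp:cutoff}--\eqref{gp:finite-budgets}. No modulus, response derivative or potential $L^1$ input is needed.

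For the pressure bound, apply the second line of \eqref{r6:general-pressure-estimates}. The primitives are $P_F(u)=u^{q+1}/(q+1)$ and $P_G(v)=v^{p+1}/(p+1)$, so
\[
 W_P=\int w\Bigl\{\frac{bu^{q+1}}{q+1}+\frac{av^{p+1}}{p+1}\Bigr\}
 =\frac{W_f}{q+1}+\frac{W_g}{p+1},
\]
since $fu=bu^{q+1}$ and $gv=av^{p+1}$. Because $\theta_K\equiv\lambda$ is constant, it factors out of $T_K$, giving $T_K=\lambda\iint wD\,d\pi=\lambda T$. Substituting these two identities yields \eqref{gp:pressure-bound}, with the same $\xi$ and $C_\xi$.

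The only point needing care is that every hypothesis of Proposition~\ref{r6:general-pressure} is actually available in the power setting. The inverse-endpoint estimate \eqref{r6:inverse-endpoint} reduces to \eqref{pr:endpoint-error}, with $\tau_f=(t/b(x))^{1/q}$ and exponential constant controlled by $H_b\Omega(1/2)/q$. The endpoint budget \eqref{gp:endpoint-budget} and the modulus \eqref{pr:annular-modulus} are assumed outright. Finally, the one-dimensional coincident-line case uses $\lambda=1-\sigma<1$, exactly as in the proposition.
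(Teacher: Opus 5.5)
Your proposal is correct and matches the paper's own proof: both take $\theta_K\equiv\lambda$ and $P_F(u)=u^{q+1}/(q+1)$, $P_G(v)=v^{p+1}/(p+1)$ in Proposition~\ref{r6:general-pressure}, which gives $W_P=W_f/(q+1)+W_g/(p+1)$ and $T_K=\lambda T$. The localization part is obtained directly from Lemma~\ref{r7:geometric-budgets}, as you say. Your extra verification of the hypotheses, including that $pq>1$ is never used, is accurate.
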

\begin{proof}
The primitives are $P_F(u)=u^{q+1}/(q+1)$ and
$P_G(v)=v^{p+1}/(p+1)$, while $\theta_K=\lambda$.
Substitute them in Proposition~\ref{r6:general-pressure}.
\end{proof}

\section{Radial cutoffs adapted to Dini moduli}
\label{gr18:section}\label{gd:section}

\subsection{A general cutoff construction}

The following inverse-function construction supplies the geometric
properties and uniform bounds needed for cutoffs adapted to a Dini modulus.

\begin{lemma}
\label{gr18:speed-realization}
Let $n\geq1$, $0<\sigma<n$, $\lambda=n-\sigma$, and
\[
 0<c<\min\{1/n,1/(\lambda+1)\}.
\]
Fix $S_0\geq1$.  Suppose $F>0$ is smooth on $[S_0,\infty)$,
and, with fixed constants, satisfies
\begin{equation}
 \begin{gathered}
 L:=\int_{S_0}^{\infty}\frac{dt}{F(t)}<\infty,\qquad
 |F'(t)|\leq C F(t),\\
 |F^{(j)}(t)|\leq C_j(1+t)^{M_j}
 \quad(t\geq S_0,\ j=0,1,\ldots),
 \end{gathered}
 \label{gr18:speed-hypotheses}
\end{equation}
where all $M_j$ are finite and nonnegative.
Define $S_o$ on $1\leq r<2$ by
\begin{equation}
 2-r=L^{-1}\int_{S_o(r)}^\infty\frac{dt}{F(t)},
 \qquad S_o(1)=S_0,\qquad S_o'=L F(S_o).
 \label{gr18:outer-coordinate}
\end{equation}
Let $\zeta$ be a fixed smooth nondecreasing function, zero for
$r\leq1$ and one for $r\geq3/2$.  Set $S=\zeta S_o$ on
$1<r<2$, extend $S=0$ to $r\leq1$, and take
\[
 \eta=e^{-S}\ \hbox{on }B_2,\qquad \eta=0\ \hbox{outside }B_2,
 \qquad H=1+S'(|x|)\ \hbox{on }B_2.
\]
Then $\eta$ and every positive power of $\eta$ are smooth with
compact support, $\eta=1$ on $B_1$, and $w=-x\cdot\nabla\eta\geq0$.
The derivative conditions \eqref{gp:derivatives}, the bounds
\eqref{gp:finite-budgets}, and the globally Lipschitz property of
$j=\eta^c$ all hold.  More strongly, for all $t>0$ and finite
$m,k\geq0$,
\begin{equation}
 \sup_{B_2}\eta^tH^m d^{-k}<\infty,\qquad
 \eta^tH^m\longrightarrow0\quad(|x|\uparrow2).
 \label{gr18:rapid-domination}
\end{equation}
The close-pair conclusions of Lemma~\ref{gp:geometry} consequently
hold with this cutoff triple.

If a finite nondecreasing modulus $\Omega:[0,4]\to[0,\infty)$,
with $\Omega(0)=0$ and $\Omega(s)\to0$, also satisfies
\begin{equation}
 F(t)\Omega(e^{-ct})\longrightarrow0\quad(t\to\infty),
 \label{gr18:speed-endpoint}
\end{equation}
then this cutoff proves its admissibility in
Definition~\ref{gp:admissible-modulus}.  
% Constants need not be uniform over different choices of $F$ or $\Omega$.
\end{lemma}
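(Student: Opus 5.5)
The proof will be a direct verification in the radial coordinate $r=|x|$, with the whole cutoff expressed through $S$. The first step is to study $S_o$. By \eqref{gr18:outer-coordinate}, $S_o$ is smooth and increasing on $[1,2)$, with $S_o(r)\to\infty$ as $r\uparrow2$. Inverting the relation gives
\[
 2-r=L^{-1}\int_{S_o}^\infty dt/F(t).
\]
Differentiating $S_o'=LF(S_o)$ repeatedly expresses each $S_o^{(j)}$ as a polynomial in $F(S_o),F'(S_o),\dots,F^{(j-1)}(S_o)$. The polynomial growth bounds in \eqref{gr18:speed-hypotheses} then give
\[
 |S^{(j)}|\le C_j(1+S)^{N_j}\quad\text{on }[3/2,2).
\]
On $[1,3/2]$ everything is smooth and bounded, and $S\equiv0$ near $r\le1$, so $\eta=1$ on $B_1$. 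Since $\zeta$ and $S_o$ are nondecreasing, $S'\ge0$, hence $w=r S'e^{-S}\ge0$.

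The second step, which drives every estimate, is a comparison showing that $e^{-tS}$ dominates any power of $H$ and $d^{-1}$. From $H=1+S'\le C(1+S)^{N_1}$ every power $H^m$ is polynomial in $S$. For $d^{-1}\sim(2-r)^{-1}$ near $r=2$, I will use $|F'|\le CF$, which gives $F(t)\ge F(S_0)e^{-C(t-S_0)}$. Then
\[
 2-r\ge L^{-1}\int_{S}^{S+1}dt/F(t)\ge c\,e^{-C'S}\Big/\max_{[S,S+1]}F\ge c\,e^{-C'S}(1+S)^{-M_0}.
\]
Consequently $d^{-k}\le Ce^{kC'S}(1+S)^{kM_0}$. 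This exponential is \emph{not} beaten by $e^{-tS}$ for small $t$, so this is the main obstacle. I will first try the opposite inequality $F(t)\le C(1+t)^{M_0}$, which gives
\[
 2-r\ge c\int_S^\infty(1+t)^{-M_0}\,dt .
\]
This lower bound is only polynomial in $S^{-1}$ when $M_0>1$. In that case $d^{-k}\le C(1+S)^{N}$ and $\eta^t=e^{-tS}$ dominates, which yields \eqref{gr18:rapid-domination} together with the limit at $|x|=2$. If $M_0\le1$ the integral $L$ diverges, so that case cannot occur. This settles the obstacle.

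With \eqref{gr18:rapid-domination} in hand, the remaining conditions are routine:
\begin{itemize}
 \item $\nabla\eta=-S'\eta\,\omega_x$ and $\nabla^2\eta=\eta\,(S'^2-S'')\,\omega_x\otimes\omega_x+\dots$, so $|\nabla\eta|\le C\eta H$ and $|\nabla^2\eta|\le C\eta H^2$. The second uses $|S''|=L|F'(S_o)|S_o'\le C S'^2$ on $[3/2,2)$, with boundedness elsewhere; the same relation gives $|\nabla H|\le CH^2$.
 \item Every positive power $\eta^t$ is smooth with compact support, since all its derivatives are $\eta^t$ times polynomials in $S^{(j)}$, and these tend to $0$ at $r=2$ by \eqref{gr18:rapid-domination}. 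In particular $j=\eta^c$ is globally Lipschitz.
 \item Each line of \eqref{gp:finite-budgets} is a supremum of $\eta^tH^m$ with $t>0$. The exponents are $c$, $1-c\lambda$, $1-cn$ and $1-c(\lambda+1)$, all positive by the choice of $c$, so all are finite and the boundary limit holds.
 \item Lemma~\ref{gp:geometry} then applies verbatim.
\end{itemize}

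For admissibility, $w/\eta=rS'\le 2LF(S)$ on $[3/2,2)$ and is bounded on $(1,3/2]$, while $2\delta j=2\delta e^{-cS}$. Monotonicity of $\Omega$ gives
\[
 \frac{w}{\eta}\,\Omega(2\delta j)\le 2L\,F(S)\,\Omega(e^{-c(S-s_\delta)}),\qquad s_\delta=c^{-1}\log(1/(2\delta)).
\]
Put $t=S-s_\delta$. By $|F'|\le CF$ we have $F(S)\le e^{Cs_\delta}F(t)$, which is not uniform in $\delta$, so I will split the supremum instead. On $\{S\le T\}$ the factor $F(S)$ is bounded and $\Omega(2\delta)\to0$. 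On $\{S>T\}$ I use $\Omega(2\delta e^{-cS})\le\Omega(e^{-cS})$ for $\delta\le1/2$, and \eqref{gr18:speed-endpoint} gives $F(S)\Omega(e^{-cS})\le\epsilon$ once $T$ is large. Choosing $T$ first and then $\delta$ gives $R_\delta\to0$, with $R_\delta<\infty$ following from the same bound.
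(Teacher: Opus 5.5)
Your proposal is correct and follows essentially the same route as the paper. Both arguments use polynomial-in-$S$ bounds for all radial derivatives of $S$ via $S_o'=LF(S_o)$, the boundary-distance bound $d\ge C^{-1}(1+S)^{1-M}$ from the polynomial upper bound on $F$ (the paper simply takes $M\ge2$, which your divergence remark justifies equally well), $|S''|\le CS'^2$ from $|F'|\le CF$, positivity of the required exponents from the choice of $c$, and the split into large $S$ and a compact region for $R_\delta$; the abandoned $s_\delta$ detour has a sign slip ($2\delta e^{-cS}=e^{-c(S+s_\delta)}$), but it plays no role in your final argument.
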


\begin{proof}
The positive finite tail integral in \eqref{gr18:outer-coordinate}
is strictly decreasing, so it defines a smooth increasing $S_o$,
with $S_o\to\infty$ as $r\uparrow2$.  Positivity of $S_o$ and
monotonicity of $\zeta$ give $S'\geq0$.  On the compact transition
annulus all derivatives are bounded, and flatness of $\zeta$ at
$r=1$ supplies smooth matching with the inner constant.

Choose $M\geq2$ so that $F(t)\leq C(1+t)^M$.  Near the outer
boundary, where $S=S_o$, the tail coordinate gives
\begin{equation}
 d=2-r\geq C^{-1}(1+S)^{1-M}.
 \label{gr18:boundary-distance}
\end{equation}
Repeated differentiation there applies the operator
$LF(S)\,d/dS$.  The polynomial bounds for every derivative of
$F$ therefore imply polynomial growth in $S$ of every radial
derivative of $S$.  All radial derivatives of $e^{-tS}$ are
bounded by $e^{-tS}$ times a polynomial in $S$, and tend to zero
at $r=2$.  Thus every positive power of $\eta$ has a smooth
zero extension.  The coordinate is constant near the origin,
so radial smoothness there causes no additional condition.
The same polynomial bounds and \eqref{gr18:boundary-distance}
prove \eqref{gr18:rapid-domination}.

On the outer annulus $H=1+LF(S)$ and
$H'=L^2F'(S)F(S)$, so $|H'|\leq C H^2$.
The compact transition satisfies the same inequality after increasing
$C$, since $H\geq1$.  Differentiating $\eta=e^{-S}$ gives
$|\nabla\eta|\leq\eta H$ and $|\nabla^2\eta|\leq C\eta H^2$.
The radial Hessian also contains $S'/r$; here $r\geq1$ whenever
$S'\ne0$.  All exponents of $\eta$ required in
\eqref{gp:finite-budgets} are positive by the chosen $c$,
so \eqref{gr18:rapid-domination} verifies them and the boundary
limit.  The smooth compact extension of $j$ is globally Lipschitz.
The close-pair result is exactly Lemma~\ref{gp:geometry}.

Finally, on the outer annulus $w/\eta=rLF(S)$.
For $0<\delta\leq1/2$, monotonicity of $\Omega$ gives
\[
 \frac{w}{\eta}\Omega(2\delta\eta^c)
 \leq 2L F(S)\Omega(e^{-cS}).
\]
Given $\varepsilon>0$, choose a fixed large $M_*$ so that this
bound is below $\varepsilon$ when $S\geq M_*$.
On the remaining compact region $w/\eta$ is bounded and
$\Omega(2\delta\eta^c)\leq\Omega(2\delta)\to0$.
The same split proves finiteness for every $0<\delta\leq1/2$
and the required uniform supremum convergence.
% No monotonicity of $F$ or differentiability of $\Omega$ was used.
\end{proof}

Multiplying the speed by a fixed positive constant does not change
these sufficient conditions.  An already prescribed outer coordinate
with $S'=\kappa F(S)$ therefore satisfies the same conclusions
whenever its finite tail coordinate ends at $r=2$; the proof uses
$\kappa$ in place of $L$.  This permits explicit cutoffs to keep
their own normalization and endpoint rates.  The lemma is not a
characterization of all cutoff triples satisfying the stated bounds.

\subsection{Realization and characterization of Dini moduli}

The cutoff conditions permit a profile adapted to the full modulus,
rather than to a prescribed power of a logarithm. The constants may depend on the full modulus.

\begin{theorem}
\label{gd:realization}\label{gd:admissible-characterization}
Let $n\geq1$, $0<\sigma<n$, and let $\Omega:[0,4]\to[0,\infty)$
be finite and nondecreasing, with $\Omega(0)=0$ and
$\Omega(s)\to0$ as $s\downarrow0$. Then $\Omega$ is admissible
in Definition~\ref{gp:admissible-modulus} if and only if
\begin{equation}
 \int_0^{1/2}\Omega(s)\,\frac{ds}{s}<\infty.
 \label{gd:dini}
\end{equation}
For every Dini modulus and any
$0<c<\min\{1/n,1/(n-\sigma+1)\}$, one can choose a smooth radial
cutoff $\eta=e^{-S}$ and $H=1+S'$ on $B_2$, with $S=0$ on $B_1$,
such that all cutoff bounds of Corollary~\ref{gp:interface} hold. Conversely, finiteness of
$R_\delta$ for just one $0<\delta\leq1/2$ in the radial cutoff class
already forces \eqref{gd:dini}.
\end{theorem}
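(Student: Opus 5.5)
The plan has two directions: sufficiency, by building a cutoff with Lemma~\ref{gr18:speed-realization}, and necessity, by a direct computation with an arbitrary radial cutoff in the class.

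\emph{Sufficiency.} Given a Dini modulus $\Omega$ and an admissible $c$, I will choose a speed $F$ satisfying \eqref{gr18:speed-hypotheses} and \eqref{gr18:speed-endpoint}. Put $\omega(t)=\Omega(e^{-ct})$ for $t\ge S_0$. Then $\omega$ is nonincreasing, tends to zero, and
\[
 \int_{S_0}^\infty\omega(t)\,dt=c^{-1}\int_0^{e^{-cS_0}}\Omega(s)\,\frac{ds}{s}<\infty .
\]
The condition $F\omega\to0$ together with $\int dt/F<\infty$ asks for $1/F$ to be integrable but much larger than $\omega$.

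\begin{enumerate}[label=(\alph*)]
\item By the standard construction, pick a nonincreasing $\tilde\omega\ge\omega$ with $\tilde\omega/\omega\to\infty$ where $\omega>0$ and $\int\tilde\omega<\infty$. One choice is $\tilde\omega=\omega/\sqrt{\int_t^\infty\omega}$, after replacing $\omega$ by a majorant that is strictly positive with integrable tail, for instance $\omega+e^{-t}$.
\item Mollify $\log\tilde\omega$ on unit scales and add a small exponential floor, say $e^{-t}$, so that $\log\tilde\omega$ has bounded derivative. Since $\tilde\omega$ is nonincreasing, the averages are still comparable to $\tilde\omega$ at shifted points.
\item Set $F=1/\tilde\omega$, suitably capped. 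The bound $|F'|\le CF$ comes from step (b). Every derivative $F^{(j)}$ is then controlled by $F$ times constants.
\end{enumerate}

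The polynomial bounds $|F^{(j)}|\le C_j(1+t)^{M_j}$ can fail if $F$ grows exponentially. I therefore replace $F$ by $\min\{F,(1+t)^2\}$, smoothed. This keeps $\int dt/F<\infty$ and still gives $F\omega\to0$, because $(1+t)^2\omega(t)\to0$ is not guaranteed, so I need more care here. Since $\omega$ is monotone and integrable, $t\,\omega(t)\to0$. So I cap at a polynomial $(1+t)^{1+\epsilon}$ and take the minimum of the two constraints. Making these choices compatible is the main obstacle: the cap should be the smaller of $1/\tilde\omega$ and a power chosen so that $F\omega\to0$ still holds. Lemma~\ref{gr18:speed-realization} then yields the cutoff $\eta=e^{-S}$, $H=1+S'$, together with all bounds of Corollary~\ref{gp:interface} and admissibility.

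\emph{Necessity.} Take any admissible radial triple with finite $R_\delta$ for a single $\delta$. The finite-budget and Lipschitz conditions give $j=\eta^c\le Cd$. Write $\eta=e^{-S}$ near $|x|=2$. Finiteness of $R_\delta$ gives
\[
 S'(r)\,\Omega\bigl(2\delta e^{-cS(r)}\bigr)\le C .
\]
Integrate $dr$ over $r\in(r_1,2)$ and change variables to $S$, which increases to $\infty$ because $\eta\to0$. This yields
\[
 \int_{S_1}^\infty\Omega\bigl(2\delta e^{-cS}\bigr)\,dS\le C(2-r_1)<\infty .
\]
The substitution $s=2\delta e^{-cS}$ converts this into \eqref{gd:dini}. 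A monotonicity argument handles radial cutoffs in which $S$ is not strictly increasing, since $dS\le S'\,dr$ holds as measures.
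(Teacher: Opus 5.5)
Your necessity argument is the paper's: use $w/\eta=rS'$, integrate $S'\Omega(2\delta e^{-cS})\le R_\delta/r$, and substitute along the nondecreasing $S$. It is fine.

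\textbf{The gap is in step (b) of the sufficiency part.} Mollifying $\log\tilde\omega$ at unit scale bounds the resulting logarithmic derivative only by the oscillation of $\log\tilde\omega$ on intervals of length about two. Averaging $\tilde\omega$ itself has the same defect. For a nonincreasing integrable $\omega$ this oscillation can be unbounded, even above your polynomial floor.

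Here is a concrete instance. Given your cap exponent $\beta>1$, take $1<\gamma<\beta$, $t_{j+1}=t_j^{\gamma}$ with $t_0>1$, and $\omega=t_j^{-\beta}$ on $[t_j,t_{j+1})$.
\begin{itemize}
\item This $\omega$ is nonincreasing with $\int\omega\le C+\sum_j t_j^{\gamma-\beta}<\infty$, so it comes from a Dini modulus.
\item At $t_{j+1}$ it drops by the factor $(t_{j+1}/t_j)^{\beta}\to\infty$ and lands just above the floor $(1+t)^{-\beta}$.
\item Your $\tilde\omega$ drops by the same factor and stays above the floor.
\end{itemize}
So the smoothed $F$ has $|F'|/F$ unbounded. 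Lemma~\ref{gr18:speed-realization} genuinely needs $|F'|\le CF$, because that is what yields $|\nabla H|\le CH^2$ in \eqref{gp:derivatives}.

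There is a second problem with step (a). Your explicit $\tilde\omega=\omega_1/(\int_t^\infty\omega_1)^{1/2}$ is not nonincreasing in general. Take $\omega$ constant on a long interval and zero afterwards: $\tilde\omega$ then increases towards the end of that interval. So the comparability at shifted points that you invoke is not available either.

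\textbf{The missing idea} is a smoothing that turns a one-sided comparability into a logarithmic Lipschitz bound. The paper proceeds as follows.
\begin{itemize}
\item Set $m(t)=\Omega(e^{-ct})$ and choose $T_k$ with $\int_{T_k}^\infty m\le4^{-k}$.
\item Put $\rho=1+\sum_k\psi(t-T_k)$. Then $\rho$ is increasing, $\rho\to\infty$, $0\le\rho'\le C$, and $\int\rho m<\infty$.
\item Let $\varphi=\rho m+(1+t_+)^{-2}$ and form the one-sided exponential convolution $r_1(t)=\int_0^\infty e^{-u}\varphi(t-u)\,du$.
\item Since $m$ and the floor are nonincreasing and $\rho$ is log-Lipschitz, $\varphi(t-u)\ge a\varphi(t)$ for $0\le u\le1$. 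Hence $r_1\ge a(1-e^{-1})\varphi$.
\item The identity $r_1'=\varphi-r_1$ then gives $|r_1'|\le Cr_1$. After each drop $r_1$ decays only exponentially, at an integrable cost.
\item A unit mollification gives $|r_2^{(k)}|\le C_kr_2$. Then $F=1/r_2$ satisfies $F\le C(1+t)^2$, $|F^{(k)}|\le C_kF$, $\int dt/F<\infty$ and $Fm\le C/\rho\to0$.
\end{itemize}

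Finally, the conflict you call the main obstacle does not exist. Capping $F$ by a polynomial only decreases $F$, so $F\omega\to0$ is automatic. The cap only has to keep $1/F$ integrable, i.e.\ have exponent larger than one.
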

\begin{proof}
\emph{An integrable slowly weighted majorant.}
Set $m(t)=\Omega(e^{-ct})$ for $t\geq0$ and extend it by the
constant $\Omega(1)$ for $t<0$. Thus $m$ is nonincreasing and
integrable on $[0,\infty)$. Choose $T_k\uparrow\infty$ with
$T_1\geq2$, $T_{k+1}\geq T_k+2$, and
$\int_{T_k}^{\infty}m\leq4^{-k}$.
Let $\psi$ be a smooth nondecreasing step, zero on $(-\infty,0]$
and one on $[1,\infty)$, and put
\[
 \rho(t)=1+\sum_{k\geq1}\psi(t-T_k).
\]
This sum is locally finite; $\rho\geq1$, $\rho\to\infty$, and
$0\leq\rho'\leq C$. In particular, $|\log\rho|'\leq C$.
Tonelli gives $\int_0^{\infty}\rho m<\infty$.
With $t_+=\max(t,0)$, define
\[
 b(t)=(1+t_+)^{-2},\qquad \varphi(t)=\rho(t)m(t)+b(t).
\]
The function $\varphi$ is positive, integrable on the positive half-line,
and constant on the negative half-line. 
% No continuity of $m$ is needed.

\medskip
\noindent\emph{Positive smoothing with relative derivative bounds.}
Consider the one-sided convolution
\[
 r_1(t)=\int_0^{\infty}e^{-u}\varphi(t-u)\,du.
\]
For $0\leq u\leq1$, monotonicity of $m,b$ and log-Lipschitz
control of $\rho$ imply $\varphi(t-u)\geq a\varphi(t)$ with
a fixed $a>0$. Hence $r_1\geq a(1-e^{-1})\varphi$.
The weak identity $r_1'=\varphi-r_1$ therefore gives
$|r_1'|\leq C r_1$ almost everywhere. The function $r_1$ is
locally absolutely continuous and log-Lipschitz. It is integrable on
$[0,\infty)$: the negative-half-line contribution decays exponentially,
and Tonelli bounds the remaining contribution by $\int_0^{\infty}\varphi$.

Choose a nonnegative smooth mollifier $\kappa$ of integral one,
supported in $[-1,1]$, and set $r_2=\kappa*r_1$. Shifted comparability
of $r_1$ shows
\begin{equation}
 C^{-1}r_1\leq r_2\leq Cr_1,
 \qquad |r_2^{(k)}|\leq C_k r_2\quad(k\geq1).
 \label{gd:smoothing}
\end{equation}
For the derivative bound differentiate $\kappa$, not $m$.
Consequently $r_2>0$ is smooth and integrable on the positive half-line,
and
\[
 r_2(t)\geq C^{-1}\rho(t)m(t),\qquad
 r_2(t)\geq C^{-1}(1+t)^{-2}\quad(t\geq0).
\]
An integrable positive log-Lipschitz function tends to zero at infinity:
otherwise its values bounded below on an infinite separated sequence
would give disjoint intervals with a fixed positive mass. Thus, with
$F=1/r_2$,
\begin{equation}
 \begin{split}
 &F(t)\to\infty,\qquad F(t)\leq C(1+t)^2,
 \qquad |F^{(k)}(t)|\leq C_kF(t),\\
 &\int_1^{\infty}\frac{dt}{F(t)}<\infty,
 \qquad F(t)\Omega(e^{-ct})\leq C/\rho(t)\to0.
 \end{split}
 \label{gd:speed}
\end{equation}
Reciprocal differentiation and \eqref{gd:smoothing} prove the derivative
bounds.
% monotonicity of $F$ is neither asserted nor required.

\medskip
\noindent\emph{Construction of the cutoff.}
The speed in \eqref{gd:speed} satisfies
\eqref{gr18:speed-hypotheses} and \eqref{gr18:speed-endpoint}.
Lemma~\ref{gr18:speed-realization}, with any fixed $S_0\geq1$,
therefore supplies the required smooth cutoff, all uniform bounds and
the uniform endpoint convergence. 
% No second cutoff construction or
% boundary-distance argument is needed.

\medskip
\noindent\emph{Necessity within the radial cutoff class.}
For necessity take an admissible radial cutoff and set
$S(r)=-\log\eta(r)$ for $1\leq r<2$.
Then $S$ is $C^1$ and nondecreasing, $S(1)=0$,
$S(r)\to\infty$, and $w/\eta=rS'$.
The endpoint bound gives
\[
 S'(r)\Omega(2\delta e^{-cS(r)})\leq R_\delta/r.
\]
Integrating and changing variables along the nondecreasing $S$ yields
\[
 \frac1c\int_0^{2\delta}\Omega(s)\,\frac{ds}{s}
 =\int_1^2 S'(r)\Omega(2\delta e^{-cS(r)})\,dr
 \leq R_\delta\log2<\infty.
\]
The monotone substitution permits flat intervals of $S$ and jumps
of $\Omega$; it follows equally by composing the locally absolutely
continuous primitive of $t\mapsto\Omega(2\delta e^{-ct})$ with $S$
on $[1,b]$, $b<2$, and then letting $b\uparrow2$.
The remaining compact part of the Dini integral is finite because
$\Omega$ is finite. This proves necessity within the stated cutoff class.
\end{proof}

This equivalence characterizes the radial cutoff conditions used in
this proof. It does not assert necessity of the Dini condition for
Lane--Emden nonexistence. A non-Dini modulus may require a different
pressure estimate. Quantitative control uniform over a
family of Dini moduli additionally requires control of their tails and
of the choices $T_k$ in the realization.

\section{Response normalization and proof of the master theorem}
\label{r6:nonlinear-section}\label{r7:master-proof-section}

Write $A=A_-$, $B=B_-$, and $\lambda=n-\sigma$ throughout
this section. The response primitives are denoted by $P_F,P_G$.
Only their lower ratios and the lower logarithmic response
derivatives are used. In particular, no upper response derivative
bound will enter the uniform estimates.

\subsection{Response bounds and necessary source exponents}

Since $F$ is positive and locally Lipschitz, $\log F$ is absolutely
continuous on every compact positive interval. Integrating the
almost everywhere lower bound in \eqref{r7:response-assumptions}
gives $F(sz)/F(s)\ge z^{q_0}$ for $z\ge1$. In particular, $F$
is strictly increasing onto $[0,\infty)$. The same holds for $G$.
For $0<z\le1$ the reversed estimate gives
$F(sz)/F(s)\le z^{q_0}$, so
\begin{equation}
 0<\mu_F\le\frac{P_F(s)}{sF(s)}\le\frac1{q_0+1},\qquad
 0<\mu_G\le\frac{P_G(s)}{sG(s)}\le\frac1{p_0+1}.
 \label{r7:primitive-ratio-bounds}
\end{equation}
Define the finite effective response exponents
\begin{equation}
 q_*:=\mu_F^{-1}-1\ge q_0,\qquad
 p_*:=\mu_G^{-1}-1\ge p_0.
 \label{r7:effective-exponents}
\end{equation}
The primitive inequality implies
$sP_F'(s)/P_F(s)\le1/\mu_F$. Integrating it from $sz$ to $s$
for $0<z\le1$, and then using monotonicity of $F$, gives
\[
 F(sz)\ge\frac{P_F(sz)}{sz}
 \ge z^{1/\mu_F-1}\frac{P_F(s)}s
 \ge\mu_F z^{q_*}F(s).
\]
Integrating the same primitive inequality from $s$ to $sz$ for
$z\ge1$ gives the corresponding upper estimate. Thus
\begin{equation}
 \begin{array}{ll}
 \mu_F z^{q_*}\le F(sz)/F(s)\le z^{q_0},&0<z\le1,\\[2pt]
 z^{q_0}\le F(sz)/F(s)\le\mu_F^{-1}z^{q_*},&z\ge1.
 \end{array}
 \label{r6:response-bounds}
\end{equation}
There are identical bounds for $G,p_0,p_*,\mu_G$.
All these constants are independent of $s>0$.
For the normalized responses
\begin{equation}
 F_s(z)=F(sz)/F(s),\qquad G_t(z)=G(tz)/G(t),
 \label{r6:normalized-responses}
\end{equation}
the lower logarithmic derivative bounds are unchanged. Their
primitive ratios are also preserved, since
\begin{equation}
 \frac{P_{F_s}(z)}{zF_s(z)}
 =\frac{P_F(sz)}{szF(sz)},\qquad
 \frac{P_{G_t}(z)}{zG_t(z)}
 =\frac{P_G(tz)}{tzG(tz)}.
 \label{r7:primitive-ratio-invariance}
\end{equation}

\begin{lemma}
\label{r7:source-parameters}
Suppose a positive pair satisfies the finite potential identities
in Theorem~\ref{r7:master-front}. Put $f=bF(u)$, $g=aG(v)$,
and use \eqref{r7:effective-exponents}. If
$\Gamma_P>0$, then
\begin{equation}
 \begin{split}
 \alpha_*&:=\frac{\sigma(p_*+1)+A+p_*B}{p_*q_*-1},\\
 \beta_*&:=\frac{\sigma(q_*+1)+B+q_*A}{p_*q_*-1}
 \end{split}
 \label{r7:source-exponents}
\end{equation}
satisfy
\begin{equation}
 0<\alpha_*,\beta_*\le\lambda,\qquad
 \alpha_*+\beta_*>\lambda,\qquad n+A>0,\quad n+B>0.
 \label{r7:forced-positivity}
\end{equation}
Moreover, if $x_0=\sigma+A$, $y_0=\sigma+B$, then
\begin{equation}
 x_0+Py_0>0,\qquad y_0+Qx_0>0
 \quad(P\in[p_0,p_*],\ Q\in[q_0,q_*]),
 \label{r7:rectangle-positivity}
\end{equation}
with uniform positive lower bounds on these intervals.
% No nonlinear energy hypothesis or coefficient modulus is needed
% for these conclusions.
\end{lemma}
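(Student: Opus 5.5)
The plan follows the source-mass argument of Lemma~\ref{gp:semiorbit-source}. The power scaling is replaced by the two-sided response bounds \eqref{r6:response-bounds}, and the sign conclusions are then read off from algebraic identities in the effective exponents.

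\emph{Step 1 (algebra).} Since $p_*\ge p_0$ and $q_*\ge q_0$, we have $p_*q_*\ge p_0q_0>1$, so the denominators in \eqref{r7:source-exponents} are positive. Introduce
\[
 \gamma_*:=\frac{n+A}{p_*+1}+\frac{n+B}{q_*+1}-\lambda .
\]
Because $\mu_F=1/(q_*+1)$, $\mu_G=1/(p_*+1)$ and $\Lambda\ge\lambda$, we get $\gamma_*=\Gamma_P+\Lambda-\lambda>0$. The same computation as for \eqref{gp:floor-scales} gives
\[
 \alpha_*+\beta_*-\lambda=\frac{(p_*+1)(q_*+1)}{p_*q_*-1}\gamma_*,
\]
so $\alpha_*+\beta_*>\lambda$.

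\emph{Step 2 (mass inequalities).} Let $\mathsf F_R=\int_{B_R}f$ and $\mathsf G_R=\int_{B_R}g$. These are finite, positive and nondecreasing by Lemma~\ref{so:automatic-weights}, since the kernel is comparable with $K_\sigma$. The lower kernel bound on $B_R$ gives $u\ge cR^{-\lambda}\mathsf G_R$ and $v\ge cR^{-\lambda}\mathsf F_R$ there.

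For the coefficients, fix a unit patch $D$ off zero. The weak Euler bound \eqref{r6:weak-euler-front} gives $b(Rx)\ge R^{B}b(x)$ on $D$, hence $\int_{B_{2R}}b\ge cR^{n+B}$; similarly $\int_{B_{2R}}a\ge cR^{n+A}$. Monotonicity of $F$ then yields
\[
 \mathsf F_{2R}\ge cR^{n+B}F\bigl(cR^{-\lambda}\mathsf G_{R}\bigr),\qquad
 \mathsf G_{2R}\ge cR^{n+A}G\bigl(cR^{-\lambda}\mathsf F_{R}\bigr).
\]
Normalize $F,G$ at the argument $1$ and apply \eqref{r6:response-bounds}. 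This bounds $F(z)$ below by $\mu_Fz^{q_*}F(1)$ when $z\le1$ and by $z^{q_0}F(1)$ when $z\ge1$, so in both regimes the lower bound is at least $c\min\{z^{q_0},z^{q_*}\}$.

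Substituting one inequality into the other, and using $p_*q_*>1$ (or $p_0q_0>1$ in the regime with large arguments) to close the loop, I expect to obtain upper bounds
\[
 \mathsf G_R\le CR^{\lambda-\alpha}\quad\text{and}\quad \mathsf F_R\le CR^{\lambda-\beta},
\]
with $(\alpha,\beta)$ given by \eqref{r7:source-exponents} for exponents $P\in\{p_0,p_*\}$, $Q\in\{q_0,q_*\}$. The pair that actually occurs depends on whether $R^{-\lambda}\mathsf F_R$ and $R^{-\lambda}\mathsf G_R$ stay below or above one. To organize this I would use the amplitude normalization of the dilation family, as in \eqref{v5:amplitudes}, with $F_s,G_t$ from \eqref{r6:normalized-responses}. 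Since positive nondecreasing masses cannot satisfy an upper bound with negative exponent, this forces $\alpha\le\lambda$ and $\beta\le\lambda$ in each occurring regime.

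\emph{Step 3 (signs).} Feeding the fixed positive lower bound $\mathsf F_R\ge\mathsf F_1$ back into the $\mathsf G$ inequality gives $\mathsf G_{2R}\ge cR^{n+A-\lambda p_*}$, and symmetrically for $\mathsf F$. Comparing with the upper bounds of Step~2 should give the linear inequalities $x_0+Py_0>0$ and $y_0+Qx_0>0$ at the endpoints $P\in\{p_0,p_*\}$, $Q\in\{q_0,q_*\}$. Because $x_0+Py_0$ is affine in $P$, positivity at the two endpoints gives \eqref{r7:rectangle-positivity} with a uniform positive minimum over the intervals. The case $P=p_*$, $Q=q_*$ is exactly $\alpha_*,\beta_*>0$. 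Finally, summing gives $(x_0+p_*y_0)+(y_0+q_*x_0)>0$. Together with $\gamma_*>0$ this should yield $n+A>0$ and $n+B>0$. For instance, if $n+A\le0$, then $\gamma_*>0$ would force $n+B>\lambda(q_*+1)$, and this contradicts $\alpha_*\le\lambda$ after expanding $\alpha_*$.

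\emph{Main obstacle.} The hardest step is Step~2: showing that the regime switching between the $q_0$ and $q_*$ branches of \eqref{r6:response-bounds} genuinely produces both endpoint inequalities. The difficulty is that no upper response derivative is available, so only the lower branches can be used. I would first try to work with the relative amplitudes $c_R(t)$, $d_R(t)$ from the normalized family, bounding them below by $t^{\alpha_*}$ and $t^{\beta_*}$ through \eqref{r6:response-bounds}. If that fails at the lower endpoints $p_0,q_0$, I would treat large and small values of $R^{-\lambda}\mathsf F_R$ separately along a sequence of scales $R\to\infty$.
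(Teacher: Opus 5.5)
\textbf{Gap in Step~2: the active branch.} There is a genuine gap in Step~2, exactly at the point you flag as the main obstacle. As written, you never establish which branch of \eqref{r6:response-bounds} is active. So you do not obtain $\mathsf G_R\le CR^{\lambda-\alpha_*}$ and $\mathsf F_R\le CR^{\lambda-\beta_*}$, and hence not $\alpha_*,\beta_*\le\lambda$, which is the substance of the lemma.

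The missing observation is that finiteness of the potentials forces the regime.
\begin{itemize}
\item Since $K(z)\ge c_K|z|^{-\lambda}$, finiteness of $u$ and $v$ at one nonzero point gives $\int_{|y|>1}|y|^{-\lambda}(f+g)\,dy<\infty$.
\item The split $R^{-\lambda}\mathsf F_R\le R^{-\lambda}\mathsf F_L+\int_{|y|>L}|y|^{-\lambda}f$ then yields $R^{-\lambda}\mathsf F_R\to0$, and likewise $R^{-\lambda}\mathsf G_R\to0$. Mere boundedness, which is immediate from $u(x_*)\ge c(R+|x_*|)^{-\lambda}\mathsf G_R$, would already suffice.
\item Thus for large $R$ only the small-argument bounds with exponents $q_*,p_*$ enter. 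On a patch $RD\subset B_R$ one gets $\mathsf F_R\ge cR^{n+B-\lambda q_*}\mathsf G_R^{q_*}$ and $\mathsf G_R\ge cR^{n+A-\lambda p_*}\mathsf F_R^{p_*}$.
\item Substituting and using $p_*q_*>1$ gives the two upper bounds with exactly $\alpha_*,\beta_*$. The large-argument branch with $p_0,q_0$ never occurs.
\end{itemize}
Your proposed fallback through normalized amplitudes does not help. The nonlinear version of that normalization, Lemma~\ref{r7:normalization}, selects the active exponents by the signs of the logarithmic amplitude ratios. It closes only through \eqref{r7:rectangle-positivity}, which is what this lemma must supply, so that route is circular.

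\textbf{Step~3 uses the wrong mechanism.} Comparing $\mathsf G_{2R}\ge cR^{n+A-\lambda p_*}$ with $\mathsf G_{2R}\le CR^{\lambda-\alpha_*}$ gives $n+A-\lambda p_*\le\lambda-\alpha_*$. By the identity $n+A=\lambda-\alpha_*+p_*\beta_*$, this is just $\beta_*\le\lambda$ again: an upper bound, not positivity. Positivity and the whole rectangle follow algebraically once the upper bounds are known. From your Step~1, $\alpha_*>\lambda-\beta_*\ge0$, and symmetrically $\beta_*>0$. The definitions give $x_0=p_*\beta_*-\alpha_*$ and $y_0=q_*\alpha_*-\beta_*$, hence
\[
 x_0+Py_0=(p_*-P)\beta_*+(Pq_*-1)\alpha_*>0,\qquad
 y_0+Qx_0=(q_*-Q)\alpha_*+(Qp_*-1)\beta_*>0,
\]
because $Pq_*\ge p_0q_0>1$ and $Qp_*\ge p_0q_0>1$. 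These expressions are affine on compact intervals, so the lower bounds are uniform. The same identities give $n+A=\lambda-\alpha_*+p_*\beta_*>0$ and $n+B=\lambda-\beta_*+q_*\alpha_*>0$.

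\textbf{Smaller points.} Your contradiction argument for $n+A>0$ reaches the right conclusion only if you combine the full inequality $\gamma_*>0$ with $\alpha_*\le\lambda$. The intermediate bound $n+B>\lambda(q_*+1)$ alone does not contradict $\alpha_*\le\lambda$ when $n+A$ is very negative. Finally, the inequality $\Lambda\ge\lambda$ used in Step~1 should be justified. For example, integrate $-d\log k/d\log r\le\Lambda$ from $1$ to $r$ and compare with $k(r)\le C_Kr^{-\lambda}$ as $r\to\infty$.
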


\begin{proof}
Finite positive potentials make $f,g$ locally integrable on the
whole space. For example, a nonzero observation point outside a
fixed ball gives a positive kernel lower bound on that ball.
At infinity, finiteness at one observation point and the kernel
comparison give
\[
 \int_{|y|>1}|y|^{-\lambda}[f(y)+g(y)]\,dy<\infty.
\]
Let $M_f(R)=\int_{B_R}f$, $M_g(R)=\int_{B_R}g$.
For $1<L<R$,
\[
 R^{-\lambda}M_f(R)
 \le R^{-\lambda}M_f(L)
       +\int_{|y|>L}|y|^{-\lambda}f(y)\,dy.
\]
Taking $R\to\infty$ and then $L\to\infty$, and arguing likewise
for $g$, proves
\begin{equation}
 R^{-\lambda}M_f(R)\longrightarrow0,\qquad
 R^{-\lambda}M_g(R)\longrightarrow0.
 \label{r7:finite-potential-source-smallness}
\end{equation}

Use the fixed patch $D=B_{1/8}(3e_0/4)\subset B_1$.
For $R\ge1$ and $z\in D$, the weak Euler bounds imply
$a(Rz)\ge R^Aa(z)$ and $b(Rz)\ge R^Bb(z)$.
The coefficients have positive minima on $\overline D$.
Also, for $x\in RD$ and $y\in B_R$,
$K(x-y)\ge cR^{-\lambda}$ almost everywhere. Consequently,
\[
 u(x)\ge cR^{-\lambda}M_g(R),\qquad
 v(x)\ge cR^{-\lambda}M_f(R)\quad(x\in RD).
\]
Both quantities on the right tend to zero by
\eqref{r7:finite-potential-source-smallness}. Apply monotonicity of
$F,G$ followed by their small-argument lower bounds in
\eqref{r6:response-bounds}, and integrate over $RD$. For all
sufficiently large $R$ this gives
\begin{equation}
 \begin{split}
 M_f(R)&\ge cR^{n+B-\lambda q_*}M_g(R)^{q_*},\\
 M_g(R)&\ge cR^{n+A-\lambda p_*}M_f(R)^{p_*}.
 \end{split}
 \label{r7:unscaled-source-feedback}
\end{equation}
The constants may depend on the original coefficients and responses;
no uniform family estimate is being claimed in this preliminary
step. Substituting the first inequality into the second and
rearranging, using $p_*q_*>1$, yields
\[
 M_g(R)\le CR^{\lambda-\alpha_*},\qquad
 M_f(R)\le CR^{\lambda-\beta_*}.
\]
Each source has a strictly positive mass on a fixed ball. Therefore
neither right side can tend to zero, and
$\alpha_*,\beta_*\le\lambda$.

The kernel assumptions imply $\Lambda\ge\lambda$: otherwise
integrating $-d\log k(r)/d\log r\le\Lambda$ from one to $r\ge1$
would give $k(r)\ge k(1)r^{-\Lambda}$, contradicting
$k(r)\le C_Kr^{-\lambda}$ as $r\to\infty$.
By \eqref{r7:effective-exponents},
\begin{equation}
 \alpha_*+\beta_*-\lambda
 =\frac{(p_*+1)(q_*+1)}{p_*q_*-1}
                         [\Gamma_P+\Lambda-\lambda]>0.
 \label{r7:source-gap-identity}
\end{equation}
Together with the individual upper bounds this forces
$\alpha_*,\beta_*>0$. The defining equations also give
\[
 n+A=\lambda-\alpha_*+p_*\beta_*>0,\qquad
 n+B=\lambda-\beta_*+q_*\alpha_*>0.
\]
Finally, $x_0=p_*\beta_*-\alpha_*$ and
$y_0=q_*\alpha_*-\beta_*$. Thus
\begin{equation}
 \begin{split}
 x_0+Py_0&=(p_*-P)\beta_*+(Pq_*-1)\alpha_*>0,\\
 y_0+Qx_0&=(q_*-Q)\alpha_*+(Qp_*-1)\beta_*>0.
 \end{split}
 \label{r7:rectangle-decomposition}
\end{equation}
Here $Pq_*\ge p_0q_0>1$ and $Qp_*\ge p_0q_0>1$.
Compactness of the exponent intervals supplies uniform positive
lower bounds. This proves every assertion.
\end{proof}

\subsection{A closed normalization family}

\begin{lemma}
\label{r6:nonlinear-normalization}\label{r7:normalization}
Suppose a pair as in Theorem~\ref{r7:master-front} exists.
For every $R>0$ there are unique $s_R,t_R>0$ such that
\begin{equation}
 s_R=R^\sigma a(Re_0)G(t_R),\qquad
 t_R=R^\sigma b(Re_0)F(s_R).
 \label{r6:implicit-amplitudes}
\end{equation}
Define
\begin{equation}
 \begin{aligned}
 K_R(z)&=R^\lambda K(Rz),\\
 u_R(x)&=u(Rx)/s_R,& v_R(x)&=v(Rx)/t_R,\\
 f_R(x)&=b_R(x)F_{s_R}(u_R(x)),&
 g_R(x)&=a_R(x)G_{t_R}(v_R(x)).
 \end{aligned}
 \label{r6:complete-family}
\end{equation}
Then
\begin{equation}
 u_R=K_R*g_R,\qquad v_R=K_R*f_R,\qquad
 g_R(x)=\frac{R^\sigma}{s_R}g(Rx),\quad
 f_R(x)=\frac{R^\sigma}{t_R}f(Rx).
 \label{r6:exact-source-transform}
\end{equation}
There is a constant $C$ independent of $R>0$ and $\rho\ge1$
such that
\begin{equation}
 \frac{s_{R\rho}}{s_R}\le C\rho^{-\alpha_*},\qquad
 \frac{t_{R\rho}}{t_R}\le C\rho^{-\beta_*}.
 \label{r6:amplitude-ratio}
\end{equation}
For every $R\ge R_0$, there is a common finite $M$ such that
\begin{equation}
 \begin{split}
 \int_{B_4}(f_R+g_R+u_R+v_R)&\le M,\\
 \int_{|x-y|\ge h}K_R(x-y)(f_R(y)+g_R(y))\,dy
 &\le Mh^{-\lambda}\quad(x\in B_2,\ 0<h\le1).
 \end{split}
 \label{r6:uniform-source-input}
\end{equation}
All kernel, response and coefficient assumptions used in these
bounds have the same constants over the normalized family.
\end{lemma}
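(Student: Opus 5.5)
Existence and uniqueness of $(s_R,t_R)$ is a one-dimensional fixed-point problem. Substituting the second equation into the first, we need $s=\Psi_R(s):=R^\sigma a(Re_0)G\bigl(R^\sigma b(Re_0)F(s)\bigr)$. By \eqref{r6:response-bounds}, $F$ and $G$ are strictly increasing bijections of $[0,\infty)$. Their compositions satisfy $\Psi_R(sz)/\Psi_R(s)\ge z^{p_0q_0}$ for $z\ge1$, with the reversed inequality for $z\le1$. Hence $\Psi_R(s)/s$ is strictly increasing: it tends to $0$ as $s\downarrow0$ and to $\infty$ as $s\to\infty$, because $p_0q_0>1$ and $\Psi_R(s)/\Psi_R(1)$ is pinched between powers. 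Continuity gives a unique crossing $s_R$, and $t_R$ is then defined by the second identity.

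The exact transforms \eqref{r6:exact-source-transform} are a change of variables in $u(Rx)=\int K(Rx-y)g(y)\,dy$ with $y=Rz$. This gives $u(Rx)=R^{n}\int K(R(x-z))g(Rz)\,dz=\int K_R(x-z)R^\sigma g(Rz)\,dz$, and dividing by $s_R$ produces the stated $g_R$. The identity $g_R=a_RG_{t_R}(v_R)$ is an algebraic consequence of \eqref{r6:implicit-amplitudes}: indeed $R^\sigma g(Rx)/s_R=a(Rx)G(v(Rx))/(a(Re_0)G(t_R))$. The kernel $K_R$ has the same constants $c_K,C_K,\Lambda$, and $F_{s_R},G_{t_R}$ keep $q_0,p_0,\mu_F,\mu_G$ by \eqref{r7:primitive-ratio-invariance}. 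The coefficients $a_R,b_R$ keep the Euler bounds, and for $R\ge R_0$ they also keep the Dini constants.

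For the amplitude ratio \eqref{r6:amplitude-ratio}, put $x=s_{R\rho}/s_R$ and $y=t_{R\rho}/t_R$. The Euler bounds give $a(R\rho e_0)\ge\rho^Aa(Re_0)$ and $b(R\rho e_0)\ge\rho^Bb(Re_0)$, hence
\[
x\ge\rho^{\sigma+A}\,G(yt_R)/G(t_R),\qquad y\ge\rho^{\sigma+B}\,F(xs_R)/F(s_R).
\]
I would argue by cases on whether $x,y\le1$ or $\ge1$, using \eqref{r6:response-bounds}. In each case one gets $x\ge c\rho^{x_0}y^{P}$ and $y\ge c\rho^{y_0}x^{Q}$, with exponents $P\in\{p_0,p_*\}$ and $Q\in\{q_0,q_*\}$. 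I expect this step to be the main obstacle, because the exponent changes across regimes, so naive iteration only bounds powers with the wrong sign. The idea is to show first that $x,y\le C$; otherwise $x\ge c\rho^{x_0}y^{P}$ with $y$ large would push things up. Then, in the small regime, combine the inequalities using \eqref{r7:rectangle-positivity} and the decomposition \eqref{r7:rectangle-decomposition}.

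Concretely, the feedback goes through the increasing maps. If $x>C\rho^{-\alpha_*}$ with $C$ large, then $y\ge c\rho^{y_0}\mu_Fx^{q_*}$ in the small regime. Feeding this back gives $x\ge c\rho^{x_0+p_*y_0}x^{p_*q_*}$, that is, $x^{p_*q_*-1}\le C\rho^{-(x_0+p_*y_0)}=C\rho^{-\alpha_*(p_*q_*-1)}$. This is the desired bound, since $x_0+p_*y_0=\alpha_*(p_*q_*-1)$ by \eqref{r7:source-exponents}. The large regimes $x\ge1$ or $y\ge1$ are excluded for large $\rho$ by the uniform positivity in \eqref{r7:rectangle-positivity}. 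Bounded $\rho$ is handled by continuity/compactness of the same inequalities. The bound for $y$ is symmetric.

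For \eqref{r6:uniform-source-input} I would follow Lemma~\ref{gp:semiorbit-source}. The normalized pair satisfies the same feedback on the patch $D$, since $a_R,b_R$ are bounded below on $\overline D$ uniformly for $R\ge R_0$ by the Dini orbit bound and $a_R(e_0)=1$. Then the argument of Lemma~\ref{so:automatic-weights}, adapted with \eqref{r6:response-bounds} (regime split as above), gives $\int_{B_4}(f_R+g_R)\le M$. Combining with \eqref{r6:amplitude-ratio} at scales $\rho\ge1$ gives dyadic masses $\int_{B_\rho}g_R\le C\rho^{\lambda-\alpha_*}$, whose summation against $K_R\le C|z|^{-\lambda}$ yields the tail bound, since $\alpha_*,\beta_*>0$. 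The $L^1(B_4)$ bounds on $u_R,v_R$ follow from local integrability of $K_R$ against these masses and tails.
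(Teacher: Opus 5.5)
Your proposal is correct and follows the paper's proof essentially step by step. The steps are:
\begin{enumerate}
\item the monotone scalar equation for $s_R$ (you use the increasing function $\Psi_R(s)/s$, the paper uses the decreasing reciprocal $s/G(B_RF(s))$);
\item the change of variables for the exact source transforms;
\item the four-regime case analysis of the amplitude ratios, closed by rectangle positivity and the identity $x_0+p_*y_0=\alpha_*(p_*q_*-1)$;
\item the patch mass feedback, ball rescaling and dyadic tail summation.
\end{enumerate}

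The only soft spot is the range $1\le\rho\le\rho_1$. There, continuity or compactness in $\rho$ alone gives no uniformity in $R>0$. Your heuristic that large $x$ or $y$ ``pushes things up'' can also fail, because $x_0=\sigma+A$ or $y_0=\sigma+B$ may individually be negative. The mechanism the paper uses is to combine the two logarithmic inequalities in every regime into $(PQ-1)\log x\le-(x_0+Py_0)\log\rho+C$ (and symmetrically for $\log y$), with $x_0+Py_0>0$. This gives $\log x\le-\delta_s\log\rho+C$ uniformly in $R$, which both excludes the large regimes for $\rho\ge\rho_1$ and bounds the ratios for $\rho\le\rho_1$.
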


\begin{proof}
Put $A_R=R^\sigma a(Re_0)$ and $B_R=R^\sigma b(Re_0)$.
Eliminating $t_R$ reduces \eqref{r6:implicit-amplitudes} to
\[
 h_R(s):=\frac{s}{G(B_RF(s))}=A_R.
\]
For $s_2>s_1>0$, the integrated lower response derivatives give
\[
 \frac{h_R(s_2)}{h_R(s_1)}
 \le\left(\frac{s_2}{s_1}\right)^{1-p_0q_0}.
\]
Thus $h_R$ is continuous and strictly decreasing, tends to infinity
at zero and to zero at infinity. It has a unique positive inverse
value; then $t_R=B_RF(s_R)$ is unique. This argument uses finite
response increments and is valid for locally Lipschitz responses.
Changing variables in the potentials proves
\eqref{r6:exact-source-transform}. The definition of $K_R$ preserves
its comparison constants and gives
$\theta_{K_R}(r)=\theta_K(Rr)$.
The weak Euler bounds and normalized coefficient modulus are also
preserved. Equations \eqref{r6:normalized-responses} and
\eqref{r7:primitive-ratio-invariance} show the same for all response
constants.

\emph{Finite-scale amplitude bounds.}
Fix $\rho>1$ and write
\[
 S=\frac{s_{R\rho}}{s_R},\quad
 T=\frac{t_{R\rho}}{t_R},\quad L=\log\rho,\quad
 U=\log S,\quad V=\log T.
\]
The implicit equations and the weak Euler inequalities imply
\begin{equation}
 S\ge\rho^{x_0}G_{t_R}(T),\qquad
 T\ge\rho^{y_0}F_{s_R}(S).
 \label{r7:amplitude-inequalities}
\end{equation}
By \eqref{r6:response-bounds},
\[
 S\ge\mu_G\rho^{x_0}\min\{T^{p_0},T^{p_*}\},\qquad
 T\ge\mu_F\rho^{y_0}\min\{S^{q_0},S^{q_*}\}.
\]
Select $P=p_0$ when $V\ge0$ and $P=p_*$ when $V<0$;
select $Q=q_0$ when $U\ge0$ and $Q=q_*$ when $U<0$.
At equality either selection is permitted.
With $C_F=-\log\mu_F\ge0$, $C_G=-\log\mu_G\ge0$, this gives
\[
 U\ge x_0L+PV-C_G,\qquad
 V\ge y_0L+QU-C_F.
\]
Substitute each inequality into the other, using $P,Q>0$, to obtain
\begin{equation}
 \begin{split}
 (PQ-1)U&\le-(x_0+Py_0)L+PC_F+C_G,\\
 (PQ-1)V&\le-(y_0+Qx_0)L+QC_G+C_F.
 \end{split}
 \label{r7:log-amplitude-bounds}
\end{equation}
Lemma~\ref{r7:source-parameters} provides positive constants
\begin{equation}
 \delta_s=\frac{\min_{P\in[p_0,p_*]}(x_0+Py_0)}{p_*q_*-1}>0,
 \qquad
 \delta_t=\frac{\min_{Q\in[q_0,q_*]}(y_0+Qx_0)}{p_*q_*-1}>0.
 \label{r6:positive-amplitude-growth}
\end{equation}
Since $PQ-1\ge p_0q_0-1>0$, the additive constants in
\eqref{r7:log-amplitude-bounds} are uniformly bounded after division.
It follows that $U\le-\delta_sL+C$ and
$V\le-\delta_tL+C$. Hence, above one fixed threshold
$\rho_1>1$, both are negative. For $\rho\ge\rho_1$, the selected
exponents are therefore $P=p_*$, $Q=q_*$, and
\eqref{r7:log-amplitude-bounds} gives
$U\le-\alpha_*L+C$, $V\le-\beta_*L+C$.
For $1\le\rho\le\rho_1$, enlarge the constant in these estimates
using the preliminary $\delta_s,\delta_t$ bounds. This proves
\eqref{r6:amplitude-ratio} for every $\rho\ge1$.
% Neither a pointwise upper response derivative nor radial
% coefficient differentiability has been used.

\emph{Uniform unit-ball source masses.}
Let $m_f(R)=\int_{B_1}f_R$, $m_g(R)=\int_{B_1}g_R$.
On $D=B_{1/8}(3e_0/4)$, the coefficient modulus and normalization
$a_R(e_0)=b_R(e_0)=1$ give uniform positive lower coefficient
bounds for $R\ge R_0$. In dimension one, this patch lies in the
positive branch. The kernel lower bound implies
\[
 u_R(x)\ge c\,m_g(R),\qquad
 v_R(x)\ge c\,m_f(R)\quad(x\in D).
\]
The response window gives
\begin{equation}
 \begin{split}
 m_f(R)&\ge c\min\{m_g(R)^{q_0},m_g(R)^{q_*}\},\\
 m_g(R)&\ge c\min\{m_f(R)^{p_0},m_f(R)^{p_*}\}.
 \end{split}
 \label{r6:mass-feedback}
\end{equation}
If either mass were unbounded, these inequalities would force both
to be large. Their large-mass branches would then imply
$m_f(R)\ge c\,m_f(R)^{p_0q_0}$, which is impossible for
unbounded $m_f(R)$ because $p_0q_0>1$. The first inequality then
also bounds $m_g(R)$. Thus their sum is uniformly bounded.

\emph{Larger balls and potential inputs.}
The exact source transforms at scales $R$ and $R\rho$, and the
unit-ball bound at the latter scale, give
\begin{equation}
 \begin{split}
 \int_{B_\rho}g_R
 &=\rho^\lambda\frac{s_{R\rho}}{s_R}\int_{B_1}g_{R\rho}
 \le C\rho^{\lambda-\alpha_*},\\
 \int_{B_\rho}f_R
 &=\rho^\lambda\frac{t_{R\rho}}{t_R}\int_{B_1}f_{R\rho}
 \le C\rho^{\lambda-\beta_*}\qquad(\rho\ge1).
 \end{split}
 \label{r6:large-source-balls}
\end{equation}
For $x\in B_4$, the source outside $B_8$ is at a comparable
radial distance. Dyadic summation therefore yields
\begin{equation}
 \int_{|y|>8}K_R(x-y)(f_R(y)+g_R(y))\,dy
 \le C\sum_{j\ge3}(2^{-j\alpha_*}+2^{-j\beta_*})\le C.
 \label{r6:uniform-source-tails}
\end{equation}
For $x\in B_2$, the portion in $B_8$ with $|x-y|\ge h$ is at
most $Ch^{-\lambda}\int_{B_8}(f_R+g_R)$. This proves the second
line of \eqref{r6:uniform-source-input}. Finally,
$\sup_{y\in B_8}\int_{B_4}K_R(x-y)\,dx\le C$, since
$\lambda<n$. Tonelli and \eqref{r6:uniform-source-tails} give
uniform $L^1(B_4)$ bounds for $u_R,v_R$; the source bounds there
follow from \eqref{r6:large-source-balls}. This proves the lemma.
\end{proof}

\subsection{Uniform energy and the final contradiction}

\begin{proof}[Proof of Theorem~\ref{r7:master-front}]
Suppose a pair exists. Lemma~\ref{r7:source-parameters} gives
\eqref{r7:forced-positivity}, and Lemma~\ref{r7:normalization}
provides the complete normalized family with a common source input
$M$. This family is closed under further scaling:
$(K_R)_\rho=K_{R\rho}$, $(F_s)_t=F_{st}$, and similarly for
$G$ and the coefficients. Uniqueness in
\eqref{r6:implicit-amplitudes} identifies the subsequent amplitude
normalizations with $s_{R\rho}/s_R$ and $t_{R\rho}/t_R$.
The local energy of every normalized member is finite, since
\begin{equation}
 f_R(x)u_R(x)+g_R(x)v_R(x)
 =\frac{R^\sigma}{s_Rt_R}[fu+gv](Rx).
 \label{r6:exact-energy-density}
\end{equation}
% No uniform energy bound has been assumed or used to obtain the
% source estimates.

Fix the cutoff from Theorem~\ref{gd:realization} for the common
Dini modulus. For a normalized member suppress the subscript $R$,
and use the definitions $E_f,E_g,E,\pi,\Phi,D,w$ from
Proposition~\ref{r6:general-pressure}. Set
\[
 W_P=\int w\{bP_F(u)+aP_G(v)\},\qquad
 T_K=\iint w(x)\theta_K(|x-y|)D(x,y)\,d\pi.
\]
Here the symbols $F,G,K,a,b$ temporarily denote the normalized
data. Their lower response derivatives and all coefficient, kernel
and source bounds are uniform. Proposition~\ref{r6:general-pressure}
therefore gives, for every $\xi>0$,
\begin{equation}
 \begin{split}
 |E_f-E_g|&\le\xi E+C_\xi,\\
 \iint|\Phi-\eta(x)+w(x)D|\,d\pi&\le\xi E+C_\xi,\\
 W_P&\le T_K+\xi E+C_\xi.
 \end{split}
 \label{r6:master-pressure-inputs}
\end{equation}
The bound $0\le\theta_K\le\Lambda$ implies
\begin{equation}
 \begin{split}
 J_K:=\iint\theta_K(|x-y|)\Phi\,d\pi
 &\le\Lambda E_f-T_K+\Lambda\xi E+C_\xi\\
 &\le\frac\Lambda2E-T_K+
                  \frac{3\Lambda}{2}\xi E+C_\xi.
 \end{split}
 \label{r6:weighted-localization}
\end{equation}
All signed quantities are finite by the compact marginals and
bounded logarithmic kernel slope.

The weak transport virial inequality in
Lemma~\ref{r6:transport-virial} gives
\begin{equation}
 \int\eta\{(n+B)bP_F(u)+(n+A)aP_G(v)\}-W_P\le J_K.
 \label{r6:nonlinear-virial-input}
\end{equation}
The lemma applies to the present locally Lipschitz responses:
on the positive compact solution range used in its finite
transport increments, each primitive is $C^{1,1}$, and its Taylor
remainder is bounded by the square of the solution increment.
% Thus no upper response derivative uniform over the normalization
% family is needed. The weak Euler lower bounds are inserted before
% the origin cutoff is removed, as in that lemma.

The signs needed for primitive coercivity were proved in
Lemma~\ref{r7:source-parameters}: $n+A,n+B>0$.
Consequently,
\[
 \int\eta\{(n+B)bP_F(u)+(n+A)aP_G(v)\}
 \ge\kappa_fE_f+\kappa_gE_g,
 \quad\kappa_f=(n+B)\mu_F,\quad\kappa_g=(n+A)\mu_G.
\]
Combining \eqref{r6:master-pressure-inputs}--
\eqref{r6:nonlinear-virial-input} cancels $T_K$ and gives
\[
 \kappa_fE_f+\kappa_gE_g
 \le\frac\Lambda2E+(1+3\Lambda/2)\xi E+C_\xi.
\]
Since $\kappa_f+\kappa_g=\Lambda+\Gamma_P$,
\[
 \kappa_fE_f+\kappa_gE_g
 \ge\frac{\Lambda+\Gamma_P}{2}E
       -\frac{|\kappa_f-\kappa_g|}{2}|E_f-E_g|.
\]
Choose $\xi>0$ so that
\[
 [1+3\Lambda/2+|\kappa_f-\kappa_g|/2]\xi\le\Gamma_P/4.
\]
The first line of \eqref{r6:master-pressure-inputs} then yields
\begin{equation}
 E_R\le C_*\quad(R\ge R_0),
 \label{r6:uniform-nonlinear-energy}
\end{equation}
with the same constant for the whole normalized family.

Define
\begin{equation}
 \mathcal H_R=\frac{R^{-\lambda}}{s_Rt_R}.
 \label{r6:energy-amplification-factor}
\end{equation}
The amplitude estimates \eqref{r6:amplitude-ratio} imply
\begin{equation}
 \frac{\mathcal H_{R\rho}}{\mathcal H_R}
 =\frac{\rho^{-\lambda}}
 {(s_{R\rho}/s_R)(t_{R\rho}/t_R)}
 \ge c\rho^{\alpha_*+\beta_*-\lambda}
 \quad(\rho\ge1).
 \label{r6:energy-amplification-growth}
\end{equation}
The exponent is positive by \eqref{r7:source-gap-identity}.
Fixing $R=R_0$ therefore gives $\mathcal H_R\to\infty$.
On the other hand, \eqref{r6:exact-energy-density} and
$\eta=1$ on $B_1$ imply
\begin{equation}
 C_*\ge E_R\ge\mathcal H_R
       \int_{B_R}[bF(u)u+aG(v)v].
 \label{r6:final-nonlinear-amplification}
\end{equation}
The original energy on any fixed positive-radius ball is finite
and strictly positive. For large $R$ it is included in the last
integral, contradicting $\mathcal H_R\to\infty$. This proves
the theorem.
\end{proof}

\subsection{Power windows, mixtures and response perturbations}

\begin{corollary}
\label{r7:two-sided-elasticity-corollary}
Let $K,a,b$ satisfy the kernel, weak Euler and coefficient modulus
assumptions of Theorem~\ref{r7:master-front}, with arbitrary real
$A=A_-$ and $B=B_-$. Let $F,G$ be positive continuous responses
vanishing at zero and locally Lipschitz on $(0,\infty)$. Suppose
\begin{equation}
 \begin{split}
 0<q_-&\le sF'(s)/F(s)\le q_+<\infty,\\
 0<p_-&\le sG'(s)/G(s)\le p_+<\infty
 \quad\hbox{for almost every }s>0,\qquad p_-q_->1,
 \end{split}
 \label{r7:two-sided-response-bounds}
\end{equation}
and
\begin{equation}
 \frac{n+A}{p_++1}+\frac{n+B}{q_++1}>\Lambda.
 \label{r6:nonlinear-gap}
\end{equation}
Then no positive pair satisfies \eqref{v5:master-system} and
\eqref{eq:main-local-energy}.
\end{corollary}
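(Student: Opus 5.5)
The plan is to reduce the corollary to Theorem~\ref{r7:master-front}. The task is to verify the response assumptions \eqref{r7:response-assumptions} with suitable constants, and then check that the primitive gap \eqref{r7:primitive-gap} follows from \eqref{r6:nonlinear-gap}. The kernel, weak Euler and Dini-orbit hypotheses are assumed verbatim, so nothing has to be done there.

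\emph{Step 1: lower logarithmic derivatives.} Take $q_0=q_-$ and $p_0=p_-$. The lower halves of \eqref{r7:two-sided-response-bounds} are then exactly the lower logarithmic derivative bounds in \eqref{r7:response-assumptions}, and $p_0q_0>1$ holds by assumption.

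\emph{Step 2: primitive ratios.} I would show $P_F(s)\ge sF(s)/(q_++1)$, so that $\mu_F=1/(q_++1)$, and similarly $\mu_G=1/(p_++1)$. First, $\log F$ is absolutely continuous on compact positive intervals, since $F$ is positive and locally Lipschitz. Integrating the upper bound $sF'/F\le q_+$ from $t$ to $s$, for $0<t\le s$, gives
\[
 F(t)\ge (t/s)^{q_+}F(s).
\]
Integrating this in $t$ over $(0,s)$ then gives
\[
 P_F(s)\ge F(s)s^{-q_+}\frac{s^{q_++1}}{q_++1}=\frac{sF(s)}{q_++1}.
\]
Because $F$ is continuous and $F(0)=0$, the primitive is the genuine integral from zero, so there is no issue with the lower limit.

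\emph{Step 3: the gap.} With these constants,
\[
 \Gamma_P=\frac{n+B}{q_++1}+\frac{n+A}{p_++1}-\Lambda .
\]
By \eqref{r6:nonlinear-gap} this quantity is positive. Theorem~\ref{r7:master-front} now applies and excludes any positive pair satisfying \eqref{v5:master-system} together with \eqref{eq:main-local-energy}.

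The proof involves essentially no obstacle. The one point needing care is the absolute continuity of $\log F$ in Step 2, which justifies integrating an almost-everywhere derivative bound. Local Lipschitz continuity together with positivity on $(0,\infty)$ supplies it, as already noted at the start of Section~\ref{r6:nonlinear-section}. The signs of $n+A$ and $n+B$ do not need to be checked separately, since Lemma~\ref{r7:source-parameters} derives them inside the theorem.
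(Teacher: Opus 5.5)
Your proposal is correct and follows the paper's proof essentially verbatim: take $q_0=q_-$, $p_0=p_-$, integrate the upper logarithmic derivative bound to get $F(t)\ge (t/s)^{q_+}F(s)$ for $t\le s$, integrate in $t$ to obtain $\mu_F=1/(q_++1)$ (and likewise $\mu_G=1/(p_++1)$), and apply Theorem~\ref{r7:master-front}. The paper likewise notes that no sign condition on $n+A$, $n+B$ is needed, since the theorem derives their positivity itself.
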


\begin{proof}
Integrating the upper logarithmic derivative bound between $sz$
and $s$, for $0<z<1$, gives
$F(sz)\ge z^{q_+}F(s)$. Integrating in $z$ yields
$P_F(s)\ge sF(s)/(q_++1)$, with the analogous bound for $G$.
Take $q_0=q_-$, $p_0=p_-$,
$\mu_F=1/(q_++1)$ and $\mu_G=1/(p_++1)$ in the master theorem.
No restriction on the signs of $n+A$ and $n+B$ is inserted here:
the theorem obtains their necessary positivity from the putative
finite potentials before applying primitive coercivity.
\end{proof}

For the power responses $F(s)=s^q$, $G(s)=s^p$, this gives the
condition
\begin{equation}
 \Gamma_\Lambda:=\frac{n+A_-}{p+1}
                 +\frac{n+B_-}{q+1}-\Lambda>0,
 \qquad p,q>0,\quad pq>1,
 \label{r6:kernel-gap}
\end{equation}
with arbitrary real Euler bounds. Thus the power case and the
bounded logarithmic derivative case follow from one proof.

\begin{corollary}
\label{r6:mixed-powers-corollary}
Under the coefficient and kernel hypotheses of
Corollary~\ref{r7:two-sided-elasticity-corollary}, let
\[
 F(s)=\sum_{i=1}^N c_i s^{q_i},\qquad
 G(s)=\sum_{j=1}^M d_j s^{p_j},\qquad c_i,d_j>0,
\]
where all exponents are positive. Put
$q_- =\min_iq_i$, $q_+=\max_iq_i$,
$p_-=\min_jp_j$, $p_+=\max_jp_j$.
If $p_-q_->1$ and \eqref{r6:nonlinear-gap} holds, there is no
pair in the stated positive potential and local energy class.
The same assertion holds for
\[
 F(s)=\int_{[q_-,q_+]}s^q\,d\nu_F(q),\qquad
 G(s)=\int_{[p_-,p_+]}s^p\,d\nu_G(p),
\]
where $\nu_F,\nu_G$ are finite nonzero positive measures.
\end{corollary}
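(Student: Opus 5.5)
The plan is to reduce the corollary to Corollary~\ref{r7:two-sided-elasticity-corollary} by checking that these mixtures satisfy the two-sided logarithmic derivative window \eqref{r7:two-sided-response-bounds} with the stated extreme exponents. The gap \eqref{r6:nonlinear-gap} and the coefficient and kernel hypotheses are then exactly those already assumed.

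\emph{Regularity.} For the finite sums, $F$ and $G$ are positive on $(0,\infty)$, vanish at zero because all exponents are positive, are continuous on $[0,\infty)$, and are smooth on $(0,\infty)$. For the measure version, I would use that the exponents lie in a compact interval $[q_-,q_+]\subset(0,\infty)$. On any compact $[s_1,s_2]\subset(0,\infty)$ the integrands $s^q$ and $\partial_s s^q=qs^{q-1}$ are bounded uniformly in $q$. Dominated convergence then gives continuity on $[0,\infty)$ with $F(0)=0$, and $C^1$ regularity on $(0,\infty)$; in particular $F$ is locally Lipschitz there. Positivity follows because $\nu_F$ is a nonzero positive measure. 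The same holds for $G$.

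\emph{Window bounds.} Differentiating under the integral gives
\[
 sF'(s)=\int q\,s^q\,d\nu_F(q).
\]
Hence $sF'(s)/F(s)$ is a $\nu_F$-weighted average of $q$ with respect to the probability measure $s^q\,d\nu_F(q)/F(s)$. It therefore lies in $[q_-,q_+]$ for every $s>0$. The finite sum is the atomic case $\nu_F=\sum_i c_i\delta_{q_i}$. The identical computation for $G$ gives the window $[p_-,p_+]$.

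With $p_-q_->1$ and \eqref{r6:nonlinear-gap}, Corollary~\ref{r7:two-sided-elasticity-corollary} applies directly and excludes any positive pair satisfying \eqref{v5:master-system} and \eqref{eq:main-local-energy}. The only point needing care is the justification of differentiation under the integral in the measure case, which is the uniform-bound argument above. One also needs $\min\operatorname{supp}\nu_F\ge q_-$ and $\max\operatorname{supp}\nu_F\le q_+$, which holds by hypothesis since the measures are supported in the stated intervals. I expect no genuine obstacle.
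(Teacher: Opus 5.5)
Your proposal is correct and matches the paper's proof. Both show that each logarithmic derivative is a positive weighted average of the exponents, with differentiation under the integral justified by the compact exponent support, and then apply Corollary~\ref{r7:two-sided-elasticity-corollary} with the window $[q_-,q_+]\times[p_-,p_+]$.
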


\begin{proof}
Each logarithmic derivative is a positive weighted average of the
response exponents. For the integral mixtures, differentiation
under the integral is justified locally uniformly on the positive
axis by the compact exponent supports, and gives the same formula.
Apply Corollary~\ref{r7:two-sided-elasticity-corollary}.
\end{proof}

\begin{corollary}
\label{r6:response-perturbation-corollary}
Let $K,a,b$ satisfy the same coefficient and kernel assumptions,
with arbitrary real $A,B$. Let
\[
 F(s)=s^qL_F(s),\qquad G(s)=s^pL_G(s),
\]
where $L_F,L_G$ are positive and locally Lipschitz on
$(0,\infty)$, and almost everywhere
\[
 \left|\frac{sL_F'(s)}{L_F(s)}\right|\le\varepsilon_F<q,
 \qquad
 \left|\frac{sL_G'(s)}{L_G(s)}\right|\le\varepsilon_G<p.
\]
If
\[
 (p-\varepsilon_G)(q-\varepsilon_F)>1,\qquad
 \frac{n+A}{p+\varepsilon_G+1}
 +\frac{n+B}{q+\varepsilon_F+1}>\Lambda,
\]
there is no pair satisfying \eqref{v5:master-system} and
\eqref{eq:main-local-energy}.
This includes the responses
\[
 F(s)=s^q[\log(e+s)]^{\beta},\qquad
 G(s)=s^p[\log(e+s)]^{\alpha}
\]
whenever the displayed inequalities hold with
$\varepsilon_F=|\beta|$, $\varepsilon_G=|\alpha|$.
\end{corollary}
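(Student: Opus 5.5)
The plan is to reduce Corollary~\ref{r6:response-perturbation-corollary} to Corollary~\ref{r7:two-sided-elasticity-corollary}. We verify the two-sided logarithmic derivative window \eqref{r7:two-sided-response-bounds} with
\[
 q_\pm=q\pm\varepsilon_F,\qquad p_\pm=p\pm\varepsilon_G.
\]
The kernel, weak Euler and Dini hypotheses are assumed verbatim, so only the response conditions need checking.

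\emph{Step 1: regularity and the window.} Since $s\mapsto s^q$ is smooth on $(0,\infty)$ and $L_F$ is positive and locally Lipschitz, $F=s^qL_F$ is positive and locally Lipschitz on $(0,\infty)$. The product rule holds almost everywhere and gives
\[
 \frac{sF'(s)}{F(s)}=q+\frac{sL_F'(s)}{L_F(s)}\in[q-\varepsilon_F,\,q+\varepsilon_F]
 \quad\hbox{for almost every }s>0.
\]
The lower endpoint is positive because $\varepsilon_F<q$. The same computation applies to $G$.

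\emph{Step 2: continuity at zero.} We still need $F\in C([0,\infty))$ with $F(0)=0$. Integrating the lower bound on $\log L_F$ from $s$ to $1$ gives $L_F(s)\le L_F(1)s^{-\varepsilon_F}$ for $0<s\le1$. Hence $F(s)\le L_F(1)s^{q-\varepsilon_F}\to0$ as $s\downarrow0$, and setting $F(0)=0$ makes $F$ continuous. The same argument handles $G$.

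\emph{Step 3: the hypotheses of Corollary~\ref{r7:two-sided-elasticity-corollary}.} The product condition is $p_-q_->1$, which is the first displayed inequality. The gap condition \eqref{r6:nonlinear-gap}, written with $p_+,q_+$, is the second displayed inequality. Corollary~\ref{r7:two-sided-elasticity-corollary} then excludes any pair satisfying \eqref{v5:master-system} and \eqref{eq:main-local-energy}.

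\emph{Step 4: the logarithmic examples.} Take $L_F(s)=[\log(e+s)]^\beta$. It is smooth and positive on $(0,\infty)$, and
\[
 \frac{sL_F'(s)}{L_F(s)}=\beta\,\frac{s}{(e+s)\log(e+s)}.
\]
Since $0\le s/(e+s)<1$ and $\log(e+s)\ge1$, the absolute value is at most $|\beta|$. Hence $\varepsilon_F=|\beta|$ is admissible whenever $|\beta|<q$, and $L_G$ is handled the same way with $\alpha$.

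There is no real obstacle here. The only point needing care is Step~2: continuity and vanishing at zero are not stated for $L_F$ and must be derived from the logarithmic derivative bound.
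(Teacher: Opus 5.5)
Your proposal is correct and follows the paper's own proof essentially step for step. The paper also verifies the almost-everywhere elasticity windows $[q-\varepsilon_F,q+\varepsilon_F]$ and $[p-\varepsilon_G,p+\varepsilon_G]$, integrates the positive lower bounds to get vanishing at zero, and applies Corollary~\ref{r7:two-sided-elasticity-corollary}. It handles the logarithmic factors through $s/((e+s)\log(e+s))\in[0,1]$, exactly as you do.
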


\begin{proof}
The response logarithmic derivatives lie in
$[q-\varepsilon_F,q+\varepsilon_F]$ and
$[p-\varepsilon_G,p+\varepsilon_G]$ almost everywhere.
Their positive lower bounds also show, by integration, that both
responses tend to zero at zero. Apply
Corollary~\ref{r7:two-sided-elasticity-corollary}.
For the logarithmic factors use
\[
 \frac{d\log\log(e+s)}{d\log s}
 =\frac{s}{(e+s)\log(e+s)}\in[0,1].
\]
\end{proof}

\subsection{Responses with no upper logarithmic derivative bound}

The primitive hypothesis is stronger in scope than a finite
logarithmic derivative window. The next example supplies a class
which is covered by the master theorem and has no such upper
window.

\begin{proposition}
\label{r7:unbounded-response-elasticity}
In dimension $n=5$, with $\sigma=2$ and $a=b=1$, there are smooth
positive responses $F=G$ such that
\[
 \frac{sF'(s)}{F(s)}\ge2,\qquad
 \sup_{s>0}\frac{sF'(s)}{F(s)}=\infty,
\]
for which the system $u=I_2(F(v))$, $v=I_2(F(u))$ has no finite
positive pair continuous off zero with
$\int_{B_1}[uF(u)+vF(v)]<\infty$.
\end{proposition}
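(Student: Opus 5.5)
The plan is to obtain the proposition as a direct application of Theorem~\ref{r7:master-front}, with the work concentrated in one explicit construction of $F$. In the setting $n=5$, $\sigma=2$, $a=b=1$, the kernel is the Riesz kernel $K_2$, so \eqref{r6:kernel-class} holds with $\lambda=\Lambda=3$. The constant coefficients satisfy \eqref{v5:euler-lower} with $A_-=B_-=0$, and they satisfy \eqref{eq:front-dini-orbit} trivially with $a_R=b_R\equiv1$ and $\Omega\equiv0$. With $F=G$, $p_0=q_0=2$ and $\mu_F=\mu_G=\mu$, the gap \eqref{r7:primitive-gap} reads
\[
 \Gamma_P=10\mu-3>0 .
\]
So it suffices to build a smooth positive $F$ with $F(0)=0$, $sF'(s)/F(s)\ge2$, unbounded logarithmic derivative, and $P_F(s)\ge\mu\,sF(s)$ for some $\mu>3/10$. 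The pure power $s^2$ has primitive ratio exactly $1/3>3/10$, which leaves room for a small multiplicative perturbation.

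I will take $F(s)=s^2\varphi(s)$, where $\varphi$ is smooth, positive, nondecreasing, equal to $1$ near $0$, and bounded above by $1+\eta$ with $\eta<1/9$. Monotonicity of $\varphi$ gives $sF'/F=2+s\varphi'/\varphi\ge2$. It also gives $\varphi(t)\ge\varphi(s)/(1+\eta)$ for $t\le s$, since $1\le\varphi\le1+\eta$. Hence
\[
 P_F(s)=\int_0^st^2\varphi(t)\,dt\ge\frac{\varphi(s)}{1+\eta}\cdot\frac{s^3}{3}
 =\frac{sF(s)}{3(1+\eta)}.
\]
This gives $\mu=1/(3(1+\eta))>3/10$ because $\eta<1/9$. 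To make the logarithmic derivative unbounded, choose points $s_k=2^k$ and small numbers $\varepsilon_k>0$ with $\prod_k(1+\varepsilon_k)\le1+\eta$. Across the short interval $[s_k,s_k(1+\varepsilon_k^2)]$, let $\log\varphi$ rise by $\log(1+\varepsilon_k)$ through a smooth monotone step, and keep $\varphi$ constant elsewhere. On that interval $s\varphi'/\varphi$ attains values of order $\log(1+\varepsilon_k)/\log(1+\varepsilon_k^2)\sim1/\varepsilon_k\to\infty$. Taking $\varepsilon_k=\varepsilon 2^{-k}$ with $\varepsilon$ small makes the product bound hold.

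The intervals are disjoint and locally finite, so $\varphi$ is smooth and $F$ is smooth and positive on $(0,\infty)$ with $F(0)=0$, which certainly includes local Lipschitz continuity. Theorem~\ref{r7:master-front} applied with $K=K_2$ then excludes every finite positive pair continuous off zero with finite local energy, which is the claim. The only delicate step is the choice of $\varepsilon_k$: the product of the step factors must stay below $1+\eta$ while each individual step is steep enough that the logarithmic derivative is unbounded. The squared interval length $\varepsilon_k^2$ against the height $\varepsilon_k$ handles both requirements at once, so I expect no real obstacle beyond writing the smooth step explicitly.
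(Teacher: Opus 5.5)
Your proposal is correct and is essentially the paper's proof. The paper also takes $F=G=s^2e^{h(\log s)}$ with $h$ smooth, nondecreasing, zero near $-\infty$, of total rise $M<\log(10/9)$ and with increasingly steep steps. It then uses $P_F(s)\ge s^3/3\ge e^{-M}sF(s)/3$ to get $\Gamma_P=10e^{-M}/3-3>0$, and applies Theorem~\ref{r7:master-front} with $p_0=q_0=2$, $A_-=B_-=0$, $\Lambda=3$. Your steps at $s=2^k$ (with $k\ge1$ understood), of height about $\varepsilon2^{-k}$ and logarithmic width about $\varepsilon_k^2$, play exactly the role of the paper's steps at $\log s=3k$ of height $\tau e^{-k}$ and width $e^{-k^2}$.
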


\begin{proof}
Let $\chi\in C^\infty(\mathbb R)$ be nondecreasing, equal to
zero on $(-\infty,0]$ and to one on $[1,\infty)$.
Choose $\tau>0$ so that
\[
 M:=\tau\sum_{k\ge2}e^{-k}<\log(10/9),\qquad
 \varepsilon_k=e^{-k^2},
\]
and define
\[
 h(t)=\tau\sum_{k\ge2}e^{-k}
                   \chi((t-3k)/\varepsilon_k),\qquad
 F(s)=G(s)=s^2e^{h(\log s)}.
\]
The sum is locally finite in $t$, $h$ is smooth and nondecreasing,
and $0\le h\le M$. Moreover, $h=0$ on $(-\infty,6]$, so
$F(s)=s^2$ near zero and $F(0)=0$ gives a smooth extension there.
Choose $\xi\in(0,1)$ with $\chi'(\xi)>0$.
The transition intervals are disjoint, and
\[
 h'(3k+\xi\varepsilon_k)
   =\tau e^{k^2-k}\chi'(\xi)\longrightarrow\infty.
\]
Thus the logarithmic response derivative equals $2+h'(\log s)$,
has lower bound two, and has no finite upper bound.
For every $s>0$,
\[
 P_F(s)\ge\int_0^s t^2\,dt=s^3/3,\qquad
 sF(s)\le e^M s^3.
\]
Hence $\mu_F=\mu_G=e^{-M}/3$ are valid primitive bounds.
The Riesz kernel has $\lambda=\Lambda=3$ and $A=B=0$, giving
\[
 \Gamma_P=10e^{-M}/3-3>0.
\]
Theorem~\ref{r7:master-front} applies with $p_0=q_0=2$.
\end{proof}

\subsection{Why asymptotic response exponents do not suffice}

The full-range primitive hypothesis cannot be replaced merely by
strict subcritical regular variation at zero and infinity.
Here regular variation of index $q$ at an endpoint means that
$F(cs)/F(s)\to c^q$ for every fixed $c>0$ as $s$ tends to that
endpoint.

\begin{example}
\label{r6:regular-variation-counterexample}
There is a positive strictly increasing
$F\in C^\infty((0,\infty))\cap C([0,\infty))$, with $F(0)=0$,
which is regularly varying of index $4$ both at zero and infinity,
yet the system
\[
 u=I_2(F(v)),\qquad v=I_2(F(u))\quad\hbox{in }\mathbb R^3
\]
has a smooth positive entire pair with finite total nonlinear
energy. The index $4$ is strictly below the scalar critical index
$5$ in this dimension.
\end{example}

\begin{proof}
Choose $\rho\in C^\infty([0,\infty))$, with $0\le\rho\le1$,
$\rho=1$ on $[0,1]$ and $\rho=0$ on $[2,\infty)$. Put
\[
 h(s)=\int_0^s\rho(t)\,dt,\qquad
 F(s)=s^4\left[\frac29+\frac{16}{9}h(s)^3\right].
\]
The function $h$ is nondecreasing, equals $s$ for $s\le1$, and
is a positive constant for $s\ge2$. Thus $F$ is positive and
strictly increasing on $(0,\infty)$, is smooth, and satisfies
\[
 F(s)\sim\frac29s^4\quad(s\downarrow0),\qquad
 F(s)\sim C s^4\quad(s\to\infty)
\]
for a positive constant $C$.
Take
\[
 u(x)=v(x)=(1+|x|^2)^{-1/3}.
\]
Direct radial differentiation gives
\begin{equation}
 -\Delta u
 =\frac29(1+|x|^2)^{-4/3}
   +\frac{16}{9}(1+|x|^2)^{-7/3}
 =\frac29u^4+\frac{16}{9}u^7=F(u),
 \label{r6:explicit-nonpower-source}
\end{equation}
because $0<u\le1$ and hence $h(u)=u$.
For completeness, write $f(r)=F(u(r))$. The radial equation gives
\[
 -r^2u'(r)=\int_0^r s^2f(s)\,ds.
\]
Using $u(\infty)=0$ and Tonelli's theorem, integration from $r$
to infinity yields
\[
 u(r)=\frac1r\int_0^r s^2f(s)\,ds+
                      \int_r^\infty s f(s)\,ds.
\]
Both integrals are finite: $f$ is bounded at zero and is
$O(r^{-8/3})$ at infinity. This is the Newton potential formula
with $K_2(z)=(4\pi|z|)^{-1}$, so the asserted exact identities
hold everywhere, including at zero. Finally,
\[
 \int_{\mathbb R^3}uF(u)
 =4\pi\int_0^\infty r^2
       \left[\frac29u(r)^5+\frac{16}{9}u(r)^8\right]dr<\infty:
\]
the integrand is $O(r^2)$ near zero and $O(r^{-4/3})$ at infinity.
The primitive gap fails on the full response range. Indeed,
\[
 \frac{P_F(1)}{F(1)}
 =\frac{2/45+2/9}{2}=\frac2{15},\qquad
 \frac{F'(1)}{F(1)}=\frac{20}{3}>5.
\]
Thus any admissible primitive constants satisfy
$\mu_F,\mu_G\le2/15$, and
$3\mu_F+3\mu_G\le4/5<1=\Lambda$.
The intermediate response amplitudes prevent the positive primitive
gap, although both asymptotic indices are subcritical.
\end{proof}

The master theorem concerns exact positive potential equations.
It does not assert nonexistence for arbitrary monotone responses,
for asymptotic response exponents alone, or for unequal kernels.
Its PDE consequences require the independent representation theorem
proved in Subsection~\ref{r7:nonlinear-pde-section}.
A solution-dependent effective coefficient would not justify an
application: its required dilation modulus would depend on unknown
solution oscillation. Response normalization instead preserves one
class with uniform source, pressure and primitive bounds.

\section{Coefficient and kernel applications}
\label{sec:coefficient-proof}

The following consequences use the single primitive theorem.

\begin{corollary}
\label{cor:power}
Let $0<\sigma<n$, $p,q>0$, $pq>1$ and $A,B\in\mathbb R$.
If $(n+A)/(p+1)+(n+B)/(q+1)>n-\sigma$, there is no finite
strictly positive pair, continuous away from zero, satisfying the
Riesz integral system $u=I_\sigma(av^p)$, $v=I_\sigma(bu^q)$ with
$a(x)=|x|^A$, $b(x)=|x|^B$ and finite local nonlinear energy.
\end{corollary}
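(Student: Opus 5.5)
The plan is to derive Corollary~\ref{cor:power} directly from Theorem~\ref{r7:master-front}. We specialize to $K=K_\sigma$, $F(s)=s^q$, $G(s)=s^p$, $a=|x|^A$, $b=|x|^B$, and check its hypotheses one by one.

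\emph{Kernel.} Take $k(r)=c_{n,\sigma}r^{-\lambda}$. It is $C^1$ and positive on $(0,\infty)$, it satisfies \eqref{r6:kernel-class} with $c_K=C_K=c_{n,\sigma}$, and $\theta_K\equiv\lambda$, so we may take $\Lambda=\lambda=n-\sigma$.

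\emph{Coefficients.} The powers $|x|^A$ and $|x|^B$ are continuous and positive off zero. They satisfy $x\cdot\nabla\log a=A$ and $x\cdot\nabla\log b=B$ exactly, so \eqref{v5:euler-lower} holds with $A_-=A$ and $B_-=B$; equivalently \eqref{r6:weak-euler-front} holds with equality.

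\emph{Dini orbit.} The normalized coefficients are independent of $R$:
\[
 a_R(x)=|x|^A/|e_0|^A=|x|^A .
\]
On $\mathcal A$ the function $\log a_R=A\log|x|$ is Lipschitz with constant at most $2|A|$. Hence \eqref{eq:front-dini-orbit} holds with $\Omega(s)=\min\{s,4\}$, which is Dini, and $H_a=2|A|$, $H_b=2|B|$ (any positive constant if $A=0$), for every $R_0>0$. In dimension one the comparison is within each branch, where the same bound holds.

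\emph{Responses.} The logarithmic derivatives of $F,G$ are exactly $q$ and $p$, so $q_0=q$, $p_0=p$ and $p_0q_0=pq>1$. The primitives give exact ratios
\[
 \frac{P_F(s)}{sF(s)}=\frac1{q+1},\qquad \frac{P_G(s)}{sG(s)}=\frac1{p+1},
\]
so $\mu_F=1/(q+1)$ and $\mu_G=1/(p+1)$. The response functions are continuous on $[0,\infty)$, vanish at zero, are positive on $(0,\infty)$, and are locally Lipschitz there, even when $p$ or $q$ is less than one.

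\emph{Gap.} The quantity
\[
 \Gamma_P=\frac{n+B}{q+1}+\frac{n+A}{p+1}-(n-\sigma)
\]
is exactly the assumed positive quantity.

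A finite strictly positive pair, continuous off zero, solving the Riesz system with finite local nonlinear energy is then exactly a pair as in \eqref{v5:master-system}--\eqref{eq:main-local-energy}, since $I_\sigma H=K_\sigma*H$. The theorem therefore excludes it.

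There is essentially no obstacle. The only points needing care are that no sign restriction on $A,B$, or on $n+A$ and $n+B$, is needed, because Lemma~\ref{r7:source-parameters} derives positivity internally, and that $\Omega$ must be a finite nondecreasing function on $[0,4]$ tending to zero at zero, which $\min\{s,4\}$ is.
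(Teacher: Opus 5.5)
Your proposal is correct and follows the paper's own proof. Like the paper, you specialize Theorem~\ref{thm:dini-front} to the Riesz kernel with $\Lambda=n-\sigma$ and to the power responses with $\mu_F=1/(q+1)$, $\mu_G=1/(p+1)$, use the exact Euler derivatives $A,B$, and use the Lipschitz modulus $\Omega(s)=s$ for the $R$-independent normalized coefficients $|x|^A,|x|^B$ on $\mathcal A$. Your extra checks, such as $|\log|x|-\log|y||\le 2|x-y|$ on $\mathcal A$ and the fact that no sign condition on $n+A,n+B$ is needed, are simply details the paper leaves implicit.
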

\begin{proof}
The Euler derivatives equal $A,B$. On the annulus the normalized coefficient
logarithms are Lipschitz, so $\Omega(s)=s$ satisfies
Theorem~\ref{thm:dini-front}.
\end{proof}

\subsection{A Dini coefficient family without a uniform H\"older modulus}

The following construction exhibits a coefficient class beyond every
uniform H\"older bound on normalized dilations. Each coefficient is
nonetheless smooth on every compact set away from zero.

\begin{proposition}
\label{r5:nonholder-coefficients}
Let $n\ge1$, $A\in\mathbb R$, and let $h$ be a positive smooth
function on $\mathbb S^{n-1}$. Choose a smooth nondecreasing function
$\chi:\mathbb R\to[0,1]$ that vanishes on $(-\infty,0]$ and equals
one on $[1,\infty)$. Set
\[
 \varepsilon_k=e^{-k^2},\qquad
 \psi(t)=\sum_{k=2}^\infty e^{-k}
       \chi\bigl((t-3k)/\varepsilon_k\bigr),\qquad
 a(x)=|x|^A h(x/|x|)e^{\psi(\log|x|)}.
\]
Then $a$ is positive and smooth away from zero, and
$\vartheta_a\ge A$, while $\sup_{x\ne0}\vartheta_a(x)=\infty$.
Its normalized logarithmic dilations satisfy the coefficient estimate in
\eqref{eq:front-dini-orbit} uniformly for all $R>0$, with a Dini
modulus of the form
\[
 \Omega(0)=0,\qquad
 \Omega(s)=\exp\!\left[-\frac14\sqrt{\log(e/s)}\right]
       \quad(0<s\le1/4),
\]
extended constantly to $[1/4,4]$. However, for every $\alpha>0$ and
every $R_0>0$ there is no finite constant $H$ such that
\[
 |\log a_R(x)-\log a_R(y)|\le H|x-y|^\alpha
 \quad(R\ge R_0,\ x,y\in\mathcal A).
\]
For $n=1$, the comparisons are within the same sign branch.
\end{proposition}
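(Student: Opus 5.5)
The plan is to work entirely with the logarithmic radial profile. Write $a(x)=|x|^Ah(\omega)e^{\psi(\log|x|)}$ with $\omega=x/|x|$, so that
\[
 \log a_R(x)=A\log|x|+\log h(\omega)-\log h(e_0)
 +\psi(\log R+\log|x|)-\psi(\log R).
\]
\emph{Smoothness and the Euler derivative.} The sum defining $\psi$ is locally finite, since the $k$-th term is constant outside $[3k,3k+\varepsilon_k]$. Hence $\psi$ is smooth and nondecreasing, with $0\le\psi\le\sum e^{-k}$, and $a$ is smooth and positive off zero. Then $\vartheta_a=A+\psi'(\log|x|)\ge A$. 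At $t_k=3k+\xi\varepsilon_k$, where $\chi'(\xi)>0$, we get $\psi'(t_k)=e^{-k}e^{k^2}\chi'(\xi)\to\infty$, so $\sup_{x\ne0}\vartheta_a=\infty$. This is the same computation as in Proposition~\ref{r7:unbounded-response-elasticity}.

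\emph{The Dini modulus.} On $\mathcal A$ the terms $A\log|x|$ and $\log h(\omega)$ are Lipschitz with $R$-independent constants. Here $|\log|x|-\log|y||\le 2|x-y|$, and for $n\ge2$ the angular difference is controlled by $C|x-y|$ on $\mathcal A$; for $n=1$ we compare only within a branch. Since $\Omega(s)\ge c\,s$ for small $s$, these terms are absorbed by $\Omega$. The main term is $|\psi(T+\log|x|)-\psi(T+\log|y|)|$, uniformly in $T=\log R$. Put $\tau=|\log|x|-\log|y||\le 2|x-y|=:2s$, and note that both arguments lie in an interval of length $\log4<3$. Transition intervals are $3$ apart, so at most one or two transitions, of some indices $k$, meet this window. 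Each transition contributes at most $e^{-k}\min\{1,\tau\|\chi'\|_\infty/\varepsilon_k\}$. We then estimate $\sup_k e^{-k}\min\{1,C s e^{k^2}\}$ by splitting at $k_0\approx\sqrt{\log(1/s)}$. For $k\ge k_0$ use the bound $e^{-k}\le e^{-k_0}$. For $k<k_0$ use $e^{-k}Cse^{k^2}\le Cse^{k^2}$, which is at most $C\sqrt s$ when $k^2\le\tfrac12\log(1/s)$. Choosing $k_0=\sqrt{\tfrac12\log(1/s)}$, the total is bounded by $C(e^{-k_0}+\sqrt s)\le C\exp[-\tfrac14\sqrt{\log(e/s)}]$ for $s\le1/4$, after adjusting constants. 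Monotonicity of $\Omega$ on $[1/4,4]$ and boundedness of $\psi$ handle large $s$. For the Dini property, substitute $s=e^{1-u^2}$, so $ds/s=-2u\,du$; this gives $\int\Omega\,ds/s\approx\int 2u e^{-u/4}\,du<\infty$.

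\emph{Failure of every Hölder bound.} This step is the main obstacle, because the witness scale has to be chosen so that $\alpha$ cannot compensate. Fix $\alpha>0$ and $R_0$. Take $k$ large with $e^{3k}\ge R_0$, set $R=e^{3k}$, choose $y=e_0$, and let $x=e^{\varepsilon_k}e_0\in\mathcal A$. Then $\log a_R(x)-\log a_R(y)=A\varepsilon_k+e^{-k}$, since only the $k$-th term changes, and it changes fully from $0$ to $1$. Meanwhile $|x-y|\le 2\varepsilon_k=2e^{-k^2}$. A Hölder bound would therefore force
\[
 e^{-k}-|A|e^{-k^2}\le H2^\alpha e^{-\alpha k^2},
\]
which is false for large $k$, because $\alpha k^2-k\to\infty$. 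The only care needed is that the two points lie in the same branch when $n=1$, and that the radial choice isolates a single transition, which holds because the transition intervals are $3$-separated.
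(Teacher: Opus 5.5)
Your proposal is correct and follows the paper's proof essentially step by step. It uses the same locally finite sum and derivative computation at $3k+\xi\varepsilon_k$, the same reduction of the modulus to the radial increment $|\log|x|-\log|y||\le2|x-y|$ with a split near $k\approx\sqrt{\tfrac12\log(1/s)}$, the same substitution for the Dini integral, and the same witnesses $R_k=e^{3k}$, $y_k=e_0$, $x_k=e^{\varepsilon_k}e_0$.

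The only difference is cosmetic. You bound the $\psi$-increment by the at most one or two transitions meeting the window and take a supremum over $k$. The paper instead sums $e^{-k}\min(1,de^{k^2})$ over all $k$. That avoids the separation observation, at the cost of an extra factor $K$ in the head of the sum.
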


\begin{proof}
The defining sum is locally finite on $\mathbb R$, and
$0\le\psi\le\sum_{k\ge2}e^{-k}$. Thus $a$ is smooth away from zero
and $\vartheta_a=A+\psi'(\log|x|)\ge A$.
Choose $\xi\in(0,1)$ with $\chi'(\xi)>0$. The identity
$\psi'(3k+\xi\varepsilon_k)=e^{k^2-k}\chi'(\xi)$
shows that its supremum is infinite.
Write $M=\max(1,\|\chi'\|_\infty)$. For $0<d\le1/4$ and every
$t\in\mathbb R$,
\begin{equation}
 |\psi(t+d)-\psi(t)|
 \le M\sum_{k=2}^\infty e^{-k}\min(1,de^{k^2}).
 \label{r5:step-modulus}
\end{equation}
For small $d$, put $L=\log(e/d)$ and
$K=\lfloor\sqrt{L/2}\rfloor$. Splitting the sum at $K$ gives
\[
 \sum_{k=2}^\infty e^{-k}\min(1,de^{k^2})
 \le dK e^{K^2}+\frac{e^{-K-1}}{1-e^{-1}}
 \le C\bigl(\sqrt L e^{-L/2}+e^{-\sqrt{L/2}}\bigr)
 \le C e^{-\sqrt L/2}.
\]
Changing $C$ covers the remaining $d\le1/4$. The same estimate
holds for negative increments by translating $t$.

For $x,y\in\mathcal A$,
$|\log|x|-\log|y||\le2|x-y|$. The normalization term
$\log a(Re_0)$ cancels in differences. The radial power and the
smooth angular factor contribute at most $C|x-y|$; for $n=1$ the
angular factor is constant on each branch. Applying
\eqref{r5:step-modulus} with the logarithmic radial increment and
enlarging the constant therefore gives
\[
 |\log a_R(x)-\log a_R(y)|\le C\Omega(|x-y|)
 \quad(R>0).
\]
For distances bounded away from zero this follows instead from the
boundedness of $\psi$ and of the smooth terms on $\mathcal A$.
Finally, the substitution $z=\sqrt{\log(e/s)}$ shows that
\[
 \int_0^{1/4}\Omega(s)\,\frac{ds}{s}
 =2\int_{\sqrt{\log(4e)}}^\infty ze^{-z/4}\,dz<\infty.
\]

To disprove a uniform H\"older estimate, take
$R_k=e^{3k}$, $y_k=e_0$, and $x_k=e^{\varepsilon_k}e_0$.
The $k$th step changes from zero to one between the two logarithmic
radii, while every other step is constant there. Hence
\[
 \log a_{R_k}(x_k)-\log a_{R_k}(y_k)
       =A\varepsilon_k+e^{-k}.
\]
For large $k$ its absolute value is at least $e^{-k}/2$, whereas
$|x_k-y_k|\le2e^{-k^2}$. The ratio to $|x_k-y_k|^\alpha$ tends
to infinity for every $\alpha>0$, with $R_k\ge R_0$ eventually.
\end{proof}

\begin{corollary}
\label{r5:nonholder-nonexistence}
Let $0<\sigma<n$, $p,q>0$, $pq>1$, and
\[
 \frac{n+A}{p+1}+\frac{n+B}{q+1}>n-\sigma.
\]
Take $a$ as in Proposition~\ref{r5:nonholder-coefficients} and
$b(x)=|x|^B g(x/|x|)$, where $g>0$ is smooth on the sphere.
There is no finite strictly positive pair, continuous away from
zero, satisfying
\[
 u=I_\sigma(av^p),\qquad v=I_\sigma(bu^q),\qquad
 \int_{B_1}(bu^{q+1}+av^{p+1})<\infty.
\]
The corresponding nonexistence conclusion also holds in each
of the three solution classes of Corollary~\ref{cor:pde-front}.
\end{corollary}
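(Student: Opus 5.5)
The integral part of Corollary~\ref{r5:nonholder-nonexistence} follows from Theorem~\ref{thm:dini-front} (in the power form of Corollary~\ref{cor:power}) with $K=K_\sigma$, $F(s)=s^q$, $G(s)=s^p$ and $\Lambda=\lambda=n-\sigma$. It remains to verify the coefficient hypotheses for $a$ and $b$, and then to invoke Corollary~\ref{cor:pde-front} for the three PDE classes.

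\emph{Euler bounds.} Proposition~\ref{r5:nonholder-coefficients} gives $\vartheta_a=A+\psi'(\log|x|)\ge A$, with $a$ smooth off zero. Since $b=|x|^Bg(x/|x|)$ with $g$ smooth and positive, $\vartheta_b=B$ exactly. For $C^1$ coefficients the pointwise bounds are equivalent to \eqref{v5:euler-lower}, so we may take $A_-=A$ and $B_-=B$.

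\emph{Dini orbit condition.} Proposition~\ref{r5:nonholder-coefficients} gives the estimate for $\log a_R$ uniformly in $R>0$, with the stated Dini modulus $\Omega$. For $b$, the normalization cancels in differences, and
\[
 \log b_R(x)-\log b_R(y)=B(\log|x|-\log|y|)+\log g(x/|x|)-\log g(y/|y|),
\]
which is Lipschitz on $\mathcal A$, uniformly in $R$. For $n=1$ we compare within a sign branch. The Lipschitz bound $C|x-y|$ is dominated by $C'\Omega(|x-y|)$ on $[0,4]$. Indeed, $s\le C\Omega(s)$ near zero because $\Omega(s)=\exp[-\tfrac14\sqrt{\log(e/s)}]$ decays more slowly than any power, and away from zero $\Omega$ is bounded below by a positive constant. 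A single modulus $\Omega$ therefore serves both coefficients with finite $H_a,H_b$, and \eqref{eq:front-dini-orbit} holds.

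\emph{Response and gap conditions.} For powers, $q_0=q$, $p_0=p$, $\mu_F=1/(q+1)$ and $\mu_G=1/(p+1)$, so $\Gamma_P>0$ is exactly the displayed hypothesis. Theorem~\ref{thm:dini-front} then excludes pairs with finite local energy $\int_{B_1}(bu^{q+1}+av^{p+1})$, which is \eqref{eq:main-local-energy}.

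\emph{PDE classes.} Corollary~\ref{cor:pde-front} assumes only the continuous-coefficient, weak Euler and Dini hypotheses of Theorem~\ref{thm:dini-front}, together with $\gamma_->0$, which is again the displayed inequality. All of these were verified above, so the nonexistence conclusion holds in each of its three solution classes.

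The only point needing care, and the main (mild) obstacle, is that the theorem uses a single modulus for both coefficients; the argument above handles this by dominating the Lipschitz bound for $b$ by a multiple of $\Omega$.
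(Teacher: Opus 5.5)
Your proof is correct and follows the paper's argument: you verify the Euler bounds and a common Dini modulus for $a$ and $b$, apply Theorem~\ref{thm:dini-front} with power responses, and then apply Corollary~\ref{cor:pde-front}. Your explicit check that $s\le C\Omega(s)$ supplies the step the paper leaves implicit when it says both coefficients share the modulus $\Omega$. The parenthetical appeal to Corollary~\ref{cor:power} is slightly off, since that corollary assumes pure-power coefficients, but this is harmless because you in fact apply the theorem directly.
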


\begin{proof}
Proposition~\ref{r5:nonholder-coefficients} supplies the uniform
Dini modulus and the Euler lower bound $A$ for $a$. The normalized
logarithmic dilations of $b$ are Lipschitz on the annulus, and
$\vartheta_b=B$. Thus both coefficients satisfy the hypotheses of
Theorem~\ref{thm:dini-front} with the same modulus $\Omega$.
Apply that theorem and Corollary~\ref{cor:pde-front}, respectively.
\end{proof}

\subsection{The local energy condition at the origin}

\begin{example}
\label{r5:energy-boundary}
The local energy hypothesis in Theorem~\ref{thm:dini-front} cannot
be omitted from the class of punctured integral pairs. Indeed, take
$n=3$, $\sigma=2$, $p=q=4$, $a=b=1$, and
\[
 u(x)=v(x)=c|x|^{-2/3},\qquad c^3=\frac29.
\]
These functions satisfy $u=I_2(v^4)$ and $v=I_2(u^4)$ at every
$x\ne0$, and the subcriticality gap is $\gamma_-=1/5>0$.
Their sources are locally integrable and their potentials are
finite off zero, but
\[
 \int_{B_1}(u^5+v^5)
 =8\pi c^5\int_0^1 r^{-4/3}\,dr=\infty.
\]
\end{example}
\begin{proof}
For the normalization $K_2(x)=(4\pi|x|)^{-1}$, the radial
Newton potential formula gives, for $r>0$,
\[
 I_2(c^4|\cdot|^{-8/3})(r)
 =c^4\left(\frac1r\int_0^r s^{-2/3}\,ds
                 +\int_r^\infty s^{-5/3}\,ds\right)
 =\frac92c^4r^{-2/3}=cr^{-2/3}.
\]
The two convergent integrals establish both the local source
integrability and the finite potential tail. The displayed energy
divergence follows directly in polar coordinates. This pair is
singular at zero, so it also lies outside the origin-continuous
PDE classes in Corollary~\ref{cor:pde-front}.
\end{proof}

\begin{example}
\label{r6:logperiodic-kernel}
For $0<\lambda<n$ and $|\varepsilon|<\lambda$, consider
\[
 K_\varepsilon(z)=|z|^{-\lambda}
       \exp\{\varepsilon\sin(\log|z|)\}.
\]
This positive radial kernel is not homogeneous when $\varepsilon\ne0$.
It is comparable to $|z|^{-\lambda}$, and its logarithmic slope is
$\lambda-\varepsilon\cos(\log r)$.
Theorem~\ref{r6:radial-kernel-nonexistence} applies whenever
\[
 \frac{n+A_-}{p+1}+\frac{n+B_-}{q+1}-\lambda>|\varepsilon|.
\]
More generally one may replace $\varepsilon\sin t$ by any bounded
$C^1$ function $h(t)$ with $\|h'\|_\infty<\lambda$, using
$\Lambda=\lambda+\|h'\|_\infty$.
The argument uses a uniformly controlled kernel orbit rather than
homogeneity of one fixed kernel. 
% No assertion with only an asymptotic
% slope bound is made.
\end{example}

\begin{corollary}
\label{r6:ellipsoidal-kernel}
Let $L\in GL(n)$, $c_1>0$, and let $k$ satisfy
\eqref{r6:radial-kernel-class}. The conclusion of
Theorem~\ref{r6:radial-kernel-nonexistence} remains valid with
$K(z)=c_1k(|Lz|)$, under the same coefficient and gap hypotheses.
For power responses, arbitrary ellipsoidal Riesz kernels have the original
strict gap $\gamma_->0$.
% with no small-anisotropy assumption.
\end{corollary}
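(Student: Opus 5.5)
The plan is to reduce Corollary~\ref{r6:ellipsoidal-kernel} to Theorem~\ref{r6:radial-kernel-nonexistence} by a linear change of variables that turns the ellipsoidal kernel into a radial one, while checking that the coefficient hypotheses survive the change. Suppose a pair $(u,v)$ solves $u=K*(aG(v))$, $v=K*(bF(u))$ with $K(z)=c_1k(|Lz|)$ and has finite local energy. Put $\tilde u(x)=u(L^{-1}x)$, $\tilde v(x)=v(L^{-1}x)$, $\tilde a(x)=a(L^{-1}x)$, $\tilde b(x)=b(L^{-1}x)$. Substituting $y=L^{-1}y'$ in the convolution gives
\[
 \tilde u=\tilde K*(\tilde aG(\tilde v)),\qquad
 \tilde v=\tilde K*(\tilde bF(\tilde u)),\qquad
 \tilde K(z)=c_1|\det L|^{-1}k(|z|),
\]
pointwise off zero. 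The kernel $\tilde K$ is radial and satisfies \eqref{r6:kernel-class} with the same $\Lambda$ and rescaled constants $c_K,C_K$. The local energy condition transfers because $L(B_1)$ contains a ball $B_\epsilon$, and Theorem~\ref{r6:radial-kernel-nonexistence} only uses local energy on some ball: its final contradiction works with any fixed positive-radius ball, or one rescales by a fixed dilation. Continuity and positivity are clearly preserved.

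Next verify the coefficient hypotheses. Since $L^{-1}$ commutes with dilations, $\tilde a(tx)=a(tL^{-1}x)\ge t^{A_-}\tilde a(x)$ for $t\ge1$, so the weak Euler bounds \eqref{r6:weak-euler-front}, and hence \eqref{v5:euler-lower}, hold with the same $A_-,B_-$. The gap $\Gamma_P$ is therefore unchanged. The Dini orbit condition is the real check. For $x,y\in\mathcal A$ and $R\ge R_0$,
\[
 \log\tilde a_R(x)-\log\tilde a_R(y)
 =\log a(RL^{-1}x)-\log a(RL^{-1}y).
\]
The points $L^{-1}x,L^{-1}y$ lie in a fixed annulus $\{\|L\|^{-1}/2\le|z|\le2\|L^{-1}\|\}$, with $|L^{-1}x-L^{-1}y|\le\|L^{-1}\||x-y|$. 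The orbit condition \eqref{eq:front-dini-orbit} is stated only on $\mathcal A=\{1/2\le|z|\le2\}$, so I would cover the larger annulus by finitely many dyadic shells and chain. For $z,z'$ in shells $2^j\mathcal A$, apply the hypothesis at scale $R2^j$; the normalizing constants cancel in differences. Points in different but nearby shells are joined through intermediate points lying in overlaps of consecutive shells, so that consecutive points lie in a common copy of $\mathcal A$.

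This chaining yields a modulus $\tilde\Omega(s)=C\Omega(\min\{4,Cs\})$ for small $s$, made constant for larger $s$, which is again a Dini modulus since $\int_0\Omega(Cs)\,ds/s<\infty$. The resulting constants $H$ are finite, and the threshold becomes $\tilde R_0=C R_0$. In dimension one $L$ is a scalar and the sign branches are merely swapped or kept.

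I expect this transfer of the Dini condition to be the main obstacle. The chaining must keep each consecutive pair inside one dilated copy of $\mathcal A$ with distance comparable to $|x-y|$. When $|x-y|$ is small this is automatic after choosing overlapping shells. When $|x-y|$ is large, boundedness of the finitely many chained increments by $H\Omega(4)$ suffices.

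With all hypotheses verified, Theorem~\ref{r6:radial-kernel-nonexistence} excludes $(\tilde u,\tilde v)$ and hence $(u,v)$. For power responses and $k(r)=r^{-\lambda}$ one has $\Lambda=\lambda$ and $\mu_F=1/(q+1)$, $\mu_G=1/(p+1)$, so $\Gamma_P=\gamma_-$. This gives the final assertion for arbitrary ellipsoidal Riesz kernels.
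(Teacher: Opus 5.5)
Your proposal is correct and follows the paper's proof. Both arguments use the same ingredients: the substitution $\zeta=Lx$, Euler bounds unchanged under the linear map, transfer of the Dini orbit by rescaling close pairs into a common original annulus and chaining for larger distances (with $R_0$ enlarged by a constant depending on $L$), transfer of local energy, and the identification $\Lambda=\lambda$, $\Gamma_P=\gamma_-$ for power responses. Placing the factor $c_1|\det L|^{-1}$ in the kernel rather than in the coefficients, as the paper does, is immaterial. For the local-energy step, the paper's justification is more direct than yours: continuity off zero makes the energy finite on every bounded set, so finiteness on $B_\epsilon\subset L(B_1)$ already yields the hypothesis on $B_1$, with no need to appeal to how the theorem's proof uses it.
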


\begin{proof}
Under $\zeta=Lx$, put
$U(\zeta)=u(L^{-1}\zeta)$ and $V(\zeta)=v(L^{-1}\zeta)$.
Their radial-kernel coefficients are
\[
 \widetilde a(\zeta)=c_1|\det L|^{-1}a(L^{-1}\zeta),\qquad
 \widetilde b(\zeta)=c_1|\det L|^{-1}b(L^{-1}\zeta).
\]
They satisfy \eqref{r6:weak-euler-front} with the same constants.
Their normalized logarithmic Dini condition follows from the original
one on sufficiently close points of a fixed transformed annulus:
rescale each such pair into a common original annulus, obtaining
$C\Omega(C_L|\zeta-\zeta'|)$. A finite chain of close pairs supplies
a uniform bound for larger distances. The transformed annulus is
connected for $n\ge2$; for $n=1$ work within the same sign branch.
A finite nondecreasing extension of this local modulus remains Dini.
The starting scale $R_0$ may be increased by a constant depending on $L$.
Local energy is unchanged up to a constant Jacobian and replacement
of a ball by an ellipsoid, and is finite on every bounded set by
continuity off zero. Apply the radial theorem to $U,V$.
For $k(r)=r^{-\lambda}$ one has $\Lambda=\lambda$, proving the
last assertion.
\end{proof}

\subsection{Continuous coefficients with singular radial variation}

\begin{example}
\label{r6:cantor-coefficient}
Let $C:\mathbb R\to[0,1]$ be the Cantor staircase on $[0,1]$,
extended constantly outside this interval, and let $h_a,h_b$ be
positive smooth functions on $\mathbb S^{n-1}$. Then
\[
 a(r\omega)=r^{A_-}h_a(\omega)e^{C(\log r)},\qquad
 b(r\omega)=r^{B_-}h_b(\omega)
\]
are positive continuous coefficients satisfying
\eqref{v5:euler-lower}. Their normalized logarithmic dilations have
a common H\"older modulus, hence a Dini modulus, although $a$ is not
$C^1$ off zero. Consequently Theorem~\ref{r7:master-front} applies to power responses whenever
\eqref{r6:kernel-gap} holds, including to the PDE classes of
Corollary~\ref{cor:pde-front} when $K=K_\sigma$.
\end{example}
\begin{proof}
The staircase is nondecreasing and H\"older continuous of exponent
$\log2/\log3$. Thus $r^{-A_-}a(r\omega)$ is nondecreasing and
the finite-dilation Euler bound holds; the bound for $b$ is equality.
On the fixed annulus, $\log r$ is Lipschitz and the angular functions
are smooth. The H\"older constant of
$C(\log R+\log r)$ is independent of $R$, giving the claimed uniform
modulus. In dimension one comparisons stay within a sign branch.
The nonconstant singular staircase has a nonzero singular derivative
measure, which excludes $C^1$ regularity of $a$ on the corresponding
annulus. Apply the master theorem and its Riesz PDE consequence.
\end{proof}

\section{Finite-energy rigidity and the global Euler hypothesis}
\label{r6:critical-boundary-section}

Finite total interaction permits a global virial limit without a
coefficient orbit condition.  This gives a rigidity statement at the
critical gap.  It also identifies a boundary of the preceding
nonexistence theorem: an Euler condition imposed only outside a compact
set does not suffice.

\begin{theorem}
\label{r6:critical-rigidity}
Let $n\geq1$, $0<\sigma<n$, and $p,q>0$.  Let
$a,b,u,v$ be strictly positive and continuous on
$\mathbb R^n\setminus\{0\}$, and suppose that the finite potential
identities
\[
 u=I_\sigma(av^p),\qquad v=I_\sigma(bu^q)
\]
hold at every nonzero point.  Assume that
\begin{equation}
 J:=\int_{\mathbb R^n}bu^{q+1}
   =\int_{\mathbb R^n}av^{p+1}<\infty.
 \label{r6:total-energy}
\end{equation}
The equality in \eqref{r6:total-energy} is automatic by Tonelli's
 theorem if either integral is finite.  Suppose that, for real $A,B$,
\begin{equation}
 \begin{split}
 \mu_a&:=\operatorname{div}(xa)-(n+A)a\,dx\geq0,\\
 \mu_b&:=\operatorname{div}(xb)-(n+B)b\,dx\geq0
 \end{split}
 \quad\text{in }\mathcal D'(\mathbb R^n\setminus\{0\}),
 \label{r6:critical-euler-measures}
\end{equation}
and set
\[
 \gamma=\frac{n+A}{p+1}+\frac{n+B}{q+1}-(n-\sigma).
\]
If $\gamma>0$, no such pair exists.  If $\gamma=0$, then
$\mu_a=\mu_b=0$.  In the latter case there are strictly positive
continuous functions $h_a,h_b$ on $\mathbb S^{n-1}$ such that
\begin{equation}
 a(r\omega)=r^A h_a(\omega),\qquad
 b(r\omega)=r^B h_b(\omega)
 \quad(r>0,\ \omega\in\mathbb S^{n-1}).
 \label{r6:critical-homogeneity}
\end{equation}
In particular, at the critical gap either nonzero Euler defect measure
excludes finite-total-energy positive pairs. 
% % No Dini orbit assumption
% is imposed here.
\end{theorem}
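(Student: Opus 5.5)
The plan is to apply the exact measure identity \eqref{r6:measure-virial} of Lemma~\ref{r6:transport-virial} to a dilating family of cutoffs, and to pass to the limit using finite total energy. Finite total energy lets both origin and infinity errors be discarded, so no coefficient orbit condition is needed. Take $F(t)=t^q$, $G(t)=t^p$ and $K=K_\sigma$, so $\theta_K=\lambda=n-\sigma$, $P_F(u)=u^{q+1}/(q+1)$ and $P_G(v)=v^{p+1}/(p+1)$. For $R\ge1$ and small $\rho$, use the field $X_{R,\rho}(x)=x\eta(x/R)\chi_\rho(x)$, with $\eta,\chi_\rho$ as in the proof of Lemma~\ref{r6:transport-virial}.

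\emph{Decomposition of the identity.} By \eqref{vi:weighted-ibp}, the left side of \eqref{r6:measure-virial} splits as
\[
 \int\eta_R\chi_\rho\Bigl[\frac{u^{q+1}}{q+1}d\mu_b+\frac{v^{p+1}}{p+1}d\mu_a\Bigr]
 +\Bigl(\frac{n+B}{q+1}+\frac{n+A}{p+1}\Bigr)\int\eta_R\chi_\rho\,bu^{q+1}
\]
plus error terms carrying $w_R=-x\cdot\nabla\eta(x/R)$ and $x\cdot\nabla\chi_\rho$. Here we use $\int bu^{q+1}=\int av^{p+1}=J$ in the limit. The right side is $\lambda\iint\Phi_{X_{R,\rho}}\,d\pi$.

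\emph{Limits and conclusion.} First let $\rho\downarrow0$. The errors vanish exactly as in \eqref{vi:origin-ibp-error} and \eqref{vi:origin-pair-error}, and the $\mu$-terms increase by monotone convergence since $\mu_a,\mu_b\ge0$. Then let $R\to\infty$. The functions $w_R$ are bounded, supported in $R\le|x|\le2R$, and tend to zero pointwise, so $\int w_R(bu^{q+1}+av^{p+1})\to0$ by dominated convergence with $J<\infty$. Since $\Phi_{X_R}$ is bounded by $\mathrm{Lip}(X_R)$, uniformly in $R$, and converges pointwise to $1$, dominated convergence in $L^1(\pi)$ gives $\lambda\iint\Phi_{X_R}\,d\pi\to\lambda\pi(\mathbb R^{2n})=\lambda J$. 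Monotone convergence then yields
\[
 \int\frac{u^{q+1}}{q+1}\,d\mu_b+\int\frac{v^{p+1}}{p+1}\,d\mu_a+(\gamma+\lambda)J=\lambda J,
\]
that is, $\int\frac{u^{q+1}}{q+1}d\mu_b+\int\frac{v^{p+1}}{p+1}d\mu_a=-\gamma J$. The left side is nonnegative and $J>0$, since the pair is strictly positive. Hence $\gamma>0$ gives a contradiction. If $\gamma=0$, both nonnegative integrals vanish; because $u,v>0$ are continuous off zero, this forces $\mu_a=\mu_b=0$.

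\emph{Homogeneity and the main obstacle.} When $\mu_a=0$, the ray-monotonicity argument of Section~\ref{vi:section} applied to both $\pm$ inequalities shows that $r^{-A}a(r\omega)$ is constant along every ray; continuity then makes $h_a(\omega):=a(\omega)$ continuous and positive, and likewise for $b$. The main obstacle I expect is justifying that $\Phi_{X_R}\to1$ $\pi$-almost everywhere with a uniform bound. This needs the normalization $\Phi_x\equiv1$ for the field $X=x$ and the fact that $\nabla X_R$ is uniformly bounded, which holds because $|x\nabla\eta(x/R)|\le C$. A second point needing care is the monotone passage in the $\mu$-terms, which must be done before subtracting infinite quantities.
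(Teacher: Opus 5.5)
Your proposal is correct and follows essentially the paper's proof: apply the radial transport identity of Lemma~\ref{r6:transport-virial} to $x\eta(x/R)\chi_\rho$, and remove the origin cutoff with local energy. Then let $R\to\infty$ using the finite measure $\pi$ of mass $J$, the uniform Lipschitz bound and $\Phi_{X_R}\to1$, and read off homogeneity from $\mu_a=\mu_b=0$ along rays.

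The one difference is that you obtain the defect identity with equality by monotone convergence. That step needs $\chi$ nondecreasing and $\eta$ radially nonincreasing. Your displayed splitting should also keep $\frac{n+A}{p+1}\int\eta_R\chi_\rho av^{p+1}$ separate until the limit. The paper instead uses Fatou to get $\gamma J+\dots\le0$, which already suffices.
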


\begin{proof}
Put $\lambda=n-\sigma$, $f=bu^q$, $g=av^p$, and
\[
 d\pi(x,y)=f(x)g(y)K_\sigma(x-y)\,dx\,dy.
\]
Tonelli's theorem gives $\pi(\mathbb R^n\times\mathbb R^n)=J>0$.
The positive distributions in
\eqref{r6:critical-euler-measures} are locally finite Radon measures
on the punctured space.

Choose a smooth radial nonincreasing function $\eta$, equal to one on
$B_1$ and zero outside $B_2$, with $0\leq\eta\leq1$.  Set
$\eta_R(x)=\eta(x/R)$, $X_R(x)=x\eta_R(x)$,
$w_R=-x\cdot\nabla\eta_R$, and
\[
 \Phi_R(x,y)=
 \frac{(X_R(x)-X_R(y))\cdot(x-y)}{|x-y|^2}.
\]
Apply the compact radial transport identity from
Lemma~\ref{r6:transport-virial} first with an additional cutoff
$\chi_\rho$ at zero.  Expanding the coefficient divergences separates
the absolutely continuous terms from $\mu_a,\mu_b$.  The terms
containing $x\cdot\nabla\chi_\rho$ tend to zero, since they are bounded
by a constant times
$\int_{B_{2\rho}}(fu+gv)$.  The fields $X_R\chi_\rho$ have uniformly
bounded Lipschitz constants as $\rho\downarrow0$; their kernel factors
therefore converge by dominated convergence against the finite
measure $\pi$.  Retain the nonnegative Euler measure terms and use
Fatou's lemma.  This gives
\begin{equation}
 \begin{split}
 &\frac{n+B}{q+1}\int\eta_Rfu
  +\frac{n+A}{p+1}\int\eta_Rgv
  -\frac1{q+1}\int w_Rfu-\frac1{p+1}\int w_Rgv\\
 &\qquad
  +\frac1{q+1}\int\eta_Ru^{q+1}\,d\mu_b
  +\frac1{p+1}\int\eta_Rv^{p+1}\,d\mu_a
 \leq\lambda\iint\Phi_R\,d\pi.
 \end{split}
 \label{r6:critical-measure-virial}
\end{equation}
The measure integrals are over $\mathbb R^n\setminus\{0\}$; no
coefficient or solution value at zero is needed.

As $R\to\infty$, both cutoff energy integrals tend to $J$.
The functions $w_R$ are uniformly bounded and supported in
$B_{2R}\setminus B_R$, so their energy integrals tend to zero.
Moreover, $\Phi_R\to1$ at every distinct fixed pair and
$|\Phi_R|\leq C$, because $X_R$ has a uniform Lipschitz bound.
Dominated convergence and then Fatou's lemma in
\eqref{r6:critical-measure-virial} yield
\begin{equation}
 \gamma J
 +\frac1{q+1}\int u^{q+1}\,d\mu_b
 +\frac1{p+1}\int v^{p+1}\,d\mu_a\leq0.
 \label{r6:critical-defect-obstruction}
\end{equation}
All three terms are nonnegative when $\gamma\geq0$.  Since $J>0$,
$\gamma>0$ is impossible.  When $\gamma=0$, strict positivity and
continuity of $u,v$ on compact annuli imply
$\mu_a=\mu_b=0$.

Finally, the distributional identity
$\operatorname{div}(xa)=(n+A)a\,dx$ says in polar logarithmic
coordinates that $e^{-At}a(e^t\omega)$ has zero derivative in $t$.
It is independent of $t$ almost everywhere; continuity extends this
identity to every $t$ and every direction.  This proves the first
identity in \eqref{r6:critical-homogeneity}, and the second follows
in the same way.  For $n=1$ the two rays are treated separately.
\end{proof}

\begin{remark}
\label{r6:critical-scope}
The total-energy assumption is essential to this proof: it supplies a
finite global interaction measure.  The theorem does not exclude
critical homogeneous-coefficient solutions, and it does not assert a
critical nonexistence theorem under local energy alone.  Its
subcritical part is the global finite-energy Pohozaev obstruction;
the Dini theorem above treats pairs for which total energy is not
assumed finite.
\end{remark}

\begin{proposition}
\label{r6:compact-euler-defect}
There are strictly positive radial functions
$a,u\in C^\infty(\mathbb R^5)$ such that
\begin{equation}
 -\Delta u=au^2,\qquad u=I_2(au^2),\qquad
 a(x)=1,\quad u(x)=2|x|^{-2}\quad(|x|\geq1),
 \label{r6:compact-defect-pair}
\end{equation}
and $\int_{\mathbb R^5}au^3<\infty$.
The normalized logarithmic coefficient orbits have a uniform
Lipschitz modulus.  Consequently, for $p=q=2$ and $a=b$, even an
exterior Euler bound with strict gap $1/3$ and coefficients constant
outside a compact set does not imply nonexistence.
\end{proposition}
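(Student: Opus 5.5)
The plan is to build the pair from its source, using the explicit singular solution in $\mathbb R^5$ as the exterior profile, and then to read off $a$ from the equation. For $r=|x|$ one has $\Delta r^{-2}=(-2)(-2+5-2)r^{-4}=-2r^{-4}$. Hence $U_0=2r^{-2}$ satisfies $-\Delta U_0=4r^{-4}=U_0^2$ off zero, so the exterior requirement in \eqref{r6:compact-defect-pair} is met there with $a=1$. The task is to replace $U_0$ inside $B_1$ by a smooth strictly superharmonic radial function, and then define $a=-\Delta u/u^2$.

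\emph{Step 1 (the source).} Choose a smooth, strictly positive radial $f$ on $\mathbb R^5$ with $f(x)=4|x|^{-4}$ for $|x|\ge1/2$ and
\[
 \int_0^{1/2}s^4f(s)\,ds=2 .
\]
Equivalently, $\int_0^1s^4f\,ds=4$, which is the value $\int_0^1 s^4\cdot4s^{-4}\,ds$ for the singular source. Such an $f$ exists: take any smooth positive profile on $[0,1/2]$ that agrees to infinite order with $4r^{-4}$ near $r=1/2$, and add a compactly supported smooth nonnegative bump in $(0,1/2)$ to adjust the weighted mass to exactly $2$.

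\emph{Step 2 (the potential).} Set $u=I_2f$. In $\mathbb R^5$ the radial Newton formula reads
\[
 u(r)=\frac1{3r^3}\int_0^rs^4f(s)\,ds+\frac13\int_r^\infty sf(s)\,ds .
\]
The second integral converges because $f=O(r^{-4})$. For $r\ge1$ the matched mass gives
\[
 u(r)=\frac{4}{3r^2}+\frac{2}{3r^2}=\frac{2}{r^2}.
\]
This is the same computation as for $U_0$. Since $f$ is smooth, $u$ is smooth; this follows from Lemma~\ref{pd:local-gain} together with standard elliptic regularity, or directly from the radial formula. Moreover $u$ is strictly positive and $-\Delta u=f$.

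\emph{Step 3 (the coefficient).} Define $a=f/u^2$. This is smooth, strictly positive and radial, and equals $4r^{-4}/(4r^{-4})=1$ for $|x|\ge1$. By construction $-\Delta u=au^2$ and $u=I_2(au^2)$.

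\emph{Step 4 (energy and Dini orbits).} The energy is
\[
 \int au^3=\int fu .
\]
It is finite: the integrand is bounded on $B_1$, and outside $B_1$ it equals $8r^{-6}$, so the radial integrand $8r^{-6}\cdot r^4=8r^{-2}$ is integrable at infinity. For the orbit modulus, take $R_0=2$. Then $a_R\equiv1$ on $\mathcal A$ for every $R\ge R_0$, so the logarithmic orbits are trivially uniformly Lipschitz.

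\emph{Step 5 (the Euler and gap values).} Outside $B_1$ we have $\vartheta_a=0$, so the exterior Euler bound holds with $A=B=0$. With $p=q=2$, $n=5$ and $\sigma=2$, the gap is
\[
 \gamma_-=\tfrac53+\tfrac53-3=\tfrac13 .
\]
Thus $(u,v)=(u,u)$ with $a=b$ is a positive entire pair. This shows that the global Euler hypothesis in Theorem~\ref{r7:master-front} cannot be replaced by an exterior one. Necessarily $a$ violates the global bound $\vartheta_a\ge0$ somewhere inside $B_1$.

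\emph{Main obstacle.} The only delicate point is Step 1: producing an $f$ that is smooth, strictly positive, glued to $4r^{-4}$ with infinite-order matching, and has exactly the right weighted mass. I would first fix a smooth positive profile and then tune a bump amplitude. Positivity is preserved because the required mass $2$ exceeds whatever the base profile contributes, once the base is chosen small on $(0,1/2)$.
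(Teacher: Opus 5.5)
Your proof is correct and follows essentially the paper's construction: a smooth positive radial source equal to $4r^{-4}$ outside a ball with matched enclosed mass, the radial Newton formula giving $u=2r^{-2}$ there, and $a$ defined as the source divided by $u^2$. The only differences are technical. The paper parametrizes by the enclosed mass $M(r)=4rh(r)$ with $h$ nondecreasing, so mass matching and positivity of the source are automatic and no bump tuning is needed; it also verifies the Lipschitz orbit bound uniformly for all $R>0$, whereas your $R\ge2$ version already suffices for \eqref{eq:front-dini-orbit}.
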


\begin{proof}
Choose $h\in C^\infty([0,\infty))$ with $0<h(r)\leq1$ for $r>0$,
$h'\geq0$, $h(r)=r^4$ near zero, and $h(r)=1$ for $r\geq1$.
For example, if $\chi$ is a smooth nondecreasing cutoff, zero on
$[0,1/4]$ and one on $[3/4,\infty)$, then
$h(r)=(1-\chi(r))r^4+\chi(r)$ has these properties.
Define
\begin{equation}
 M(r)=4rh(r),\qquad
 u(r)=\int_r^\infty\frac{M(s)}{s^4}\,ds,\qquad
 \rho(r)=\frac{M'(r)}{r^4},\qquad
 a(r)=\frac{\rho(r)}{u(r)^2}.
 \label{r6:radial-mass-construction}
\end{equation}
Here $M'(r)=4(h(r)+rh'(r))>0$ for $r>0$.  Near zero,
$M(r)=4r^5$, $u(r)=u(0)-2r^2$, and $\rho(r)=20$.
Thus $u,\rho,a$ extend smoothly and positively across zero.
Radial differentiation in dimension five gives
$-\Delta u=M'(r)/r^4=\rho=au^2$.
For $r\geq1$, the formulas give $M(r)=4r$, $u(r)=2r^{-2}$,
$\rho(r)=4r^{-4}$, and $a(r)=1$.

The radial Newton formula, followed by integration by parts, gives
\[
 I_2\rho(r)
 =\frac13\left(r^{-3}M(r)+\int_r^\infty s\rho(s)\,ds\right)
 =\int_r^\infty\frac{M(s)}{s^4}\,ds=u(r).
\]
Thus the potential identity in \eqref{r6:compact-defect-pair} is
exact.  Smoothness on the unit ball and the exterior bound
$au^3=8r^{-6}$ prove finite total energy.

Since $a$ is smooth, positive, and constant outside the unit ball,
$\sup_{x\ne0}|x|\,|\nabla\log a(x)|<\infty$.
The fixed annulus is uniformly connected by paths of length at most a
constant times the endpoint distance.  Integrating along such paths
shows that $\log a(Rx)-\log a(Ry)$ is bounded by $C|x-y|$ on that
annulus, uniformly for $R>0$.  This is also the estimate for the
normalized logarithms, since the normalizing value cancels.
Finally, $x\cdot\nabla\log a=0$ for $|x|\geq1$, while
$2\cdot5/3-(5-2)=1/3>0$.
\end{proof}

\begin{remark}
\label{r6:compact-defect-scaling}
This obstruction cannot be removed by requiring the Euler negative
part to have a small support or a small unweighted finite $L^r$ norm.
Indeed, set
\[
 a_\varepsilon(x)=a(x/\varepsilon),\qquad
 u_\varepsilon(x)=\varepsilon^{-2}u(x/\varepsilon).
\]
These give the same positive system, with $a_\varepsilon=1$ outside
$B_\varepsilon$.  If
$d(x)=(-x\cdot\nabla\log a(x))_+$, then for every $1\leq r<\infty$,
\[
 \|d(\,\cdot\,/\varepsilon)\|_{L^r(\mathbb R^5)}
 =\varepsilon^{5/r}\|d\|_{L^r(\mathbb R^5)}\longrightarrow0.
\]
Under dilation the defect remains weighted by the same energy
amplification factor as the interior nonlinear energy; compact
support alone supplies no absorbable error.
\end{remark}

\section{Green kernels under annular potential budgets}
\label{r7:green-section}\label{r8:longrange-section}

Throughout this section, $n\ge3$, $\lambda=n-2$ and
$N(x)=c_{n,2}|x|^{-\lambda}$. Let $V\ge0$ belong to
$C^1(\mathbb R^n)$. We use the minimal positive Green
kernel $G_V$ of $-\Delta+V$ and the budgets
\eqref{r7:newton-budget}--\eqref{r8:annular-potential-class}
defined in the introduction. All equations
use that actual Green kernel. We establish the estimates needed for
the proof of the theorem, retaining local nonlinear energy as the
only initial energy assumption.
Green comparison and relative composition bounds have an extensive
perturbation theory; see \cite{Pinchover1992,GangulyPinchover2020,Hirata2006}.
The additional issue here is the compatibility of endpoint differences
with the localized source-layer pressure. We prove those differences
directly, without a global nonlinear energy bound.

The elementary Newton inequality used below is
\[
 \frac{N(x-z)N(z-y)}{N(x-y)}
 \le C_\lambda\{N(x-z)+N(z-y)\}.
\]
It follows from the triangle inequality and
$(a+b)^\lambda\le C_\lambda(a^\lambda+b^\lambda)$.
In particular $N|h|N\le\mathcal B(h)N$.

\begin{lemma}
\label{r8:green-exponential-comparison}
For any nonnegative $V\in C^1(\mathbb R^n)$ with
$\mathcal B(V)<\infty$,
\begin{equation}
 e^{-\mathcal B(V)}N(x-y)\le G_V(x,y)\le N(x-y),\qquad x\ne y.
 \label{r8:green-exponential-bound}
\end{equation}
Consequently, if $\mathcal B(|Q|)<\infty$,
\begin{equation}
 \mathcal N_V\le e^{\mathcal B(V)}\mathcal B(Q_+).
 \label{r7:explicit-green-budget}
\end{equation}
\end{lemma}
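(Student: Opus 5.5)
The upper bound $G_V\le N$ is the easy half. On each Dirichlet ball $B_k$, the Green function of $-\Delta+V$ is dominated by that of $-\Delta$, because $V\ge0$ and the maximum principle applies. The Dirichlet Laplacian Green function is in turn at most $N$. Passing to the increasing limit that defines the minimal kernel gives $G_V\le N$.

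For the lower bound I will use the resolvent (Dyson) series built on the identity $N=G_V+G_V V N$. Two routes are available.

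\emph{Direct series.} Write
\[
G_V=\sum_{k\ge0}(-1)^k N(VN)^k.
\]
The alternating series converges only when $\mathcal B(V)<1$, so this alone is not enough.

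\emph{Coupling-constant route (preferred).} Set $g_t=G_{tV}(x,y)$ for $t\in[0,1]$. Differentiating the minimal kernel in the coupling constant gives
\[
\frac{d}{dt}G_{tV}(x,y)=-\int G_{tV}(x,z)V(z)G_{tV}(z,y)\,dz.
\]
I will justify this on Dirichlet balls, where the kernels are genuine resolvents, and then pass to the monotone limit. Using $G_{tV}\le N$ and the three-kernel inequality $NVN\le\mathcal B(V)N$, we have pointwise
\[
\int G_{tV}VG_{tV}\le \int N V N\le \mathcal B(V)N.
\]
This bounds the derivative by $\mathcal B(V)N$, but a bound relative to $G_{tV}$ is what produces the exponential.

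\emph{Relative bound.} The key claim is
\[
\int G_{tV}(x,z)V(z)G_{tV}(z,y)\,dz\le \mathcal B(V)\,G_{tV}(x,y).
\]
If this holds, then $\frac{d}{dt}\log G_{tV}\ge-\mathcal B(V)$, and integrating from $t=0$ to $t=1$ yields $G_V\ge e^{-\mathcal B(V)}N$.

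The claimed relative inequality is a 3G-type inequality for $G_{tV}$, and it is the main obstacle. I would attack it by the following comparison argument, carried out on each ball $B_k$ and then passed to the limit.
\begin{enumerate}
\item Fix $y$ and set $\phi(x)=\int G_{tV}(x,z)V(z)G_{tV}(z,y)\,dz$.
\item This $\phi$ solves $(-\Delta+tV)\phi=V\,G_{tV}(\cdot,y)$.
\item The right side is at most $V$ times the quantity $\psi:=G_{tV}(\cdot,y)$, which satisfies $(-\Delta+tV)\psi=\delta_y$.
\item Compare $\phi$ with $\mathcal B(V)\psi$. Equivalently, compare $\phi/\psi$ via the $h$-transform by $\psi$.
\end{enumerate}
This is where the structure of $\mathcal B$, namely the Newton 3G inequality with constant $2C_\lambda$, must enter. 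If the direct comparison fails, I would settle for the cruder route: the series estimate combined with a Harnack/iteration argument that splits $V$ into small pieces along the coupling. Since $G$ is monotone in $t$, one can also write $\frac{d}{dt}G_{tV}\ge -\int N V G_{tV}$ and use a Gronwall argument in operator form. In that form the bound reduces to $\int N(x,z)V(z)G(z,y)\le \mathcal B(V)G(x,y)$, which is again exactly the relative 3G issue.

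Given the two-sided bound, \eqref{r7:explicit-green-budget} is immediate:
\[
\int G_V Q_+ G_V\le \int NQ_+N\le \mathcal B(Q_+)N\le \mathcal B(Q_+)e^{\mathcal B(V)}G_V,
\]
and taking the supremum over $x\ne y$ gives the claim. The finiteness of $\mathcal B(Q_+)$ follows from $Q_+\le|Q|$.
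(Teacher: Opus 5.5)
Your upper bound $G_V\le N$ and your deduction of \eqref{r7:explicit-green-budget} from the two-sided comparison are correct and coincide with the paper's. The lower bound has a genuine gap. Everything rests on the relative inequality $\int G_{tV}(x,z)V(z)G_{tV}(z,y)\,dz\le\mathcal B(V)G_{tV}(x,y)$, which you state but do not prove, and the mechanism you outline cannot produce it. With $\phi,\psi$ as in your steps, $(-\Delta+tV)(\mathcal B(V)\psi-\phi)=-V\psi\le0$ off $y$. So $\mathcal B(V)\psi-\phi$ is a subsolution, and the maximum principle gives no lower bound for it. The $h$-transform only rewrites $\phi/\psi$ as the Green potential of $V$ for the $\psi$-conditioned operator, and bounding that by $\mathcal B(V)$ is again a 3G inequality for the unknown kernel. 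The hypothesis $\mathcal B(V)<\infty$ is integral information about $N$ alone, and transferring it to $G_{tV}$ needs $N\le CG_{tV}$, which is what you are trying to prove.

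Your Gronwall fallback is worse than unproved. By the resolvent identity $NVG_{tV}=(N-G_{tV})/t$, the inequality $NVG_{tV}\le\mathcal B(V)G_{tV}$ is equivalent to $G_{tV}\ge N/(1+t\mathcal B(V))$. That is stronger than the lemma and false in general. Take $V=M\chi$ with $\chi$ a smooth bump equal to one on a large ball. A barrier argument gives $G_V(x,y)\le Ce^{-c\sqrt M}$ for fixed separated points inside that ball, while $N/(1+\mathcal B(V))$ is of order $1/M$.

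The paper needs no 3G inequality for $G_V$; it uses an explicit subsolution.
\begin{itemize}
\item Fix the pole $y$ and set $h=(NVN)(\cdot,y)/N(\cdot-y)$. Then $0\le h\le\mathcal B(V)$ by the Newton three-kernel inequality, and $h(x)\to0$ as $x\to y$.
\item Since $N(\cdot-y)$ is harmonic off $y$ and $-\Delta(N(\cdot-y)h)=VN(\cdot-y)$, the function $w=N(\cdot-y)e^{-h}$ satisfies $(-\Delta+V)w=-w|\nabla h|^2\le0$ off $y$.
\item The resolvent identity gives $N-G_V=NVG_V\le hN$, so $G_V/N\to1$ at the pole. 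Hence $(1-\epsilon)w<G_V(\cdot,y)$ on small spheres about $y$.
\item Both functions tend to zero at infinity. The maximum principle for $-\Delta+V$ on punctured expanding balls then gives $(1-\epsilon)w\le G_V(\cdot,y)$, whence $G_V\ge e^{-h}N\ge e^{-\mathcal B(V)}N$.
\end{itemize}
This is the analytic form of the Feynman--Kac--Jensen bound $G_V\ge N\exp(-NVN/N)$.

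If you want to keep your coupling-constant route, the missing ingredient is log-convexity rather than comparison. By Feynman--Kac, $t\mapsto G_{tV}(x,y)$ is a mixture of functions $e^{-tA}$ with $A\ge0$, hence log-convex. Therefore $-\frac{d}{dt}\log G_{tV}$ is nonincreasing in $t$ and bounded by its value $(NVN)(x,y)/N(x-y)\le\mathcal B(V)$ at $t=0$. That is exactly your relative bound, and integrating it from $0$ to $1$ gives the lemma.
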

\begin{proof}
Fix the pole $y$ and put $\Phi=NVN(\cdot,y)$,
$h=\Phi/N(\cdot-y)$. The Newton three-kernel inequality gives
$0\le h\le\mathcal B(V)$. Also $h(x)\to0$ as $x\to y$.
Indeed the integral restricted to $B_\rho(y)$, divided by
$N(x-y)$, is at most $C\|V\|_{\infty,B_{2\rho}(y)}\rho^2$
for $|x-y|<\rho/2$, by the same three-kernel inequality.
On the complementary set its numerator is uniformly bounded as
$x\to y$; its distant tail is finite since
$\int_{|z-y|>1}V(z)N(z-y)^2dz<\infty$, a consequence of
$\mathcal B(V)<\infty$. First send $x\to y$ and then $\rho\to0$.

The increasing-ball Green kernel has $0<G_V\le N$ and
$N-G_V=NVG_V\le\Phi$; this resolvent identity follows from
Newton representation and does not require a small norm.
In particular $G_V(x,y)/N(x-y)\to1$ at the pole.
Off the pole, define
$\mathcal L^N=-\Delta-2\nabla\log N(\cdot-y)\cdot\nabla$.
Since $-\Delta\Phi=VN$, one has $\mathcal L^Nh=V$.
Therefore $w=N(\cdot-y)e^{-h}$ satisfies
\[
 (-\Delta+V)w=-w|\nabla h|^2\le0
       \quad\hbox{off }y.
\]
For $0<\epsilon<1$, the function $(1-\epsilon)w-G_V(\cdot,y)$
is negative on a sufficiently small sphere about $y$ and has
nonpositive upper limit at infinity, since both terms are bounded
by $N(\cdot-y)\to0$. The maximum principle for $-\Delta+V$ on
punctured expanding balls gives $(1-\epsilon)w\le G_V$.
Letting $\epsilon\downarrow0$ gives $G_V\ge w\ge e^{-\mathcal B(V)}N$.
Finally $G_VQ_+G_V\le NQ_+N\le\mathcal B(Q_+)N$ and the
lower comparison give the asserted relative budget.
\end{proof}

Put $c_V=e^{-\mathcal B(V)}>0$ and, for $R\ge1$, write
\[
\begin{aligned}
 V_R(x)&=R^2V(Rx),&G_R(x,y)&=R^\lambda G_V(Rx,Ry),\\
 H_R(x,y)&=N(x-y)-G_R(x,y),&Q_R(x)&=R^2Q(Rx).
\end{aligned}
\]
All three potential quantities in \eqref{r8:annular-potential-class}
and the relative budget are invariant under this scaling. The exact
correction identity is
\begin{equation}
 (x\cdot\nabla_x+y\cdot\nabla_y)H_R
       =-\lambda H_R+G_RQ_RG_R.
 \label{r7:correction-dilation}
\end{equation}

\begin{lemma}
\label{r8:longrange-kernel}\label{r7:green-correction}
Under \eqref{r8:annular-potential-class}, the comparison and relative
budget are those of Lemma~\ref{r8:green-exponential-comparison}.
The resolvent identities $H_R=NV_RG_R=G_RV_RN$ and the dilation
identity \eqref{r7:correction-dilation} remain valid. Moreover
\begin{equation}
 |x|\,|\nabla_xH_R(x,y)|+|y|\,|\nabla_yH_R(x,y)|
        \le C_VN(x-y),\qquad x\ne y.
 \label{r8:longrange-gradient}
\end{equation}
Define the active-annulus part of the correction kernel by
\[
 H_R^{\rm a}(x,y)=\int_{1/4<|z|<4}N(x-z)V_R(z)G_R(z,y)\,dz.
\]
For $x,a\in B_3$ and $y\ne x,a$,
\begin{equation}
\begin{aligned}
 |H_R^{\rm a}(x,y)-H_R^{\rm a}(a,y)|
 &\le C\nu |x-a|\{N(x-y)+N(a-y)\},\\
 \nu&=\sup_{R\ge1}\|V_R\|_{L^\infty(\{1/4<|z|<4\})}<\infty.
\end{aligned}
 \label{r8:relative-lipschitz}
\end{equation}
\end{lemma}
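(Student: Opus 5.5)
The argument has four parts: the scaling statements, the kernel gradient bound \eqref{r8:longrange-gradient}, and the annular Lipschitz estimate \eqref{r8:relative-lipschitz}.

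\emph{Scaling and the dilation identity.} Since $G_V$ is the increasing limit of Dirichlet Green kernels, the change of variables $x\mapsto Rx$ identifies $G_R$ with the minimal Green kernel of $-\Delta+V_R$, and $V_R$ satisfies the same budgets. Lemma~\ref{r8:green-exponential-comparison} therefore applies verbatim to $G_R$. The resolvent identities follow as in that lemma: write $N-G=NV_RG$ on Dirichlet balls, pass to the monotone limit, and use the symmetry of $G_R$ to get $H_R=G_RV_RN$. For \eqref{r7:correction-dilation}, differentiate the family $G_\rho$ in $\rho$ at $\rho=1$ (after rescaling). Formally, $\partial_\rho G$ solves $L\,\partial_\rho G=-(\partial_\rho V_\rho)G$, with $\partial_\rho V_\rho|_{\rho=1}=2V+x\cdot\nabla V=Q$. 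Hence
\[
(x\cdot\nabla_x+y\cdot\nabla_y+\lambda)G_R=-G_RQ_RG_R .
\]
Combining this with the exact Euler relation $(x\cdot\nabla_x+y\cdot\nabla_y+\lambda)N(x-y)=0$ gives the identity. To make this rigorous, I would use difference quotients $(G_{\rho}-G_1)/(\rho-1)=-G_\rho\frac{V_\rho-V_1}{\rho-1}G_1$, which follow from the resolvent identity between two potentials. Then pass to the limit by dominated convergence, using $G\le N$, $\mathcal B(|Q|)<\infty$, and the three-kernel inequality.

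\emph{Gradient bound.} Start from $H_R(x,y)=\int N(x-z)V_R(z)G_R(z,y)\,dz$ and differentiate under the integral, so that $|x||\nabla_xH_R|\le\int|x||x-z|^{-1}N(x-z)V_R(z)N(z-y)\,dz$. Split into two regions. Where $|x-z|\ge|x|/2$, the factor $|x|/|x-z|\le2$ and the three-kernel bound gives $C\mathcal B(V)N(x-y)$. Where $|x-z|<|x|/2$, we have $|z|\sim|x|$, so $V_R(z)\le A_V|x|^{-2}$. Using $|x||x-z|^{-1}N(x-z)\cdot|x|^{-2}$, which is integrable on $B_{|x|/2}(x)$ with mass $C|x|^{2}\cdot|x|^{-2}$ scaled appropriately, the inner integral is $C A_V\sup N(z-y)$ in the worst case. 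The main obstacle is $z$ near $y$ inside this ball. I would handle it by a further split at $|z-y|<|x-y|/2$, where $N(x-z)\sim N(x-y)$ and the $z$-integral of $|x-z|^{-1}|x|\,|x|^{-2}N(z-y)$ over $|z-y|<|x-y|/2$ is bounded using $|x-y|\lesssim|x|$ and $|x-z|\gtrsim|x-y|$. In all sub-regions the constant depends only on $A_V$ and $\mathcal B(V)$. The $y$-derivative is symmetric, via $H_R=G_RV_RN$.

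\emph{Annular Lipschitz estimate.} For $H^{\rm a}_R$, write the difference as $\int_{1/4<|z|<4}[N(x-z)-N(a-z)]V_R(z)G_R(z,y)\,dz$ with $V_R\le\nu$ on the annulus. The goal is $|N(x-z)-N(a-z)|\le C|x-a|\,(|x-z|^{-\lambda-1}+|a-z|^{-\lambda-1})$, but this bound alone is not integrable against $N(z-y)$ near the points, so I would split. On $\{|z-x|\ge2|x-a|\}$, use the mean value bound $C|x-a||x-z|^{-\lambda-1}$. Then integrate against $N(z-y)\le$ the pointwise three-point kernel inequality, $|x-z|^{-1}N(x-z)N(z-y)\lesssim N(x-y)(\ldots)$, over a bounded region; this is finite because the singularity $|x-z|^{1-n}$ is integrable and the factor $|z-y|^{-\lambda}$ is absorbed by the three-kernel split. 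On the near region $|z-x|<2|x-a|$, bound each term separately, giving $\nu\int_{B_{3|x-a|}}(N(x-z)+N(a-z))N(z-y)\,dz$. This is at most $C\nu|x-a|^{2}\{N(x-y)+N(a-y)\}$ up to the same splitting near $y$, and hence at most $C\nu|x-a|$ since $|x-a|\le6$. The uniformity $\nu<\infty$ is immediate from $A_V$, since $V_R(z)\le A_V|z|^{-2}\le16A_V$ on the annulus.
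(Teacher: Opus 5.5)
Your proposal is correct and follows essentially the paper's route. You use resolvent difference quotients between nearby scales for the dilation identity, differentiate only the Newton factor with the split at $B_{|x|/2}(x)$ for the gradient bound (your further split near $y$ is the paper's scaled convolution estimate \eqref{r8:one-annulus-gradient}), and combine a mean-value/crude-bound split with a relative Newton convolution bound for \eqref{r8:relative-lipschitz}.

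Two small corrections. First, $|x-z|^{1-n}N(z-y)$ is in fact integrable over $B_4$ with bound $CN(x-y)$ (this is \eqref{r8:relative-newton-convolution}), so your extra near/far split there is harmless but not needed. Second, in the dilation identity, $\mathcal B(|Q|)<\infty$ does not provide a pointwise dominating function for the scale-averaged $|Q_s|$. The limit should therefore be taken by uniform integrability after the change of variables $Z=sz$, as the paper does, rather than by plain dominated convergence.
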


\begin{proof}
The Newton three-kernel inequality proves all value comparisons
and composition estimates exactly as before. Increasing-ball
construction and the maximum principle give $0<G_V\le N$.
The Newton potential $NVG_V$ is finite off the diagonal and bounded
by $\mathcal B(V)N$. Its distributional negative Laplacian is $VG_V$;
the remaining entire harmonic function tends to zero at infinity
and hence vanishes. Thus $H=NVG_V=G_VVN$.

We record an elementary scaled relative convolution estimate:
\begin{equation}
 \int_{B_{r/2}(x)}|x-z|^{1-n}N(z-y)\,dz
                 \le CrN(x-y),\qquad r>0.
 \label{r8:one-annulus-gradient}
\end{equation}
By translation and scaling it is enough to take $x=0,r=1$.
For distant $y$ this is immediate. For bounded $y$, division by
$N(-y)$ and the triangle inequality reduce to the integrals of
$|z|^{1-n}$ and $|z|^{-1}|z-y|^{-\lambda}$ over a bounded ball.
Splitting according to which of the two distances is smaller bounds
the second integrand by a sum of powers $-(n-1)$, whose integrals
are uniformly finite.

To estimate $|x||\nabla_xH|$, set $r=|x|>0$ and differentiate
only the Newton factor in $H=NVG_V$. On $B_{r/2}(x)$,
$V(z)\le4A_V/r^2$, and \eqref{r8:one-annulus-gradient} gives
an upper bound $CA_VN(x-y)$. On its complement,
$r/|x-z|\le2$, so the integrand is bounded by a constant times
$N(x-z)V(z)N(z-y)$. The Newton three-kernel inequality bounds
this integral by $C\mathcal B(V)N(x-y)$. The case $x=0$ is
trivial, symmetry treats the other endpoint, and scaling gives
\eqref{r8:longrange-gradient}. In particular no dyadic summation
is hidden in that estimate.

For completeness, the resolvent identity between nearby scales is
differentiable pointwise off the diagonal. First the resolvent
identity and the Newton three-kernel bound give
$|G_{Re^t}-G_R|\le C|t|\mathcal B(|Q|)N$, so the kernels
converge locally off the diagonal. Its difference quotient
contains the average of $Q_s(z)=s^2Q(sz)$ over nearby logarithmic
scales. Locally away from the two endpoints use continuity; near
an endpoint use local boundedness of $Q$ and the locally integrable
Newton singularity. At large $z$, domination by
$N(x-z)N(z-y)|Q_s(z)|$ is uniformly integrable: after a change of
variables its tail is bounded by a fixed constant times
$\int_{|Z|>L}|Z|^{-2\lambda}|Q(Z)|dZ$, which tends to zero,
since $\mathcal B(|Q|)<\infty$ implies
$\int_{|Z|>1}|Z|^{-\lambda}|Q(Z)|dZ<\infty$.
The resolvent identity therefore gives
$dG_R/d\log R=-G_RQ_RG_R$, and hence the exact dilation identity.

For the finite-difference estimate use, for all $z\ne x,a$,
\[
 |N(x-z)-N(a-z)|\le C|x-a|
        \{|x-z|^{1-n}+|a-z|^{1-n}\}.
\]
This follows by the mean value theorem when the endpoint distance
is smaller than half the distance to $z$, and by the triangle
inequality otherwise. The elementary estimate
\begin{equation}
 \int_{B_4}|x-z|^{1-n}N(z-y)\,dz\le CN(x-y),\qquad x\in B_3,
 \label{r8:relative-newton-convolution}
\end{equation}
is uniform in $y\ne x$. For $|y|>8$ it follows by comparison
with $|y|^{-\lambda}$. For bounded $y$, divide by $N(x-y)$
and use the triangle inequality to reduce to integrals of
$|x-z|^{1-n}$ and $|x-z|^{-1}|z-y|^{-\lambda}$.
The latter is uniformly integrable by splitting according to which
distance is smaller, since the sum of its exponents is $n-1<n$.
Apply \eqref{r8:relative-newton-convolution} at $x$ and $a$,
use $G_R\le N$ and $V_R\le\nu$ on the active annulus, and
obtain \eqref{r8:relative-lipschitz}. This proof uses finite
differences rather than a line integral of the gradient through a
possible diagonal singularity.
\end{proof}

\subsection{A finite-total-energy consequence}

\begin{proposition}
\label{r7:green-finite-energy}
Let $p,q>0$, and let $a,b,u,v>0$ be continuous off zero, with the
weak Euler bounds \eqref{v5:euler-lower} for real $A_-,B_-$.
Suppose that
\[
 u=G_V(av^p),\qquad v=G_V(bu^q),\qquad
 J:=\int bu^{q+1}=\int av^{p+1}<\infty.
\]
The equality is automatic by symmetry and Tonelli if either integral
is finite. Put
\[
 \gamma=\frac{n+A_-}{p+1}+\frac{n+B_-}{q+1}-(n-2).
\]
If $\gamma>\mathcal N_V$, no such pair exists. No coefficient
Dini hypothesis is needed in this proposition.
\end{proposition}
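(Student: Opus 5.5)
The plan is to imitate the global virial argument of Theorem~\ref{r6:critical-rigidity}, now with the Green kernel $G_V$ in place of $K_\sigma$. Put $f=bu^q$, $g=av^p$ and $d\pi=f(x)g(y)G_V(x,y)\,dx\,dy$. By symmetry of $G_V$ and Tonelli, $\pi$ has marginals $fu$ and $gv$ and total mass $J>0$. Take the radial cutoffs $\eta_R$, $X_R=x\eta_R$, $w_R$ from that theorem, together with an origin cutoff $\chi_\rho$.

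The transport computation of Lemma~\ref{r6:transport-virial} is repeated for the radial flow $T_t$ of $Y=X_R\chi_\rho$. The coefficient side is unchanged: pushforward increments, the Taylor estimate for the primitives, and the Euler measure lower bounds inserted before $\rho\downarrow0$. Since the Lemma's proof is written for radial kernels $K(x-y)$ and $G_V$ is not translation invariant, the increment bounds and differentiation step must be redone directly for $G_V$. The new ingredient is the derivative of the interaction increment
\[
 \iint f(x)g(y)[G_V(T_tx,T_ty)-G_V(x,y)]\,dx\,dy .
\]

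To compute it, write $G_V=N-H$ with $H=NVG_V$ (Lemma~\ref{r8:longrange-kernel}). The Newton part gives $-\lambda\iint\Phi_Y\,d\pi_N$ as in Lemma~\ref{r6:transport-virial}, where $\pi_N$ uses $N$ in place of $G_V$. For the correction part, I would decompose
\[
 Y(x)\cdot\nabla_x+Y(y)\cdot\nabla_y
\]
into the pure dilation $x\cdot\nabla_x+y\cdot\nabla_y$ weighted by the cutoff, plus a remainder involving $\eta_R(x)-\eta_R(y)$. The pure dilation is evaluated by \eqref{r7:correction-dilation}. The $-\lambda H$ term then combines with the Newton term: where $\Phi_R\approx1$, the pair $-\lambda\pi_N+\lambda\pi_H$ recombines to $-\lambda\pi$. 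This is the ``virtual Newton-energy cancellation''. What remains is $+G_VQG_V$, tested against $f\otimes g$. The remainder terms are controlled by the gradient bound \eqref{r8:longrange-gradient}, which dominates $|x||\nabla_xH|$ by $C_VN\le C_Ve^{\mathcal B(V)}G_V$.

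The limit $R\to\infty$ then proceeds as in Theorem~\ref{r6:critical-rigidity}:
\begin{itemize}
\item the cutoff energies tend to $J$;
\item the $w_R$ terms tend to zero;
\item $\Phi_R\to1$ boundedly, so the main terms converge by dominated convergence against the finite measure $\pi$.
\end{itemize}
The correction remainders carry factors like $\eta_R(x)-\eta_R(y)$, or $\eta_R$ with one endpoint outside $B_R$. They are bounded by $C\,G_V(x,y)$ times such factors, so they also vanish by dominated convergence with respect to $\pi$.

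The defect term is bounded by
\[
 \iint f(x)g(y)\int G_V(x,z)Q_+(z)G_V(z,y)\,dz\,dx\,dy\le\mathcal N_V J,
\]
using the definition \eqref{r7:green-defect-budget}. Absolute integrability of the $Q$ term comes from $\mathcal B(|Q|)<\infty$ and Lemma~\ref{r8:green-exponential-comparison}. The net outcome is
\[
 \gamma J\le\mathcal N_V J,
\]
which contradicts $\gamma>\mathcal N_V$ since $J>0$.

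The main obstacle is making the dilation cancellation rigorous with only finite total energy and no smallness of $V$. Two points need care. First, the sign bookkeeping must show that only $Q_+$ survives: the $Q_-$ part enters with a favorable sign and can be dropped. Second, the cutoff weighting of the dilation identity must be handled. I would first try applying the flow with $Y=x\eta_R$ and writing
\[
 Y(x)\cdot\nabla_x+Y(y)\cdot\nabla_y
 =\eta_R(x)\,(x\cdot\nabla_x+y\cdot\nabla_y)+(\eta_R(y)-\eta_R(x))\,y\cdot\nabla_y .
\]
The second term is then estimated by \eqref{r8:longrange-gradient} and dominated convergence as $R\to\infty$.
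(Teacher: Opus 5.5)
Your proposal is correct and follows essentially the paper's route: a global virial with $X_R=x\eta(x/R)$ and an origin cutoff, the split $G_V=N-H$ with the dilation identity \eqref{r7:correction-dilation}, the commutator $(\eta_R(y)-\eta_R(x))\,y\cdot\nabla_yH$ controlled by \eqref{r8:longrange-gradient}, and $R\to\infty$ by dominated convergence (finite $J$ and $N\le c_V^{-1}G_V$), giving $\gamma J\le\int Quv\le\int Q_+uv\le\mathcal N_VJ$. The only difference is that the paper does not redo the transport-increment estimates for $G_V$: it notes that $u=u_0-N*(Vu)$ (and likewise $v$) is $C^1$ off zero, so ordinary radial integration by parts with the Euler measures suffices.
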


\begin{proof}
The comparison \eqref{r8:green-exponential-bound} makes both Newton virtual
potentials $u_0=N*(av^p)$, $v_0=N*(bu^q)$ finite, with
$u\le u_0\le{c_V}^{-1}u$, and similarly for $v$.
Since the sources are continuous off zero and the Newton gradient is
locally integrable, the actual pair is $C^1$ locally off zero; use
$u=u_0-N*(Vu)$ and the analogous identity for $v$.
Radial integration by parts with the Euler derivative measures is
therefore justified by the product rule on compact annuli.
Use $X_R=x\eta(x/R)$, first with an additional cutoff at zero.
The proof of Lemma~\ref{r6:transport-virial} removes that cutoff
using local energy; \eqref{r8:longrange-gradient} controls
the additional Green-correction terms by Newton marginals.
The nonnegative Euler-defect terms are retained by Fatou's lemma.
Total energy and Newton comparison allow $R\to\infty$ in the
remaining marginal and kernel terms by dominated convergence. The dilation identity
\eqref{r7:correction-dilation} gives
\[
 \gamma J+\frac1{q+1}\int u^{q+1}\,d\mu_b
          +\frac1{p+1}\int v^{p+1}\,d\mu_a
 \le\int Q(x)u(x)v(x)\,dx,
\]
where $\mu_a,\mu_b\ge0$ are the Euler defect measures.
The right side is absolutely integrable: the absolute-value relative
Green composition budget is finite by $\mathcal B(|Q|)<\infty$.
Positivity and Tonelli give
\[
 \int Q_+uv
 =\iint f(x)g(y)(G_VQ_+G_V)(x,y)\,dx\,dy
 \le\mathcal N_VJ,
\]
with $f=bu^q$, $g=av^p$. Since $J>0$, the stated strict inequality
is impossible.
\end{proof}

\subsection{The localized Green argument}

For a scaled kernel $G_R$, write $u=G_Rg$, $v=G_Rf$, and set
\[
\begin{aligned}
 u_0&=N*g=u+P,&v_0&=N*f=v+S,\\
 P&=N*(V_Ru),&S&=N*(V_Rv).
\end{aligned}
\]
Write $d\pi_0=f(x)g(y)N(x-y)\,dx\,dy$.

The following lemma gives the additional estimates needed to retain
local energy as the only nonlinear energy hypothesis.

Use $E_f=\int\eta fu$, $E_g=\int\eta gv$ and $E=E_f+E_g$.
The following two estimates replace active-annulus harmonicity.

\begin{lemma}
\label{r8:annular-correction}
Assume the source-response and coefficient hypotheses of
Proposition~\ref{r6:general-pressure}, with $f=bF(u)$ and $g=aG(v)$.
Suppose the source input \eqref{r7:green-source-input} holds with
constant $M$, and $u=G_Rg$, $u_0=N*g$, $P=u_0-u=N*(V_Ru)$.
For the retained intervals and weights of
Proposition~\ref{r6:general-pressure}, let $P_I$ be the arithmetic
average of its two endpoint values. Then
\begin{equation}
 \mathbb E_e\int\int_0^{f(x)}\chi_f(e,x,t)|P(x)-P_I|\,dt\,dx
       \le C_V\nu\delta E_f+ C_VM^2,
 \qquad \ell=\delta j.
 \label{r8:annular-endpoint-budget}
\end{equation}
The same assertion holds after interchanging the two sources.
Here $C_V$ is uniform in $R$ and the pair, but may depend on the
fixed potential budgets, $c_V^{-1}$, the coefficient bounds and the
response data. In particular the sum is at most $\xi E+C_\xi$ after
choosing $\delta$ sufficiently small with these data fixed.
\end{lemma}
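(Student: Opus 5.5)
We split the correction potential $P=N*(V_Ru)$ into an active-annulus part and a remainder, and estimate the endpoint discrepancy $|P(x)-P_I|$ separately for each. For a retained component $I$ with endpoints $x_\pm$ we have $|x_\pm-x|\le2\ell(x)=2\delta j(x)$, and the whole closed interval lies in $1<|x|<2$. Write $P=P^{\rm a}+P^{\rm o}$ with
\[
 P^{\rm a}(x)=\int_{1/4<|z|<4}N(x-z)V_R(z)u(z)\,dz,
 \qquad P^{\rm o}=P-P^{\rm a}.
\]
Since $u=G_Rg$, Tonelli gives $P^{\rm a}(x)=\int H^{\rm a}_R(x,y)g(y)\,dy$. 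The difference estimate \eqref{r8:relative-lipschitz} of Lemma~\ref{r8:longrange-kernel}, applied with $a=x_\pm\in B_3$, then yields
\[
 |P^{\rm a}(x)-P^{\rm a}(x_\pm)|
 \le C\nu\,\delta j(x)\int\{N(x-y)+N(x_\pm-y)\}g(y)\,dy
 \le C\nu\delta j(x)\{u_0(x)+u_0(x_\pm)\}.
\]
By the comparison $u_0\le c_V^{-1}u$, this is at most $C_V\nu\delta j(x)\{u(x)+u(x_\pm)\}$. The endpoint values $u(x_\pm)$ are controlled by \eqref{r6:inverse-endpoint}, giving $u_I\le Cu(x)$. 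Integrating over $0<t<f(x)$ with weight $\chi_f\le h_e$, and using $\mathbb E_eh_e=w$ and $wj\le C\eta$, produces the term $C_V\nu\delta E_f$.

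For the outer part $P^{\rm o}$, the source $V_Ru$ is supported in $\{|z|\le1/4\}\cup\{|z|\ge4\}$. This set is at distance at least $3/4$ from the segment joining $x$ to $x_\pm$, since that segment lies in $1/2<|\xi|<5/2$ by Lemma~\ref{gp:geometry}. On the segment the kernel is therefore smooth, with $|\nabla N(\xi-z)|\le CN(\xi-z)/|\xi-z|$. Hence
\[
 |P^{\rm o}(x)-P^{\rm o}(x_\pm)|\le C\delta j(x)\sup_{\xi}P^{\rm o}(\xi),
\]
where the supremum is over the segment. To bound the supremum uniformly, use $P\le\mathcal B(V)u_0$, which follows from the three-kernel inequality and $G_R\le N$. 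For the outer source, split $u_0=N*g$ into near and far parts. The far part is controlled by the dyadic tail bound \eqref{r6:uniform-source-tails}, whose analogue is assumed in \eqref{r7:green-source-input}. The inner ball part is handled by $\int_{B_{1/4}}V_Ru\le C\,\|$source mass$\|$, since $N*(V_RN)\le\mathcal B(V)N$. Together these give $\sup_{1/2<|\xi|<5/2}P^{\rm o}(\xi)\le C_VM$. The resulting contribution is then
\[
 C_VM\,\mathbb E_e\int h_e f\,\delta j\le C_VM\int wf\le C_VM^2.
\]

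The main obstacle is this uniform bound on $P^{\rm o}$ near the cutoff annulus. It must use only the source inputs and the potential budgets, and no energy. The delicate point is the inner ball contribution, where $u$ near zero is controlled only through the Green/Newton comparison and the unit-ball source masses. I would first try to bound $\int_{B_{1/4}}N(\xi-z)V_R(z)u(z)\,dz$ by writing $u\le u_0$, applying Tonelli, and using $\int_{B_{1/4}}N(\xi-z)V_R(z)N(z-y)\,dz\le\mathcal B(V)\{N(\xi-y)\wedge\cdots\}$. This reduces it to $M$ times a bounded kernel on $|\xi|>1/2$.

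The interchanged statement, with $S$, $g$ and $v$ in place of $P$, $f$ and $u$, follows identically. Finally, choose $\delta$ so that $C_V\nu\delta\le\xi$. This gives the bound $\xi E+C_\xi$.
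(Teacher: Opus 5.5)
Your proposal is correct. The active-annulus part is the paper's argument: you integrate \eqref{r8:relative-lipschitz} against $g$, then use $u_0\le c_V^{-1}u$, $u_I\le Cu(x)$ and $w\ell\le C\delta\eta$.

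The routes differ on the outer part.
\begin{itemize}
\item The paper bounds the value and gradient of the core part through the truncated core charge $\int_{B_{1/4}}V_Ru\le C_VM$. For the distant part it bounds only the gradient: it uses $|\nabla_\xi N(\xi-z)|\lesssim|z|^{-\lambda-1}$ and $V_R\le A_V|z|^{-2}$, so that $k_R\lesssim|z|^{-n-1}$ has $I_2k_R\lesssim\langle y\rangle^{-\lambda}$. It explicitly declines to bound the distant value.
\item You bound the value of the whole outer part. You then convert that into an endpoint-difference bound via $|\nabla N(\xi-z)|\le CN(\xi-z)$ on separated pairs. Note that the retained interval lies in $1<|\xi|<2$, so the separations are at least $3/4$.
\end{itemize}

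One step needs tightening. The inequality $P\le\mathcal B(V)u_0$ that you invoke first does not give the supremum bound, because $u_0$ on the annulus is not controlled by the source data; only $\int_{B_4}u_0\le M$ is. What makes your value bound true, for the distant part exactly as for the core, is the $y$-uniform kernel estimate
\[
 \int_{\{|z|<1/4\}\cup\{|z|>4\}}N(\xi-z)V_R(z)N(z-y)\,dz
 \le\min\bigl\{C\|I_2V\|_\infty,\,\mathcal B(V)N(\xi-y)\bigr\}
 \le C_V\langle y\rangle^{-\lambda}
 \qquad(1<|\xi|<2).
\]
It follows from three facts: $N(\xi-z)$ is bounded on the outer region, $\|I_2V_R\|_\infty=\|I_2V\|_\infty$, and the three-kernel inequality holds. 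After $u\le u_0$ and Tonelli, this gives $\sup P^{\rm o}\le C_V\int\langle y\rangle^{-\lambda}g\le C_VM$ by \eqref{r7:green-source-input} at $x=0$, $s=1$.

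You state this bounded-kernel mechanism only for the core, calling it the delicate case. In fact the distant part is no harder.

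What each route buys:
\begin{itemize}
\item Yours treats core and distant parts uniformly and avoids the $k_R$ computation. It pays by using $\mathcal B(V)<\infty$ for the distant value. With only the inverse-square bound $A_V$, that value kernel carries a $\log\langle y\rangle$ loss.
\item The paper's gradient route avoids that loss and needs only inverse-square decay for the distant piece.
\end{itemize}
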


\begin{proof}
Decompose $V_R$ into its restrictions to $B_{1/4}$,
$\{1/4<|z|<4\}$ and $\{|z|>4\}$, and decompose $P$ accordingly.
Only the truncated core charge is needed:
\begin{equation}
 \int_{B_{1/4}}V_Ru\le C_VM.
 \label{r8:truncated-core-charge}
\end{equation}
Indeed its Newton potential at a source point $y$ is uniformly
bounded by $\|I_2V\|_\infty$ for bounded $y$, whereas for
$|y|>4$ it is at most
$C|y|^{-\lambda}\int_{B_{1/4}}V_R$.
The last mass is uniformly bounded, since $N$ is bounded below
on $B_{1/4}$ and $\|I_2V_R\|_\infty=\|I_2V\|_\infty$.
Tonelli's theorem and the local mass and distant-source input prove
\eqref{r8:truncated-core-charge}. Consequently the core part of
$P$ has value and gradient bounded by $C_VM$ on
$\{1/2<|x|<3\}$.

For the distant part only its gradient is needed. Define
$k_R(z)=\mathbf1_{\{|z|>4\}}|z|^{-\lambda-1}V_R(z)$.
Since $V_R(z)\le A_V|z|^{-2}$,
\[
 0\le k_R(z)\le A_V|z|^{-n-1}\mathbf1_{\{|z|>4\}}.
\]
A direct split into $|z|<|y|/2$, $|z-y|<|y|/2$ and their
complement gives
\[
 I_2k_R(y)\le CA_V\langle y\rangle^{-\lambda}.
\]
In the near-$y$ region, use $k_R\le CA_V|y|^{-n-1}$ and
$\int_{B_{|y|/2}}N\le C|y|^2$; the other regions use $\|k_R\|_1$.
For bounded $y$, boundedness of $k_R$ and its $L^1$ norm suffice.
Since $x\in B_3$ is separated from $|z|>4$, Tonelli's theorem
therefore bounds the gradient of the distant part of $P$ by
$C_V\int\langle y\rangle^{-\lambda}g(y)dy\le C_VM$.
One can also read this directly as an absolute finite-difference
bound by applying the Newton difference estimate before Tonelli;
no source-data bound for the value of the distant part is asserted.

A retained interval has endpoints $a_\pm$ with
$|a_\pm-x|\le2\ell(x)$ and lies in
$\{1/2<|z|<3\}$ whenever its weight is nonzero. The core and
distant contributions to its difference are thus at most
$C_VM\ell(x)$. For the active contribution,
\eqref{r8:relative-lipschitz} gives
\[
 |P^{\rm a}(x)-P^{\rm a}_I|
   \le C\nu\ell(x)\{u_0(x)+u_{0,I}\}.
\]
The lower Green comparison gives
$u_0\le c_V^{-1}u$ at every point, including
both endpoints. The true inverse-source estimate on a retained
component gives $u_I\le Cu(x)$; hence
$u_{0,I}\le Cu(x)$. It follows that
\[
 |P(x)-P_I|\le C\ell(x)\{C_VM+\nu u(x)\}.
\]
This bound holds pointwise before integration in the level parameter.
There is no missing endpoint measure: $\chi_f\le h_e(x)$,
$\mathbb E_eh_e=w$, and $\int_0^{f(x)}dt=f(x)$.
Finally the cutoff budgets imply
\[
 w\ell\le C\delta\eta jH\le C\delta\eta.
\]
These facts give \eqref{r8:annular-endpoint-budget}. The source
term uses only $\int_{B_2}f\le M$; the other term is the true
cutoff energy $E_f$. No bound on $\sup u$, $\sup g$ or total
nonlinear energy has been used.
\end{proof}

\begin{lemma}
\label{r8:longrange-commutator}
Under the same source input, the virial commutator satisfies, for
every $\xi>0$,
\begin{equation}
 \iint f(x)g(y)|\eta(y)-\eta(x)|\,
            |y\cdot\nabla_yH_R(x,y)|\,dx\,dy
              \le\xi E+C_\xi.
 \label{r8:longrange-commutator-budget}
\end{equation}
\end{lemma}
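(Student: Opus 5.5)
We will prove \eqref{r8:longrange-commutator-budget} by the close/far split from the localization part of Lemma~\ref{r7:geometric-budgets}, using the gradient bound \eqref{r8:longrange-gradient} in place of a kernel size bound. By that estimate, $|y\cdot\nabla_yH_R(x,y)|\le C_VN(x-y)$ for $x\ne y$, uniformly in $R\ge1$. Thus the integrand is dominated by $C_V|\eta(x)-\eta(y)|\,d\pi_0$. Lemma~\ref{r8:green-exponential-comparison}, applied in scaled form, gives $N\le c_V^{-1}G_R$. Hence it suffices to bound $\iint|\eta(x)-\eta(y)|\,N(x-y)f(x)g(y)\,dx\,dy$ by $\xi E+C_\xi$. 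The Newton marginals $fu_0$, $gv_0$ are controlled pointwise by $c_V^{-1}fu$ and $c_V^{-1}gv$.

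\emph{Close pairs.} We fix $\delta\le\delta_*$ from Lemma~\ref{gp:geometry} and set $\ell=\delta j$. Call a pair close when $|x-y|\le\ell(z)$ for an endpoint $z\in B_2$. Lemma~\ref{gp:geometry} and $|\nabla\eta|\le C\eta H$ give
\[
|\eta(x)-\eta(y)|\le C\delta j(z)\eta(z)H(z)\le C\delta\eta(z),
\]
since $\sup jH^{n+1}<\infty$. Integrating against $N f(x)g(y)$ and using the Newton marginals bounded by $c_V^{-1}$ times the Green marginals, this part is at most $C_Vc_V^{-1}\delta E$.

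\emph{Far pairs.} Here the weight is at most $\eta(x)+\eta(y)$, and only endpoints inside $B_2$ contribute. For an endpoint $z\in B_2$ we integrate the opposite source over $|z-y'|\ge\ell(z)$ against $N$. The truncated-potential part of the source input \eqref{r7:green-source-input}, which we expect to be the Newton analogue of \eqref{r6:general-pressure-input}, bounds this by $M\ell(z)^{-\lambda}$. The remaining weight is
\[
\eta\,\ell^{-\lambda}\le C\delta^{-\lambda}\eta^{1-c\lambda},
\]
which is bounded by \eqref{gp:finite-budgets}. Integrating against the local source mass $\int_{B_2}f\le M$ then gives $C_\delta M^2$. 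Choosing $\delta$ so that $C_Vc_V^{-1}\delta\le\xi$ proves the claim.

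\emph{Main obstacle.} The step needing care is whether the truncated tail input is available for the Newton kernel rather than $G_R$. If \eqref{r7:green-source-input} is stated only for $G_R$, we transfer it using $N\le c_V^{-1}G_R$, at the cost of the constant $c_V^{-1}$. We must also check that no integrability near the diagonal is lost in the close region. There the factor $|\eta(x)-\eta(y)|$ is not used to gain a power of $|x-y|$; only the marginal bound is used, so finiteness follows from the local energy $fu,gv\in L^1_{\rm loc}$.
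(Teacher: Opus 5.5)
Your proof is correct and follows the paper's argument essentially step for step: you use \eqref{r8:longrange-gradient} to dominate the integrand by $C_V|\eta(x)-\eta(y)|\,d\pi_0$, bound the close pairs by $C\delta E_0\le C\delta c_V^{-1}E$, and bound the far pairs by $C_\delta M^2$ using the truncated tail input and $\sup\eta\ell^{-\lambda}<\infty$, then take $\delta$ small. The hedge in your last paragraph is unnecessary, because \eqref{r7:green-source-input} is already stated for the Newton kernel $N$, so no transfer from $G_R$ is needed.
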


\begin{proof}
By \eqref{r8:longrange-gradient}, the integrand is bounded by
$C_VfgN(x-y)|\eta(y)-\eta(x)|$.
Call the pair close when its separation is at most $\ell(z)$
for either endpoint $z\in B_2$. Close geometry gives
$|\eta(y)-\eta(x)|\le C\delta\eta(z)$.
The virtual Newton marginals integrate this to $C_V\delta E_0$,
and $E_0\le c_V^{-1}E$.
On the complementary pairs,
$|\eta(y)-\eta(x)|\le\eta(x)+\eta(y)$, and each nonzero
endpoint weight is integrated only over separations greater than
$\ell$ at that endpoint. The truncated-potential source input and
$\sup\eta\ell^{-\lambda}<\infty$ bound this part by
$C_{V,\delta}M^2$. Choose $\delta$ small. Notice that an additive
$CM^2$ bound independent of the close-pair choice is unnecessary;
the absorbable energy error is sufficient.
\end{proof}

\begin{lemma}
\label{r7:green-localized-interface}
Fix the cutoff triple and coefficient modulus of
Proposition~\ref{r6:general-pressure}, with $\lambda=n-2$.
Let $f=bF(u)$, $g=aG(v)$, where the responses have positive lower
elasticity bounds as in \eqref{r6:pressure-elasticity}, and suppose
that the local source and energy integrability assumptions there hold.
Assume, uniformly over $R\ge1$,
\begin{equation}
 \begin{split}
 \int_{B_4}(f+g+u_0+v_0)&\le M,\\
 \int_{|x-y|\ge s}N(x-y)(f(y)+g(y))\,dy
     &\le Ms^{-\lambda}\quad(x\in B_2,\ 0<s\le1).
 \end{split}
 \label{r7:green-source-input}
\end{equation}
Use the true energies $E_f=\int\eta fu$, $E_g=\int\eta gv$,
$E=E_f+E_g$, and put
\[
 T_0=\iint w(x)D(x,y)\,d\pi_0,\qquad
 W_P=\int w\{bP_F(u)+aP_G(v)\}.
\]
Then for every $\xi>0$,
\begin{equation}
 \begin{split}
 |E_f-E_g|&\le\xi E+C_\xi,\\
 W_P&\le\lambda T_0+\xi E+C_\xi.
 \end{split}
 \label{r7:green-localized-pressure}
\end{equation}
If the coefficients also have weak Euler lower bounds $A,B$, then
\begin{equation}
 \begin{split}
 &\int[(n+B)\eta-w]bP_F(u)
     +\int[(n+A)\eta-w]aP_G(v)\\
 &\hspace{12mm}\le(\lambda+\mathcal N_V)E_f
                         -\lambda T_0+\xi E+C_\xi.
 \end{split}
 \label{r7:green-localized-virial}
\end{equation}
The constants are independent of $R$ and the particular pair with
these source inputs. They may depend on $V$, the Newton comparison
constant and the displayed fixed bounds.
\end{lemma}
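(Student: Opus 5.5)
The plan is to write everything in terms of the virtual Newton pair $u_0=u+P$, $v_0=v+S$ and the measure $\pi_0$. We rerun the arguments of Lemma~\ref{r7:geometric-budgets} and Proposition~\ref{r6:general-pressure} with the Newton kernel, and treat the Green correction as an endpoint error or a commutator error. The comparison $c_VN\le G_R\le N$ from Lemma~\ref{r8:green-exponential-comparison} gives $u\le u_0\le c_V^{-1}u$. The virtual energies $E_0=\int\eta(fu_0+gv_0)$ are therefore comparable to $E$, so absorbable errors in $E_0$ are absorbable in $E$.

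\emph{Localization.} To bound $|E_f-E_g|$, write $E_f-E_g=\iint(\eta(x)-\eta(y))f(x)g(y)G_R(x,y)\,dx\,dy$, which uses the symmetry of $G_R$. Bound $G_R\le N$ and split the pairs into close and far pairs exactly as in Lemma~\ref{r7:geometric-budgets}. Close pairs contribute $C\delta E_0$. Far pairs contribute $C_\delta M^2$, using the Newton truncated-potential input in \eqref{r7:green-source-input}.

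\emph{Pressure.} Apply the Riesz/Newton interval lemma to the retained components of the superlevel sets of $f,g$. The endpoint kernel averages then integrate to $u_{0,I}=u_I+P_I$, not to $u_I$. The layer identity \eqref{r6:primitive-layer} uses the true endpoint values $u(x_\pm)=F^{-1}(t/b(x_\pm))$. The recovery expansion therefore has one extra term $\chi_f|P(x)-P_I|$, since $u_0(x)-u_{0,I}=(u-u_I)+(P-P_I)$. After directional averaging, Lemma~\ref{r8:annular-correction} bounds this term by $C_V\nu\delta E_f+C_VM^2$. The pressure term, the oscillation term and the layer losses are treated as in Proposition~\ref{r6:general-pressure}, now relative to $\pi_0$. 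Since the true energy $fu$ is at most $fu_0$, all $E_0$ errors reduce to $E$. This gives $W_P\le\lambda T_0+\xi E+C_\xi$.

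\emph{Virial.} Run the transport argument of Lemma~\ref{r6:transport-virial} with the true kernel $G_R$. The radial flow now differentiates a non-homogeneous kernel. The dilation identity \eqref{r7:correction-dilation} gives
\[
\frac{d}{dt}\Big|_{t=0}G_R(T_tx,T_ty)
=\text{(Newton part)}-\lambda\,(\text{cutoff-weighted }H_R)+(\text{cutoff-weighted }G_RQ_RG_R)+\text{commutator}.
\]
For the vector field $X=x\eta$ we use $X(x)\cdot\nabla_x+X(y)\cdot\nabla_y=\eta(x)(x\cdot\nabla_x+y\cdot\nabla_y)+(\eta(y)-\eta(x))\,y\cdot\nabla_y$.

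The last term is bounded by Lemma~\ref{r8:longrange-commutator}. The $Q$ term is bounded through $Q\le Q_+$ and the relative budget: $\iint\eta(x)f(x)g(y)(G_RQ_{R,+}G_R)\le\mathcal N_V\iint\eta(x)fgG_R=\mathcal N_VE_f$. The Newton part gives $-\lambda\iint\Phi\,d\pi_0$. The $-\lambda H_R$ term combines with it, giving $\lambda\iint\eta(x)\,d\pi_0-\lambda\iint\eta(x)fgH_R=\lambda E_f$ up to the localization errors of $\Phi-\eta(x)+w(x)D$, bounded as in Lemma~\ref{r7:geometric-budgets}. The outcome is $\lambda E_f-\lambda T_0$. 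This is the exact cancellation of the virtual Newton excess.

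\emph{Main obstacle.} The hardest step is justifying the transport lemma for $G_R$. Lemma~\ref{r6:transport-increments} needs the kernel comparison $|G_R(T_tx,T_ty)J_t-G_R(x,y)|\le C|t|N(x-y)$. This follows from the gradient bound \eqref{r8:longrange-gradient} for $H_R$ together with the Newton bound. Removing the origin cutoff uses local energy, the uniform Lipschitz control of $X_\rho$, and the bound $|x||\nabla H_R|\le C_VN$, which controls the error by $\int_{B_{2\rho}}(fu_0+gv_0)\le c_V^{-1}\int_{B_{2\rho}}(fu+gv)$. Substituting $W_P\le\lambda T_0+\ldots$ is not needed for \eqref{r7:green-localized-virial} itself. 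The result is \eqref{r7:green-localized-virial}, with constants uniform in $R$ by the scale invariance of the budgets.
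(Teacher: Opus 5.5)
Your proposal is correct and follows the paper's proof step for step. The shared ingredients are the comparison $E_0\le c_V^{-1}E$, the Newton interval inequality on the virtual pair with the extra endpoint term $\chi_f|P(x)-P_I|$ controlled by Lemma~\ref{r8:annular-correction}, and the exact cancellation $\lambda E_f^0-\lambda Z_R=\lambda E_f$ together with $\mathcal D_R\le\mathcal N_VE_f$ and Lemma~\ref{r8:longrange-commutator}.

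The one difference is in the virial step. You rerun the flow argument with the non-convolution kernel $G_R$; this works, using \eqref{r8:longrange-gradient} for the kernel comparison and a Schur-test version of the near/far split. The paper instead uses the local $C^1$ regularity of $u,v$ off zero and integrates by parts directly. Also, your displayed expansion is really the right side of the virial identity, i.e.\ minus the flow derivative of $G_R$, so its Newton part is $+\lambda\iint\Phi\,d\pi_0$, not $-\lambda\iint\Phi\,d\pi_0$ as you wrote; your subsequent combination and final bound already use the correct sign.
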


\begin{proof}
The virtual energy
$E_0=\int\eta(fu_0+gv_0)$ is at most
${c_V}^{-1}E$. All geometric estimates for $\pi_0$ therefore
have arbitrarily small true-energy errors after their free parameters
are chosen. The true balance estimate follows directly from the
positive measure $d\pi_G=fgG_R\,dx\,dy$, whose marginals are
$fu$ and $gv$. For close pairs use
$|\eta(x)-\eta(y)|\le C\delta\eta$; for far pairs use
$G_R\le N$, the second input in \eqref{r7:green-source-input},
and the bounded weights $\eta\ell^{-\lambda}$.
This gives $|E_f-E_g|\le C\delta E+C_\delta$.

\emph{The correction at pressure endpoints.}
The conclusion of Proposition~\ref{r6:general-pressure} cannot be
applied directly: the actual response relation is $f=bF(u)$,
whereas its Newton potential is $u_0=u+P$.  We instead apply only
the Newton interval inequality and its finite layer reassembly, and
write the additional recovery terms explicitly.

For a retained component $I=I(e,x,t)$, put
$\tau_f=F^{-1}(t/b(x))$ and let $u_I,P_I,u_{0,I}$ be the
arithmetic means of the respective two endpoint values.  Then
$u_{0,I}=u_I+P_I$. Endpoint products with zero retention weight
are assigned the value zero. The following is a pointwise identity:
\begin{equation}
 \begin{aligned}
 h_e(x)(u(x)-\tau_f)
 ={}&\chi_f(e,x,t)[u_0(x)-u_{0,I}]\\
 &+[h_e(x)-\chi_f(e,x,t)](u(x)-\tau_f)\\
 &+\chi_f(e,x,t)(u_I-\tau_f)
   +\chi_f(e,x,t)[P_I-P(x)].
 \end{aligned}
 \label{paper1:green-layer-recovery}
\end{equation}
There is an identical formula for $g=aG(v)$, with $v_0=v+S$.
The endpoint source values are those of the actual source $f$,
so \eqref{r6:inverse-endpoint} still gives
$|u_I-\tau_f|\le C\Omega(2\ell(x))u(x)$ and $u_I\le Cu(x)$.
The virtual endpoint integral is finite before subtraction, since
\[
 \int g(y)\frac{N(x_--y)+N(x_+-y)}2\,dy
 =u_{0,I}\le c_V^{-1}u_I\le Cu(x).
\]
The analogous statement holds for the other source.  Thus the
signed Newton layer aggregate
\[
 S_e^0=\int\!\int_0^{f(x)}\chi_f(u_0-u_{0,I})\,dt\,dx
       +\int\!\int_0^{g(y)}\chi_g(v_0-v_{0,J})\,dt'\,dy
\]
is a difference of finite quantities.  The Newton interval
inequality and reassembly, without an inverse-response assertion
for $u_0,v_0$, give
\[
 S_e^0\le\lambda\iint h_e(x)D_e(x,y)\,d\pi_0
       +\iint|h_e(x)-h_e(y)|\,d\pi_0+L_f^0(e)+L_g^0(e),
\]
where $L_f^0,L_g^0$ are the discarded-layer losses from
\eqref{eq:pr-losses}, with $u_0,v_0$ in place of $u,v$.
They are finite, and the actual losses are at most these virtual
losses because $u\le u_0$ and $v\le v_0$.

Integrate \eqref{paper1:green-layer-recovery} in $t,x$, use the
primitive layer identity \eqref{r6:primitive-layer}, and add the
other source.  If
\[
 A_e=\int\!\int_0^{f(x)}\chi_f|P(x)-P_I|\,dt\,dx
     +\int\!\int_0^{g(y)}\chi_g|S(y)-S_J|\,dt'\,dy,
\]
the resulting explicit bound is
\begin{equation}
 \begin{aligned}
 \int h_e[bP_F(u)+aP_G(v)]
 \le{}&\lambda\iint h_e(x)D_e\,d\pi_0
       +\iint|h_e(x)-h_e(y)|\,d\pi_0\\
 &+2L_f^0(e)+2L_g^0(e)
  +C\int\Omega(2\ell)h_e(fu+gv)+A_e.
 \end{aligned}
 \label{paper1:green-layer-pressure-bound}
\end{equation}
Lemma~\ref{r8:annular-correction} gives
$\mathbb E_eA_e\le\xi E+C_\xi$ after choosing the interval
scale. For the virtual losses, Lemma~\ref{r7:geometric-budgets}
uses only the superlevel geometry of the actual sources $f,g$, the
local $L^1$ bounds for $u_0,v_0$, and the virtual marginals
$fu_0,gv_0$. It requires no response relation for $u_0,v_0$.
The remaining geometric, cap, coefficient and discarded-layer
estimates are the estimates in the proof of
Proposition~\ref{r6:general-pressure}, applied to the virtual
Newton marginals.  These marginals are bounded by $c_V^{-1}$
times the actual marginals, so their free parameters can be chosen
after this comparison to make their total energy coefficient as
small as required.  Directional averaging of
\eqref{paper1:green-layer-pressure-bound} therefore proves
$W_P\le\lambda T_0+\xi E+C_\xi$.
In particular, the new term is the controlled endpoint difference
$P_I-P(x)$, rather than a multiple of an uncontrolled boundary
pressure.

\emph{Exact cancellation in the virial correction.}
The Newton representations above imply local $C^1$ regularity off zero.
Integrate by parts against $X=x\eta$, initially cut off at zero,
and use the radial Euler derivative measures as in
Lemma~\ref{r6:transport-virial}. Symmetry of $H_R$ gives the exact
interaction expression
\begin{equation}
 -\int fX\cdot\nabla u-\int gX\cdot\nabla v
 =\lambda\iint\Phi\,d\pi_0
  +\iint fg(X(x)\cdot\nabla_x+X(y)\cdot\nabla_y)H_R.
 \label{r7:green-virial-split}
\end{equation}
The correction obeys
\[
 (X(x)\cdot\nabla_x+X(y)\cdot\nabla_y)H_R
 =\eta(x)(-\lambda H_R+G_RQ_RG_R)
       +(\eta(y)-\eta(x))y\cdot\nabla_yH_R.
\]
To display the cancellation, set
\[
 E_f^0=\int\eta fu_0,
 \qquad
 Z_R=\iint\eta(x)f(x)g(y)H_R(x,y)\,dx\,dy
     =E_f^0-E_f.
\]
The geometric localization identity is
\begin{equation}
 \lambda\iint\Phi\,d\pi_0
 =\lambda E_f^0-\lambda T_0+\mathcal R_R,
 \qquad |\mathcal R_R|\le\xi E+C_\xi,
 \label{paper1:green-virtual-energy-localization}
\end{equation}
where
$\mathcal R_R=\lambda\iint(\Phi-\eta(x)+w(x)D)\,d\pi_0$.
Its estimate follows from the geometric input and
$E_0\le c_V^{-1}E$, after choosing the geometric error parameter.
Consequently \eqref{r7:green-virial-split} is exactly
\begin{equation}
 \begin{aligned}
 -\int fX\cdot\nabla u-\int gX\cdot\nabla v
 &=\lambda E_f^0-\lambda T_0-\lambda Z_R
       +\mathcal D_R+\mathcal C_R+\mathcal R_R\\
 &=\lambda E_f-\lambda T_0
       +\mathcal D_R+\mathcal C_R+\mathcal R_R,
 \end{aligned}
 \label{paper1:green-true-energy-cancellation}
\end{equation}
with
\[
 \begin{aligned}
 \mathcal D_R
 &=\iint\eta(x)f(x)g(y)(G_RQ_RG_R)(x,y)\,dx\,dy,\\
 \mathcal C_R
 &=\iint f(x)g(y)[\eta(y)-\eta(x)]
                       y\cdot\nabla_yH_R(x,y)\,dx\,dy.
 \end{aligned}
\]
The two one-body terms on the left denote their common
origin-cutoff limit, justified below; no separate global score
integral is asserted. The composition with $|Q_R|$ has a finite
relative bound, by \eqref{r7:explicit-green-budget} with $Q_+$
replaced by $|Q|$, so $\mathcal D_R$ is absolutely integrable.
Its upper bound uses only the positive part:
\[
 \mathcal D_R
 \le\iint\eta(x)fg(G_R(Q_R)_+G_R)(x,y)\,dx\,dy
 \le\mathcal N_VE_f.
\]
Lemma~\ref{r8:longrange-commutator} gives
$|\mathcal C_R|\le\xi E+C_\xi$, uniformly in scale.
The cancellation of $\lambda Z_R$ in
\eqref{paper1:green-true-energy-cancellation} is algebraic; it does
not estimate virtual energy by an extra multiple of actual energy.
Choosing the free error parameters smaller in advance and using
the weak Euler lower bounds proves
\eqref{r7:green-localized-virial}.

For completeness, removing the origin cutoff in
\eqref{r7:green-virial-split} does not require global energy.
The Newton part uses its localized finite marginals.
By \eqref{r8:longrange-gradient}, the correction on the
difference of the two radial fields is bounded by $C_V$ times the
virtual marginal energy on $B_{2\rho}$, which tends to zero and is
at most ${c_V}^{-1}$ times the true local energy there.
The primitive integration-by-parts origin errors have the same
local-energy bound. This justifies the asserted order of limits.
\end{proof}

\begin{proof}[Proof of Theorem~\ref{r7:green-local-energy}]
Put $\lambda=n-2$, $f=bF(u)$, $g=aG(v)$,
$q_*=\mu_F^{-1}-1$ and $p_*=\mu_G^{-1}-1$.
We first justify the source and amplitude inputs independently of
energy.  The lower comparison
$G_V(x,y)\ge{c_V}N(x-y)$ makes the Newton source potentials
finite.  In particular $f,g$ are locally integrable,
\[
 \int_{|y|>1}|y|^{-\lambda}(f+g)<\infty,
 \qquad R^{-\lambda}M_f(R),\ R^{-\lambda}M_g(R)\longrightarrow0.
\]
On the dilated coefficient patch $RD$, the same comparison gives
$u\ge cR^{-\lambda}M_g(R)$ and
$v\ge cR^{-\lambda}M_f(R)$.
The arguments on the right tend to zero.  Apply monotonicity,
then the small-argument response bounds
\eqref{r6:response-bounds}, and the weak Euler inequalities.
Exactly the source feedback \eqref{r7:unscaled-source-feedback}
results, with $\sigma=2$ and fixed additional kernel constants.
Consequently
\[
 \alpha_*:=\frac{2+A+p_*(2+B)}{p_*q_*-1}\le\lambda,
 \qquad
 \beta_*:=\frac{2+B+q_*(2+A)}{p_*q_*-1}\le\lambda.
\]
The effective-power gap identity is
\[
 \alpha_*+\beta_*-\lambda
 =\frac{(p_*+1)(q_*+1)}{p_*q_*-1}\Gamma_G>0.
\]
It follows that $\alpha_*,\beta_*>0$ and
$n+A=\lambda-\alpha_*+p_*\beta_*>0$,
$n+B=\lambda-\beta_*+q_*\alpha_*>0$.
The rectangle positivity \eqref{r7:rectangle-decomposition} follows
from the same algebra.

Choose the implicit amplitudes
$s_R=R^2a(Re_0)G(t_R)$,
$t_R=R^2b(Re_0)F(s_R)$ and normalize the responses and
coefficients as in \eqref{r6:complete-family}, but use
$G_R(x,y)=R^\lambda G_V(Rx,Ry)$ as the kernel.
The exact source and energy transformations remain
\eqref{r6:exact-source-transform} and
\eqref{r6:exact-energy-density}.
The unique scalar inverse and finite-increment argument in
Lemma~\ref{r7:normalization} use only the response bounds,
Euler inequalities, and rectangle positivity.  They therefore give
\[
 s_{R\rho}/s_R\le C\rho^{-\alpha_*},\qquad
 t_{R\rho}/t_R\le C\rho^{-\beta_*}.
\]
The normalized patch mass argument uses only the lower Newton
comparison for $G_R$ and gives uniform unit-ball source masses.
Larger balls and dyadic summation then give
\eqref{r7:green-source-input}, including the virtual Newton
potentials $u_0=N*g_R$, $v_0=N*f_R$.  These bounds precede every
uniform energy estimate.  The Green comparison constants,
$\mathcal N_V$, and response constants are unchanged under scaling.

Apply Lemma~\ref{r7:green-localized-interface} at sufficiently
large scales, using the common Dini-adapted cutoff.  Write
$\kappa_f=(n+B)\mu_F$, $\kappa_g=(n+A)\mu_G$.
The signs just established permit primitive coercivity.
Adding localized pressure to localized virial cancels
$\lambda T_0$ and yields
\[
 (\kappa_f-\lambda-\mathcal N_V)E_f+\kappa_gE_g
       \le2\xi E+C_\xi.
\]
The two coefficients have sum
$\Gamma_G-\mathcal N_V>0$.
Their signed balance identity, together with
$|E_f-E_g|\le\xi E+C_\xi$, absorbs the error after choosing
$\xi>0$ sufficiently small.  Thus $E_R\le C_*$ uniformly.

Finally, $\mathcal H_R=R^{-\lambda}/(s_Rt_R)$ tends to infinity,
since its growth exponent is
$\alpha_*+\beta_*-\lambda>0$.  The exact true-energy scaling gives
\[
 C_*\ge E_R\ge\mathcal H_R
           \int_{B_R}[bF(u)u+aG(v)v].
\]
The original energy on a fixed ball is finite and strictly positive,
which contradicts this inequality.  This uses only local nonlinear
energy throughout.
\end{proof}

\begin{corollary}
\label{r7:green-power-local-energy}
Under the same Green and coefficient hypotheses, let $p,q>0$,
$pq>1$.  If
\begin{equation}
 \gamma:=\frac{n+A}{p+1}+\frac{n+B}{q+1}-(n-2)>\mathcal N_V,
 \label{r7:green-gap}
\end{equation}
there is no finite positive continuous-off-zero pair satisfying
$u=G_V(av^p)$, $v=G_V(bu^q)$ and
$\int_{B_1}(bu^{q+1}+av^{p+1})<\infty$.
\end{corollary}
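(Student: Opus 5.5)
This corollary is the power specialization of Theorem~\ref{r7:green-local-energy}, so the plan is simply to verify its hypotheses for $F(s)=s^q$, $G(s)=s^p$ and then quote it. The Green and coefficient hypotheses are assumed verbatim: the potential class \eqref{r8:annular-potential-class}, the global weak Euler bounds \eqref{v5:euler-lower} with $A_-=A$, $B_-=B$, and the normalized Dini orbit \eqref{eq:front-dini-orbit}. The remaining work is to check that the power responses satisfy the full-range bounds \eqref{r7:response-assumptions}, and that the gap \eqref{r7:green-gap} is exactly \eqref{r7:green-primitive-gap}.

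For the responses, $F(s)=s^q$ and $G(s)=s^p$ are continuous on $[0,\infty)$, smooth and positive on $(0,\infty)$, and vanish at zero. Their logarithmic derivatives are identically $q$ and $p$, so I take $q_0=q$, $p_0=p$, and $p_0q_0=pq>1$ by hypothesis. The primitives are $P_F(s)=s^{q+1}/(q+1)$ and $P_G(s)=s^{p+1}/(p+1)$. Hence $P_F(s)=sF(s)/(q+1)$ and $P_G(s)=sG(s)/(p+1)$, and the choices $\mu_F=1/(q+1)$, $\mu_G=1/(p+1)$ are admissible (and optimal).

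With these constants,
\[
 \Gamma_G=\frac{n+B}{q+1}+\frac{n+A}{p+1}-(n-2)=\gamma,
\]
so \eqref{r7:green-gap} is precisely the strict inequality $\Gamma_G>\mathcal N_V$. The local energy integrand $bF(u)u+aG(v)v$ equals $bu^{q+1}+av^{p+1}$, and the Green integral system is the one in the theorem. Theorem~\ref{r7:green-local-energy} therefore excludes every such pair.

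There is essentially no obstacle. The only point needing care is that the theorem requires nothing further, namely no sign condition on $n+A$, $n+B$ and no upper elasticity bound. Its proof derives the signs from the finite potentials, and power responses have bounded elasticity anyway.
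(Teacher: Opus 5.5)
Your proposal is correct and matches the paper's proof, which likewise takes $F(s)=s^q$, $G(s)=s^p$, $\mu_F=1/(q+1)$, $\mu_G=1/(p+1)$ in Theorem~\ref{r7:green-local-energy}. The resulting identity $\Gamma_G=\gamma$ turns the hypothesis \eqref{r7:green-gap} into the theorem's strict gap.
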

\begin{proof}
Take $F(s)=s^q$, $G(s)=s^p$,
$\mu_F=1/(q+1)$ and $\mu_G=1/(p+1)$ in
Theorem~\ref{r7:green-local-energy}.
\end{proof}

\begin{corollary}
\label{r7:small-green-perturbation}
\label{r7:small-green-primitive-perturbation}
Fix a nonzero $W\ge0$ in $C^1(\mathbb R^n)$ with
$A_W,\mathcal B(W),\mathcal B(|2W+x\cdot\nabla W|)<\infty$,
and responses
and coefficients satisfying the primitive, Euler and Dini hypotheses
of Theorem~\ref{r7:green-local-energy}.  Suppose
$\Gamma_G=(n+B)\mu_F+(n+A)\mu_G-(n-2)>0$.
Put $Q_W=2W+x\cdot\nabla W$.  For every
\begin{equation}
 0<\varepsilon<
 \frac{\Gamma_G}
 {\mathcal B((Q_W)_+)+\Gamma_G\mathcal B(W)},
 \label{r7:explicit-small-potential}
\end{equation}
the actual Green system for $-\Delta+\varepsilon W$ has no positive
pair in the local-energy class of that theorem.
For constant coefficients $a=b=1$ the gap is
$\Gamma_G=n(\mu_F+\mu_G)-(n-2)$.
\end{corollary}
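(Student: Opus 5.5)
The plan is to reduce the corollary to Theorem~\ref{r7:green-local-energy} applied with $V=\varepsilon W$. Two things must be checked: that the hypotheses \eqref{r8:annular-potential-class} hold for $\varepsilon W$, and that the strict gap $\Gamma_G>\mathcal N_{\varepsilon W}$ holds for every $\varepsilon$ in the range \eqref{r7:explicit-small-potential}. The response, Euler and Dini hypotheses on $F,G,a,b$ do not involve the potential, so they carry over unchanged.

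\emph{Verifying the potential class.} Every quantity in \eqref{r8:annular-potential-class} is linear in the potential. Thus $A_{\varepsilon W}=\varepsilon A_W$ and $\mathcal B(\varepsilon W)=\varepsilon\mathcal B(W)$. Since $2(\varepsilon W)+x\cdot\nabla(\varepsilon W)=\varepsilon Q_W$, we also get $\mathcal B(|Q_{\varepsilon W}|)=\varepsilon\mathcal B(|Q_W|)$. All three are finite by assumption, and $\varepsilon W\ge0$ lies in $C^1$.

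\emph{Bounding the Green defect budget.} Apply \eqref{r7:explicit-green-budget} of Lemma~\ref{r8:green-exponential-comparison} to $V=\varepsilon W$. Using $(\varepsilon Q_W)_+=\varepsilon(Q_W)_+$, this gives
\[
 \mathcal N_{\varepsilon W}\le e^{\varepsilon\mathcal B(W)}\,\varepsilon\,\mathcal B((Q_W)_+).
\]
It therefore suffices to show $\varepsilon\mathcal B((Q_W)_+)e^{\varepsilon\mathcal B(W)}<\Gamma_G$. Write $b_W=\mathcal B(W)$ and $\beta=\mathcal B((Q_W)_+)$.

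\emph{The main obstacle: the exponential factor.} The explicit threshold \eqref{r7:explicit-small-potential} is rational, while the factor above is exponential, so the crude bound $e^{\varepsilon b_W}$ cannot simply be plugged in. I would first check whether the exponential form suffices: the threshold says $\varepsilon\beta<\Gamma_G(1-\varepsilon b_W)$, which needs $e^{\varepsilon b_W}\le(1-\varepsilon b_W)^{-1}$. This inequality does hold whenever $\varepsilon b_W<1$, because $e^{-s}\ge1-s$. The threshold forces $\varepsilon b_W<1$, since $\varepsilon(\beta+\Gamma_G b_W)<\Gamma_G$ gives $\varepsilon b_W<1$. Hence
\[
 \mathcal N_{\varepsilon W}\le\frac{\varepsilon\beta}{1-\varepsilon b_W}<\Gamma_G,
\]
where the last inequality is exactly a rearrangement of \eqref{r7:explicit-small-potential}. (If $\beta=0$, the budget vanishes and the bound is immediate.) Theorem~\ref{r7:green-local-energy} then yields nonexistence.

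The final sentence of the corollary, about constant coefficients $a=b=1$, is a direct computation. In that case $\vartheta_a=\vartheta_b=0$, so one may take $A=B=0$, and then $\Gamma_G=n(\mu_F+\mu_G)-(n-2)$.
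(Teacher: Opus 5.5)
Your proposal is correct and follows the paper's proof: scale the potential budgets, apply the relative budget bound $\mathcal N_{\varepsilon W}\le\varepsilon e^{\varepsilon\mathcal B(W)}\mathcal B((Q_W)_+)$ from Lemma~\ref{r8:green-exponential-comparison}, and note that the rational threshold forces $s=\varepsilon\mathcal B(W)<1$, so $e^s\le(1-s)^{-1}$ gives $\mathcal N_{\varepsilon W}<\Gamma_G$. Theorem~\ref{r7:green-local-energy} then gives nonexistence, and the paper likewise records the exponential condition as the more general sufficient criterion.
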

\begin{proof}
More generally it is sufficient that
\begin{equation}
 \varepsilon e^{\varepsilon\mathcal B(W)}
               \mathcal B((Q_W)_+)<\Gamma_G.
 \label{r8:exponential-amplitude-budget}
\end{equation}
Indeed \eqref{r7:explicit-green-budget} then bounds
$\mathcal N_{\varepsilon W}$ by a number smaller than $\Gamma_G$.
The displayed rational range is a simpler sufficient condition:
it ensures $s=\varepsilon\mathcal B(W)<1$, and
$e^s\le(1-s)^{-1}$. Apply Theorem~\ref{r7:green-local-energy}.
No smallness of $\mathcal B(V)$ is required by that theorem itself.
\end{proof}

\subsection{Verifiable noncompact classes}

The concrete condition
$|V(x)|+|x\cdot\nabla V(x)|\le C\langle x\rangle^{-2-\varepsilon}$,
$\varepsilon>0$, implies $A_V<\infty$ and finiteness of
$\mathcal B(V)$ and $\mathcal B(|Q|)$. The same is true for an
inverse-square tail divided by $(\log(2+|x|))^{1+\varepsilon}$,
with the corresponding bound on its radial derivative. These
finiteness assumptions require no amplitude smallness; a small
amplitude supplies the strict relative-defect gap when the nonlinear
gap is positive.
An easily verifiable sufficient condition for the finiteness of
the displayed quantities is finiteness, for both
$h=V,Q$, of the annular envelope
\[
 \mathfrak A(h)=\|h\|_{L^\infty(B_2)}
   +\sum_{j=1}^{\infty}2^{2j}
           \|h\|_{L^\infty(\{2^j<|x|<2^{j+1}\})}.
\]
Indeed $\|I_2|h|\|_\infty\le C\mathfrak A(h)$, by a dyadic
split and the integral of $N$ over a ball. No annular summability
of the supremum is required in \eqref{r8:annular-potential-class} itself.

\begin{example}
\label{r8:explicit-longrange-example}
For $\varepsilon>0$ set
\[
 W_\varepsilon(x)=(1+|x|^2)^{-1-\varepsilon/2},\qquad
 Q_\varepsilon=2W_\varepsilon+x\cdot\nabla W_\varepsilon
       =W_\varepsilon\frac{2-\varepsilon|x|^2}{1+|x|^2}.
\]
Both $W_\varepsilon$ and $(Q_\varepsilon)_+$ are radial and
nonincreasing, so their Newton potentials attain their maxima at zero.
Using $c_{n,2}|\mathbb S^{n-1}|=1/(n-2)=1/\lambda$ gives
\[
 \mathcal B(W_\varepsilon)=\frac{2C_\lambda}{\lambda\varepsilon},
 \qquad
 \mathcal B((Q_\varepsilon)_+)
    =\frac{4C_\lambda}{\lambda\varepsilon}
            (1+2/\varepsilon)^{-1-\varepsilon/2}.
\]
For the second formula integrate only over
$0<r<\sqrt{2/\varepsilon}$ and substitute $t=1+r^2$.
Consequently, if $\Gamma=(n+B)\mu_F+(n+A)\mu_G-\lambda>0$,
then $V=tW_\varepsilon$ satisfies the sufficient Green budget whenever
\[
 t\,e^{t\mathcal B(W_\varepsilon)}\mathcal B((Q_\varepsilon)_+)<\Gamma.
\]
The rational sufficient range is given by
\eqref{r7:explicit-small-potential} with $W=W_\varepsilon$.
The exponential criterion also contains examples with arbitrarily
large Newton budget, as the following choice shows.
As $\varepsilon\downarrow0$,
$\mathcal B((Q_\varepsilon)_+)\to2C_\lambda/\lambda$.
Choose
\[
 t_\varepsilon=\frac{\lambda\varepsilon}{4C_\lambda}
                        \log(1/\varepsilon),\qquad 0<\varepsilon<1.
\]
Then
\[
 \mathcal B(t_\varepsilon W_\varepsilon)
       =\tfrac12\log(1/\varepsilon)\longrightarrow\infty,
\]
whereas
\[
 t_\varepsilon e^{t_\varepsilon\mathcal B(W_\varepsilon)}
                 \mathcal B((Q_\varepsilon)_+)
       =O\bigl(\sqrt\varepsilon\log(1/\varepsilon)\bigr)
           \longrightarrow0.
\]
Thus every fixed positive nonlinear gap admits genuine noncompact
examples with $\mathcal B(V)>1$, and even with arbitrarily large
$\mathcal B(V)$, while their relative defect is smaller than the gap.
Only local nonlinear energy is required. The parameter $\varepsilon$
may be arbitrarily small and positive; no assertion is made by setting
it equal to zero.
\end{example}

% General long-range entire-PDE consequence; no initial energy/tail assumption.
\subsection{Entire power systems with noncompact potentials}
\label{r8:longrange-entire-pde-section}

\begin{lemma}
\label{r8:potential-exterior-bound}
Let $n\ge3$, $p,q>0$, $pq>1$, and
\[
 \gamma=\frac n{p+1}+\frac n{q+1}-(n-2)>0,
 \qquad \alpha=\frac{2(p+1)}{pq-1},\quad
 \beta=\frac{2(q+1)}{pq-1}.
\]
For every $A<\infty$ there is $C=C(n,p,q,A)$ such that every
nonnegative classical pair on $\mathbb R^n\setminus\overline B_1$
solving
\[
 -\Delta u+Vu=v^p,\qquad -\Delta v+Vv=u^q,
 \qquad 0\le V(x)\le A|x|^{-2},
\]
satisfies, for $|x|>2$,
\begin{equation}
 u(x)+|x||\nabla u(x)|\le C|x|^{-\alpha},\qquad
 v(x)+|x||\nabla v(x)|\le C|x|^{-\beta}.
 \label{r8:potential-exterior-decay}
\end{equation}
% No derivative bound on $V$ is required for this estimate.
\end{lemma}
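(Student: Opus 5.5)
The estimate \eqref{r8:potential-exterior-decay} is a universal exterior bound, and I would prove it by the doubling--rescaling method of Pol\'a\v{c}ik--Quittner--Souplet, applied to the scale-invariant quantity
\[
 M(x)=u(x)^{1/\alpha}+v(x)^{1/\beta}+|\nabla u(x)|^{1/(\alpha+1)}+|\nabla v(x)|^{1/(\beta+1)}.
\]
The target is $M(x)\le C\,\mathrm{dist}(x,\partial B_1)^{-1}$ on $\mathbb R^n\setminus\overline B_1$, with $C$ depending only on $n,p,q,A$. For $|x|>2$ the distance is comparable to $|x|$, and the displayed bound then gives exactly \eqref{r8:potential-exterior-decay}. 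The class of equations is invariant under $u_\lambda(y)=\lambda^\alpha u(x_0+\lambda y)$, $v_\lambda(y)=\lambda^\beta v(x_0+\lambda y)$, $V_\lambda(y)=\lambda^2V(x_0+\lambda y)$. Indeed $\alpha+2=p\beta$ and $\beta+2=q\alpha$, so the equations keep their form. The key observation is that on a ball $B_{\lambda^{-1}d/2}$ with $d=\mathrm{dist}(x_0,\partial B_1)$, one has $|x_0+\lambda y|\ge d/2$. Hence $0\le V_\lambda\le 4A\lambda^2 d^{-2}$.

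I argue by contradiction. Suppose there are pairs $(u_k,v_k,V_k)$ and points $y_k$ with $M_k(y_k)>2k\,\mathrm{dist}(y_k,\partial B_1)^{-1}$. The doubling lemma of \cite{PolacikQuittnerSouplet2007}, applied on the domain $\mathbb R^n\setminus\overline B_1$ with the distance to its boundary, produces points $x_k$ with the following properties: $M_k(x_k)d_k>2k$, where $d_k=\mathrm{dist}(x_k,\partial B_1)$, and $M_k\le2M_k(x_k)$ on $B_{k/M_k(x_k)}(x_k)$. Rescale with $\lambda_k=M_k(x_k)^{-1}$. The rescaled pair has $M\le2$ on $B_k$ and $M(0)=1$. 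The rescaled potential satisfies $V_k\le 4A\lambda_k^2d_k^{-2}\le A/k^2\to0$ uniformly on $B_k$. This is exactly where the inverse-square bound $V\le A|x|^{-2}$ enters, and why no derivative bound on $V$ is needed.

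Next comes compactness. On each fixed ball the rescaled $u,v,\nabla u,\nabla v$ are uniformly bounded, and $\Delta u=Vu-v^p$ is bounded in $L^\infty$. Interior $W^{2,r}$ estimates for every $r<\infty$ then give uniform $C^{1,\theta}_{\rm loc}$ bounds, with constants depending only on $n,p,q$ because $V_k\to0$ in $L^\infty_{\rm loc}$. Along a subsequence we get $C^1_{\rm loc}$ convergence to a nonnegative pair $(U,W)$ on $\mathbb R^n$. This pair solves $-\Delta U=W^p$, $-\Delta W=U^q$ in the weak sense, hence classically after bootstrapping wherever the sources are H\"older. The normalization passes to the limit, so $M_{U,W}(0)=1$. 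The limit is also bounded, with $M\le2$.

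The main obstacle is the final Liouville step: I need the nonexistence of nontrivial bounded nonnegative entire solutions of the unweighted Lane--Emden system under $\gamma>0$ and $pq>1$. For $p,q\ge1$ this is not as hard. After the strong maximum principle, each component is either positive or identically zero. If $U\equiv0$, then $W^p\equiv0$, and the limit is trivial, contradicting $M(0)=1$ (the gradient contribution also vanishes). A positive bounded entire pair satisfies the Riesz integral system by the standard representation argument: $U-I_2(W^p)$ is bounded below and harmonic, hence constant, and boundedness together with Lemma~\ref{so:automatic-weights} forces that constant to be zero after the usual tail argument. Corollary~\ref{cor:power}, with $A=B=0$ and $\sigma=2$, then excludes it, since the pair is continuous and hence has finite local energy. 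For sublinear exponents the strong maximum principle may fail, and $U$ may have dead cores. There I would still pass to the integral system using superharmonicity and bounded nonnegative sources, and apply Corollary~\ref{cor:power} on the set where the pair is positive. If that fails, I would invoke the known bounded-solution Liouville theorem for the full subcritical hyperbola from the cited literature. This Liouville input is the step I expect to require the most care.
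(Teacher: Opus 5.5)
Your proposal is correct and follows the paper's proof: doubling on $\mathbb R^n\setminus\overline B_1$ with the inverse-square bound forcing the rescaled potential to vanish, then $W^{2,s}$ compactness to a bounded nonnegative nonzero entire Lane--Emden pair. That pair is excluded by Green minorants, vanishing harmonic constants, exact Riesz representation and the power case of the master theorem (Corollary~\ref{cor:power}); the only variation is that you put the gradients into the doubling functional, whereas the paper doubles with $u^{1/\alpha}+v^{1/\beta}$ and obtains the gradient bound afterwards by rescaling balls of radius $|x|/4$. Your worry about sublinear exponents and dead cores is unfounded: the limit components are nonnegative superharmonic functions with no absorption term, so the strong maximum principle makes both positive for every $p,q>0$, exactly as the paper uses. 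Drop the fallback to a bounded-solution Liouville theorem on the whole subcritical hyperbola, since the cited literature provides that only under dimensional or exponent restrictions.
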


\begin{proof}
We prove the stronger pointwise estimate in terms of
$\operatorname{dist}(x,\partial B_1)=|x|-1$ at every $|x|>1$.
It follows by doubling. If it failed, there would
be potentials and pairs with a common bound $A$ and points $x_j$
such that
\[
 M_j(x_j)\operatorname{dist}(x_j,\partial B_1)>2j^2,
 \qquad M_j=u_j^{1/\alpha}+v_j^{1/\beta}.
\]
The doubling lemma of \cite{PolacikQuittnerSouplet2007},
with parameter $j^2$, gives $y_j$ and
$r_j=M_j(y_j)^{-1}$ such that
\[
 r_j<\frac{\operatorname{dist}(y_j,\partial B_1)}{2j^2}
          \le\frac{|y_j|}{2j^2},\qquad
 M_j(y_j+r_jz)\le2M_j(y_j)\quad(|z|\le j^2).
\]
Set $U_j(z)=r_j^\alpha u_j(y_j+r_jz)$ and
$W_j(z)=r_j^\beta v_j(y_j+r_jz)$. Their root sum is at most
two on $B_{j^2}$ and equals one at zero. The scaled equations are
\[
 -\Delta U_j+\widetilde V_jU_j=W_j^p,\qquad
 -\Delta W_j+\widetilde V_jW_j=U_j^q,
 \quad \widetilde V_j(z)=r_j^2V_j(y_j+r_jz).
\]
On $B_j$, the inverse-square bound gives
$0\le\widetilde V_j\le CAj^{-4}\to0$ uniformly.
The bounded right sides give uniform local $W^{2,s}$ estimates for
every finite $s$, hence compactness in $C^{1,\theta}_{\rm loc}$
for some $\theta>0$. Taking the equations in distributions gives
a bounded nonnegative entire nonzero limit satisfying
$-\Delta U=W^p$, $-\Delta W=U^q$.
Only the limit needs Schauder regularity: its positive powers are
locally H\"older, so it is a classical pair. Thus no oscillation or
H\"older bound on the approximating potentials has been invoked.
The strong maximum principle makes both limit components positive.

For completeness, this bounded pure pair is excluded directly by the
power specialization of Theorem~\ref{r7:master-front}. Increasing-ball
Newton Green minorants give $I_2(W^p)\le U$ and $I_2(U^q)\le W$.
Their nonnegative harmonic remainders are constants. A positive
constant remainder would make the opposite source bounded below by
a positive constant, contradicting finiteness of its Newton potential.
The representations are therefore exact; boundedness supplies local
energy, and the strict gap gives the required contradiction.

The pointwise estimate is
$u\le C(|x|-1)^{-\alpha}$ and
$v\le C(|x|-1)^{-\beta}$ for every $|x|>1$, which implies
the displayed exterior decay for $|x|>2$.
For the gradient, rescale a ball of radius $|x|/4$ centered at such
an $x$. The rescaled pair and rescaled potential are bounded, so
the right sides in the Laplace equations are bounded. Interior
$W^{2,s}$ estimates with $s>n$ give the displayed gradient bounds.
This final step again uses no derivatives of $V$.
\end{proof}

\begin{corollary}
\label{r8:longrange-entire-pde}\label{r7:green-entire-pde}
Assume the noncompact potential hypotheses
\eqref{r8:annular-potential-class}. If $p,q>0$, $pq>1$, and
\[
 \frac n{p+1}+\frac n{q+1}-(n-2)>\mathcal N_V,
\]
then
\[
 -\Delta u+Vu=v^p,\qquad -\Delta v+Vv=u^q
       \quad\hbox{in }\mathbb R^n
\]
has no strictly positive entire classical pair. 
% No boundedness,
% growth, tail, potential-representation or energy hypothesis is
% imposed on the pair.
\end{corollary}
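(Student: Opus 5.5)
The plan is to reduce Corollary~\ref{r8:longrange-entire-pde} to Corollary~\ref{r7:green-power-local-energy} with $a=b=1$, i.e.\ $A=B=0$. Constant coefficients satisfy the weak Euler bounds with equality, and their normalized orbits have the trivial modulus, so the gap hypothesis is exactly \eqref{r7:green-gap}. What remains is to show that a strictly positive entire classical pair satisfies the exact Green representations $u=G_V(v^p)$, $v=G_V(u^q)$ and has finite local energy. Local energy is automatic, since the pair is continuous on $\mathbb R^n$. The whole task is therefore the representation.

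\emph{Step 1: minorants.} On $B_\rho$ let $G^\rho_V$ be the Dirichlet Green kernel of $-\Delta+V$. Since $u\ge0$ solves $(-\Delta+V)u=v^p$ classically, the maximum principle for $-\Delta+V$ with $V\ge0$ gives
\[
 u(x)\ge\int_{B_\rho}G^\rho_V(x,y)v(y)^p\,dy .
\]
Letting $\rho\to\infty$, monotone convergence yields $u\ge G_V(v^p)$, and likewise $v\ge G_V(u^q)$. By Lemma~\ref{r8:green-exponential-comparison}, $G_V\ge c_VN$, so the sources have finite Newton potentials. The remainders $h_u=u-G_V(v^p)\ge0$ and $h_v=v-G_V(u^q)\ge0$ are $(-\Delta+V)$-harmonic on all of $\mathbb R^n$.

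\emph{Step 2: the remainders vanish.} This is the main obstacle: nonnegative solutions of $(-\Delta+V)h=0$ need not be constant, and they may grow. First I would use Lemma~\ref{r8:potential-exterior-bound} with $A=A_V$, applied on $\mathbb R^n\setminus\overline B_1$. Since $\gamma>\mathcal N_V\ge0$ gives the unweighted gap needed there, we get $u\le C|x|^{-\alpha}$ and $v\le C|x|^{-\beta}$ at infinity. Thus $h_u\le u\to0$ at infinity, and $h_u$ is a nonnegative $(-\Delta+V)$-harmonic function tending to zero. The maximum principle on $B_\rho$ then gives $h_u\le\sup_{\partial B_\rho}h_u\to0$, so $h_u\equiv0$, and the same argument gives $h_v\equiv0$. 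This step uses only $V\ge0$ and the decay from the doubling lemma; the constant $A_V<\infty$ is exactly what makes that lemma applicable. I would check carefully that its stated hypotheses allow a noncompact $V$ with only an inverse-square bound, which the statement of Lemma~\ref{r8:potential-exterior-bound} provides.

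\emph{Step 3: conclusion.} Continuity of $u,v$ on $\mathbb R^n$, together with strict positivity, gives a finite strictly positive pair that is continuous off zero. It satisfies the exact Green identities at every point and has $\int_{B_1}(u^{q+1}+v^{p+1})<\infty$. Corollary~\ref{r7:green-power-local-energy} then forbids such a pair.
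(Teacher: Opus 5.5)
Your argument is correct, but it closes differently from the paper.

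\textbf{Where you agree with the paper.} Your first two steps are the paper's own:
\begin{itemize}
\item Lemma~\ref{r8:potential-exterior-bound} with $A=A_V$ gives exterior decay.
\item Increasing Dirichlet balls give the minorants $u\ge G_V(v^p)$ and $v\ge G_V(u^q)$.
\item The maximum principle for $V\ge0$ kills the nonnegative $L_V$-harmonic remainders, since they are dominated by components tending to zero.
\end{itemize}

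\textbf{How the paper finishes.} It uses the full decay, including the gradient bounds and the identities $\alpha(q+1)=\beta(p+1)=\alpha+\beta+2>n$. From these it shows that $J=\int u^{q+1}=\int v^{p+1}$ and $\int|Q|uv$ are finite. It then runs a global Pohozaev argument, either Proposition~\ref{r7:green-finite-energy} or direct multiplication by $x\cdot\nabla v$ and $x\cdot\nabla u$. This gives
\[
\gamma J=\int Quv\le\int Q_+uv\le\mathcal N_VJ,
\]
where the last inequality uses Tonelli and the exact Green representations.

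\textbf{How you finish.} You use only $u,v\to0$ at infinity. You then hand the pair, which has merely local energy, to Corollary~\ref{r7:green-power-local-energy} with $a=b=1$, $A=B=0$ and a trivial modulus, so $\Gamma_G$ is exactly the stated gap. That theorem does not depend on the present corollary, so there is no circularity.

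\textbf{What each route buys.} Yours avoids all boundary-term and integrability bookkeeping, and needs no gradient decay. However, it rests on the entire localized Green machinery: the Dini-adapted cutoff, relative endpoint differences and the virtual-energy cancellation. That machinery was built for pairs whose total energy may be infinite. Here finite total energy comes for free from the decay, and you discard it. The paper's route shows that, once decay is known, the corollary needs only an elementary global identity, in which the role of $\mathcal N_V$ is explicit.
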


\begin{proof}
Since $\mathcal N_V\ge0$, the strict hypothesis implies $\gamma>0$.
Lemma~\ref{r8:potential-exterior-bound} applies with $A=A_V$.
The algebraic identities
\[
 \alpha+2=p\beta,\quad\beta+2=q\alpha,\qquad
 \alpha(q+1)=\beta(p+1)=\alpha+\beta+2>n
\]
show that both nonlinear energies, $|\nabla u||\nabla v|$,
and $Vuv$ are integrable. Their finiteness is a conclusion of
exterior decay. Multiplying the equations by the opposite component
and integrating over increasing balls gives
\[
 J:=\int u^{q+1}=\int v^{p+1}
     =\int\nabla u\cdot\nabla v+\int Vuv\in(0,\infty).
\]
All boundary terms are $O(R^{n-\alpha-\beta-2})\to0$.

The Green representations are exact as well. Increasing Dirichlet
balls give $G_V(v^p)\le u$ and $G_V(u^q)\le v$.
The nonnegative remainders are entire $L_V$-harmonic and bounded
above by components tending to zero at infinity. The maximum principle
for $V\ge0$ makes both remainders zero.

One can now apply the finite-energy Green virial argument, whose
long-range validity follows from
Lemma~\ref{r8:longrange-kernel}. Alternatively, direct PDE integration
makes the limiting step explicit. Since
$\alpha+\beta>n-2=\lambda$, the decay of $uv$ and
$\mathcal B(|Q|)<\infty$ give
\[
 \int |Q|uv<\infty;
\]
indeed $uv\le C|x|^{-\alpha-\beta}\le C|x|^{-\lambda}$
outside a ball, and the latter is integrable against $|Q|$ by its
Newton potential at zero. Multiply the first PDE by $x\cdot\nabla v$
and the second by $x\cdot\nabla u$, and integrate over balls.
The gradient and potential boundary terms again vanish as
$O(R^{n-\alpha-\beta-2})$. The identity obtained after using the
common energy equality is
\[
 \gamma J=\int(2V+x\cdot\nabla V)uv=\int Q uv.
\]
Finally, positivity, the exact Green representations and Tonelli give
\[
\begin{aligned}
 \int Q_+uv
 &=\iint u(x)^qv(y)^p(G_VQ_+G_V)(x,y)\,dx\,dy\\
 &\le\mathcal N_V\iint u(x)^qv(y)^pG_V(x,y)\,dx\,dy
 =\mathcal N_VJ.
\end{aligned}
\]
This contradicts $\gamma>\mathcal N_V$ and $J>0$.
\end{proof}

\subsection{A borderline inverse-square counterexample}

The Newton-norm and defect restrictions cannot be silently dropped
when allowing critical long-range tails.

\begin{example}
\label{r8:inverse-square-counterexample}
Let $n\ge3$, $1<p<(n+2)/(n-2)$ and $\alpha=2/(p-1)$.
Choose $M>0$ with $M^{p-1}\ge\alpha n$, and put
\[
 U(x)=M(1+|x|^2)^{-\alpha/2},\qquad
 V(x)=\frac{M^{p-1}-\alpha n}{1+|x|^2}
       +\frac{\alpha(\alpha+2)|x|^2}{(1+|x|^2)^2}.
\]
Then $V\ge0$ is smooth with a positive inverse-square tail,
$\mathcal B(V)=\infty$, and the strictly subcritical symmetric
system has the smooth positive exact Green pair $u=v=U$:
\[
 -\Delta U+VU=U^p,\qquad U=G_V(U^p),\qquad
 \int U^{p+1}<\infty.
\]
\end{example}
\begin{proof}
Direct differentiation gives
\[
 \frac{\Delta U}{U}
 =-\frac{\alpha n}{1+r^2}
        +\frac{\alpha(\alpha+2)r^2}{(1+r^2)^2},
 \qquad U^{p-1}=\frac{M^{p-1}}{1+r^2}.
\]
These identities prove the PDE. The positive inverse-square
coefficient makes $I_2V(0)$ diverge logarithmically.
Since $\alpha(p+1)=2\alpha+2>n$, the energy is finite.
Increasing Dirichlet balls give $G_V(U^p)\le U$; the nonnegative
entire $L_V$-harmonic remainder is bounded by $U\to0$ and vanishes
by the maximum principle. This proves the exact representation.
The example makes no assertion that
$2n/(p+1)-(n-2)>\mathcal N_V$; that strict defect hypothesis
remains part of the proved theorem.
\end{proof}

\begin{remark}
\label{r7:green-open-boundary}
The Green argument uses a relative finite-difference estimate,
truncated core charge and a close/far commutator budget. It does
not require harmonicity on the active annulus or a bound on the
full killing charge. The exact resolvent dilation term remains
essential. The results do not cover arbitrary inverse-square
tails, sign-changing potentials, variable principal matrices,
rough domains or two different operator kernels. The equality
$\Gamma_G=\mathcal N_V$ is not settled by the absorption argument.
\end{remark}

\section{PDE representation and consequences}
\label{sec:paper1-pde-representation}

\subsection{Linear tools and source inversion}
\label{ho:section}

Throughout this subsection $0<\sigma<n$, $\lambda=n-\sigma$,
and $L_\sigma=(-\Delta)^{\sigma/2}$. The tail classes are
\eqref{ho:tail}. We divide by the Fourier multiplier only away from
zero frequency; no nonsmooth multiplier is applied to an arbitrary
distribution supported at zero.

For real $t$, write $w\in\mathsf M_t$ if
\[
 \int_{|x|>1}|w(x)||x|^{-t}\,dx<\infty.
\]
Thus $\mathsf M_t\subset\mathsf M_{t'}$ when $t'\ge t$.

\subsubsection{Linear estimates without a maximum principle}

\begin{lemma}
\label{ho:linear}
For every $\sigma>0$, the map
\[
 \mathcal S(\mathbb R^n)\longrightarrow
 X_\sigma:=\{a:\sup_x(1+|x|)^{n+\sigma}|a(x)|<\infty\},
 \qquad \phi\longmapsto L_\sigma\phi,
\]
is continuous.  If $h\in\mathcal L_0$ and $L_\sigma h=0$ on the
whole space in the compact-test sense, then $h=0$ almost everywhere.
\end{lemma}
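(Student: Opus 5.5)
The lemma has two parts, and I would prove them in order.

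\emph{Continuity into $X_\sigma$.} Since the defining symbol $|\xi|^\sigma$ is singular at zero frequency, split it as
\[
 |\xi|^\sigma=\chi(\xi)|\xi|^\sigma+(1-\chi(\xi))|\xi|^\sigma,
\]
with $\chi\in C_c^\infty$ equal to one near zero.

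The second piece is a smooth symbol of order $\sigma$, with all derivatives bounded by polynomials. It therefore maps $\mathcal S$ continuously into $\mathcal S$, and $\mathcal S$ embeds continuously into $X_\sigma$.

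For the first piece, write $\chi|\xi|^\sigma\hat\phi=m\cdot\hat\phi$ with $m=\chi|\cdot|^\sigma$ compactly supported. Its inverse transform is $\check m*\phi$, and I would show that
\[
 |\check m(x)|\le C(1+|x|)^{-n-\sigma}.
\]
Boundedness of $\check m$ is clear, since $m\in L^1$. For the decay I would use a dyadic Littlewood--Paley decomposition of $m$ near the origin. The piece at frequency $2^{-k}$ is $2^{-k\sigma}$ times a rescaled Schwartz bump. Its inverse transform is therefore bounded by
\[
 C_N\,2^{-k(n+\sigma)}\,(1+2^{-k}|x|)^{-N}.
\]
Summing over $k\ge0$ gives $|x|^{-n-\sigma}$ for large $|x|$; this is the standard homogeneous-kernel estimate.

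Convolving with $\phi$ preserves the weight, because $(1+|x|)^{n+\sigma}\le C(1+|x-y|)^{n+\sigma}(1+|y|)^{n+\sigma}$. The resulting bound involves finitely many Schwartz seminorms of $\phi$, which gives continuity. This decay estimate is the only delicate point of the first part.

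\emph{Uniqueness.} Now let $h\in\mathcal L_0$ with $\int hL_\sigma\phi=0$ for every $\phi\in C_c^\infty$. First extend the identity to all $\phi\in\mathcal S$. Approximate $\phi$ by $\phi\,\eta(\cdot/R)$. By the first part, the images converge in $X_\sigma$. Since $h\in\mathcal L_0\subset\mathcal L_\sigma$, the pairing $\int|h|\,(1+|x|)^{-n-\sigma}$ is finite, so the pairing passes to the limit.

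Next take $\phi$ with $\hat\phi\in C_c^\infty(\mathbb R^n\setminus\{0\})$. Then $L_\sigma\phi$ is again Schwartz, and its Fourier transform $|\xi|^\sigma\hat\phi$ ranges over all of $C_c^\infty(\mathbb R^n\setminus\{0\})$. Hence $h$, viewed as a tempered distribution (it is one, since $h\in\mathcal L_0$), has $\hat h$ supported in $\{0\}$. So $h$ is a polynomial.

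Finally, a polynomial satisfying $\int|h|(1+|x|)^{-n}<\infty$ must be zero, because a nonzero polynomial is bounded below in absolute value on a set of infinite $dx/(1+|x|)^n$ measure. This argument uses only Fourier analysis and no maximum principle, as the lemma's title indicates.
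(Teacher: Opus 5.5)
Your argument is correct. The uniqueness half coincides with the paper's: extension to Schwartz tests via the $X_\sigma$ continuity, tests with $\widehat\phi=|\xi|^{-\sigma}\theta$ for $\theta$ supported away from the origin, point support of $\widehat h$, and exclusion of nonzero polynomials. For the last step the paper makes your infinite-measure claim explicit with an open cone on which the top-degree part is nonvanishing, so that $|h|\ge c|x|^d$ there for large $|x|$.

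The continuity half takes a genuinely different route. The paper stays in physical space. It writes $\sigma=2m+s$ with $0<s<2$ and bounds $\|L_\sigma a\|_\infty$ by sup norms of $(-\Delta)^m a$ and $D^2(-\Delta)^m a$ through the symmetrized singular integral. It then derives the exterior formula $L_\sigma a(x)=d_{n,\sigma}\int a(y)|x-y|^{-n-\sigma}\,dy$ off the support of a compactly supported $a$, by moving the Laplacians onto the kernel. At $|x|=r\ge4$ it splits $\phi$ into a piece cut off to $B_{r/2}$, bounded by $Cr^{-n-\sigma}\|\phi\|_1$, and a remainder whose derivatives are $O(r^{-N})$.

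You split the symbol instead. You obtain $|\check m(x)|\le C(1+|x|)^{-n-\sigma}$ for the low-frequency multiplier by a dyadic sum, and pass the weight through the convolution with Peetre's inequality. Your route is shorter and treats every $\sigma>0$ at once; for even integers $\check m$ is simply Schwartz, so no separate even-order case is needed. The paper's route produces the exterior kernel formula and the sup-norm bound as explicit intermediate facts. For the later applications to rescaled cutoffs (the origin-extension and cutoff-obstruction lemmas), your $X_\sigma$ bound together with the scaling $L_\sigma[\phi(\cdot/\varepsilon)]=\varepsilon^{-\sigma}(L_\sigma\phi)(\cdot/\varepsilon)$ gives the same estimates. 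That is in fact how the paper phrases those steps.
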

\begin{proof}
For an even integer order the continuity assertion follows from
Schwartz seminorms of derivatives.  Otherwise write
$\sigma=2m+s$, $m\geq0$, $0<s<2$.  The symmetrized singular integral
for $(-\Delta)^{s/2}a$ gives
\begin{equation}
 \|L_\sigma a\|_\infty
 \leq C\bigl(\|(-\Delta)^m a\|_\infty+
                  \|D^2(-\Delta)^m a\|_\infty\bigr).
 \label{ho:bounded-operator}
\end{equation}
If $a\in C_c^\infty$ and $x$ is outside its support, integration by
parts in the nonsingular integral gives
\begin{equation}
 L_\sigma a(x)=d_{n,\sigma}
       \int a(y)|x-y|^{-n-\sigma}\,dy.
 \label{ho:off-support}
\end{equation}
The coefficient may have either sign; only its absolute value is used.
Indeed one first applies the order-$s$ exterior formula to
$(-\Delta)^m a$, then transfers the $m$ Laplacians to its kernel.
There are no boundary terms because $a$ is compactly supported.

For $|x|=r\geq4$, split a Schwartz function into
$a_1(y)=\chi(y/r)\phi(y)$ and $a_2=\phi-a_1$, where the fixed smooth
cutoff $\chi$ equals one on $B_{1/4}$ and is supported in $B_{1/2}$.
Formula~\eqref{ho:off-support} bounds
$|L_\sigma a_1(x)|$ by $Cr^{-n-\sigma}\|\phi\|_1$.
For any fixed $N>n+\sigma$, all derivatives of $a_2$ through order
$2m+2$ have supremum norm bounded by $Cr^{-N}$ times finitely many
Schwartz seminorms of $\phi$.  Equation~\eqref{ho:bounded-operator}
therefore gives the same bound, or better, for $L_\sigma a_2(x)$.
The bounded region uses \eqref{ho:bounded-operator}.  This proves the
claimed continuous map.  In particular compact tests satisfy
$|L_\sigma\phi(x)|\leq C_\phi(1+|x|)^{-n-\sigma}$.

Now let $h\in\mathcal L_0$ be entire $L_\sigma$-harmonic.  It defines
a tempered distribution, and belongs to $\mathcal L_\sigma$.
For every $\phi\in\mathcal S$, the cutoff functions
$\chi(\cdot/R)\phi$ converge to $\phi$ in $\mathcal S$ as
$R\to\infty$.  The first assertion implies convergence of their
$L_\sigma$ images in $X_\sigma$.  Pairing with $h\in\mathcal L_\sigma$
thus extends the compact-test equation to
\begin{equation}
 \int hL_\sigma\phi=0\qquad(\phi\in\mathcal S).
 \label{ho:schwartz-harmonic}
\end{equation}
For $\theta\in C_c^\infty(\mathbb R^n\setminus\{0\})$, the function
$\phi=\mathcal F^{-1}(|\xi|^{-\sigma}\theta)$ is Schwartz and its
Fourier multiplier satisfies
$L_\sigma\phi=\mathcal F^{-1}\theta$.  Equation~\eqref{ho:schwartz-harmonic}
shows that $\widehat h$ vanishes against every test supported away from
zero.  Thus $\operatorname{supp}\widehat h\subset\{0\}$.
The structure theorem for distributions supported at a point implies
that $\widehat h$ is a finite sum of derivatives of $\delta_0$;
see \cite[Section~2.3]{Hormander2003}. Consequently $h$ is a
polynomial almost everywhere.
No nonzero polynomial is in $\mathcal L_0$: a polynomial of degree
$d$ has size at least $cr^d$ on a cone for large $r$, and its weighted
absolute integral dominates
$c\int_1^\infty r^{d-1}\,dr$, also divergent when $d=0$.
Consequently $h=0$.
\end{proof}

Origin removal for fractional Hardy--H\'enon systems is treated in
\cite[Lemma~2.1]{LiYuZhang2024}; the local Newtonian version also
appears in \cite[Lemma~2.2]{OpenAI2026}. The next argument states
the natural-tail and local even-integer alternatives together.

\begin{lemma}
\label{ho:origin-extension}
Let $0<\sigma<n$, $u\in C(\mathbb R^n)$, and
$g\geq0$ belong to $L^1_{\rm loc}(\mathbb R^n\setminus\{0\})$.
Assume either $u\in\mathcal L_\sigma$, or $\sigma$ is an even integer
and all operator pairings have their local compact-test meaning.
If $L_\sigma u\geq g$ in distributions off zero, then
$g\in L^1_{\rm loc}(\mathbb R^n)$ and the inequality holds on the
whole space. A punctured equality extends as an equality.
No sign of $u$ or initial source integrability at zero is needed;
no tail is needed in the even-integer case.
\end{lemma}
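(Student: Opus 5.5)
The plan is a cutoff-and-limit argument at the origin: test the punctured inequality against test functions multiplied by a shrinking cutoff, show that the cutoff commutator vanishes in the limit, and read off local integrability of $g$ from Fatou.

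\emph{Setup.} Fix a nonnegative $\phi\in C_c^\infty(\mathbb R^n)$. Choose a smooth $\chi$ with $\chi=0$ on $B_1$, $\chi=1$ outside $B_2$, and set $\chi_\epsilon=\chi(\cdot/\epsilon)$. Then $\phi\chi_\epsilon$ is a legitimate nonnegative test function off zero, and the hypothesis gives
\[
 \int g\,\phi\chi_\epsilon\le\int u\,L_\sigma(\phi\chi_\epsilon).
\]
Write $L_\sigma(\phi\chi_\epsilon)=L_\sigma\phi-L_\sigma(\phi(1-\chi_\epsilon))$. Put $\psi_\epsilon=\phi(1-\chi_\epsilon)$; it is supported in $B_{2\epsilon}$ and is bounded by $\|\phi\|_\infty$. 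The whole argument reduces to proving
\begin{equation}
 \int u\,L_\sigma\psi_\epsilon\longrightarrow0\qquad(\epsilon\downarrow0).
 \label{plan:origin-commutator}
\end{equation}

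\emph{Integrability of $g$ and the inequality on $\mathbb R^n$.} Given \eqref{plan:origin-commutator}, Fatou (or monotone convergence, since $\chi_\epsilon\uparrow1$ off zero) yields $\int g\phi\le\int uL_\sigma\phi<\infty$. Choosing $\phi\ge\mathbf 1_{B_1}$ proves $g\in L^1_{\rm loc}(\mathbb R^n)$. For a general real test $\phi$, dominated convergence on the left (now justified) together with \eqref{plan:origin-commutator} passes to the limit, so the inequality holds with every nonnegative test on the whole space. For the equality statement, apply this argument to both $\pm$ inequalities; signed tests are handled by linearity once $g$ is known to be integrable.

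\emph{Proof of \eqref{plan:origin-commutator}, the main obstacle.} This step is the only delicate one. In the even-integer case $\sigma=2m$, $L_\sigma\psi_\epsilon$ is supported in $B_{2\epsilon}$. Expanding by the product rule, the term with no derivatives on the cutoff is $(-\Delta)^m\phi\,(1-\chi_\epsilon)$, of size $O(1)$ on $B_{2\epsilon}$. The terms carrying $j\ge1$ derivatives on $1-\chi_\epsilon$ are $O(\epsilon^{-j})$ and are supported in the annulus $B_{2\epsilon}\setminus B_\epsilon$. Using only the $O(1)$ bound, $|\int uL_\sigma\psi_\epsilon|\le C\sup_{B_{2\epsilon}}|u|\,\epsilon^{n-2m}$, which tends to $0$ because $2m<n$ and $u$ is continuous (hence bounded near zero). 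No tail is used here, since everything is compactly supported.

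For general $\sigma$ with $u\in\mathcal L_\sigma$, I would use scaling instead. Write $\psi_\epsilon(x)=\Psi_\epsilon(x/\epsilon)$, where $\Psi_\epsilon=\phi(\epsilon\,\cdot)(1-\chi)$ lies in a bounded subset of $C_c^\infty(B_2)$. Then $L_\sigma\psi_\epsilon(x)=\epsilon^{-\sigma}(L_\sigma\Psi_\epsilon)(x/\epsilon)$. By the continuity in Lemma~\ref{ho:linear}, $|L_\sigma\Psi_\epsilon(z)|\le C(1+|z|)^{-n-\sigma}$ uniformly in $\epsilon$, so
\[
 |L_\sigma\psi_\epsilon(x)|\le C\epsilon^{-\sigma}\bigl(1+|x|/\epsilon\bigr)^{-n-\sigma}
 =C\epsilon^{n}\bigl(\epsilon+|x|\bigr)^{-n-\sigma}.
\]
Split the pairing with $u$ into two regions. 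On $B_1$, continuity bounds $|u|$, and
\[
 \int_{B_1}\epsilon^n(\epsilon+|x|)^{-n-\sigma}\,dx\le C\epsilon^{n-\sigma}\to0,
\]
using $\sigma<n$. On $|x|>1$, the bound $C\epsilon^n|x|^{-n-\sigma}$ paired with $u\in\mathcal L_\sigma$ gives $O(\epsilon^n)$. This proves \eqref{plan:origin-commutator} and completes the plan; the required uniformity of the Lemma~\ref{ho:linear} constants over a bounded family of tests comes directly from the seminorm continuity stated there.
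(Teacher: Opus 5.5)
This is correct and is essentially the paper's argument: the same shrinking cutoff $\phi(1-\chi_\epsilon)$ (the paper's $h_\varepsilon$), the same scaling plus Lemma~\ref{ho:linear} bound giving $O(\epsilon^{n-\sigma})+O(\epsilon^{n})$ for the origin commutator, and compact support of the image in the even-integer case so that no tail enters. After that come Fatou for local integrability and dominated convergence for the limiting (in)equality. One wording slip: in the even-integer case the bound $C\sup_{B_{2\epsilon}}|u|\,\epsilon^{n-2m}$ does not come from the $O(1)$ term alone; it comes from $\|L_\sigma\psi_\epsilon\|_\infty\le C\epsilon^{-2m}$ (all derivatives falling on the cutoff) integrated over $B_{2\epsilon}$.
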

\begin{proof}
For $\phi\in C_c^\infty$, let
$h_\varepsilon(x)=\phi(x)\zeta(x/\varepsilon)$, where
$0\leq\zeta\leq1$ is smooth, supported in $B_2$, and equals one on
$B_1$. The fixed-support profiles $\phi(\varepsilon z)\zeta(z)$ have
uniformly bounded derivatives of every fixed order. Scaling and
Lemma~\ref{ho:linear} give
\[
 \|L_\sigma h_\varepsilon\|_\infty
 \leq C_\phi\varepsilon^{-\sigma},\qquad
 |L_\sigma h_\varepsilon(x)|
 \leq C_\phi\varepsilon^n|x|^{-n-\sigma}
 \quad(|x|\geq4\varepsilon)
\]
for non-even orders. For even integer orders the first bound follows
from Leibniz's rule, and the image vanishes outside $B_{2\varepsilon}$.
Local boundedness, integration over $B_{4\varepsilon}$ and
$B_1\setminus B_{4\varepsilon}$, and the natural tail in the
nonlocal case therefore give
\begin{equation}
 \left|\int uL_\sigma h_\varepsilon\right|
 \leq C_\phi\|u\|_{L^\infty(B_1)}\varepsilon^{n-\sigma}
 +C_\phi\varepsilon^n
 \int_{|x|\geq1}|u(x)||x|^{-n-\sigma}\,dx\longrightarrow0.
 \label{ho:puncture-error}
\end{equation}
The final term is omitted in the even-integer case, so no tail has
entered that argument. For $\phi\geq0$, the punctured test
$\phi-h_\varepsilon=\phi(1-\zeta(\cdot/\varepsilon))$ is nonnegative.
The inequality bounds its nonnegative source integral by
$\int uL_\sigma(\phi-h_\varepsilon)$, which has a finite limit.
Fatou first proves $\int g\phi<\infty$. Choosing $\phi$ at least one
on any prescribed compact set gives local integrability at zero.
Dominated convergence now passes the source integral to its limit and
proves the entire inequality. For a punctured equality the same
calculation for signed tests proves the entire equation.
\end{proof}

\subsubsection{Moment and annular estimates from an exterior lower bound}

\begin{lemma}
\label{gc:source-transfer}
Let $r\geq1$, $B\in\mathbb R$, $m>0$, and let $H,w\geq0$ be
measurable with $H(x)\geq m|x|^B w(x)^r$ for $|x|>1$.
For $b\in\mathbb R$ set
\[
 \mathcal T_r(b,B)=n(1-1/r)+(b-B)/r.
\]
Then the following two source-to-solution transfers hold:
\begin{enumerate}[label=(\roman*)]
\item $H\in\mathsf M_b$ implies $w\in\mathsf M_t$ for every
$t>\mathcal T_r(b,B)$;
\item if $\int_{B_R}H\leq CR^b$ for $R\geq1$, then
\begin{equation}
 \int_{R<|x|<2R}w\leq C'R^{\mathcal T_r(b,B)}
 \quad(R\geq1).
 \label{gc:annular-transfer}
\end{equation}
\end{enumerate}
This includes the endpoint $r=1$ and requires the coefficient lower bound
only on the exterior region.
\end{lemma}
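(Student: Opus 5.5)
The plan is to prove both transfers by applying Hölder's inequality on dyadic annuli. The pointwise lower bound gives $w\le (H/m)^{1/r}|x|^{-B/r}$ for $|x|>1$. The case $r=1$ is then immediate: (i) becomes $\int_{|x|>1}w|x|^{-t}\le m^{-1}\int H|x|^{-B-t}$, which is finite once $-B-t\le -b$, i.e.\ $t\ge b-B=\mathcal T_1(b,B)$; and (ii) is the direct bound $\int_{R<|x|<2R}w\le m^{-1}(2R)^{-B}\ldots$, handled with the same annular comparison. So assume $r>1$ and let $r'=r/(r-1)$ be the dual exponent.

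For (ii), fix the annulus $A_R=\{R<|x|<2R\}$. On $A_R$ one has $|x|^{-B/r}\le CR^{-B/r}$ for either sign of $B$, with $C=2^{|B|/r}$. Hölder then gives
\[
 \int_{A_R}w\le m^{-1/r}CR^{-B/r}\Bigl(\int_{A_R}H\Bigr)^{1/r}|A_R|^{1/r'}
 \le C'R^{-B/r}R^{b/r}R^{n(1-1/r)},
\]
using $\int_{A_R}H\le\int_{B_{2R}}H\le C(2R)^b$. The exponent is exactly $\mathcal T_r(b,B)$, which proves \eqref{gc:annular-transfer}.

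For (i), decompose $|x|>1$ into the dyadic annuli $A_{2^k}$, $k\ge0$. Write $h_k=\int_{A_{2^k}}H|x|^{-b}$; the hypothesis $H\in\mathsf M_b$ says $\sum_k h_k<\infty$, and in particular $\int_{A_{2^k}}H\le C2^{kb}h_k$. The same Hölder step as in (ii) then gives
\[
 \int_{A_{2^k}}w|x|^{-t}\le C2^{-kt}2^{k(b-B)/r}h_k^{1/r}2^{kn/r'}
 =C2^{k(\mathcal T_r-t)}h_k^{1/r}.
\]
To sum over $k$, bound $h_k^{1/r}$ by $(\sum_j h_j)^{1/r}$. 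The factor $2^{k(\mathcal T_r-t)}$ is geometrically summable exactly when $t>\mathcal T_r(b,B)$; this is where the strict inequality is used, and for $r>1$ it cannot in general be weakened. Alternatively, a discrete Hölder inequality would sum the series directly. Either way the tail integral is finite, so $w\in\mathsf M_t$.

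None of the steps is a real obstacle. The points needing care are tracking the sign of $B$ when comparing $|x|^{-B/r}$ with $R^{-B/r}$ on an annulus, which costs only the constant $2^{|B|/r}$, and noting that only the exterior region $|x|>1$ is ever used. This is consistent with the lemma requiring the coefficient lower bound only there.
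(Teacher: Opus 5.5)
Your argument is correct and is essentially the paper's: invert the pointwise bound to $w\le m^{-1/r}|x|^{-B/r}H^{1/r}$ and apply H\"older (on $\{R<|x|<2R\}$ for (ii), by direct integration when $r=1$); the only difference is that for (i) the paper applies H\"older once on all of $\{|x|>1\}$ against the weight $|x|^{r'[-t+(b-B)/r]}$, whose integrability is exactly $t>\mathcal T_r(b,B)$, rather than summing over dyadic annuli. One small slip: in your $r=1$ version of (ii) the annular factor should be $2^{|B|}R^{-B}$, not $(2R)^{-B}$, when $B>0$, which is what your own annular comparison gives.
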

\begin{proof}
For $r>1$, the pointwise inequality
$w\leq m^{-1/r}|x|^{-B/r}H^{1/r}$ and H\"older's inequality give
\[
 \int_{|x|>1}w|x|^{-t}
 \leq m^{-1/r}\left(\int_{|x|>1}H|x|^{-b}\right)^{1/r}
 \left(\int_{|x|>1}
 |x|^{r'[-t+(b-B)/r]}\right)^{1/r'}.
\]
The last integral is finite precisely when
$t>\mathcal T_r(b,B)$, proving (i). On
$\mathcal A_R=\{R<|x|<2R\}$ the same inequality gives
\[
 \int_{\mathcal A_R}w
 \leq C R^{-B/r}|\mathcal A_R|^{1-1/r}
       \left(\int_{B_{2R}}H\right)^{1/r}
 \leq C'R^{\mathcal T_r(b,B)},
\]
which proves (ii). For $r=1$, integrate
$w\leq m^{-1}|x|^{-B}H$ directly, using
$|x|^{-t-B}\leq|x|^{-b}$ in (i) and annular comparability in (ii).
\end{proof}

\subsubsection{Integral representation for nonnegative sources}

Representation of integer-order equations with positive Radon
right-hand sides was studied in
\cite{CaristiDAmbrosioMitidieri2008}. The next statement uses a
strict exterior source moment and $\mathcal L_0$ to identify the
potential at every point of a continuous solution, for all $0<s<n$.

\begin{proposition}
\label{gc:source-inversion}
Let $0<s<n$, $\lambda_s=n-s$, and let $\mu$ be a nonzero nonnegative
Radon measure on $\mathbb R^n$ such that, for some $0<t<\lambda_s$,
\[
 \int_{|y|>1}|y|^{-t}\,d\mu(y)<\infty.
\]
Then the extended potential $P=I_s\mu$ belongs to $\mathcal L_0$
and satisfies $L_sP=\mu$ globally in distributions.
If $w\in C(\mathbb R^n)\cap\mathcal L_0$ satisfies $L_sw=\mu$
globally, then $P(x)=w(x)>0$ at every point. In particular $P$ is
finite and continuous everywhere, and $\mu$ has no atoms.
% No local boundedness or density of the source is required.
\end{proposition}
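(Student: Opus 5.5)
We first show that $P=I_s\mu$ is locally integrable with an $\mathcal L_0$ tail, then that $L_sP=\mu$, and finally identify $w$ with $P$ pointwise through Lemma~\ref{ho:linear} and a lower semicontinuity argument.

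\emph{Step 1: $P\in\mathcal L_0$.} Split $\mu=\mu_1+\mu_2$ with $\mu_1=\mu\llcorner B_2$.
\begin{itemize}[label=--]
\item Near part: $I_s\mu_1$ is locally integrable by Tonelli, since $K_s\in L^1_{\rm loc}$. For $|x|\geq4$ it is bounded by $C|x|^{-\lambda_s}\mu(B_2)$, which lies in $\mathcal L_0$ because $\lambda_s>0$.
\item Far part: Tonelli gives
\[
 \int\frac{I_s\mu_2(x)}{1+|x|^n}\,dx
 =\int\Bigl(\int\frac{K_s(x-y)}{1+|x|^n}\,dx\Bigr)d\mu_2(y).
\]
A standard three-region split ($|x|<|y|/2$, $|x-y|<|y|/2$, and the rest) shows that the inner integral is at most $C|y|^{-\lambda_s}\log(2+|y|)$. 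This is $\leq C_t|y|^{-t}$ because $t<\lambda_s$, and the moment hypothesis finishes this part.
\end{itemize}
The same computation, with the weight $(1+|x|)^{-n-s}$ replaced by any $X_s$ function, shows that $\iint K_s(x-y)|\phi'(x)|\,dx\,d\mu(y)<\infty$ whenever $|\phi'(x)|\leq C(1+|x|)^{-n-s}$. Hence the pairing below is absolutely convergent.

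\emph{Step 2: $L_sP=\mu$.} For $\phi\in C_c^\infty$, Lemma~\ref{ho:linear} places $L_s\phi$ in $X_s$. By Step~1 we may use Fubini:
\[
 \int PL_s\phi=\int(I_sL_s\phi)\,d\mu.
\]
The identity $I_sL_s\phi=\phi$ holds pointwise: both sides are continuous, and Fourier symbols give it for Schwartz $\phi$. A clean route is to check it on the dense set where $\widehat\phi$ vanishes near zero, then pass to the limit using $X_s$ continuity together with the fact that $I_s$ maps $X_s$ boundedly into continuous functions. This gives $L_sP=\mu$ in the tested sense.

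\emph{Step 3: identification, the main obstacle.} Set $h=w-P\in\mathcal L_0$. Then $L_sh=0$ globally, so Lemma~\ref{ho:linear} gives $h=0$ almost everywhere. The difficulty is upgrading this to every point, since $P$ is only lower semicontinuous and could a priori be infinite at atoms.

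We use mollified averages. Since $K_s$ is superharmonic-type (monotone radial), the averages $P_r(x)=\fint_{B_r(x)}P$ satisfy $\liminf_{r\to0}P_r(x)\geq P(x)$ by Fatou and lower semicontinuity of $K_s*\mu$. Continuity of $w$ gives $P_r(x)=\fint_{B_r(x)}w\to w(x)$. Hence $P(x)\leq w(x)<\infty$ everywhere.

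For the reverse inequality we would show $\limsup_r P_r(x)\leq P(x)$ when $P(x)<\infty$. The kernel average $\fint_{B_r(x)}K_s(z-y)\,dz$ is bounded by $CK_s(x-y)$ uniformly in $r$; this is an elementary property of radial decreasing kernels, and the constant can in fact be taken as $1$ for $s\leq2$ by superharmonicity. Dominated convergence against the finite integral $\int K_s(x-y)\,d\mu(y)$ then gives $P_r(x)\to P(x)$. Therefore $P(x)=w(x)$ at every point.

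\emph{Consequences.} Positivity follows because $\mu\neq0$ and $K_s>0$. Finiteness rules out atoms, since an atom at $x_0$ would force $P(x_0)=\infty$. Continuity of $P$ is inherited from $w$.
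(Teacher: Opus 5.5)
Your route is the paper's own. You use Tonelli to place $P$ in $\mathcal L_0$, absolute Fubini with the fundamental-solution identity to get $L_sP=\mu$, and Lemma~\ref{ho:linear} to get $w=P$ almost everywhere. You then identify every point through ball averages, the uniform bound $\fint_{B_r}K_s(\cdot-z)\le CK_s(z)$, and dominated convergence, which needs no atom at the point since $P(x)<\infty$. Your differences are cosmetic: you integrate $(1+|x|^n)^{-1}$ directly with a three-region split where the paper bounds $\int_{B_R}P\le CR^{s+t}$, and you get $P\le w$ by Fatou on averages rather than along a sequence. Steps 1 and 3 are sound.

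The one justification that fails as written is your proof of $I_sL_s\phi=\phi$ in Step 2. Approximants $\phi_k$ with $\widehat{\phi_k}$ vanishing near zero cannot make $L_s\phi_k\to L_s\phi$ in the norm of $X_s$.
\begin{itemize}
\item Each such $L_s\phi_k$ is Schwartz.
\item For $s$ not an even integer and $\int\phi\ne0$, the off-support formula \eqref{ho:off-support} shows $L_s\phi$ decays exactly like a nonzero multiple of $|x|^{-n-s}$. Hence $\|L_s\phi_k-L_s\phi\|_{X_s}$ stays bounded below.
\item Equivalently, that subspace is not dense in $\mathcal S$, because $\phi\mapsto\int\phi$ is continuous and vanishes on it. So the continuity in Lemma~\ref{ho:linear} cannot be invoked.
\end{itemize}

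The fix is to replace norm convergence by domination. Take $\widehat{\phi_k}=\widehat\phi\,(1-\chi(\cdot/\varepsilon_k))$. A scaling computation gives $|L_s\phi_k(x)-L_s\phi(x)|\le C\varepsilon_k^{n+s}(1+\varepsilon_k|x|)^{-n-s}\le C(1+|x|)^{-n-s}$, which also tends to zero pointwise. Since $\int K_s(y-x)(1+|x|)^{-n-s}\,dx<\infty$, dominated convergence passes $I_sL_s\phi_k=\phi_k$ to the limit. Alternatively, cite the identity as the paper does (Stein, Chapter~V, or Gaussian regularization).

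Your aside that the averaging constant can be taken equal to $1$ for $s\le2$ by superharmonicity is backwards for $s<2$. With $a=n-s>n-2$, one has $\Delta|z|^{-a}=a(a-n+2)|z|^{-a-2}>0$ off the origin. So ball averages not containing the pole exceed the centre value, and the constant $1$ holds only for $s\ge2$. Your argument needs only some uniform $C$, so this does not affect the proof.
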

\begin{proof}
Local finiteness and the strict moment give
$M_\mu(R):=\mu(B_R)\leq CR^t$ for $R\geq1$.
Tonelli and a near/far split show
\[
 \int_{B_R}P\leq CR^sM_\mu(2R)
 +CR^n\int_{|y|>2R}|y|^{-\lambda_s}\,d\mu(y)
 \leq CR^{s+t}\qquad(R\geq1).
\]
The last estimate follows by dyadic summation since $t<\lambda_s$.
It gives local integrability, almost-everywhere finiteness and
$P\in\mathcal L_0$. Lemma~\ref{ho:linear} gives
$\int P|L_s\phi|<\infty$ for compact smooth tests. The normalized
Riesz kernel satisfies
\begin{equation}
 \int K_s(x-y)L_s\phi(x)\,dx=\phi(y),
 \qquad \phi\in C_c^\infty,\quad 0<s<n.
 \label{gc:inverse}
\end{equation}
This is the fundamental-solution identity for the normalized Riesz
kernel; see \cite[Chapter~V, Section~1]{Stein1970}. The integral is
absolutely convergent. The identity follows from Fourier inversion
for the locally integrable multiplier $|\xi|^{-s}$, or Gaussian
regularization. Its cancellation with
$|\xi|^s\widehat\phi$ is between ordinary functions, without applying
a nonsmooth multiplier to a point-supported distribution.
Absolute Fubini is justified by
\[
 \iint K_s(x-y)|L_s\phi(x)|\,dx\,d\mu(y)
 =\int P|L_s\phi|<\infty,
\]
and proves $L_sP=\mu$.

Now let the continuous solution $w$ be given. The remainder $w-P\in\mathcal L_0$ is $L_s$-harmonic on
$\mathbb R^n$ and vanishes by Lemma~\ref{ho:linear}.
Thus $w=P$ almost everywhere, without a prior continuity assertion
for $P$. Fix any $x_0$ and choose points $x_j\to x_0$ where this
equality holds. Positive-kernel Fatou gives
$0<P(x_0)\leq\liminf_jP(x_j)=w(x_0)<\infty$; strict positivity
uses a positive compact source mass. In particular
$\mu(\{x_0\})=0$.

An averaging argument over balls identifies every point. Write
\[
 A_r(z)=\frac1{|B_r|}\int_{B_r}K_s(x-z)\,dx.
\]
For $z\ne0$, $A_r(z)\leq CK_s(z)$ uniformly in $r>0$:
if $|z|\geq2r$ the kernels are comparable, and if $|z|<2r$,
local kernel integrability gives
$A_r(z)\leq Cr^{-\lambda_s}\leq C'|z|^{-\lambda_s}$.
Also $A_r(z)\to K_s(z)$ for each $z\ne0$. Since
$\int K_s(x_0-y)\,d\mu(y)=P(x_0)<\infty$, dominated convergence
and Tonelli give
\[
 P(x_0)=\lim_{r\downarrow0}\int A_r(y-x_0)\,d\mu(y)
 =\lim_{r\downarrow0}\fint_{B_r(x_0)}P
 =\lim_{r\downarrow0}\fint_{B_r(x_0)}w=w(x_0).
\]
The argument applies to every $x_0$, including the origin, and proves
all remaining assertions.
\end{proof}

\begin{corollary}
\label{gc:pair-inversion}
Let $0<s_1,s_2<n$, $p,q>0$, and let $a,b>0$ be continuous off zero.
Suppose $u,v\in C(\mathbb R^n)\cap\mathcal L_0$ are nonnegative,
strictly positive off zero, and solve
$L_{s_1}u=g:=av^p$, $L_{s_2}v=f:=bu^q$ globally in distributions.
Assume $f,g\in L^1_{\mathrm{loc}}$ and strict source moments
$g\in\mathsf M_{t_g}$, $f\in\mathsf M_{t_f}$ for some
$0<t_g<n-s_1$ and $0<t_f<n-s_2$.
Then the two potential identities hold everywhere, both origin values
are positive, and
\begin{equation}
 \int_{B_1}a(y)|y|^{s_1-n}\,dy<\infty,\qquad
 \int_{B_1}b(y)|y|^{s_2-n}\,dy<\infty,
 \qquad fu+gv\in L^1_{\mathrm{loc}}.
 \label{gc:pair-origin}
\end{equation}
% No product restriction, source scaling, origin coefficient bound or
% sum-of-orders restriction is used in this conditional inversion.
\end{corollary}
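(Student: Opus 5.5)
The plan is to apply Proposition~\ref{gc:source-inversion} twice, once with $\mu=g\,dx$ and $s=s_1$, and once with $\mu=f\,dx$ and $s=s_2$, and then read off the remaining assertions from the resulting exact identities.

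\emph{Step 1: the inversion hypotheses.} Since $f,g\in L^1_{\rm loc}$ are nonnegative, $g\,dx$ and $f\,dx$ are nonnegative Radon measures. They are nonzero, because $u,v$ are strictly positive off zero and $a,b>0$. The strict moments $g\in\mathsf M_{t_g}$ with $0<t_g<n-s_1$, and $f\in\mathsf M_{t_f}$ with $0<t_f<n-s_2$, are exactly the exterior hypotheses of the proposition. Moreover $u,v\in C(\mathbb R^n)\cap\mathcal L_0$ solve $L_{s_1}u=g$ and $L_{s_2}v=f$ globally in distributions.

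\emph{Step 2: exact identities and positive origin values.} The proposition gives $u=I_{s_1}g$ and $v=I_{s_2}f$ at every point, including the origin, with strictly positive values there. In particular $u(0),v(0)>0$.

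\emph{Step 3: the coefficient integrals at the origin.} Finiteness of $u(0)=c_{n,s_1}\int|y|^{s_1-n}a(y)v(y)^p\,dy$ is the key fact. By continuity and positivity of $v$ at $0$, $v\ge v(0)/2>0$ on some ball $B_r$. On $B_1\setminus B_r$ the function $v$ has a positive minimum, since it is continuous and strictly positive off zero. Hence $v^p\ge c>0$ on $B_1$, and therefore $\int_{B_1}a|y|^{s_1-n}\,dy\le C\,u(0)<\infty$. The bound for $b$ follows in the same way from $v(0)<\infty$ and positivity of $u$ near zero.

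\emph{Step 4: local energy.} Since $u,v$ are continuous, they are bounded on compact sets. Thus $fu\le(\sup_K u)f$ and $gv\le(\sup_K v)g$ are locally integrable, which gives $fu+gv\in L^1_{\rm loc}$.

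The only point needing care is Step 3, in particular the lower bound on $v^p$ over all of $B_1$ rather than only near the origin. This is handled by combining positivity at zero with positivity off zero and compactness.
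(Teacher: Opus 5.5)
Your proposal is correct and follows the paper's proof. Both apply Proposition~\ref{gc:source-inversion} separately to $(s_1,u,g)$ and $(s_2,v,f)$, use positive lower bounds for $v,u$ to convert the finite origin potentials into the coefficient integrals, and get local energy from continuity. The only cosmetic difference is in Step~3: you bound $v$ below on all of $B_1$ by compactness, whereas the paper bounds it below only on a small ball and then extends the integrability using boundedness of $a,b$ on compact annuli away from zero. Both versions are valid.
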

\begin{proof}
Apply
Proposition~\ref{gc:source-inversion} separately to $(s_1,u,g)$ and
$(s_2,v,f)$.  It identifies every point, including the origin, and gives positive
origin values.  Positive lower bounds for
$v,u$ on a small ball then turn the finite origin potentials into
the corresponding coefficient integrals in \eqref{gc:pair-origin}.
Boundedness of the coefficients on compact annuli away from zero
extends these integrability conditions to
$B_1$.  Source local integrability times locally bounded solutions
gives the energy assertion.
\end{proof}

\subsection{Power-source representation in three PDE classes}
\label{sec:paper1-power-representation}

\subsubsection{Integer-order representation without initial tails}
\label{ui:section}

At each even integer order, the operator image of a compact test remains
compactly supported.  Local Leibniz estimates and balanced cutoffs
therefore yield bounds for the source masses on balls from exterior coefficient lower bounds
at different even integer orders, without an initial tail assumption.
These bounds produce the solution class $\mathcal L_0$ and strict
source moments before the linear representation theorem is applied.
The exponents $\alpha_\infty,\beta_\infty$ are determined by the coefficient lower bounds; the
variable-coefficient equations need not have a dilation symmetry.
Each order must be below dimension, but their sum is unrestricted.
The compact-test argument below belongs to the general test-function
approach to a priori estimates and nonexistence; see
Mitidieri and Pokhozhaev~\cite{MitidieriPokhozhaev2001}; related source
estimates are given in \cite[Proposition~2.6]{Yang2021}.
We give the estimates needed here, including the exponent one, directly.

Let $m_1,m_2\geq1$ be integers, and put
\begin{equation}
 \begin{gathered}
 \sigma_i=2m_i<n,\qquad \lambda_i=n-\sigma_i,\qquad
 L_i=(-\Delta)^{m_i}\quad(i=1,2),\\
 p,q\geq1,\qquad pq>1.
 \end{gathered}
 \label{ui:parameters}
\end{equation}
Let $a,b>0$ be continuous on $\mathbb R^n\setminus\{0\}$, and suppose
\begin{equation}
 \begin{split}
 a(x)&\geq m_a|x|^{A_\infty},\qquad
 b(x)\geq m_b|x|^{B_\infty}\quad(|x|\geq1),\\
 m_a,m_b&>0,\qquad A_\infty,B_\infty\in\mathbb R.
 \end{split}
 \label{ui:exterior-floors}
\end{equation}
Assume that the solutions belong to the following class
\begin{equation}
 \begin{gathered}
 u\in C(\mathbb R^n)\cap
 C^{2m_1}_{\mathrm{loc}}(\mathbb R^n\setminus\{0\}),\qquad
 v\in C(\mathbb R^n)\cap
 C^{2m_2}_{\mathrm{loc}}(\mathbb R^n\setminus\{0\}),\\
 u,v\geq0\ \hbox{on }\mathbb R^n,\qquad
 u,v>0\ \hbox{off zero},
 \end{gathered}
 \label{ui:class}
\end{equation}
and the pointwise equations
\begin{equation}
 L_1u=g:=av^p,\qquad L_2v=f:=bu^q
 \quad\hbox{on }\mathbb R^n\setminus\{0\}.
 \label{ui:pde}
\end{equation}
Assign $f(0)=g(0)=0$ only to choose representatives.  Source local
integrability at zero and positive origin values are not assumed.
Define
\begin{equation}
 \alpha_\infty=
 \frac{\sigma_1+A_\infty+p(\sigma_2+B_\infty)}{pq-1},\qquad
 \beta_\infty=
 \frac{\sigma_2+B_\infty+q(\sigma_1+A_\infty)}{pq-1}.
 \label{ui:floor-scales}
\end{equation}
The balance identities are
\begin{equation}
 p\beta_\infty=\alpha_\infty+\sigma_1+A_\infty,\qquad
 q\alpha_\infty=\beta_\infty+\sigma_2+B_\infty.
 \label{ui:balance}
\end{equation}
No sign of these exponents is imposed until the tail conclusions below.

The even-integer alternative of Lemma~\ref{ho:origin-extension}
applies separately to the two equations whose solutions are continuous
at zero. It supplies
locally integrable sources and entire compact-test equations before
any source bound, tail estimate or potential representation is used.

\begin{lemma}
\label{ui:source-bounds}
Under \eqref{ui:parameters}--\eqref{ui:floor-scales}, there is a finite
constant $C$, independent of $R\geq1$, such that
\begin{equation}
 M_g(R):=\int_{B_R}g\leq CR^{\lambda_1-\alpha_\infty},\qquad
 M_f(R):=\int_{B_R}f\leq CR^{\lambda_2-\beta_\infty}
 \quad(R\geq1).
 \label{ui:source-masses}
\end{equation}
Every putative pair consequently satisfies
\begin{equation}
 \alpha_\infty\leq\lambda_1,\qquad
 \beta_\infty\leq\lambda_2.
 \label{ui:upper-scales}
\end{equation}
The estimates include either linear endpoint and precede every
tail or potential representation.
If additionally $\alpha_\infty,\beta_\infty>0$, then
$u,v\in\mathcal L_0$ and
\begin{equation}
 g\in\mathsf M_{\lambda_1-\alpha_\infty/2},\qquad
 f\in\mathsf M_{\lambda_2-\beta_\infty/2},
 \qquad
 0<\lambda_1-\alpha_\infty/2<\lambda_1,\quad
 0<\lambda_2-\beta_\infty/2<\lambda_2.
 \label{ui:strict-moments}
\end{equation}
% These tails and strict source moments are conclusions, not inputs.
\end{lemma}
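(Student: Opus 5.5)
Since $u,v$ are continuous at zero, the even-integer alternative of Lemma~\ref{ho:origin-extension} first makes $f,g$ locally integrable and turns \eqref{ui:pde} into entire compact-test equations $\int uL_1\phi=\int g\phi$ and $\int vL_2\phi=\int f\phi$. We test with $\phi_R=\varphi(\cdot/R)^k$, where $0\le\varphi\le1$ is a smooth radial cutoff equal to one on $B_1$ and supported in $B_2$, and $k$ is large. Because $L_i$ is a local differential operator of order $2m_i$, Leibniz's rule gives
\[
 |L_i\phi_R|\le CR^{-\sigma_i}\phi_R^{1-\sigma_i/k}\mathbf 1_{\{R<|x|<2R\}}.
\]
This yields
\[
 M_g(R)\le\int g\phi_R\le CR^{-\sigma_1}\int_{A_R}u\,\phi_R^{1-\sigma_1/k},
\]
and the analogous bound for $f$, with $A_R=\{R<|x|<2R\}$.

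Next we convert each solution integral into a source integral with the exterior floors \eqref{ui:exterior-floors}. On the annulus, $f\ge m_b|x|^{B_\infty}u^q$. For $q>1$ we write $u\phi_R^{1-\sigma_1/k}=(u^q\phi_R)^{1/q}\phi_R^{\,1-\sigma_1/k-1/q}$, which is admissible once $k$ is large, and apply H\"older:
\[
 \int_{A_R}u\,\phi_R^{1-\sigma_1/k}\le CR^{n(1-1/q)-B_\infty/q}\Bigl(\int f\phi_R\Bigr)^{1/q}.
\]
This is the balanced form of Lemma~\ref{gc:source-transfer}(ii), kept weighted by $\phi_R$. Setting $X=\int g\phi_R$ and $Y=\int f\phi_R$, which are finite, we get
\[
 X\le CR^{-\sigma_1+n(1-1/q)-B_\infty/q}Y^{1/q},\qquad
 Y\le CR^{-\sigma_2+n(1-1/p)-A_\infty/p}X^{1/p}.
\]
Substituting one into the other and using $pq>1$ bounds $X^{1-1/(pq)}$ by a power of $R$. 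By the balance identities \eqref{ui:balance}, this power gives exactly $X\le CR^{\lambda_1-\alpha_\infty}$ and then $Y\le CR^{\lambda_2-\beta_\infty}$, which is \eqref{ui:source-masses}. Since $M_g,M_f$ are positive and nondecreasing, a negative exponent would force them to vanish; hence \eqref{ui:upper-scales}.

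\textbf{Main obstacle: the linear endpoint.} If $p=1$ or $q=1$, H\"older is unavailable and the exponent $1-\sigma/k-1/q$ degenerates. For $q=1$ we have $u\le m_b^{-1}|x|^{-B_\infty}f$ pointwise on $A_R$. Using the estimate only as $u\phi_R^{1-\sigma_1/k}\le CR^{-B_\infty}f\,\phi_R^{1-\sigma_1/k}$ then requires $\int f\phi_R^{1-\sigma_1/k}$ rather than $\int f\phi_R$. To handle this, we use different powers $k_1>k_2$ in the two tests, so that the weaker cutoff power produced by one equation is still dominated by the test used in the other, or we simply bound by $M_f(2R)$. With the second choice we obtain the recursive inequality $M_g(R)\le CR^{\theta}M_g(4R)^{1/(pq)}$. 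We close it with the standard iteration lemma for nondecreasing functions: iterate over dyadic scales and use that the exponent $1/(pq)<1$ makes the geometric series of exponents converge. This needs an a priori polynomial bound on $M_g(R)$ to start. That bound is available crudely from the same identity with $p=1$, in the form $\int g\phi_R\le CR^{-\sigma_1}\int_{B_{2R}}u$, together with local boundedness. I expect this step to need the most care, and I would try the mismatched cutoff powers first.

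Finally, assume $\alpha_\infty,\beta_\infty>0$. Dyadic summation of \eqref{ui:source-masses} gives
\[
 \int_{|x|>1}g|x|^{-t}\le\sum_j 2^{-jt}M_g(2^{j+1})<\infty
 \quad\text{whenever }t>\lambda_1-\alpha_\infty,
\]
and in particular for $t=\lambda_1-\alpha_\infty/2$, which lies in $(0,\lambda_1)$ by \eqref{ui:upper-scales}. The same argument gives the bound for $f$. Membership $u,v\in\mathcal L_0$ follows from the annular transfer \eqref{gc:annular-transfer}. With $H=f$, $w=u$, $r=q$ and $b=\lambda_2-\beta_\infty$, it gives $\int_{A_R}u\le CR^{\mathcal T_q}$, where the balance identities show $\mathcal T_q=n-\alpha_\infty$. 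Summing over dyadic annuli against $|x|^{-n}$ then converges because $\alpha_\infty>0$. The case of $v$ is symmetric, and together these give \eqref{ui:strict-moments}.
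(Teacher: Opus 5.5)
Your argument follows the paper's proof for $p,q>1$ and for the final moment and tail part. This includes the entire compact tests from Lemma~\ref{ho:origin-extension}, the Leibniz bound on the annulus, H\"older through the exterior floors, elimination using $pq>1$ with the exponent identified via \eqref{ui:balance}, monotonicity for \eqref{ui:upper-scales}, and dyadic summation plus Lemma~\ref{gc:source-transfer}(ii) with $\mathcal T_q=n-\alpha_\infty$. The one incomplete point is the linear endpoint. There your two alternatives are not equally sound, and the proposal does not commit to the one that works.

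\emph{The route that works.} The mismatched-power option you say you would try first is exactly the paper's device, and the paper uses it uniformly, not only at the endpoint. Test $L_1u=g$ with $\chi_R^{k_g}$ and $L_2v=f$ with $\chi_R^{k_f}$, where the integers satisfy $k_g-\sigma_1-k_f/q\ge0$ and $k_f-\sigma_2-k_g/p\ge0$. On the annulus you then have $u\chi_R^{k_g-\sigma_1}\le C|x|^{-B_\infty/q}(f\chi_R^{k_f})^{1/q}\chi_R^{k_g-\sigma_1-k_f/q}$. For $q>1$ apply H\"older; for $q=1$ drop the leftover factor, which is at most one. Either way $\int g\chi_R^{k_g}\le CR^{\xi_g}\bigl(\int f\chi_R^{k_f}\bigr)^{1/q}$, and symmetrically for the other equation. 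The two constraints are compatible precisely because $pq>1$: for large $k_g$ the admissible interval $[\sigma_2+k_g/p,\ q(k_g-\sigma_1)]$ for $k_f$ has length $k_g(q-1/p)-q\sigma_1-\sigma_2\to\infty$. The paper's explicit choice is $k_g=\lceil(p+1)\ell\rceil$, $k_f=\lceil(q+1)\ell\rceil$ with $\ell=(\max\{q\sigma_1,p\sigma_2\}+2)/(pq-1)$. You name this idea but do not verify these inequalities; that verification is the missing step.

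\emph{The fallback does not work.} The recursion $M_g(R)\le CR^\theta M_g(4R)^{1/(pq)}$ cannot be closed by iteration without an a priori polynomial bound on $M_g$. The bound you propose is not available. The class \eqref{ui:class} has no tail or growth hypothesis, so continuity gives $\int_{B_{2R}}u<\infty$ for each fixed $R$ but no control of its growth in $R$. Avoiding such an assumption is the point of this lemma. The recursion alone is satisfied, for instance, by $M(R)=\exp(R^s)$ once $4^s>pq$, so no iteration lemma can extract a power bound from it. Drop this alternative and use the mismatched powers.
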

\begin{proof}
Lemma~\ref{ho:origin-extension} permits entire compact testing.
Choose $0\leq\chi\leq1$ in $C_c^\infty(B_2)$, equal to one
on $B_1$, put $\chi_R=\chi(\cdot/R)$, and choose
\begin{equation}
 \begin{gathered}
 d_0=pq-1,\qquad
 \ell=\frac{\max\{q\sigma_1,p\sigma_2\}+2}{d_0},\\
 k_g=\lceil(p+1)\ell\rceil,\qquad
 k_f=\lceil(q+1)\ell\rceil.
 \end{gathered}
 \label{ui:cutoff-powers}
\end{equation}
The ceiling bounds give
\[
 qk_g-k_f\geq d_0\ell-1\geq q\sigma_1+1,\qquad
 pk_f-k_g\geq d_0\ell-1\geq p\sigma_2+1.
\]
Thus $k_g>\sigma_1$, $k_f>\sigma_2$, and both
$k_g-\sigma_1-k_f/q$ and $k_f-\sigma_2-k_g/p$ are nonnegative.
For each $i$ and any integer $k>\sigma_i$, Leibniz's rule gives
\begin{equation}
 |L_i(\chi_R^k)|\leq C_{i,k}R^{-\sigma_i}
                  \chi_R^{k-\sigma_i},\qquad
 \operatorname{supp}L_i(\chi_R^k)
 \subset\{R\leq|x|\leq2R\}.
 \label{ui:cutoff-image}
\end{equation}
At most $\sigma_i$ factors are differentiated in any product;
the remaining factors give the displayed cutoff power.  The total
derivative order contributes $R^{-\sigma_i}$, and the image vanishes
where the cutoff is constant.

For $R\geq1$, let $\mathcal A_R=\{R<|x|<2R\}$ and set
\[
 G_R=\int g\chi_R^{k_g},\qquad
 F_R=\int f\chi_R^{k_f}.
\]
These masses are finite and strictly positive.
The lower bound for $b$ is used only on $\mathcal A_R$, where
$u=b^{-1/q}f^{1/q}\leq
m_b^{-1/q}|x|^{-B_\infty/q}f^{1/q}$.
Testing $L_1u=g$ and using \eqref{ui:cutoff-image} gives
\[
 G_R\leq CR^{-\sigma_1}\int_{\mathcal A_R}
 |x|^{-B_\infty/q}(f\chi_R^{k_f})^{1/q}
                 \chi_R^{k_g-\sigma_1-k_f/q}.
\]
Factoring is done where $\chi_R>0$; the original integrand vanishes
on the remaining set.  If $q>1$, H\"older with $q,q'$ and annular
comparability give
$CR^{-\sigma_1-B_\infty/q}
|\mathcal A_R|^{1-1/q}F_R^{1/q}$.
If $q=1$, the nonnegative remaining cutoff power instead gives the
direct bound $CR^{-\sigma_1-B_\infty}F_R$.
Both cases imply
\begin{equation}
 G_R\leq CR^{\xi_g}F_R^{1/q},\qquad
 \xi_g=\lambda_1-\frac{n+B_\infty}{q}.
 \label{ui:coupled-g}
\end{equation}
Testing $L_2v=f$ and using the lower bound for $a$ in the same way, including
direct integration when $p=1$, gives
\begin{equation}
 F_R\leq CR^{\xi_f}G_R^{1/p},\qquad
 \xi_f=\lambda_2-\frac{n+A_\infty}{p}.
 \label{ui:coupled-f}
\end{equation}
No division by $q-1$ or $p-1$ and no global inverse-weight norm
is used at a linear endpoint.  All coefficient factors are estimated
on the exterior annulus, so both coefficient exponents may be arbitrary real
numbers.

Eliminating the positive masses and using $pq>1$ yields
\[
 G_R\leq CR^{\frac{pq}{pq-1}(\xi_g+\xi_f/q)}
       =CR^{\lambda_1-\alpha_\infty},\qquad
 F_R\leq CR^{\frac{pq}{pq-1}(\xi_f+\xi_g/p)}
       =CR^{\lambda_2-\beta_\infty}.
\]
The last equalities use the definitions \eqref{ui:floor-scales}
with the corresponding order on each component.
Both cutoff powers equal one on $B_R$, proving
\eqref{ui:source-masses}.  If $\alpha_\infty>\lambda_1$, the first
bound tends to zero as $R\to\infty$, contradicting
$M_g(R)\geq M_g(1)>0$.  The second bound similarly excludes
$\beta_\infty>\lambda_2$.  No positivity of the algebraic exponents has
been used.

If the same coefficient lower bounds hold for every $x\ne0$, the entire
compact-test argument above is valid for every $R>0$, with one
constant independent of $R$.  The origin-extension lemma supplies
the local source integrability needed even when $R<1$.
Under only the exterior lower bounds \eqref{ui:exterior-floors}, the
asserted source bounds remain restricted to $R\geq1$.

Finally suppose $\alpha_\infty,\beta_\infty>0$.
Apply Lemma~\ref{gc:source-transfer}(ii) to the two source bounds.
The balance identities \eqref{ui:balance} give
\[
 \int_{\mathcal A_R}u\leq CR^{n-\alpha_\infty},\qquad
 \int_{\mathcal A_R}v\leq CR^{n-\beta_\infty}
 \quad(R\geq1).
\]
Dyadic summation, with local integrability supplied by continuity,
therefore yields $u,v\in\mathcal L_0$. More generally it gives
$u\in\mathsf M_T$ for $T>n-\alpha_\infty$ and
$v\in\mathsf M_T$ for $T>n-\beta_\infty$.
The source bounds similarly imply
$g\in\mathsf M_t$ for $t>\lambda_1-\alpha_\infty$ and
$f\in\mathsf M_t$ for $t>\lambda_2-\beta_\infty$.
Choosing the two midpoint exponents gives \eqref{ui:strict-moments};
the upper bounds on the exponents make these midpoint exponents
positive, while the positivity of $\alpha_\infty,\beta_\infty$ places them below
their respective Riesz thresholds. No limiting or endpoint
moment is used.
\end{proof}

\begin{proposition}
\label{ui:representation}
Under \eqref{ui:parameters}--\eqref{ui:floor-scales}, suppose
$\alpha_\infty,\beta_\infty>0$.  Then $u,v\in\mathcal L_0$,
$u(0),v(0)>0$, and
\begin{equation}
 u=I_{\sigma_1}(av^p),\qquad
 v=I_{\sigma_2}(bu^q)
 \quad\hbox{at every point of }\mathbb R^n.
 \label{ui:exact}
\end{equation}
Moreover,
\begin{equation}
 \int_{B_1}a(y)|y|^{-\lambda_1}\,dy<\infty,\qquad
 \int_{B_1}b(y)|y|^{-\lambda_2}\,dy<\infty,\qquad
 fu+gv\in L^1_{\mathrm{loc}}(\mathbb R^n).
 \label{ui:origin-admissibility}
\end{equation}
% There is no restriction on $\sigma_1+\sigma_2$ and no initial natural
% tail, global boundedness, prescribed growth, origin coefficient bound,
% source-decay input, Euler bound, orbit condition, or nonlinear-energy
% assumption.
\end{proposition}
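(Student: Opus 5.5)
The plan is to chain the three linear results already proved: origin extension, the source bounds of Lemma~\ref{ui:source-bounds}, and pair inversion via Corollary~\ref{gc:pair-inversion}.

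\emph{Step 1: entire equations.} Since $u,v$ are continuous on $\mathbb R^n$ and classical off zero, the pointwise equations \eqref{ui:pde} hold in the compact-test distributional sense on $\mathbb R^n\setminus\{0\}$. The even-integer alternative of Lemma~\ref{ho:origin-extension} then applies separately to $(L_1,u,g)$ and $(L_2,v,f)$. It gives $f,g\in L^1_{\rm loc}(\mathbb R^n)$ and the equations $L_1u=g$, $L_2v=f$ globally in distributions. No tail is needed at this stage.

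\emph{Step 2: tails and moments.} With $\alpha_\infty,\beta_\infty>0$, Lemma~\ref{ui:source-bounds} yields $u,v\in\mathcal L_0$ together with the strict moments \eqref{ui:strict-moments}. In the notation of Corollary~\ref{gc:pair-inversion} these are
\[
 t_g=\lambda_1-\alpha_\infty/2\in(0,\lambda_1),\qquad
 t_f=\lambda_2-\beta_\infty/2\in(0,\lambda_2),
\]
which are exactly the admissible ranges $0<t_g<n-s_1$ and $0<t_f<n-s_2$ with $s_i=\sigma_i$.

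One point needs checking before applying Lemma~\ref{ho:linear}. For an $\mathcal L_0$ function, the compact-test equation at even integer order must coincide with the pairing used in Proposition~\ref{gc:source-inversion}. It does: at integer order $L_i\phi$ is compactly supported, so no tail is involved. The uniqueness argument in Lemma~\ref{ho:linear} extends the compact-test harmonic equation to Schwartz tests using only $h\in\mathcal L_0$, which is the situation here.

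\emph{Step 3: inversion.} Corollary~\ref{gc:pair-inversion} applies with $s_i=\sigma_i$, and its hypotheses are met:
\begin{itemize}
\item $u,v$ are continuous, nonnegative and strictly positive off zero;
\item $a,b$ are positive and continuous off zero;
\item the sources are locally integrable and the global distributional equations hold (Step~1);
\item $u,v\in\mathcal L_0$ and the strict source moments hold (Step~2).
\end{itemize}
The corollary then gives the exact identities \eqref{ui:exact} at every point, including the origin, and the positivity $u(0),v(0)>0$. It also gives the integrability statement \eqref{gc:pair-origin}. Since $s_i-n=-\lambda_i$, this is precisely \eqref{ui:origin-admissibility}.

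The main obstacle is the source-moment bookkeeping, and it is already handled by Lemma~\ref{ui:source-bounds}. The one place needing care is verifying that $\mathcal L_0$ membership together with entire compact-test equations meets the requirements of Proposition~\ref{gc:source-inversion}, i.e.\ that its uniqueness step via Lemma~\ref{ho:linear} needs no $\mathcal L_\sigma$ tail a priori. This holds because $\mathcal L_0\subset\mathcal L_\sigma$.
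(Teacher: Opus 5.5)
Your proposal is correct and follows the paper's proof essentially step for step. The paper uses the even-integer case of Lemma~\ref{ho:origin-extension} for the entire equations and source local integrability, then Lemma~\ref{ui:source-bounds} for $u,v\in\mathcal L_0$ and the strict moments, and finally Corollary~\ref{gc:pair-inversion} with $s_i=\sigma_i$. The only addition in the paper is that it states explicitly that both sources are nonzero, which Proposition~\ref{gc:source-inversion} requires; in your write-up this follows implicitly from the strict positivity of $u,v,a,b$ off zero.
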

\begin{proof}
Lemma~\ref{ho:origin-extension} supplies the entire distributional
equations and source local integrability.
Lemma~\ref{ui:source-bounds} supplies $u,v\in\mathcal L_0$ and the
strict source moment for each equation's own order.
Both sources are nonzero because the solutions are positive off zero.
These are precisely the inputs of Corollary~\ref{gc:pair-inversion}
with the two orders $\sigma_1,\sigma_2$.
Its representation statement identifies every point and gives the
two kernel-weighted coefficient integrals and local nonlinear energy.
All inversion inputs have been derived before this invocation.
\end{proof}

\begin{proof}[Proof of Corollary~\ref{thm:integer-front}]
Set $\sigma=2m$, $\lambda=n-2m$ and take
$a(x)=|x|^A$, $b(x)=|x|^B$. Lemma~\ref{ui:source-bounds}, applied
without an initial tail, gives
$\alpha_\infty,\beta_\infty\le\lambda$ for the exponents in
\eqref{ui:floor-scales}. Their sum satisfies
\[
 \alpha_\infty+\beta_\infty-\lambda
 =\frac{(p+1)(q+1)}{pq-1}
   \left(\frac{n+A}{p+1}+\frac{n+B}{q+1}-\lambda\right)>0.
\]
Both exponents are therefore positive. Proposition~\ref{ui:representation}
gives the exact Riesz identities everywhere and finite local nonlinear
energy. Finally, the Euler derivatives equal $A,B$, and the normalized
coefficient logarithms are Lipschitz on $\mathcal A$.
Theorem~\ref{thm:dini-front}, with $\Omega(s)=s$, gives the
contradiction. No initial-tail moment iteration or low-order comparison
principle is used in this proof.
\end{proof}

\subsubsection{Representation from initial tail assumptions}
\label{r5:natural-tail-section}

This part treats the natural-tail class of
Corollary~\ref{cor:pde-front}(i) at arbitrary orders $0<\sigma_i<n$.
Its source-moment iteration is independent of the integer-order argument
and is not used in the proof of Corollary~\ref{thm:integer-front}.

\paragraph{Weighted tail estimates for nonnegative sources}
\begin{lemma}
\label{tb:positive-test}
Suppose $w\in C(\mathbb R^n)\cap\mathcal L_\sigma$, $L_\sigma w=H\geq0$
on the whole space, and $H\in L^1_{\mathrm{loc}}$. If
$w\in\mathsf M_t$ for some $t\leq n+\sigma$, then
\[
 H\in\mathsf M_\tau\qquad
       \bigl(\tau>\max\{0,t-\sigma\}\bigr).
\]
\end{lemma}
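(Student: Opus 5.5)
The plan is to test the equation $L_\sigma w=H$ against a rescaled cutoff, obtain dyadic annular mass bounds for $H$, and sum them. Fix $0\le\chi\le1$ in $C_c^\infty(B_2)$ with $\chi=1$ on $B_1$, and set $\chi_R=\chi(\cdot/R)$ for $R\ge1$. Since $H\ge0$ and $L_\sigma w=H$ globally, with $w\in\mathcal L_\sigma$ justifying the pairing by Lemma~\ref{ho:linear}, we have
\[
 \int_{B_R}H\le\int H\chi_R=\int wL_\sigma\chi_R.
\]
The first goal is the mass bound
\[
 M_H(R):=\int_{B_R}H\le C(1+R^{t-\sigma})\qquad(R\ge1).
\]
Summing it dyadically will give the moment: $\int_{2^k<|x|<2^{k+1}}H|x|^{-\tau}\le C2^{-k\tau}(1+2^{k(t-\sigma)})$, which is summable exactly when $\tau>\max\{0,t-\sigma\}$.

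To bound $\int wL_\sigma\chi_R$, scale the estimates of Lemma~\ref{ho:linear}. Writing $L_\sigma\chi_R(x)=R^{-\sigma}(L_\sigma\chi)(x/R)$ and using $|L_\sigma\chi(y)|\le C(1+|y|)^{-n-\sigma}$, we get
\[
 |L_\sigma\chi_R(x)|\le CR^{-\sigma}\min\{1,(R/|x|)^{n+\sigma}\}.
\]
Split the integral $\int|w||L_\sigma\chi_R|$ into three regions.
\begin{itemize}
\item On $B_1$, continuity of $w$ gives a contribution $\le CR^{-\sigma}\le C$.
\item On $1<|x|<4R$, use $|x|^{-t}\ge (4R)^{-t}$ when $t\ge0$. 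This gives $R^{-\sigma}\int_{1<|x|<4R}|w|\le CR^{t-\sigma}\int_{|x|>1}|w||x|^{-t}$. When $t<0$, instead use $\int_{1<|x|<4R}|w|\le\int|w||x|^{-t}$ times $1$, because $|x|^{-t}\ge1$ there; the contribution is then $\le CR^{-\sigma}\le C$.
\item On $|x|>4R$, the contribution is $\le CR^{n}\int_{|x|>4R}|w||x|^{-n-\sigma}$. Since $t\le n+\sigma$, we have $|x|^{-n-\sigma}\le |x|^{-t}(4R)^{t-n-\sigma}$, so this term is $\le CR^{t-\sigma}\int|w||x|^{-t}$.
\end{itemize}
Combining the three regions gives $M_H(R)\le C(1+R^{t-\sigma})$, as required.

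The main obstacle I expect is the exterior region, where the hypothesis $t\le n+\sigma$ is exactly what permits comparing the decay of $L_\sigma\chi_R$ with the $\mathsf M_t$ weight. The middle region also needs care when $t$ is negative or small; there the bound degenerates to $C$, which is why $\tau>0$ appears. At even integer orders the image of $\chi_R$ is supported in the annulus $R\le|x|\le2R$, and the same bound follows directly. Local integrability of $H$, together with the dyadic summation, then completes the argument.
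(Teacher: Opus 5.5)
Your argument is correct, but it follows a different route from the paper's proof.

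\textbf{The paper's route.} The paper tests directly against truncated weights $\psi_{\tau,R}=\psi_\tau\chi(\cdot/R)$, where $\psi_\tau=|x|^{-\tau}$ for $|x|\ge2$. It imports the weighted cutoff estimate of Ao--Gonz\'alez--Hyder--Wei, in Yang's formulation, to obtain $|L_\sigma\psi_{\tau,R}|\le C(1+|x|)^{-\sigma-\tau}$ uniformly in $R$, or $(1+|x|)^{-n-\sigma}$ when $\tau>n$. It then lets $R\to\infty$, using dominated convergence against $w$ and Fatou on the source side. The exponent $\tau=n$, where the image bound is only logarithmic, is handled separately through the inclusion $\mathsf M_{\tau'}\subset\mathsf M_n$.

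\textbf{Your route.} You test only against the unweighted cutoffs $\chi_R$, so the decay bound of Lemma~\ref{ho:linear} together with exact scaling is all you need. This gives the ball-mass growth $\int_{B_R}H\le C(1+R^{t-\sigma})$, and dyadic summation then recovers the weight. Your three-region estimate is sound. This includes the use of $t\le n+\sigma$ in the exterior region and the separate treatment of $t<0$ in the middle region.

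\textbf{What each buys.} Your route is self-contained: it needs no external weighted cutoff lemma, no limiting argument, and no exceptional exponent $\tau=n$. It also yields a quantitative mass bound of the type appearing in Lemma~\ref{gc:source-transfer}(ii). The paper's route gives, as a by-product, the exact weighted identity $\int H\psi_\tau=\int wL_\sigma\psi_\tau$, which the lemma does not need.
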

\begin{proof}
For $\tau>0$ choose a positive smooth $\psi_\tau$, constant near zero
and equal to $|x|^{-\tau}$ for $|x|\geq2$. Let $0\leq\chi\leq1$ be
smooth, equal to one on $B_1$ and supported in $B_2$, and set
$\psi_{\tau,R}=\psi_\tau\chi(x/R)$. The cutoff estimate of
\cite{AoGonzalezHyderWei2022}, in the form of
\cite[Lemma~2.3]{Yang2021} with order parameter $\sigma/2$, gives
local uniform convergence
$L_\sigma\psi_{\tau,R}\to L_\sigma\psi_\tau$ and, uniformly for large
$R$,
\[
 |L_\sigma\psi_{\tau,R}(x)|\leq C_\tau
 \begin{cases}
 (1+|x|)^{-\sigma-\tau},&0<\tau<n,\\
 (1+|x|)^{-n-\sigma},&\tau>n.
 \end{cases}
\]
Its exact tail starts at radius one; applying it to
$2^\tau\psi_\tau(2z)$ and undoing this fixed rescaling gives the stated
bound. No sign of the operator image is asserted.

When $\tau<n$, its image exponent satisfies $\sigma+\tau>t$;
when $\tau>n$, the majorant is the initial natural tail. Thus dominated
convergence against $w$, with local boundedness on bounded sets, gives
\[
 \int H\psi_{\tau,R}=\int wL_\sigma\psi_{\tau,R}
 \longrightarrow\int wL_\sigma\psi_\tau\in\mathbb R.
\]
Fatou on the nonnegative source side first proves
$\int H\psi_\tau<\infty$. Only afterwards does dominated convergence,
using $0\leq\chi\leq1$, identify it with the displayed limit. This proves
the asserted tail moment. The exponent $\tau=n$ has a logarithmic image
bound; when it is requested here, choose
$\max(0,t-\sigma)<\tau'<n$ and use
$\mathsf M_{\tau'}\subset\mathsf M_n$.
\end{proof}

\begin{lemma}
\label{tb:cutoff-obstruction}
Under the entire equation $L_\sigma w=H\geq0$, with a nonzero locally
integrable source and $w\in C(\mathbb R^n)\cap\mathcal L_\sigma$, there
cannot be a moment $w\in\mathsf M_t$ with $t\le\sigma$.
\end{lemma}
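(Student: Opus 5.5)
The plan is to argue by contradiction: first turn the assumed moment of $w$ into moments of the source $H$, then represent $w$ as a Riesz potential, and finally compare the resulting lower bound for $w$ at infinity with the moment. Suppose $w\in\mathsf M_t$ for some $t\le\sigma$. Since $\sigma<n$, we have $t\le n+\sigma$, so Lemma~\ref{tb:positive-test} applies. Because $t-\sigma\le0$, its threshold is $\max\{0,t-\sigma\}=0$. Hence $H\in\mathsf M_\tau$ for every $\tau>0$, and in particular for some fixed $0<\tau<\lambda=n-\sigma$. This is exactly the strict exterior source moment required by Proposition~\ref{gc:source-inversion}, with $\mu=H\,dx$ and order $s=\sigma$.

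Next I would check the remaining inputs of Proposition~\ref{gc:source-inversion}. The measure $H\,dx$ is nonzero, nonnegative and locally finite. The function $w$ is continuous and solves $L_\sigma w=H$ globally. It remains to see that $w\in\mathcal L_0$. Local integrability comes from continuity. At infinity, $\mathsf M_t\subset\mathsf M_n$ because $t\le\sigma<n$, so $\int_{|x|>1}|w||x|^{-n}<\infty$. The proposition then gives
\[
 w(x)=I_\sigma H(x)=c_{n,\sigma}\int|x-y|^{-\lambda}H(y)\,dy
\]
at every point. In particular $w>0$ everywhere.

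Now I would produce a lower bound at infinity. Since $H$ is nonzero and nonnegative, choose $R_1$ with $m:=\int_{B_{R_1}}H>0$. For $|x|\ge2R_1$ and $y\in B_{R_1}$ we have $|x-y|\le 2|x|$. Therefore
\[
 w(x)\ge c_{n,\sigma}2^{-\lambda}m\,|x|^{-\lambda}\qquad(|x|\ge2R_1).
\]
Inserting this into the moment gives
\[
 \int_{|x|>1}|w||x|^{-t}\,dx\ge c\int_{2R_1}^\infty r^{n-1-\lambda-t}\,dr
 =c\int_{2R_1}^\infty r^{\sigma-t-1}\,dr .
\]
This integral diverges precisely because $\sigma-t\ge0$, which contradicts $w\in\mathsf M_t$.

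The only delicate point is the borderline case $t=\sigma$. There Lemma~\ref{tb:positive-test} still yields only $\tau>0$, and that is all Proposition~\ref{gc:source-inversion} needs; the divergence in the last step is then logarithmic. I expect the main care to go into confirming that $w\in\mathcal L_0$ holds without any sign assumption on $w$ before the representation is established. The inclusion $\mathsf M_t\subset\mathsf M_n$ settles this, since both classes are defined with $|w|$.
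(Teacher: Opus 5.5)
Your argument is correct and not circular: neither Lemma~\ref{tb:positive-test} nor Proposition~\ref{gc:source-inversion} uses the obstruction. Your observation that $\mathsf M_t\subset\mathsf M_n$ gives $w\in\mathcal L_0$ is exactly the input the inversion needs, and the borderline $t=\sigma$ works as you say, with a logarithmically divergent integral.

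The paper's proof is shorter and more elementary. It tests the entire equation against the rescaled cutoff $\chi_R=\chi(\cdot/R)$ itself. Scaling the linear estimates of Lemma~\ref{ho:linear} gives $|L_\sigma\chi_R(x)|\le C(1+|x|)^{-\sigma}$ uniformly for $R\ge1$, with $L_\sigma\chi_R\to0$ pointwise. Then $w\in\mathsf M_\sigma$ and dominated convergence yield $\int H\chi_R=\int wL_\sigma\chi_R\to0$, which contradicts a fixed positive compact source mass. That route needs neither the external power-tail cutoff estimate behind Lemma~\ref{tb:positive-test} nor the Fourier-support Liouville argument behind the inversion. It also never uses a representation or the positivity of $w$.

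Your route is heavier, but it explains why $\sigma$ is the threshold. Once $w=I_\sigma H$, you have $w\gtrsim|x|^{-(n-\sigma)}$ at infinity, and this profile lies in $\mathsf M_t$ exactly when $t>\sigma$. You also get the exact representation of $w$ as a by-product.
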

\begin{proof}
Such a moment implies $w\in\mathsf M_\sigma$. For
$\chi_R=\chi(x/R)$, the linear estimates of Lemma~\ref{ho:linear}
and scaling give
\[
 |L_\sigma\chi_R(x)|\leq CR^{-\sigma}\quad(|x|\leq4R),\qquad
 |L_\sigma\chi_R(x)|\leq CR^n|x|^{-n-\sigma}\quad(|x|>4R).
\]
The latter image vanishes outside the support in the even-integer case.
Consequently, for $R\geq1$, these images are bounded by
$C(1+|x|)^{-\sigma}$ and tend to zero at every fixed point. Local
boundedness and the moment permit dominated convergence in
\[
 \int H\chi_R=\int wL_\sigma\chi_R\longrightarrow0.
\]
The left side is bounded below by a fixed positive compact source mass
for all sufficiently large $R$, a contradiction.
\end{proof}

\paragraph{Exterior coefficient lower bounds}
\label{gc:section}

The source-to-solution estimate of Lemma~\ref{gc:source-transfer}
permits general exterior lower bounds. The representation statement
allows unequal orders; nonlinear nonexistence is deduced only at equal
orders.

For the following representation results let $0<\sigma_1,\sigma_2<n$,
$\lambda_i=n-\sigma_i$, $p,q\geq1$ and $pq>1$. Let $a,b>0$ be
continuous off zero and suppose
\begin{equation}
 a(x)\geq m_a|x|^{A_\infty},\qquad
 b(x)\geq m_b|x|^{B_\infty}\quad(|x|\geq1),
 \qquad m_a,m_b>0,\quad A_\infty,B_\infty\in\mathbb R.
 \label{gc:exterior-floors}
\end{equation}
Assume $u,v\in C(\mathbb R^n)$ are nonnegative, strictly positive off
zero, $u\in\mathcal L_{\sigma_1}$, $v\in\mathcal L_{\sigma_2}$, and
\begin{equation}
 L_{\sigma_1}u=g:=av^p,\qquad L_{\sigma_2}v=f:=bu^q
 \quad\hbox{in distributions off zero}.
 \label{gc:pde}
\end{equation}
Define the algebraic exponents
\begin{equation}
 \alpha_\infty=
 \frac{\sigma_1+A_\infty+p(\sigma_2+B_\infty)}{pq-1},\qquad
 \beta_\infty=
 \frac{\sigma_2+B_\infty+q(\sigma_1+A_\infty)}{pq-1}.
 \label{gc:floor-scales}
\end{equation}

\begin{lemma}
\label{gc:moment-bootstrap}
Under \eqref{gc:exterior-floors}--\eqref{gc:floor-scales}, every putative
pair has $\alpha_\infty\leq\lambda_1$, $\beta_\infty\leq\lambda_2$.
If both algebraic exponents are positive, then $u,v\in\mathcal L_0$ and
\[
 g\in\mathsf M_{t_g},\quad 0<t_g<\lambda_1,
 \qquad f\in\mathsf M_{t_f},\quad 0<t_f<\lambda_2
\]
for some exponents satisfying the displayed strict inequalities.
\end{lemma}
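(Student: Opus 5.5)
The plan is to follow the compact-test argument of Lemma~\ref{ui:source-bounds}, replacing compactly supported cutoff images by the nonlocal cutoff estimates available in the tail class. First, Lemma~\ref{ho:origin-extension} (natural-tail alternative) extends both equations to the whole space. It also makes $f,g\in L^1_{\rm loc}(\mathbb R^n)$. I would then test $L_{\sigma_1}u=g$ against $\chi_R^{k_g}$ and $L_{\sigma_2}v=f$ against $\chi_R^{k_f}$, with the powers chosen as in \eqref{ui:cutoff-powers}.

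The difficulty is that $L_{\sigma_i}(\chi_R^k)$ is no longer supported in the annulus $\mathcal A_R$. It has a far tail of size $CR^n|x|^{-n-\sigma_i}$, and on $B_R$ it is not controlled by powers of the cutoff. So the absorption step of Lemma~\ref{ui:source-bounds}, in which $|L(\chi_R^k)|\le CR^{-\sigma}\chi_R^{k-\sigma}$, is unavailable.

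I would first aim for a weaker but sufficient dyadic statement. Pairing with $u\in\mathcal L_{\sigma_1}$ and using the scaled bounds of Lemma~\ref{ho:linear} gives
\[
 G_R\le CR^{-\sigma_1}\int_{B_{4R}}u+CR^n\int_{|x|>4R}u|x|^{-n-\sigma_1}.
\]
I would then bound $u$ by the exterior lower bound for $b$ and Hölder, exactly as in Lemma~\ref{gc:source-transfer}(ii), in terms of $f$-masses on balls. This gives an inequality of the form
\[
 G_R\le C\sum_{j\ge0}2^{-j\sigma_1}R^{\xi_g}\bigl(2^jR\bigr)^{(n-n/q)}\cdots F_{2^{j+2}R}^{1/q},
\]
together with the symmetric inequality for $F_R$. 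The interior part on $B_1$ is bounded by continuity.

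The main obstacle is closing this coupled system. It is nonlocal in $R$, so elimination as in Lemma~\ref{ui:source-bounds} needs an a priori polynomial growth bound on the masses. That bound is supplied by the tail hypothesis: $u\in\mathcal L_{\sigma_1}$ and Lemma~\ref{tb:positive-test} give $g\in\mathsf M_\tau$ for every $\tau>n$, hence $M_g(R)\le CR^{\tau}$, and similarly for $f$. With polynomial a priori growth, I would use a standard iteration. Define
\[
 \Theta=\sup_{R\ge1}R^{-(\lambda_1-\alpha_\infty)}M_g(R)+R^{-(\lambda_2-\beta_\infty)}M_f(R),
\]
first truncated to $R\le R_1$. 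Inserting the a priori bounds into the far tails, whose sum converges because $\sigma_i>0$ beats the growth losses after composing the two inequalities, shows the truncated $\Theta$ obeys $\Theta\le C+C\Theta^{1/(pq)}$. Since $pq>1$, this yields $\Theta\le C$ independently of $R_1$. If the tail sum does not converge directly, I would instead run a finite bootstrap. Starting from the moment $\tau_0>n$, each pass of Lemma~\ref{gc:source-transfer}(i) followed by Lemma~\ref{tb:positive-test} lowers the moment exponents of $g,f$ by a fixed positive amount, as long as they exceed the target values $\lambda_1-\alpha_\infty$ and $\lambda_2-\beta_\infty$. The gain comes from the $\sigma$ in $\max\{0,t-\sigma\}$ combined with the transfer exponent $\mathcal T_r$. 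The balance identities \eqref{ui:balance} make the composed map an affine contraction with fixed point at exactly these values.

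From the resulting mass (or moment) bounds, the conclusions follow as in Lemma~\ref{ui:source-bounds}. If $\alpha_\infty>\lambda_1$, then $M_g(R)\to0$, contradicting $M_g(R)\ge M_g(1)>0$; similarly for $\beta_\infty$. Lemma~\ref{tb:cutoff-obstruction} additionally rules out degenerate moments. If $\alpha_\infty,\beta_\infty>0$, Lemma~\ref{gc:source-transfer} and dyadic summation give $u,v\in\mathcal L_0$. They also give $g\in\mathsf M_{t_g}$ and $f\in\mathsf M_{t_f}$ with midpoint exponents $t_g=\lambda_1-\alpha_\infty/2$ and $t_f=\lambda_2-\beta_\infty/2$, which lie strictly inside $(0,\lambda_1)$ and $(0,\lambda_2)$.
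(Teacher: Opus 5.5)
Your fallback is the paper's mechanism: alternate Lemma~\ref{gc:source-transfer}(i) with Lemma~\ref{tb:positive-test}, starting from $\mathsf M_{n+\varepsilon}$. Once $\lambda_1-\alpha_\infty\ge0$ and $\lambda_2-\beta_\infty\ge0$ are known, it correctly gives $u,v\in\mathcal L_0$ and the strict midpoint moments. The gap is in the first assertion, $\alpha_\infty\le\lambda_1$, $\beta_\infty\le\lambda_2$, which neither of your routes proves.

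Your primary route does not close. For $R\le R_1$, the dyadic tail in your bound for $G_R$ involves $F_{2^{j+2}R}$ at radii beyond $R_1$, where the truncated $\Theta$ gives no control. You must use the a priori growth $F_S\le CS^{\tau}$, $\tau>n$, there. That estimate either diverges or, after division by $R^{\lambda_1-\alpha_\infty}$ at $R\sim R_1$, is of order $R_1^{(\tau-\lambda_2+\beta_\infty)/q}$ (by $q\alpha_\infty=\beta_\infty+\sigma_2+B_\infty$), which in general grows with $R_1$. Even the part controlled by $\Theta$ converges at the target rate only if $\sigma_1+\alpha_\infty>0$, a sign you do not know at this stage. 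So $\Theta\le C+C\Theta^{1/(pq)}$ is not established.

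Your fallback is misdescribed exactly where the first assertion is decided. Lemma~\ref{tb:positive-test} only returns $\tau>\max\{0,t-\sigma\}$ and needs $t\le n+\sigma$. With $C=\lambda_1-(n+B_\infty)/q$ and $D=\lambda_2-(n+A_\infty)/p$, the iteration is therefore $(a,b)\mapsto(\min\{n,(C+b/q)_+\},\min\{n,(D+a/p)_+\})$, not an affine map. Its fixed point lies below $(\max\{0,C,a_0\},\max\{0,D,b_0\})$, where $a_0=\lambda_1-\alpha_\infty$ and $b_0=\lambda_2-\beta_\infty$. This is not $(a_0,b_0)$ precisely in the cases $a_0<0$ or $b_0<0$ that you must exclude. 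Moreover, moments $g\in\mathsf M_t$ with $t>0$ only give $M_g(R)=O(R^t)$, so your contradiction ``$M_g(R)\to0$'' is unavailable on this route.

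The missing step is a case analysis at that fixed point, using the balance identities $C+D/q=(1-1/(pq))a_0$ and $D+C/p=(1-1/(pq))b_0$. If $a_0<0$, then either the raw threshold $\sigma_1+C+b/q$ for $u$ falls strictly below $\sigma_1$, or, in the subcase $b_0<0$, $D<0\le C$, the threshold $\sigma_2+D+a/p$ for $v$ falls strictly below $\sigma_2$. A finite iterate then gives $u\in\mathsf M_t$ with $t<\sigma_1$, or $v\in\mathsf M_t$ with $t<\sigma_2$, which Lemma~\ref{tb:cutoff-obstruction} forbids for a nonzero source; the case $b_0<0$ is symmetric. Your remark that this lemma ``rules out degenerate moments'' names the right tool but omits the argument that actually triggers it.
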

\begin{proof}
Lemma~\ref{ho:origin-extension} applies separately to both equations.
It supplies locally integrable sources and entire distributional
equations. By Lemma~\ref{tb:positive-test}, initially
$g,f\in\mathsf M_{n+\varepsilon}$ for every $\varepsilon>0$.
Set
\[
 C=\lambda_1-(n+B_\infty)/q,\quad
 D=\lambda_2-(n+A_\infty)/p,\quad
 a_0=\lambda_1-\alpha_\infty,\quad
 b_0=\lambda_2-\beta_\infty.
\]
The balance identities imply
\[
 a_0=C+b_0/q,\quad b_0=D+a_0/p,\quad
 C+D/q=(1-1/(pq))a_0,\quad
 D+C/p=(1-1/(pq))b_0.
\]
Lemma~\ref{gc:source-transfer}(i), applied to
$(f,u,q,B_\infty)$ and $(g,v,p,A_\infty)$, transfers each known
source moment to the other solution. In the notation above the
thresholds are $\sigma_1+C+b/q$ and $\sigma_2+D+a/p$,
respectively. The same formula covers either linear endpoint.

Let $P(a,b)$ mean that $g\in\mathsf M_{a+\varepsilon}$ and
$f\in\mathsf M_{b+\varepsilon}$ for every $\varepsilon>0$.
The preceding inequalities and Lemma~\ref{tb:positive-test} give
$P(a,b)\Rightarrow P(G(a,b))$, where
\[
 F(a,b)=((C+b/q)_+,(D+a/p)_+),\qquad
 G(a,b)=(\min(n,F_1(a,b)),\min(n,F_2(a,b))).
\]
The cap retains the initial known moment when a proposed improvement
would exceed $n$. Every application of the test estimate uses a strictly positive surplus.
For example, if $F_1<n$, choose a solution moment strictly above
$\sigma_1+C+b/q$ and at most $n+\sigma_1$, close enough for the
desired new source moment. If $F_1\ge n$, retain the initial test.
The same argument treats the second coordinate and both linear endpoints.
The raw solution thresholds are therefore
\[
 T_u(a,b)=\sigma_1+C+b/q,\qquad
 T_v(a,b)=\sigma_2+D+a/p.
\]

Starting at $z_0=(n,n)$, $z_{k+1}=G(z_k)$ decreases.
Both $F^2$ and $G^2$ have Lipschitz constant at most $(pq)^{-1}<1$
in the maximum norm, so they have unique fixed points and their
iterates converge. Direct substitution gives
\[
 z_F=(\max(0,C,a_0),\max(0,D,b_0)).
\]
Indeed the balance identities give
\[
 \bigl(C+\max\{0,D,b_0\}/q\bigr)_+
 =\bigl(\max\{C,(1-1/(pq))a_0,a_0\}\bigr)_+
 =\max\{0,C,a_0\},
\]
and the second coordinate is symmetric. Starting the $G$ iteration at $z_F$
shows $z_G\le z_F$. All strict inequalities at $z_G$ hold at a
finite iterate, and hence give tail integrability with strictly admissible exponents.

Suppose $a_0<0$. If $b_0\ge0$, then $C<0$,
$D>b_0\ge0$, and $z_F=(0,D)$, so
$T_u(z_F)-\sigma_1=C+D/q<0$.
If $b_0<0$ and $D\ge0$, the same conclusion holds.
If $b_0<0$, $D<0$ and $C\ge0$, then $z_F=(C,0)$ and
$T_v(z_F)-\sigma_2=D+C/p<0$.
If $C,D<0$, both raw thresholds at $(0,0)$ are below their orders.
Thus at a finite iterate at least one solution has a moment
strictly below its own order. Lemma~\ref{tb:cutoff-obstruction}
contradicts the positive source. This excludes $a_0<0$;
exchanging the components excludes $b_0<0$.

When both algebraic exponents are positive,
$0\le a_0<\lambda_1$ and $0\le b_0<\lambda_2$.
Now $C\le a_0$, $D\le b_0$, and both fixed points equal
$(a_0,b_0)$, with the cap inactive. Finite iteration gives source
moments below $\lambda_1,\lambda_2$ respectively. The limiting raw
solution thresholds are $n-\alpha_\infty<n$ and
$n-\beta_\infty<n$; a finite iterate gives solution moments
below $n$, hence $u,v\in\mathcal L_0$. No limiting endpoint moment
or uniform moment constant is claimed.
\end{proof}

\begin{proposition}
\label{gc:representation}
Under \eqref{gc:exterior-floors}--\eqref{gc:floor-scales}, suppose
$\alpha_\infty,\beta_\infty>0$. Then $u(0),v(0)>0$, $u,v\in\mathcal L_0$,
and
\begin{equation}
 u=I_{\sigma_1}(av^p),\qquad v=I_{\sigma_2}(bu^q)
 \quad\hbox{at every point of }\mathbb R^n.
 \label{gc:exact}
\end{equation}
Moreover,
\begin{equation}
 \int_{B_1}a(y)|y|^{-\lambda_1}\,dy<\infty,
 \qquad \int_{B_1}b(y)|y|^{-\lambda_2}\,dy<\infty,
 \qquad fu+gv\in L^1_{\mathrm{loc}}.
 \label{gc:origin-admissibility}
\end{equation}
% No Euler bound, orbit condition, origin coefficient bound,
% source-decay assumption or nonlinear energy is imposed. The sum of the
% orders need not be below dimension.
\end{proposition}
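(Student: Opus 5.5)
The proof follows the integer-order Proposition~\ref{ui:representation}, replacing the compact-test source bounds by the moment bootstrap of Lemma~\ref{gc:moment-bootstrap}, and then invokes the conditional inversion of Corollary~\ref{gc:pair-inversion}.

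First, Lemma~\ref{ho:origin-extension}, in its natural-tail alternative, applies separately to each equation in \eqref{gc:pde}. The hypotheses $u\in C(\mathbb R^n)\cap\mathcal L_{\sigma_1}$ and $v\in C(\mathbb R^n)\cap\mathcal L_{\sigma_2}$ are exactly its inputs. It gives $f,g\in L^1_{\rm loc}(\mathbb R^n)$ and shows that $L_{\sigma_1}u=g$ and $L_{\sigma_2}v=f$ hold globally in distributions. The punctured equality extends as an equality, so nothing is lost at the origin.

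Second, since $\alpha_\infty,\beta_\infty>0$ is assumed, Lemma~\ref{gc:moment-bootstrap} gives $u,v\in\mathcal L_0$ and strict source moments $g\in\mathsf M_{t_g}$, $f\in\mathsf M_{t_f}$ with $0<t_g<\lambda_1$ and $0<t_f<\lambda_2$. Each source is nonzero, because $u,v>0$ off zero and $a,b>0$ there.

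Third, these are exactly the hypotheses of Corollary~\ref{gc:pair-inversion} with orders $s_1=\sigma_1$ and $s_2=\sigma_2$. That corollary applies Proposition~\ref{gc:source-inversion} to each equation. It identifies $u=I_{\sigma_1}g$ and $v=I_{\sigma_2}f$ at every point, including zero, and gives $u(0),v(0)>0$. It also converts the finite origin potentials into the kernel-weighted coefficient integrals and local energy of \eqref{gc:origin-admissibility}, using positive lower bounds of $u,v$ near zero.

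Nothing here restricts $\sigma_1+\sigma_2$, and no Euler bound is used. The only real work is in Lemma~\ref{gc:moment-bootstrap}, which I take as given. The one thing to check is that its strict moments meet the open thresholds $t<n-s$ required in Proposition~\ref{gc:source-inversion}, and they do, since $t_g<\lambda_1$ and $t_f<\lambda_2$ are strict.
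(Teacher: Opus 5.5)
Your proposal is correct and follows the paper's proof. The paper likewise takes the entire equations, local source integrability, $u,v\in\mathcal L_0$ and the strict source moments from Lemma~\ref{gc:moment-bootstrap}, whose proof itself begins with the origin-extension step you invoke separately, and then applies Corollary~\ref{gc:pair-inversion} with $s_i=\sigma_i$ to obtain all the conclusions.
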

\begin{proof}
Lemma~\ref{gc:moment-bootstrap} supplies the entire equations, local
source integrability, both solution $\mathcal L_0$ inputs and strict
source moments below the respective Riesz thresholds.
Corollary~\ref{gc:pair-inversion}, with $s_i=\sigma_i$, gives all
conclusions.  The lower-bound exponents play no role in the representation or
coefficient integrability conditions at zero after those inputs have been obtained.
\end{proof}

\begin{proposition}
\label{gc:floor-gap-counterexample}
For $n=5$, $\sigma=2$, $p=q=2$, the functions
\[
 u=v=(1+|x|^2)^{-1},\qquad
 a=b=\frac{2(|x|^2+5)}{1+|x|^2}\geq2
\]
give a positive PDE pair in the natural-tail class, continuous at zero, with smooth
positive coefficients. Their exterior lower-bound exponents $A_\infty=B_\infty=0$
give a gap of $1/3>0$. Their optimal gap based on global Euler lower bounds is
negative.
\end{proposition}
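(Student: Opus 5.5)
The plan is to verify everything by direct radial computation in $\mathbb R^5$. No earlier theorem is needed except the definitions of the tail class $\mathcal L_\sigma$, the exterior-floor gap, and the global Euler bound \eqref{v5:euler-lower}.

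\emph{Step 1: the equation.} Write $r=|x|$ and $u=(1+r^2)^{-1}$. Then
\[
u'=-2r(1+r^2)^{-2},\qquad u''=-2(1+r^2)^{-2}+8r^2(1+r^2)^{-3}.
\]
With $\Delta u=u''+\tfrac4r u'$ this gives
\[
-\Delta u=2(r^2+5)(1+r^2)^{-3}=\frac{2(r^2+5)}{1+r^2}\,u^2=au^2 .
\]
Since $u=v$ and $a=b$, the pair solves $-\Delta u=av^2$, $-\Delta v=bu^2$ classically on all of $\mathbb R^5$, hence in distributions off zero. It is smooth, positive and continuous at zero, and the coefficient is smooth and positive. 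The bound $a\ge2$ is immediate from $(r^2+5)/(1+r^2)\ge1$.

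\emph{Step 2: the tail class and the exterior gap.} Because $u\le Cr^{-2}$, the integral $\int u\,(1+|x|^{7})^{-1}dx$ is dominated by $\int^\infty r^{-2}r^{-7}r^4\,dr<\infty$, so $u,v\in\mathcal L_2$; in fact $u,v\in\mathcal L_0$. The floors \eqref{gc:exterior-floors} hold with $m_a=m_b=2$ and $A_\infty=B_\infty=0$. Hence
\[
\frac{5+0}{3}+\frac{5+0}{3}-(5-2)=\frac13>0 .
\]
One may also note $\alpha_\infty=\beta_\infty=(2+2\cdot2)/3=2\in(0,3]$, consistent with Lemma~\ref{gc:moment-bootstrap}. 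Consequently Proposition~\ref{gc:representation} even gives the exact Newton representations, which shows that the example really lies in the natural-tail PDE class.

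\emph{Step 3: the global Euler constant.} This is the only point needing care. For $C^1$ coefficients, \eqref{v5:euler-lower} reads $\vartheta_a=x\cdot\nabla\log a\ge A_-$, so the optimal constant is $A_-=\inf\vartheta_a$. A direct computation gives
\[
\vartheta_a=\frac{2r^2}{r^2+5}-\frac{2r^2}{1+r^2}=-\frac{8s}{(s+5)(s+1)},\qquad s=r^2 .
\]
The function $s/((s+5)(s+1))$ is maximized at $s=\sqrt5$. Therefore
\[
A_-=B_-=-\frac{8\sqrt5}{10+6\sqrt5}=-\frac{4\sqrt5}{5+3\sqrt5}\approx-0.764,
\]
and the infimum is attained. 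The optimal global Euler gap is then
\[
\frac{2(5+A_-)}{3}-3=\frac{1+2A_-}{3}<0,
\]
because $A_->-\tfrac12$ would require $8\sqrt5<10+6\sqrt5$, i.e.\ $\sqrt5<5$, which is true, so $A_-<-\tfrac12$, that is $1+2A_-<0$. Any admissible Euler constants satisfy $A\le A_-$ and $B\le B_-$, so every global-Euler gap is at most this negative number.

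The only step I expect to need a second check is this sign comparison, together with confirming the distributional/pointwise equivalence of the Euler bound for smooth $a$ (stated after \eqref{v5:euler-lower}). The computations are otherwise routine. The example thus shows that the positive exterior-floor gap of the representation results cannot replace the global Euler hypothesis of Theorem~\ref{r7:master-front}.
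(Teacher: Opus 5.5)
Your proposal follows the paper's proof: you differentiate radially to get $-\Delta u=au^2$, use the exterior floors $A_\infty=B_\infty=0$ to get the gap $1/3$, and minimize the Euler derivative $-8s/[(s+1)(s+5)]$ at $s=\sqrt5$. Your constant $-4\sqrt5/(5+3\sqrt5)$ rationalizes to the paper's $\sqrt5-3$, so your gap $(1+2A_-)/3$ coincides with the paper's $(2\sqrt5-5)/3$. All the computations are correct.

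The one flaw is the sentence meant to prove $1+2A_-<0$, which is not a valid argument. First, $A_->-\tfrac12$ is equivalent to $16\sqrt5<10+6\sqrt5$, i.e.\ $\sqrt5<1$. You dropped a factor $2$: the inequality $8\sqrt5<10+6\sqrt5$ only encodes $A_->-1$. Second, showing that a necessary condition for $A_->-\tfrac12$ is true cannot refute $A_->-\tfrac12$.

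The clean fix is to write $A_-=\sqrt5-3$. Then $1+2A_-=2\sqrt5-5<0$ because $20<25$. Your numerical value $A_-\approx-0.764$ already shows this.
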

\begin{proof}
Direct differentiation gives $-\Delta u=av^2$ and the symmetric
equation. Their tails and origin properties are immediate.
Writing $s=|x|^2$, the Euler derivative is
$-8s/[(s+1)(s+5)]$, whose infimum is $\sqrt5-3$.
The largest possible gap from global Euler lower bounds is therefore
$(2\sqrt5-5)/3<0$. Thus the general representation may be used, but
the pure-power Corollary~\ref{cor:power} cannot be applied to these
variable coefficients.
\end{proof}

\begin{remark}
\label{gc:scope}
The representation conclusions do not assert
$A_\infty>-\sigma_1$ or $B_\infty>-\sigma_2$.
For example, with $n=5$, $\sigma_1=\sigma_2=2$, $p=q=2$, take
\[
 u=(1+r^2)^{-7/5},\qquad v=(1+r^2)^{-1/5},
\]
\[
 a=\frac{14}{5}(5+r^2/5)(1+r^2)^{-3},\qquad
 b=\frac25(5+13r^2/5)(1+r^2)^{3/5}.
\]
The identity
$-\Delta(1+r^2)^{-h}=2h[5+(3-2h)r^2](1+r^2)^{-h-2}$
verifies both PDEs. Their exterior lower-bound exponents may be chosen as
$A_\infty=-4$, $B_\infty=16/5$, with positive algebraic exponents
$\alpha_\infty=14/5$, $\beta_\infty=2/5$.
The coefficients are smooth and positive at zero, although the first
exterior exponent is below $-2$. The origin statement is instead
\eqref{gc:origin-admissibility}.

The initial natural-tail assumptions, continuity at the origin and
$p,q\geq1$ remain hypotheses of this representation argument. Singular-origin defects and sublinear
source transfer are not treated. Unequal-order representation is not a
general unequal-order nonlinear Liouville theorem. The Euler lower bounds in the nonexistence theorem are global;
an exterior bound alone is insufficient, as
Proposition~\ref{gc:floor-gap-counterexample} shows.

\end{remark}

\subsubsection{Low-order positive minorants}
\label{pd:section}\label{pp:section}

The low-order argument uses a positive Green minorant and continuity
at zero. This representation strategy appears in
\cite[Lemmas~2.2 and~3.1]{LiYuZhang2024} and
\cite[Proposition~1.1]{MeiWei2026}. We give the source truncation and
punctured distributional comparison arguments needed here; the
comparison principle is the standard low-order one, cf.
\cite{Silvestre2007}. No singular-origin classification is required.

Throughout this part let
\[
 n\geq1,\qquad 0<\sigma<\min\{2,n\},\qquad
 \lambda=n-\sigma,\qquad L=(-\Delta)^{\sigma/2},
\]
and choose the standard positive normalizations so that
\[
 L\phi(x)=C_{n,\sigma}\operatorname{PV}
 \int_{\mathbb R^n}\frac{\phi(x)-\phi(y)}{|x-y|^{n+\sigma}}\,dy,
 \qquad K_\sigma(x)=c_{n,\sigma}|x|^{-\lambda},
 \qquad LK_\sigma=\delta_0.
\]
Write $I_\sigma H=K_\sigma*H$ and use the tail class \eqref{ho:tail}.
For $w\in\mathcal L_\sigma$ the distribution $Lw$ is defined by
$\langle Lw,\phi\rangle=\int wL\phi$, since
\begin{equation}\label{pd:test-tail}
 |L\phi(x)|\leq C_\phi(1+|x|)^{-n-\sigma}
 \quad\bigl(\phi\in C_c^\infty(\mathbb R^n)\bigr).
\end{equation}
The tail estimate is Lemma~\ref{ho:linear}; the
fundamental-solution formula is \eqref{gc:inverse} with $s=\sigma$.
Unless a pointwise hypothesis is explicitly stated, equations and
inequalities involving $L$ below are distributional on the stated
domains.  A function in
$C^{1,1}_{\mathrm{loc}}(\mathbb R^n\setminus\{0\})\cap\mathcal L_\sigma$
has a pointwise operator off zero and its pointwise equation implies the
distributional one there.  Indeed, symmetrizing the truncated integral
against a compact test supported away from zero gives $\int wL\phi$
in the limit: local Lipschitz differences give an integrable
$O(|x-y|^{2-n-\sigma})$ diagonal, local $L^1$ handles separated bounded
regions, and the natural tail handles infinity.  The local
$C^{1,1}$ bound also gives uniform convergence of the pointwise
truncations on the test support.  No pointwise operator is inferred
from $C^1$ alone when $\sigma\geq1$.

The nonnegative zero-exterior Green kernel of $B_R$ is
\begin{equation}\label{pd:green-formula}
 \begin{split}
 G_R(x,y)&=\kappa_{n,\sigma}|x-y|^{-\lambda}
 \int_0^{\tau_R(x,y)}t^{\sigma/2-1}(1+t)^{-n/2}\,dt,\\
 \tau_R(x,y)&=
 \frac{(R^2-|x|^2)(R^2-|y|^2)}{R^2|x-y|^2},
 \end{split}
\end{equation}
for distinct $x,y\in B_R$, extended by zero otherwise.  The constant
is fixed by $\kappa_{n,\sigma}\int_0^\infty
t^{\sigma/2-1}(1+t)^{-n/2}\,dt=c_{n,\sigma}$.  Thus
\begin{equation}\label{pd:green-limit}
 0\leq G_R(x,y)\uparrow K_\sigma(x-y)\quad(R\to\infty).
\end{equation}
The upper limit in \eqref{pd:green-formula} increases with $R$;
the integral to infinity is finite because $\sigma<n$.
For a smooth source near $\overline B_R$, the Green potential is
globally continuous, zero outside $B_R$, and solves its Poisson
equation inside.  These facts, in all dimensions in the present range,
are the linear inputs of \cite[Theorems~3.1--3.2]{Bucur2016}.

\paragraph{A source and barrier lemma for solutions and supersolutions}

\begin{lemma}
\label{pp:annulus-comparison}
Let $D$ be a bounded ball or annulus.  Suppose $z\in\mathcal L_\sigma$
is continuous on a neighborhood of $\overline D$, $z\geq0$ almost
everywhere on $D^c$, and $Lz\geq0$ in $\mathcal D'(D)$.  Then
$z\geq0$ on $D$.
\end{lemma}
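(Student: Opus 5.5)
The plan is the standard weak maximum principle for $0<\sigma<2$, carried out in the distributional setting by mollification. Suppose $z$ takes a negative value somewhere in $D$. Because $Lz\ge0$ holds only in $\mathcal D'(D)$ and $z$ is merely continuous near $\overline D$, the first step is to regularize. Set $z_\varepsilon=z*\rho_\varepsilon$ with a standard mollifier. For small $\varepsilon$ this is smooth on a slightly shrunk domain $D_\varepsilon=\{x\in D:\operatorname{dist}(x,\partial D)>\varepsilon\}$. It lies in $\mathcal L_\sigma$, since convolution with a compactly supported kernel preserves the weight $(1+|x|)^{-n-\sigma}$ up to constants. It also satisfies $Lz_\varepsilon=(Lz)*\rho_\varepsilon\ge0$ pointwise on $D_\varepsilon$. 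This is because $L$ commutes with translations in the pairing $\int wL\phi$, and on $D_\varepsilon$ the smooth function $z_\varepsilon$ has a pointwise operator that agrees with the distribution. That agreement is the $C^{1,1}$ remark stated just before the Green kernel formula.

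The second step handles the exterior data, which is the delicate part. Only $z\ge0$ almost everywhere on $D^c$ is known, and $z_\varepsilon$ may be slightly negative near $\partial D$ and outside. I would instead compare on $D$ directly with a strict perturbation, using continuity of $z$ on a neighbourhood $U\supset\overline D$. Suppose $\min_{\overline D}z=-m<0$. By continuity on $U$ and $z\ge0$ a.e. outside $D$, we get $z\ge0$ on $\partial D$. Hence the negative minimum is attained at an interior compact set $Z\subset D$, at positive distance $d$ from $\partial D$. Moreover, $z\ge -m/4$ on a neighbourhood $N$ of $\overline D^c\cap U$ for small $\varepsilon$-dependent windows.

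Next take $\varepsilon<d/4$, so $z_\varepsilon$ is smooth near $Z$. Its global infimum over $\mathbb R^n$ is controlled by two facts. Near $D$ it is at least about $-m-o(1)$ and attains values near $-m$ inside $D_\varepsilon$. Off $U$ we have $z_\varepsilon\ge0$ a.e. up to the mollification layer. Add a small multiple of a bump, $\tilde z=z_\varepsilon+\theta\phi$, with $\phi\in C_c^\infty(D)$, $\phi=1$ near $Z$, and $\theta\ll m$. This makes the minimum over $\mathbb R^n$ strict and attained at some $x_0\in D_\varepsilon$. At $x_0$ the singular integral gives
\[
 L\tilde z(x_0)=C_{n,\sigma}\int\frac{\tilde z(x_0)-\tilde z(y)}{|x_0-y|^{n+\sigma}}\,dy<0 .
\]
The integral is strictly negative because $\tilde z(y)\ge0>\tilde z(x_0)$ on the positive-measure set $D^c\setminus(U+B_\varepsilon)$, or already on $D^c$ outside a thin layer. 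On the other hand, $L\tilde z(x_0)\ge \theta L\phi(x_0)\ge-C\theta$.

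The main obstacle is making this quantitative. The strict negativity must be of size at least $c\,m$, with $c$ depending on $D$ and $x_0\in D$, and it must beat $C\theta$. The lower bound comes from $\tilde z(x_0)\le -m/2$ and $\tilde z\ge -m/4$ on a fixed exterior shell of positive measure at bounded distance, which gives $L\tilde z(x_0)\le -c\,m$. Choosing $\theta$ small then yields a contradiction. Finally, the tail of $\tilde z$ is integrable against $|x_0-y|^{-n-\sigma}$ by $\mathcal L_\sigma$, so every integral used is finite.
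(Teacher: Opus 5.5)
Your argument is correct and is essentially the paper's: mollify, use $z\ge0$ on $\partial D$ and on $\{\operatorname{dist}(\cdot,D)>\varepsilon\}$ to place the negative global minimum of the mollified function in a fixed interior region where $Lz_\varepsilon\ge0$ holds pointwise, and contradict this with the singular-integral formula at that global minimum. The strict negativity there must come from the set $\{\operatorname{dist}(\cdot,D)>\varepsilon\}$, not from $D^c\setminus(U+B_\varepsilon)$, which is empty when $U=\mathbb R^n$. The bump $\theta\phi$ is unnecessary: adding $+\theta\phi$ with $\phi=1$ near $Z$ only raises the values there and does not make the minimum strict. The paper therefore takes $\theta=0$ and needs only strict negativity, not your quantitative $-cm$ bound, although that bound does correctly absorb the $-C\theta$ error.
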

\begin{proof}
Continuity and exterior nonnegativity give nonnegative boundary values.
If $m=\min_{\overline D}z<0$, a fixed boundary collar has $z>m/2$.
Convolve with a nonnegative smooth approximate identity
$\rho_\varepsilon$ supported in $B_\varepsilon$.
Bounded translates have comparable natural weights, so
$z_\varepsilon=z*\rho_\varepsilon\in\mathcal L_\sigma$, and
$z_\varepsilon\to z$ uniformly on compact neighborhoods of
$\overline D$.  For small $\varepsilon$, the convolution is greater
than $3m/5$ on a smaller fixed boundary collar and less than $3m/4$
at an original minimum point.  Outside the $\varepsilon$-neighborhood
of $D$ it is nonnegative.  Hence its negative global minimum is
attained in a fixed interior region at distance greater than
$\varepsilon$ from $\partial D$.

There $Lz_\varepsilon=(Lz)*\rho_\varepsilon\geq0$: this local
convolution tests $Lz$ only inside $D$.  Commutation follows by
translation invariance, \eqref{pd:test-tail}, and the natural tail.
The smooth convolution has a pointwise operator, with its second
difference integrable at zero because $\sigma<2$.  At the negative
global minimum this operator is strictly negative.  All differences
in the defining integral are nonpositive, and those against the
nonnegative values outside a large ball are strictly negative.
This contradiction proves comparison without a pointwise operator
assumption on $z$.
\end{proof}

\begin{lemma}
\label{pp:automatic-source}\label{pd:removable-minorant}
Let $w\geq0$ belong to $\mathcal L_\sigma$ and have a finite continuous
representative on $\mathbb R^n\setminus\{0\}$.  Let
$H\geq0$ belong to $L^1_{\mathrm{loc}}(\mathbb R^n\setminus\{0\})$.
If $Lw\geq H$ there, then $H\in L^1_{\mathrm{loc}}(\mathbb R^n)$ and
\begin{equation}\label{pd:linear-minorant}
 I_\sigma H(x)\leq w(x)<\infty\quad(x\ne0).
\end{equation}
If $w$ is also continuous at zero, then $I_\sigma H(0)\leq w(0)$
and $Lw\geq H$ on the whole space in distributions.  In this
regular-origin case a punctured equality extends as an equality.
% No origin integrability of $H$, energy condition, or power-weight
% restriction is assumed.  The source may be merely measurable.
\end{lemma}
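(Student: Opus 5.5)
The plan is to compare $w$ with Green potentials of truncated sources on large balls, using the low-order comparison Lemma~\ref{pp:annulus-comparison} on annuli that avoid the origin, and then to let the truncations and balls exhaust the space.

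\emph{Minorant off zero.} Fix $0<\rho<R$ and a bounded truncation
\[
 H_{k,\rho}=\min\{H,k\}\mathbf 1_{\{\rho<|x|<R/2\}},
\]
and mollify it slightly to a smooth nonnegative source $H'$ supported in an annulus $\{\rho/2<|x|<R\}$ with $H'\le$ a mollification of $H$. Let $\Psi=\int G_R(\cdot,y)H'(y)\,dy$. By the linear inputs recalled before \eqref{pd:green-limit}, $\Psi$ is continuous, vanishes outside $B_R$ and satisfies $L\Psi=H'$ in $B_R$; its distributional operator is $H'$ on $B_R$ and $\le0$ outside.

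To compare on the punctured ball, set $z=w-\Psi+\varepsilon K_\sigma$ for small $\varepsilon>0$ and take the annulus $D=\{r<|x|<R\}$ with $r\ll\rho$. Since $K_\sigma$ is $L$-harmonic off zero, $Lz\ge H-H'\ge0$ in $\mathcal D'(D)$ once the mollification error is controlled; as a fallback, compare first with the unmollified bounded source using a Green potential of a bounded source, which is continuous. On $\overline{B_r}$ one has $\varepsilon K_\sigma\ge\sup\Psi$ for $r$ small, because $\Psi$ is bounded, so $z\ge0$ there. Outside $B_R$, $z\ge w\ge0$. Lemma~\ref{pp:annulus-comparison} then gives $z\ge0$ on $D$. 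Letting $\varepsilon\downarrow0$, then $k\to\infty$, $\rho\downarrow0$ and $R\to\infty$, monotone convergence and \eqref{pd:green-limit} yield $I_\sigma H(x)\le w(x)$ for $x\ne0$.

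\emph{Local integrability at zero.} Finiteness of $I_\sigma H(x_*)$ at one point $x_*\ne0$, with $K_\sigma(x_*-\cdot)$ bounded below on $B_1$, gives $H\in L^1(B_1)$. Hence $H\in L^1_{\rm loc}(\mathbb R^n)$.

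\emph{Regular-origin case.} If $w$ is continuous at zero, Fatou at zero gives $I_\sigma H(0)\le\liminf_{x\to0}w(x)=w(0)$. The puncture-removal argument of Lemma~\ref{ho:origin-extension} (natural tail case, $u=w$, $g=H$) extends the inequality to the whole space, and also extends a punctured equality as an equality.

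\emph{Main obstacle.} The hard step is the comparison near the origin without any integrability of $H$ at zero and without a pointwise operator for $w$. The barrier $\varepsilon K_\sigma$ together with the exclusion of a small ball handles this. The delicate point is verifying that $L\Psi$ is indeed $H'$ distributionally on $D$ and that $z$ is continuous near $\overline D$, which uses the Bucur Green inputs and continuity of $w$ off zero.
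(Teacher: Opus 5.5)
Your overall plan is the paper's: Green potentials on $B_R$ of bounded truncations of $H$ supported in compact annuli, an origin barrier $\varepsilon K_\sigma$ (the paper's $M\delta^\lambda|x|^{-\lambda}$) that dominates $\sup\Psi$ on a small ball, Lemma~\ref{pp:annulus-comparison} on $\{r<|x|<R\}$, and monotone limits via \eqref{pd:green-limit}. The remaining steps also match: a kernel lower bound near zero for integrability, and Fatou plus Lemma~\ref{ho:origin-extension} in the regular-origin case. Your barrier bookkeeping, the order of limits and these last two steps are correct.

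The step that fails as written is the mollified source. The comparison needs $Lz\ge H-H'\ge0$ on $D$, that is, $H'\le H$ almost everywhere. Knowing only that $H'$ lies below a mollification of $H$ gives nothing. For a merely measurable $H$ there may be no nonzero continuous minorant at all: if $H$ is the indicator of a closed nowhere dense set of positive measure, any continuous $H'\le H$ a.e.\ is $\le0$. So your fallback is forced, and it is exactly where the missing work lies. The smooth-source Green facts recalled before \eqref{pd:green-limit} do not by themselves give continuity, zero exterior values and $L\Psi=h$ in $\mathcal D'(B_R)$ for the bounded measurable $h=\min\{H,k\}\mathbf 1_{\{\rho<|x|<R/2\}}$.

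The paper supplies this by approximating $h$ in $L^r(B_R)$, $r>n/\sigma$, by smooth compactly supported $h_m$ which are \emph{not} required to lie below $H$. From $G_R\le K_\sigma$, H\"older's inequality and $\lambda r'<n$ (with $r'=r/(r-1)$) one gets $\sup_x|G_R(h_m-h)(x)|\le C_R\|h_m-h\|_{L^r(B_R)}$. Hence $G_Rh$ is a uniform limit of globally continuous zero-exterior potentials. Uniform convergence on a common bounded support, together with \eqref{pd:test-tail}, then passes $L(G_Rh_m)=h_m$ to $L(G_Rh)=h$ in $B_R$. The approximation is thus made in the linear problem, before any comparison, and only the exact inequality $h\le H$ enters the comparison. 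With this inserted your argument is complete.
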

\begin{proof}
Fix $R>0$.  We first justify the Green potential for a bounded measurable
$h\geq0$ supported in a compact annulus inside $B_R$.
Choose smooth compactly supported approximations $h_m\to h$ in
$L^r(B_R)$ for some $r>n/\sigma$, and denote their Green potentials
by $P_m$.  The sources need not minorize $H$ at this approximation
stage.  With $r'=r/(r-1)$, one has $\lambda r'<n$ and
\[
 \sup_x\left|\int_{B_R}G_R(x,y)(h_m-h)(y)\,dy\right|
 \leq C_R\|h_m-h\|_{L^r(B_R)},
\]
by $G_R\leq K_\sigma$ and H\"older's inequality.
Consequently $P=G_Rh$ is a uniform limit of the globally continuous,
zero-exterior $P_m$ and is bounded and continuous on all of
$\mathbb R^n$.  Uniform convergence, their bounded common support,
and \eqref{pd:test-tail} pass $LP_m=h_m$ to $LP=h$ in $B_R$.
Also $P\geq0$ and $P\in\mathcal L_\sigma$.

For any such $h\leq H$, put $M=\|P\|_\infty$ and
\[
 W_\delta(x)=w(x)-P(x)+M\delta^\lambda|x|^{-\lambda},
 \qquad 0<\delta<R.
\]
The barrier lies in $\mathcal L_\sigma$ and is $L$-harmonic off zero.
Thus $LW_\delta\geq H-h\geq0$ in
$B_R\setminus\overline{B_\delta}$.  For $0<|x|\leq\delta$ the
barrier is at least $M\geq P(x)$; on $B_R^c$, $P=0$.
Comparison therefore gives $W_\delta\geq0$ on the annulus.
For fixed $x\ne0$, letting $\delta\downarrow0$ proves $w(x)\geq P(x)$.

Now use the actual bounded truncations
\[
 h_j=\min\{H,j\}\,
 \mathbf1_{\{1/j<|y|<R-1/j\}},
\]
with an empty support when necessary.  They increase to $H$ almost
everywhere in the punctured ball, and each is supported in a compact
annulus and satisfies $h_j\leq H$.  Monotone convergence gives
\begin{equation}\label{pp:green-source-minorant}
 \int_{B_R}G_R(x,y)H(y)\,dy\leq w(x)<\infty
 \quad(x\in B_R\setminus\{0\}).
\end{equation}
For a fixed $x_0\ne0$ inside the ball, $G_R(x_0,y)$ has a positive
lower bound on a sufficiently small closed ball around zero.
The last inequality makes $H$ integrable there.  Its assumed punctured
local integrability gives $H\in L^1_{\mathrm{loc}}(\mathbb R^n)$.
Finally exhaust $R\to\infty$ using \eqref{pd:green-limit} to obtain
\eqref{pd:linear-minorant}.  If $w$ is continuous at zero, take any
$x_j\ne0$ tending to zero.  Fatou, applied to
$K_\sigma(x_j-y)H(y)\geq0$, gives
$I_\sigma H(0)\leq\liminf_jI_\sigma H(x_j)\leq w(0)$.

When $w$ is continuous at zero, Lemma~\ref{ho:origin-extension}
now gives the full-space inequality, and the full-space equality for a
punctured equality. Its only nonlocal input is the assumed natural
tail; source integrability at zero has already been proved and is
also a conclusion of that lemma.
\end{proof}

\paragraph{Integral representation from source mass estimates}
\label{lc:section}

The low-order positive minorants permit a source mass argument
that is not supplied by the higher-order representation argument above.
We prove the integral representation independently of pressure:
normalized integral lower bounds for the coefficients on a fixed ball
and global Euler lower bounds remove the harmonic constants when
$pq>1$ and $\gamma_->0$, including a sublinear component.  The nonexistence consequence
then follows from the argument in Subsubsection~\ref{pt:section}.

\begin{lemma}
\label{lc:exactness}
Let $0<\sigma<\min\{2,n\}$, $p,q>0$, $pq>1$, and let $a,b>0$
be continuous away from zero with the global weak Euler lower bounds
\eqref{v5:euler-lower} and strict gap \eqref{v5:master-gap}.
Fix $e_0\in\mathbb S^{n-1}$ and $R_*\geq1$, and assume only the
normalized integral lower bounds on a fixed ball
\begin{equation}
 \inf_{R\geq R_*}\int_D\frac{a(Rx)}{a(Re_0)}\,dx>0,
 \qquad
 \inf_{R\geq R_*}\int_D\frac{b(Rx)}{b(Re_0)}\,dx>0,
 \qquad D=B_{1/8}(3e_0/4).
 \label{lc:patch-floors}
\end{equation}
Suppose $u,v\in C(\mathbb R^n)\cap\mathcal L_\sigma$ are nonnegative,
strictly positive off zero, and solve
$L_\sigma u=g:=av^p$, $L_\sigma v=f:=bu^q$ in distributions off zero.
Then $u,v\in\mathcal L_0$, both origin values are
positive, and
\begin{equation}
 u=I_\sigma(av^p),\qquad v=I_\sigma(bu^q)
 \quad\hbox{everywhere},\qquad fu+gv\in L^1_{\mathrm{loc}}.
 \label{lc:exact}
\end{equation}
The kernel-weighted coefficient integrals of
\eqref{gc:pair-origin} are also finite with $s_1=s_2=\sigma$.
% This representation requires no modulus assumption on the coefficient
% dilation family, pressure estimate, or nonlinear energy bound.
% The exterior power lower bounds used in the proof follow from the
% Euler lower bounds.
\end{lemma}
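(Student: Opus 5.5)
The plan is to reduce the statement to Corollary~\ref{gc:pair-inversion}. Its inputs are $u,v\in\mathcal L_0$ and strict source moments $g\in\mathsf M_{t_g}$, $f\in\mathsf M_{t_f}$ with $0<t_g,t_f<\lambda$. I will obtain both from the positive Green minorants and the normalized source family, without the moment bootstrap of Lemma~\ref{gc:moment-bootstrap}. That bootstrap requires $p,q\ge1$ and is replaced here by the minorant route, which tolerates a sublinear component.

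\emph{First step.} Apply Lemma~\ref{pp:automatic-source} to each equation, with $w=u$, $H=g$ and then $w=v$, $H=f$. Since $u,v$ are continuous at zero, this gives $f,g\in L^1_{\rm loc}(\mathbb R^n)$, the global distributional equations, and the pointwise minorants $u\ge I_\sigma g$, $v\ge I_\sigma f$ at every point, including zero. These minorants and the patch floors \eqref{lc:patch-floors} are exactly the hypotheses of Lemma~\ref{gp:semiorbit-source}. That lemma then gives $\alpha_-,\beta_-\le\lambda$, and under $\gamma_->0$ both are positive. It also gives the large-ball bounds $\int_{B_R}g\le CR^{\lambda-\alpha_-}$ and $\int_{B_R}f\le CR^{\lambda-\beta_-}$ for $R\ge R_*$. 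Dyadic summation turns these into the moments $g\in\mathsf M_t$ for every $t>\lambda-\alpha_-$, and similarly for $f$. Choosing the midpoints $t_g=\lambda-\alpha_-/2$ and $t_f=\lambda-\beta_-/2$ gives strict moments in $(0,\lambda)$.

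\emph{Second step: $u,v\in\mathcal L_0$.} This is the main obstacle, since the hypotheses give only $\mathcal L_\sigma$. I would remove the harmonic remainders directly. With the moments above, Proposition~\ref{gc:source-inversion} shows $P_u=I_\sigma g\in\mathcal L_0\subset\mathcal L_\sigma$ and $L_\sigma P_u=g$. The remainder $h=u-P_u\ge0$ is then an $\mathcal L_\sigma$ entire $L_\sigma$-harmonic function. By the Fourier argument of Lemma~\ref{ho:linear}, $h$ is a polynomial. A polynomial in $\mathcal L_\sigma$ with $\sigma<2$ has degree at most one, and nonnegativity forces it to be a constant $c_u\ge0$; similarly $v=I_\sigma f+c_v$.

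If $c_v>0$, then $g=av^p\ge c_v^p a$. Euler monotonicity gives $a(Rz)\ge R^{A}a(z)$ on dilated patches, so $\int_{B_R}g\gtrsim R^{n+A}$. The Euler inequalities hold for all real $A,B$, so one should double-check the following step. Comparing with the bound $CR^{\lambda-\alpha_-}$ requires $n+A>\lambda-\alpha_-$, i.e.\ $\sigma+A+\alpha_->0$. The balance identity $p\beta_-=\alpha_-+\sigma+A$ together with $\beta_->0$ makes this exponent positive, which gives a contradiction. The symmetric argument excludes $c_u>0$. Hence $u=I_\sigma g$ and $v=I_\sigma f$ lie in $\mathcal L_0$.

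\emph{Final step.} All inputs of Corollary~\ref{gc:pair-inversion} with $s_1=s_2=\sigma$ are now available. It gives the pointwise identities \eqref{lc:exact} everywhere, positive origin values, the kernel-weighted coefficient integrals, and $fu+gv\in L^1_{\rm loc}$. The only delicate point beyond the constant-exclusion step is citing Lemma~\ref{ho:linear} for $\mathcal L_\sigma$ harmonic functions, which is stated there with $\mathcal L_0$. Its proof only uses that $h$ is tempered and pairs with Schwartz images, which holds for $\mathcal L_\sigma$ functions; this should be remarked in the proof.
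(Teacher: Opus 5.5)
Your proposal is correct and follows the paper's proof essentially step by step. The steps match: minorants from Lemma~\ref{pp:automatic-source}; large-ball source bounds and strict midpoint moments from Lemma~\ref{gp:semiorbit-source}; source inversion via Proposition~\ref{gc:source-inversion}; reduction of the nonnegative remainders to constants; exclusion of a positive constant through the balance identity (you use $p\beta_->0$ on the $v$-remainder where the paper uses $q\alpha_->0$ on the $u$-remainder); and a final appeal to Corollary~\ref{gc:pair-inversion}.

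The one real difference is in the Liouville step. The paper cites Fall's theorem for entire $L_\sigma$-harmonic functions in $\mathcal L_\sigma$, whereas you rerun the Fourier-support argument of Lemma~\ref{ho:linear} with $h\in\mathcal L_\sigma$. That adaptation is valid: it yields a polynomial of degree below $\sigma<2$, and it makes the step self-contained. You also rightly observe that an almost-everywhere identity suffices before Corollary~\ref{gc:pair-inversion} identifies every point, so the paper's use of Lemma~\ref{pd:local-gain} can be skipped.
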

\begin{proof}
The positive minorants of Lemma~\ref{pp:automatic-source} give
$u\ge I_\sigma g$ and $v\ge I_\sigma f$, and their continuity
at zero gives entire equations and locally integrable sources.
Lemma~\ref{gp:semiorbit-source} applies before pressure and provides
\begin{equation}
 M_g(R)\le CR^{\lambda-\alpha_-},\qquad
 M_f(R)\le CR^{\lambda-\beta_-},\qquad
 0<\alpha_-,\beta_-\le\lambda,
 \label{lc:source-masses}
\end{equation}
where $\lambda=n-\sigma$ and the scaling exponents are
\eqref{gp:floor-scales}. These bounds supply the strict source moments
\[
 g\in\mathsf M_{\lambda-\alpha_-/2},\qquad
 f\in\mathsf M_{\lambda-\beta_-/2},\qquad
 0<\lambda-\alpha_-/2,\lambda-\beta_-/2<\lambda,
\]
by dyadic summation. Consequently
Proposition~\ref{gc:source-inversion} places the two source potentials
$P_u=I_\sigma g$, $P_v=I_\sigma f$ in $\mathcal L_0$, hence also
in $\mathcal L_\sigma$, and gives their entire distributional equations.
The nonnegative remainders $u-P_u$ and $v-P_v$ are entire
$L_\sigma$-harmonic in $\mathcal L_\sigma$. Applying
\cite[Theorem~1.1]{Fall2016} with $s=\sigma/2\in(0,1)$ shows that
they are affine. Their nonnegativity on the whole space makes them
constants $C_u,C_v\ge0$.
Lemma~\ref{pd:local-gain} gives continuity of the potentials off
zero, since their sources are locally bounded there. The affine
identities therefore hold at every nonzero point.

The balance identities give
\[
 n+B_-=q\alpha_-+\lambda-\beta_->0,\qquad
 n+A_-=p\beta_-+\lambda-\alpha_->0.
\]
Integrating the global Euler lower bounds along rays yields positive exterior
power lower bounds with exponents $A_-,B_-$. If $C_u>0$, then
$u\ge C_u$ and $M_f(R)\ge cR^{n+B_-}$ for large $R$.
Its lower exponent exceeds $\lambda-\beta_-$ by $q\alpha_->0$,
contradicting \eqref{lc:source-masses}. Similarly $C_v>0$ contradicts
the other source bound. Thus $C_u=C_v=0$.

The off-zero identities and $P_u,P_v\in\mathcal L_0$ give
$u,v\in\mathcal L_0$. The already established strict source moments
allow Corollary~\ref{gc:pair-inversion} to identify the origin values,
prove their positivity and the kernel-weighted coefficient integrability conditions at zero,
and give the local nonlinear energy. No pressure or nonexistence
conclusion is used in this representation argument.
\end{proof}

\subsubsection{Proofs of the PDE consequences}
\label{pt:section}

Each PDE argument first proves the integral representation and finite
local energy. The integral nonexistence theorem is invoked only after these independent representation inputs have been established.

\begin{proof}[Proof of Corollary~\ref{cor:pde-front}]
For the Riesz kernel and pure powers of Corollary~\ref{cor:pde-front},
the global weak Euler lower bounds imply exterior power lower bounds:
\[
 a(r\omega)\geq a(\omega)r^{A_-},\qquad
 b(r\omega)\geq b(\omega)r^{B_-}
 \quad(r\geq1,\ \omega\in\mathbb S^{n-1}).
\]
Positive minima on the unit sphere give fixed positive lower-bound constants.
In dimension one this uses the minimum of two positive values and does
not introduce a comparison between the two normalized sign branches.
Write $\lambda=n-\sigma$ and define the corresponding algebraic exponents
\[
 \alpha_-=
 \frac{\sigma+A_-+p(\sigma+B_-)}{pq-1},\qquad
 \beta_-=
 \frac{\sigma+B_-+q(\sigma+A_-)}{pq-1}.
\]
Their gap identity is
\begin{equation}
 (pq-1)(\alpha_-+\beta_--\lambda)
       =(p+1)(q+1)\gamma_->0.
 \label{pt:gap-scales}
\end{equation}

In case (i), Lemma~\ref{gc:moment-bootstrap} gives the two upper bounds on the
exponents $\alpha_-,\beta_-\leq\lambda$ from the initial natural tails.
Equation~\eqref{pt:gap-scales} then forces both exponents to be positive;
a nonpositive exponent would force the other to exceed $\lambda$.
Proposition~\ref{gc:representation} gives exact potentials everywhere
and finite local nonlinear energy before a nonlinear theorem is invoked.

In case (ii), the orbit implies the patch input
\eqref{lc:patch-floors}.  Indeed, for $R\geq R_*:=\max\{1,R_0\}$
and $x\in D=B_{1/8}(3e_0/4)$, its same-component bound gives
\[
 a_R(x)\geq e^{-H_a\Omega(3/8)},\qquad
 b_R(x)\geq e^{-H_b\Omega(3/8)}.
\]
Both patch integrals therefore have positive lower bounds uniform
in $R$.  Lemma~\ref{lc:exactness} gives exact potentials and local energy after excluding the harmonic constants.  That lemma
uses positive minorants and source masses, and does not impose
$p,q\geq1$ or use a pressure conclusion to remove the defects.

In case (iii), Lemma~\ref{ui:source-bounds} obtains the upper bounds on the
exponents by local compact integer tests without an initial tail.
Equation~\eqref{pt:gap-scales} again gives positive exponents, and
Proposition~\ref{ui:representation} gives $u,v\in\mathcal L_0$, the integral representations and local nonlinear energy.

In each case the obtained pair is finite, positive and continuous off
zero, satisfies the exact equal-order system, and has the local origin
energy required by Theorem~\ref{thm:dini-front}.  The coefficients retain
the same coefficient dilation family, global Euler lower bounds
and strict gap.  That independent
integral theorem now gives the contradiction.
\end{proof}

\begin{remark}
\label{pt:scope}
The three PDE consequences use the same integral theorem after
independent representation arguments. Proposition~\ref{ui:representation}
also treats unequal integer orders, but the nonlinear Liouville
theorem requires equal orders. The compact-test method uses
locality and $p,q\ge1$; the positive low-order minorant permits a
sublinear component. No noninteger tail removal or singular-origin
extension is asserted.
\end{remark}

\subsection{Nonlinear PDE representation and nonexistence}
\label{r7:nonlinear-pde-section}

The reduction to potentials extends to the primitive response class.
We separate this reduction from the pressure argument.  In this
subsection $L_s=(-\Delta)^{s/2}$ has the Riesz inverse $I_s$.
For the representation theorem the two orders may differ; write
$\lambda_i=n-\sigma_i$.

\begin{remark}
\label{r7:lipschitz-response-extension}
All responses in this subsection have the locally Lipschitz regularity
of Theorem~\ref{r7:master-front}. Logarithmic derivative inequalities
are interpreted almost everywhere. Their integrated ratio bounds,
locally Lipschitz inverses and locally $C^{1,1}$ primitives are the
only response regularity used below.
\end{remark}

Fix positive responses $F,G$ vanishing at zero, with the regularity in
Remark~\ref{r7:lipschitz-response-extension}, and suppose
\begin{equation}
 \begin{gathered}
 \frac{sF'(s)}{F(s)}\ge q_0>0,\qquad
 \frac{sG'(s)}{G(s)}\ge p_0>0\quad\hbox{a.e. }s>0,
 \qquad p_0q_0>1,\\
 P_F(s):=\int_0^sF(t)\,dt\ge m_FsF(s),\qquad
 P_G(s):=\int_0^sG(t)\,dt\ge m_GsG(s),\qquad m_F,m_G>0.
 \end{gathered}
 \label{r7:pde-response-class}
\end{equation}
Write
\begin{equation}
 q_*:=m_F^{-1}-1\ge q_0,\qquad
 p_*:=m_G^{-1}-1\ge p_0.
 \label{r7:pde-effective-exponents}
\end{equation}
For $F_s(z)=F(sz)/F(s)$, the primitive argument gives
\begin{equation}
 F_s(z)\ge
 \begin{cases}m_Fz^{q_*},&0<z\le1,\\ z^{q_0},&z\ge1,
 \end{cases}
 \qquad
 z\le m_F^{-1/q_*}F_s(z)^{1/q_*}+F_s(z)^{1/q_0}.
 \label{r7:inverse-response}
\end{equation}
Indeed $P_F(sz)\ge z^{1/m_F}P_F(s)$ when $z\le1$, and
$F(sz)\ge P_F(sz)/(sz)$.  The large-argument bound follows from
the lower logarithmic derivative.  The same assertions hold for $G$.

\begin{theorem}
\label{r7:nonlinear-pde-representation}
Let $0<\sigma_1,\sigma_2<n$, and let $a,b>0$ be continuous off zero
with global weak Euler lower bounds $A>-\sigma_1$, $B>-\sigma_2$
and the normalized
Dini orbit hypothesis \eqref{eq:front-dini-orbit}.  Suppose
\eqref{r7:pde-response-class} holds.  Let $u,v\in C(\mathbb R^n)$
be nonnegative and not both zero, solving
\begin{equation}
 L_{\sigma_1}u=g:=aG(v),\qquad L_{\sigma_2}v=f:=bF(u)
 \quad\hbox{off zero}.
 \label{r7:nonlinear-pde-system}
\end{equation}
Assume one of the following:
\begin{enumerate}[label=(\roman*)]
\item $p_0,q_0\ge1$, $u\in\mathcal L_{\sigma_1}$,
$v\in\mathcal L_{\sigma_2}$, and the equations
hold in distributions off zero;
\item $\sigma_i=2m_i<n$ for integers $m_i\ge1$, $p_0,q_0\ge1$,
$u\in C^{2m_1}_{\mathrm{loc}}(\mathbb R^n\setminus\{0\})$,
$v\in C^{2m_2}_{\mathrm{loc}}(\mathbb R^n\setminus\{0\})$,
and the equations hold classically there;
\item $0<\sigma_i<\min\{2,n\}$,
$u\in\mathcal L_{\sigma_1}$, $v\in\mathcal L_{\sigma_2}$,
and the equations hold in distributions off zero.
\end{enumerate}
Then $f,g$ are nonzero, locally integrable on the whole space,
$u,v\in\mathcal L_0$, and
\begin{equation}
 u=I_{\sigma_1}g,\qquad v=I_{\sigma_2}f
 \quad\hbox{at every point of }\mathbb R^n.
 \label{r7:nonlinear-pde-exact}
\end{equation}
Both solutions are strictly positive everywhere,
$fu+gv\in L^1_{\mathrm{loc}}(\mathbb R^n)$, and
$\int_{B_1}a(x)|x|^{-\lambda_1}dx$ and
$\int_{B_1}b(x)|x|^{-\lambda_2}dx$ are finite.  
% In case~(ii) no
% initial tail or sign condition on intermediate Laplacians is imposed.
\end{theorem}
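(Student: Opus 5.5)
The plan mirrors the power-source representation arguments of Subsection~\ref{sec:paper1-power-representation}, with the pure powers $v^p,u^q$ replaced by the responses $G(v),F(u)$. The comparisons use the two-sided power windows \eqref{r6:response-bounds} and the inverse bound \eqref{r7:inverse-response}.

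\emph{Step 1: global equations and source integrability.} Lemma~\ref{ho:origin-extension} applies in each case: the natural tail in (i),(iii), the even-integer alternative in (ii). It yields entire distributional equations and $f,g\in L^1_{\rm loc}(\mathbb R^n)$. Nontriviality comes next. If, say, $u\not\equiv0$, then $f=bF(u)\not\equiv0$. If then $v\equiv0$, we would have $L_{\sigma_2}v=f\ne0$, which is impossible, so $v\not\equiv0$ and hence $g\not\equiv0$.

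\emph{Step 2: exterior coefficient floors and source masses.} Integrating the Euler bounds along rays gives $a\ge m_a|x|^{A}$ and $b\ge m_b|x|^{B}$ for $|x|\ge1$.
\begin{itemize}
\item In cases (i),(ii) I would rerun the compact-test feedback of Lemma~\ref{ui:source-bounds}, respectively the moment bootstrap of Lemma~\ref{gc:moment-bootstrap}. The pointwise inversion $u^{q}\lesssim F(u)$-type bounds are replaced by \eqref{r7:inverse-response}: $u\le C\bigl[(f/b)^{1/q_*}+(f/b)^{1/q_0}\bigr]$. Hölder is applied separately to each of the two branches, using $q_0,p_0\ge1$ so the exponents $1/q_0,1/q_*\le1$.
\item Each branch gives a feedback inequality $G_R\le CR^{\xi}F_R^{1/Q}$ with $Q\in\{q_0,q_*\}$. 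Eliminating as in Lemma~\ref{r7:normalization}, with the branch chosen by whether the mass is large or small, gives polynomial mass bounds $M_g(R)\le CR^{\lambda_1-\alpha}$ and $M_f(R)\le CR^{\lambda_2-\beta}$ with some $\alpha,\beta>0$.
\item This is where $A>-\sigma_1$ and $B>-\sigma_2$ are used: by the rectangle algebra \eqref{r7:rectangle-decomposition}, every branch exponent $(x_0+Py_0)/(PQ-1)$ and $(y_0+Qx_0)/(PQ-1)$ is then positive, with $x_0=\sigma_1+A>0$, $y_0=\sigma_2+B>0$.
\item In case (iii), positive minorants from Lemma~\ref{pp:automatic-source} give $u\ge I_{\sigma_1}g$ and $v\ge I_{\sigma_2}f$. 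The mass feedback then follows exactly as in Lemma~\ref{r7:source-parameters}: on the patch $RD$ one has $u\ge cR^{-\lambda_1}M_g(R)$, and the small-argument branch of \eqref{r6:response-bounds} is used. No restriction $p_0,q_0\ge1$ is needed there.
\end{itemize}

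\emph{Step 3: moments and exact representation.} The mass bounds give strict moments $g\in\mathsf M_{t_g}$, $f\in\mathsf M_{t_f}$ with $0<t_g<\lambda_1$, $0<t_f<\lambda_2$ by dyadic summation. The source transfer of Lemma~\ref{gc:source-transfer}, applied branchwise, gives $u,v\in\mathcal L_0$. In cases (i),(ii), Corollary~\ref{gc:pair-inversion} (its proof uses only Proposition~\ref{gc:source-inversion}, not the power form) then gives \eqref{r7:nonlinear-pde-exact} everywhere, positive origin values, the kernel-weighted coefficient integrals, and $fu+gv\in L^1_{\rm loc}$.

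In case (iii) one more step is needed. The remainders $u-I_{\sigma_1}g$ and $v-I_{\sigma_2}f$ are nonnegative entire harmonic functions in $\mathcal L_\sigma$, hence constants by \cite{Fall2016}. A positive constant $C_u$ would force $f\ge bF(C_u)\gtrsim|x|^{B}$, so $M_f(R)\gtrsim R^{n+B}$. This contradicts $M_f(R)\le CR^{\lambda_2-\beta}$, because $n+B>\lambda_2-\beta$ since $B>-\sigma_2$. The same argument excludes $C_v>0$. Strict positivity everywhere then follows from the potentials of nonzero sources.

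\emph{Main obstacle.} The hard step is Step 2 in cases (i),(ii). The two-branch inverse bound is not a single power, so the elimination must handle mixed branches uniformly in $R$. The first attempt is: assume the masses are eventually large, use the $q_0,p_0$ branches, and note that the small-mass branches only improve the bounds. If the moment bootstrap's fixed-point structure breaks with mixed exponents, I would instead work with the worst exponents $p_*,q_*$ throughout, accepting weaker but still strictly admissible moments.
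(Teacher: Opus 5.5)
Your treatment of cases (ii) and (iii) is correct, but it follows a different route from the paper's.

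\emph{The paper's route.} It normalizes by the implicit amplitudes $s_R,t_R$ of \eqref{r7:pde-amplitudes}. It then uses the Dini orbit hypothesis to get uniform lower bounds for $a_R,b_R$, on the unit annulus in case (ii) and on the patch $D$ in case (iii). Next it bounds the normalized unit-ball masses, and finally returns to original variables through the amplitude decay \eqref{r7:pde-amplitude-decay}.

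\emph{Your route.} You work at scale $R$ with the original functions and only the exterior floors $a\ge m_a|x|^A$, $b\ge m_b|x|^B$. In case (ii) the compact test gives $G_R\le C\sum_{Q\in\{q_0,q_*\}}R^{\lambda_1-(n+B)/Q}F_R^{1/Q}$ and its counterpart. With $U=\log(R^{-\lambda_1}G_R)$, composing the two branch maxima gives, for some branch pair, $U\le U/(PQ)-(x_0+Py_0)(PQ)^{-1}\log R+C$. Hence $M_g(R)\le CR^{\lambda_1-\alpha}$ with $\alpha=\min_{P,Q}(x_0+Py_0)/(PQ-1)$. This minimum is attained at $(p_*,q_*)$, so $\alpha$ is exactly the paper's $\alpha_*$. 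In case (iii), the unnormalized feedback of Lemma~\ref{r7:source-parameters}, adapted to two orders, gives the same exponents.

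\emph{Comparison.} Your route needs no orbit hypothesis for the representation. The paper's route reuses the normalization that also drives the nonexistence and universal-bound arguments.

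\emph{Gap in case (i).} Case (i) is not actually carried out, and this is a genuine gap. Compact tests are unavailable at non-even orders. (Your ``respectively'' pairs them with (i) and the moment bootstrap with (ii), which is reversed.) So there is no feedback inequality of the form $G_R\le CR^{\xi}F_R^{1/Q}$ in case (i). What the branchwise transfer and Lemma~\ref{tb:positive-test} give instead is the capped moment map \eqref{r7:nonlinear-moment-map}, with $T_1(a,b)=[\max_Q\{\lambda_1-(n+B-b)/Q\}]_+$ and its symmetric counterpart. The step you leave open, and express doubt about, is that its iterates from $(n,n)$ reach moments strictly below $(\lambda_1,\lambda_2)$. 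The missing argument has two parts.
\begin{enumerate}
\item The map is monotone. Its first coordinate depends only on $b$, with slope at most $1/q_0$, and its second only on $a$, with slope at most $1/p_0$. So its square is a $(p_0q_0)^{-1}$-contraction.
\item $(\lambda_1,\lambda_2)$ is a strict supersolution. Since $\sigma_2+B>0$, one has $T_1(\lambda_1,\lambda_2)=[\lambda_1-(\sigma_2+B)/q_*]_+<\lambda_1$, and likewise for $T_2$. This is where $A>-\sigma_1$ and $B>-\sigma_2$ enter in case (i).
\end{enumerate}

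\emph{Your fallback and heuristic.} Your fallback of using $p_*,q_*$ throughout is not valid. Both branches of the inverse bound must lie in the moment class. Whenever $b>n+B$ (for instance at the initial $b=n$ if $B<0$), the $q_0$ branch gives the larger, binding threshold. Your heuristic that one may use the $q_0,p_0$ branches, because the small branches ``only improve'' the bounds, is also backwards. Since $F_R=O(R^{\lambda_2-\beta_*})$ and $n+B>\lambda_2$, one has $F_RR^{-(n+B)}\to0$. So the $q_*$ branch dominates and determines the exponent.

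\emph{A smaller point in case (iii).} Lemma~\ref{gc:source-transfer} requires $r\ge1$, so your Step~3 transfer to $\mathcal L_0$ is unavailable when $p_0<1$ or $q_0<1$. In that situation $u,v\in\mathcal L_0$ follows instead from $u=I_{\sigma_1}g$, $v=I_{\sigma_2}f$ once the constants vanish. These identities hold first off zero, by continuity of the potentials (Lemma~\ref{pd:local-gain}). The value at the origin is then identified by a final application of Proposition~\ref{gc:source-inversion}, as in the paper.
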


\begin{proof}
In every case the linear origin-extension lemma gives entire
 distributional equations and $f,g\in L^1_{\mathrm{loc}}$; in
case~(iii) one may first use Lemma~\ref{pp:automatic-source}.
The two sources are nonzero: if, for example, $g=0$ off zero,
positivity of $a$ and $G$ gives $v=0$, and the other equation then
gives $u=0$.  Continuity rules out a nonzero value supported only at
zero.

\emph{Case (i): a finite moment iteration.}
The unnormalized form of \eqref{r7:inverse-response}, together with
the exterior coefficient bounds implied by the Euler inequalities,
gives
\[
 u\le C\sum_{Q\in\{q_0,q_*\}}|x|^{-B/Q}f^{1/Q},\qquad
 v\le C\sum_{P\in\{p_0,p_*\}}|x|^{-A/P}g^{1/P}
 \quad(|x|>1).
\]
For $Q>1$, H\"older's inequality against $|x|^{-t}$ shows that
$f\in\mathsf M_b$ implies
\[
 u\in\mathsf M_t\quad\hbox{for all }
 t>\max_{Q\in\{q_0,q_*\}}
       \{n(1-1/Q)+(b-B)/Q\}.
\]
When $Q=1$, the same conclusion follows by direct integration of
$|x|^{-B}f$; no conjugate exponent is used.  The analogous transfer
holds for $g,v$ and the exponents $P$.

Lemma~\ref{tb:positive-test} initially gives
$f,g\in\mathsf M_{n+\varepsilon}$ for every $\varepsilon>0$.
Let $\mathcal P(a,b)$ mean that
$g\in\mathsf M_{a+\varepsilon}$ and
$f\in\mathsf M_{b+\varepsilon}$ for every $\varepsilon>0$.
The preceding transfers and the same positive-source test give
$\mathcal P(a,b)\Rightarrow\mathcal P(\mathcal S(a,b))$, where
\begin{equation}
 \begin{split}
 T_1(a,b)&=\left[\max_{Q\in\{q_0,q_*\}}
              \left\{\lambda_1-\frac{n+B-b}{Q}\right\}\right]_+,\\
 T_2(a,b)&=\left[\max_{P\in\{p_0,p_*\}}
              \left\{\lambda_2-\frac{n+A-a}{P}\right\}\right]_+,\\
 \mathcal S_i(a,b)&=\min\{n,T_i(a,b)\}\quad(i=1,2).
 \end{split}
 \label{r7:nonlinear-moment-map}
\end{equation}
The cap retains the initial known moments whenever a proposed
threshold is larger.  All uses of the transfer and test lemmas take
strictly positive surplus exponents, exactly as in
Lemma~\ref{gc:moment-bootstrap}; thus no endpoint moment is asserted.
The first coordinate depends only on $b$ with Lipschitz constant at
most $1/q_0$, and the second only on $a$ with constant at most
$1/p_0$.  Hence $\mathcal S^2$ is a contraction with constant at
most $(p_0q_0)^{-1}<1$.

The iteration from $(n,n)$ decreases to the unique fixed point
$z$.  Moreover $(\lambda_1,\lambda_2)$ is a strict supersolution:
\[
 T_1(\lambda_1,\lambda_2)
 =\left[\lambda_1-\frac{\sigma_2+B}{q_*}\right]_+<\lambda_1,
 \qquad
 T_2(\lambda_1,\lambda_2)
 =\left[\lambda_2-\frac{\sigma_1+A}{p_*}\right]_+<\lambda_2.
\]
Iterating from this supersolution shows $z_i<\lambda_i$.
Consequently a finite iteration from $(n,n)$ supplies strict source
moments $g\in\mathsf M_{t_g}$, $f\in\mathsf M_{t_f}$ with
$0<t_g<\lambda_1$, $0<t_f<\lambda_2$.  At the fixed point the raw
solution thresholds are at most $\sigma_1+z_1$ and $\sigma_2+z_2$,
both strictly below $n$.
A further finite transfer therefore gives $u,v\in\mathcal L_0$.

\emph{Case (ii): compact tests with normalized responses.}
For each $R>0$ choose the unique positive pair
\begin{equation}
 s_R=R^{\sigma_1}a(Re_0)G(t_R),\qquad
 t_R=R^{\sigma_2}b(Re_0)F(s_R).
 \label{r7:pde-amplitudes}
\end{equation}
For $H_R(s)=s/G(R^{\sigma_2}b(Re_0)F(s))$, the integrated lower
response inequalities give
$H_R(s_2)/H_R(s_1)\le(s_2/s_1)^{1-p_0q_0}$ whenever $s_2>s_1$.
Thus $H_R$ is strictly decreasing, with endpoint limits infinity and
zero, which proves existence and uniqueness.  Normalize
\[
 U_R(x)=u(Rx)/s_R,\quad V_R(x)=v(Rx)/t_R,\quad
 g_R=a_R G_{t_R}(V_R),\quad f_R=b_R F_{s_R}(U_R).
\]
Then $L_{\sigma_1}U_R=g_R$, $L_{\sigma_2}V_R=f_R$, and
$g_R=R^{\sigma_1}g(R\,\cdot\,)/s_R$,
$f_R=R^{\sigma_2}f(R\,\cdot\,)/t_R$.  No potential identity is used
in these transformations.

The orbit hypothesis gives uniform positive lower bounds for $a_R,b_R$
on $1\le|x|\le2$ for all sufficiently large $R$.  Here $n>2m_i$
implies $n\ge3$, so the fixed annulus is connected.  Choose
$0\le\chi\le1$, equal to one on $B_1$ and supported in $B_2$,
and integers $k_g,k_f$ so large that
\[
 k_g-\sigma_1-k_f/q_0\ge0,\qquad
 k_f-\sigma_2-k_g/p_0\ge0.
\]
Such integers exist because $p_0q_0>1$.
The compact-test estimate \eqref{ui:cutoff-image} and
\eqref{r7:inverse-response} give, with
$\mathfrak G_R=\int g_R\chi^{k_g}$ and
$\mathfrak F_R=\int f_R\chi^{k_f}$,
\begin{equation}
 \mathfrak G_R\le C\sum_{Q\in\{q_0,q_*\}}
                     \mathfrak F_R^{1/Q},\qquad
 \mathfrak F_R\le C\sum_{P\in\{p_0,p_*\}}
                     \mathfrak G_R^{1/P}.
 \label{r7:compact-response-feedback}
\end{equation}
For $Q>1$ use H\"older on the unit annulus; the remaining cutoff
power $k_g-\sigma_1-k_f/Q$ is nonnegative because $Q\ge q_0$.
For $Q=1$ integrate directly, and treat $P=1$ in the same way.
If one mass is large, so is the other, and the large-mass branches
of \eqref{r7:compact-response-feedback} give
$\mathfrak G_R\le C\mathfrak G_R^{1/(p_0q_0)}$.
Thus both masses are uniformly bounded.

We record the amplitude decay needed below.  For $\rho>1$, put
$S=s_{R\rho}/s_R$, $T=t_{R\rho}/t_R$,
$x=\sigma_1+A>0$, and $y=\sigma_2+B>0$.
The weak Euler inequalities and \eqref{r7:inverse-response}'s lower
response bounds imply $S,T<1$: if $S\ge1$, then
$T\ge\rho^y S^{q_0}>1$ and
$S\ge\rho^xT^{p_0}\ge\rho^{x+p_0y}S^{p_0q_0}$, a contradiction.
The other case is symmetric.  Using the small-argument bounds now gives
\[
 S\ge m_G\rho^xT^{p_*},\qquad
 T\ge m_F\rho^yS^{q_*},
\]
and hence
\begin{equation}
 \frac{s_{R\rho}}{s_R}\le C\rho^{-\alpha_*},\quad
 \frac{t_{R\rho}}{t_R}\le C\rho^{-\beta_*},\quad
 \alpha_*:=\frac{x+p_*y}{p_*q_*-1}>0,\quad
 \beta_*:=\frac{y+q_*x}{p_*q_*-1}>0.
 \label{r7:pde-amplitude-decay}
\end{equation}
The constants are independent of $R,\rho$.
Returning to the original variables in the unit-ball source bounds
therefore gives
\begin{equation}
 \int_{B_R}g\le CR^{\lambda_1}s_R\le CR^{\lambda_1-\alpha_*},\qquad
 \int_{B_R}f\le CR^{\lambda_2}t_R\le CR^{\lambda_2-\beta_*}
 \label{r7:pde-source-masses}
\end{equation}
for all sufficiently large $R$.  A negative exponent would already
contradict a nonzero compact source mass.  Otherwise these bounds
supply source moments strictly below the respective $\lambda_i$ by
dyadic summation.
The source bounds on $B_{2R}$ and the inverse response inequality on
$R<|x|<2R$ give
\[
 \int_{R<|x|<2R}u\le CR^n s_R,\qquad
 \int_{R<|x|<2R}v\le CR^n t_R.
\]
For clarity, source bounds on the larger ball are obtained with the
cutoff at scale $2R$, before the annular transfer; one does not divide
by a vanishing cutoff at the outer edge.  Equation
\eqref{r7:pde-amplitude-decay} now yields $u,v\in\mathcal L_0$.

In cases~(i) and~(ii), apply Proposition~\ref{gc:source-inversion}
to the actual sources $g,f$, not to auxiliary powers.  It gives
\eqref{r7:nonlinear-pde-exact} and strict positivity at every point.
Local boundedness of $u,v$ times local source integrability gives
$fu+gv\in L^1_{\mathrm{loc}}$.  The positive origin values in the
exact identities imply the two kernel-weighted coefficient integrals
by lower bounds for $F(u),G(v)$ near zero.

\emph{Case (iii): positive minorants and constant remainders.}
Lemma~\ref{pp:automatic-source} gives $u\ge I_{\sigma_1}g$,
$v\ge I_{\sigma_2}f$.  Use the normalization
\eqref{r7:pde-amplitudes}.  On the fixed patch
$D=B_{1/8}(3e_0/4)$ the orbit hypothesis gives uniform positive
lower coefficient bounds.  If $m_g=\int_{B_1}g_R$,
$m_f=\int_{B_1}f_R$, the positive minorants give
$U_R\ge cm_g$, $V_R\ge cm_f$ on $D$.  Integrating the lower
response bounds gives
\[
 m_f\ge c\min\{m_g^{q_0},m_g^{q_*}\},\qquad
 m_g\ge c\min\{m_f^{p_0},m_f^{p_*}\}.
\]
On the large-mass branch, $p_0q_0>1$ bounds both masses uniformly.
This step uses no H\"older inequality and allows either lower
exponent to be less than one.  The algebraic source transformations
and \eqref{r7:pde-amplitude-decay} then give
\eqref{r7:pde-source-masses}, and hence strict source moments.

Set $P=I_{\sigma_1}g$, $Q=I_{\sigma_2}f$.  Source inversion puts
$P,Q\in\mathcal L_0$ and gives their entire source equations.
The remainders $u-P,v-Q$ are nonnegative, entire
$L_{\sigma_1}$- and $L_{\sigma_2}$-harmonic functions in their
respective natural-tail classes.
Theorem~1.1 of \cite{Fall2016} makes them affine, and
nonnegativity makes them constants $C_u,C_v\ge0$.
If $C_u>0$, then $F(u)\ge F(C_u)>0$ and the exterior lower bound
for $b$ gives
\[
 \int_{B_R}f\ge cR^{n+B}
 \quad\hbox{for large }R.
\]
But $n+B>\lambda_2$ whereas the upper bound in
\eqref{r7:pde-source-masses} has exponent
$\lambda_2-\beta_*<\lambda_2$.  This is a contradiction.  The other
constant vanishes in the same way.  The identities hold at every
nonzero point because the potentials of the locally bounded sources
are continuous there.  Thus $u,v\in\mathcal L_0$, and a final
application of Proposition~\ref{gc:source-inversion} identifies the
origin as well.  The coefficient integrals follow from the same
positive-origin argument.  This proves all assertions.
\end{proof}

\begin{corollary}
\label{r7:nonlinear-pde-nonexistence}
Under the hypotheses of Theorem~\ref{r7:nonlinear-pde-representation},
assume $\sigma_1=\sigma_2=\sigma$. If
\begin{equation}
 (n+A)m_G+(n+B)m_F>n-\sigma,
 \label{r7:pde-primitive-gap}
\end{equation}
there is no nonnegative nonzero pair in any of its three PDE classes.
% In particular, the low-order class allows a sublinear response;
% the even-integer class requires no initial tail assumption.
\end{corollary}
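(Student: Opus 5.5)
The plan is to combine Theorem~\ref{r7:nonlinear-pde-representation} with the master potential theorem, Theorem~\ref{r7:master-front}, applied to the Riesz kernel $K=K_\sigma$. Suppose a nonnegative nonzero pair exists in one of the three classes.

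\textbf{Step 1 (representation).} Theorem~\ref{r7:nonlinear-pde-representation} gives the following facts:
\begin{itemize}
\item $f=bF(u)$ and $g=aG(v)$ are nonzero and locally integrable;
\item $u,v\in\mathcal L_0$ and both are strictly positive everywhere;
\item the exact identities $u=I_\sigma g$, $v=I_\sigma f$ hold at every point, in particular at every $x\ne0$;
\item the local energy $fu+gv$ lies in $L^1_{\rm loc}$.
\end{itemize}
Since $u,v\in C(\mathbb R^n)$, the pair is finite, strictly positive and continuous off zero. The integrability of $fu+gv$ over $B_1$ is exactly \eqref{eq:main-local-energy}. Hence $(u,v)$ solves \eqref{v5:master-system} with $K=K_\sigma$.

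\textbf{Step 2 (checking the hypotheses of Theorem~\ref{r7:master-front}).}
\begin{itemize}
\item The kernel: $K_\sigma$ satisfies \eqref{r6:kernel-class} with $\theta_K\equiv\lambda=n-\sigma$, so we may take $\Lambda=n-\sigma$.
\item The coefficients: they are assumed continuous off zero, with global weak Euler bounds $A,B$ and the normalized Dini orbit \eqref{eq:front-dini-orbit}.
\item The responses: \eqref{r7:pde-response-class} is literally \eqref{r7:response-assumptions} with $\mu_F=m_F$ and $\mu_G=m_G$. The regularity in Remark~\ref{r7:lipschitz-response-extension} matches that required in Theorem~\ref{r7:master-front}.
\item The gap: with these choices, $\Gamma_P=(n+B)m_F+(n+A)m_G-(n-\sigma)$, which is positive by \eqref{r7:pde-primitive-gap}.
\end{itemize}
Theorem~\ref{r7:master-front} then says no such pair exists, which is the required contradiction.

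\textbf{Main obstacle.} The only delicate point is making sure the sign and regularity hypotheses line up.

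First, the representation theorem assumes $A>-\sigma$ and $B>-\sigma$, while the master theorem allows arbitrary real Euler bounds. So the hypotheses are only being restricted, not extended, and nothing extra is needed.

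Second, in case (iii) the lower exponents $p_0,q_0$ may be less than one. This is allowed, because Theorem~\ref{r7:master-front} requires only $p_0q_0>1$, which holds.

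Third, the statement only assumes the pair is ``nonzero'', not strictly positive. The representation theorem upgrades this to strict positivity everywhere, so the master theorem's positivity requirement is met.

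Finally, the master theorem needs continuity only off zero, and in dimension one the Dini comparisons are made branchwise. Both are consistent with what the representation theorem delivers. With these checks, the corollary is a direct combination of the two theorems.
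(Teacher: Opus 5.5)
Your proposal is correct and follows the paper's own proof. Theorem~\ref{r7:nonlinear-pde-representation} supplies the exact Riesz potentials at every point, strict positivity and finite local nonlinear energy, and then Theorem~\ref{r7:master-front} applies with $K=K_\sigma$, $\Lambda=n-\sigma$, $\mu_F=m_F$, $\mu_G=m_G$; the paper likewise relies only on the locally Lipschitz response regularity of Remark~\ref{r7:lipschitz-response-extension}, and your checks on the signs of $A,B$, the possibly sublinear exponents in case (iii), and the upgrade from a nonzero pair to a strictly positive one are consistent with it.
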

\begin{proof}
The representation theorem first supplies exact potentials, strict
positivity, and finite local nonlinear energy.  Apply Theorem~\ref{r7:master-front}, using
Remark~\ref{r7:lipschitz-response-extension} if necessary.
\end{proof}

\section{Universal interior bounds without regular variation}
\label{r7:universal-bound-section}

We apply the nonlinear PDE result to a class closed under amplitude
normalization.  The doubling and rescaling principle is classical;
see \cite{PolacikQuittnerSouplet2007}.  Universal bounds for
non-scale-invariant second-order systems with regularly varying
responses were developed in \cite[Section~4]{QuittnerSouplet2025}.
The conclusion below uses finite logarithmic derivative windows on
the full response range, and permits responses without regular
variation at either endpoint.

\begin{theorem}
\label{r7:universal-interior-bound}
Let $m\ge1$ be an integer, $\sigma=2m<n$, and fix finite constants
\begin{equation}
 1\le p_0\le p_1,\qquad 1\le q_0\le q_1,\qquad
 p_0q_0>1,\qquad
 \frac{n}{p_1+1}+\frac{n}{q_1+1}>n-\sigma.
 \label{r7:universal-response-window}
\end{equation}
Let $F,G$ be continuous on $[0,\infty)$, locally Lipschitz on
$(0,\infty)$, positive on $(0,\infty)$, and zero at zero, with
\begin{equation}
 q_0\le\frac{sF'(s)}{F(s)}\le q_1,\qquad
 p_0\le\frac{sG'(s)}{G(s)}\le p_1
 \quad\hbox{for a.e. }s>0.
 \label{r7:universal-elasticities}
\end{equation}
For $r>0$ let $s_r,t_r$ be the unique positive solution of
\begin{equation}
 s_r=r^\sigma G(t_r),\qquad t_r=r^\sigma F(s_r).
 \label{r7:interior-amplitudes}
\end{equation}
There is a constant $C$, depending only on
$n,m,p_0,p_1,q_0,q_1$, such that every nonnegative classical solution
of
\begin{equation}
 (-\Delta)^m u=G(v),\qquad (-\Delta)^m v=F(u)
 \quad\hbox{in a proper domain }\Omega\subset\mathbb R^n
 \label{r7:interior-pde}
\end{equation}
satisfies
\begin{equation}
 u(x)\le Cs_{d(x)},\qquad v(x)\le Ct_{d(x)},\qquad
 d(x)=\operatorname{dist}(x,\partial\Omega).
 \label{r7:universal-interior-estimate}
\end{equation}
% No boundary condition, boundedness assumption, or sign condition on
% intermediate Laplacians is imposed.
\end{theorem}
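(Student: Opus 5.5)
We argue by contradiction through the doubling and rescaling principle of \cite{PolacikQuittnerSouplet2007}, with the rescaling performed by the implicit amplitudes \eqref{r7:interior-amplitudes} rather than by powers. Suppose the estimate fails. Then there are data $F_k,G_k$ in the window \eqref{r7:universal-elasticities}, domains $\Omega_k$, solutions $(u_k,v_k)$ and points $x_k$ at which
\[
 M_k:=\frac{u_k}{s^{(k)}_{d}}+\frac{v_k}{t^{(k)}_{d}}
\]
exceeds $2k$ at $x_k$, where $d=d_k(x_k)$. To apply the doubling lemma we first convert the amplitudes into an inverse length. For each $(x,\text{value})$ we define the scale $r(x)$ implicitly: it is the radius at which $u_k(x)=s_{r}$ or $v_k(x)=t_{r}$, whichever gives the smaller $r$.

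This is well defined because the normalization Lemma~\ref{r7:normalization}, specialized to $a=b=1$, gives the following. The maps $r\mapsto s_r,t_r$ are continuous and strictly decreasing. They satisfy two-sided power bounds $c\rho^{-\alpha_1}\le s_{r\rho}/s_r\le C\rho^{-\alpha_0}$ for $\rho\ge1$, with exponents depending only on the window. The upper elasticities $p_1,q_1$ supply the reverse inequality by the same finite-increment argument. Consequently $N_k(x):=1/r(x)$ is comparable to a power of $M_k$, uniformly in $k$. The doubling lemma then yields points $y_k$ and radii $r_k=1/N_k(y_k)$ with $r_k\le d_k(y_k)/(2k)$, and $N_k\le 2N_k(y_k)$ on $B_{kr_k}(y_k)$.

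We rescale by
\[
 U_k(z)=u_k(y_k+r_kz)/s_{r_k},\qquad V_k(z)=v_k(y_k+r_kz)/t_{r_k}.
\]
The exact transformation \eqref{r6:complete-family} gives $(-\Delta)^mU_k=G_{t_{r_k}}(V_k)$ and $(-\Delta)^mV_k=F_{s_{r_k}}(U_k)$ on $B_k$. The doubling bound and the power comparison of amplitudes bound $U_k+V_k$ on $B_k$ uniformly, and $\max(U_k(0),V_k(0))=1$. The normalized responses $F_{s},G_t$ equal $1$ at $1$ and stay in the same elasticity window. By \eqref{r6:response-bounds} with the upper window they are locally uniformly Lipschitz and bounded on compacts of $(0,\infty)$, with power bounds near zero. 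Arzel\`a--Ascoli therefore gives limits $F_\infty,G_\infty$ in the same window, and these are locally Lipschitz since the a.e. elasticity bounds pass to the limit via the integrated ratio inequalities. This is the compactness that the upper bounds $p_1,q_1$ are needed for.

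Interior $W^{2m,s}$ estimates for the polyharmonic equation with bounded right sides give $C^{2m-1,\theta}_{\rm loc}$ compactness. We pass to a limit pair $(U,V)$, nonnegative, bounded and entire, with $\max(U(0),V(0))=1$, solving $(-\Delta)^mU=G_\infty(V)$ and $(-\Delta)^mV=F_\infty(U)$. Schauder theory upgrades it to a classical $C^{2m}$ pair.

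\textbf{Main obstacle.} The step I expect to be delicate is applying Corollary~\ref{r7:nonlinear-pde-nonexistence}, case (ii), with $a=b=1$, $A=B=0$ and $\Omega\equiv0$. This requires only that $(U,V)$ be nonnegative and not both zero, so no sign on intermediate Laplacians is needed. The primitive constants are $m_F=1/(q_1+1)$ and $m_G=1/(p_1+1)$, as in Corollary~\ref{r7:two-sided-elasticity-corollary}. The gap \eqref{r7:pde-primitive-gap} is exactly the last condition in \eqref{r7:universal-response-window}, and the remaining hypotheses are $p_0,q_0\ge1$ and $\sigma=2m<n$. Nonexistence contradicts the normalization at $0$. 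Beyond checking these hypotheses, the care goes into making the doubling scale function continuous and uniformly comparable. I will settle this first by proving the two-sided amplitude ratio bounds from both elasticity windows.
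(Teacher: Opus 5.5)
Your proposal is correct and follows essentially the paper's proof: doubling with $N_k=\max\{1/s^{-1}(u_k),1/t^{-1}(v_k)\}$, rescaling by the implicit amplitudes $s_{r_k},t_{r_k}$, compactness of the normalized responses from the upper elasticity bounds, interior elliptic estimates, and then the nonlinear representation theorem together with Corollary~\ref{r7:nonlinear-pde-nonexistence}, using $m_F=(q_1+1)^{-1}$ and $m_G=(p_1+1)^{-1}$. The differences are minor. The paper invokes case~(i), where boundedness supplies the natural tails, whereas you invoke case~(ii); both apply to the limit pair. The paper also obtains the two-sided amplitude bounds directly from the exact relations $X=\sigma L+PY$ and $Y=\sigma L+QX$, rather than from Lemma~\ref{r7:normalization}, whose stated hypotheses presuppose a solution pair.
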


\begin{proof}
The same strictly monotone scalar equation as in
\eqref{r7:pde-amplitudes} gives the amplitudes.  Their scale bounds
will also specify the normalization used in the proof.  For $R>r$,
write $L=\log(R/r)$, $X=\log(s_R/s_r)$, and
$Y=\log(t_R/t_r)$.  Integrating
\eqref{r7:universal-elasticities} on the amplitude intervals gives
$P\in[p_0,p_1]$, $Q\in[q_0,q_1]$ such that
\[
 X=\sigma L+PY,\qquad Y=\sigma L+QX.
\]
Thus
\[
 -X=\frac{\sigma(1+P)}{PQ-1}L,\qquad
 -Y=\frac{\sigma(1+Q)}{PQ-1}L.
\]
Choose $0<a_*\le a^*<\infty$ bounding both displayed positive
quotients on the compact exponent rectangle.  Then
\begin{equation}
 (R/r)^{a_*}\le\frac{s_r}{s_R}\le(R/r)^{a^*},\qquad
 (R/r)^{a_*}\le\frac{t_r}{t_R}\le(R/r)^{a^*}.
 \label{r7:interior-scale-comparison}
\end{equation}
In particular, $s_r,t_r$ are continuous strictly decreasing
bijections of $(0,\infty)$ onto itself.

Let $s^{-1},t^{-1}$ denote their inverses, and define
\[
 M(x)=\max\{1/s^{-1}(u(x)),\,1/t^{-1}(v(x))\},
\]
with each term set to zero when its argument is zero.  We claim that
$M(x)d(x)\le C_0$ uniformly over the response class and the domains.
Otherwise the doubling lemma from
\cite{PolacikQuittnerSouplet2007} gives a sequence of solutions and
points $y_j$, with $r_j=1/M(y_j)$, such that
\[
 B_{jr_j}(y_j)\subset\Omega_j,\qquad
 M(y)\le2M(y_j)\quad(|y-y_j|<jr_j).
\]
The responses may also vary with $j$ while retaining the fixed
windows.  Set
\[
\begin{aligned}
 U_j(x)&=\frac{u_j(y_j+r_jx)}{s_{r_j}},&
 V_j(x)&=\frac{v_j(y_j+r_jx)}{t_{r_j}},\\
 F_j(z)&=\frac{F(s_{r_j}z)}{F(s_{r_j})},&
 G_j(z)&=\frac{G(t_{r_j}z)}{G(t_{r_j})}.
\end{aligned}
\]
where each amplitude pair and response belongs to the corresponding
member of the sequence.  Then, on $B_j$,
\begin{equation}
 (-\Delta)^m U_j=G_j(V_j),\qquad
 (-\Delta)^m V_j=F_j(U_j),\qquad
 \max\{U_j(0),V_j(0)\}=1.
 \label{r7:doubled-system}
\end{equation}
The doubling inequality implies $u_j(y_j+r_jx)\le s_{r_j/2}$
and $v_j(y_j+r_jx)\le t_{r_j/2}$; hence
\eqref{r7:interior-scale-comparison} gives
$0\le U_j,V_j\le2^{a^*}$ throughout $B_j$.

The normalized responses retain their elasticity windows and have
value one at one.  They are uniformly Lipschitz on every compact
interval $[0,L]$, including zero.  Indeed, for $0<z\le1$,
$F_j(z)\le z^{q_0}$ and
$F_j'(z)\le q_1z^{q_0-1}\le q_1$ a.e.; on $[1,L]$ use the
upper power bound, and argue similarly for $G_j$.
Arzel\`a--Ascoli and a diagonal subsequence give locally uniform
limits $F_\infty,G_\infty$ on $[0,\infty)$.  The integrated
response ratio inequalities pass to the limits, so these responses
are locally Lipschitz, positive off zero, zero at zero, normalized at
one, and satisfy \eqref{r7:universal-elasticities} almost everywhere.
They need not be $C^1$; this is why
Remark~\ref{r7:lipschitz-response-extension} was recorded explicitly.

The right sides of \eqref{r7:doubled-system} are uniformly bounded
on compact balls.  Interior $W^{2m,r}$ estimates with $r>n$ and
compact embedding therefore give locally uniform convergence of a
subsequence to bounded nonnegative entire functions $U,V$, with local
$C^{1,\alpha}$ convergence after fixing $0<\alpha<1-n/r$.
The uniform response convergence passes the equations to
\[
 (-\Delta)^m U=G_\infty(V),\qquad
 (-\Delta)^m V=F_\infty(U),\qquad
 \max\{U(0),V(0)\}=1.
\]
The limiting responses are Lipschitz on bounded intervals, so
$F_\infty(U)$ and $G_\infty(V)$ are locally H\"older continuous.
Interior Schauder estimates give $U,V\in C^{2m,\alpha}_{\rm loc}$
for a possibly smaller $\alpha>0$, hence a classical pair.  At this point the
pair is merely nonnegative and nonzero; no high-order maximum
principle is being used.  Boundedness supplies its natural tails.
Theorem~\ref{r7:nonlinear-pde-representation}(i), which was stated
for nonnegative nonzero pairs, gives exact positive Riesz potentials
and therefore strict positivity of both components.  Its primitive
hypotheses hold with
$m_F=(q_1+1)^{-1}$, $m_G=(p_1+1)^{-1}$.
Corollary~\ref{r7:nonlinear-pde-nonexistence} and
\eqref{r7:universal-response-window} now contradict the nonzero
limit.  This proves the claim about $M(x)d(x)$.

Increase $C_0$ to at least one.  From $M(x)\le C_0/d(x)$ and
monotonicity of the amplitude functions,
\[
 u(x)\le s_{d(x)/C_0}\le C_0^{a^*}s_{d(x)},\qquad
 v(x)\le t_{d(x)/C_0}\le C_0^{a^*}t_{d(x)}.
\]
This is \eqref{r7:universal-interior-estimate}.
\end{proof}

\begin{example}
\label{r7:nonrv-interior-example}
The estimate applies to
\[
 F(s)=s^q\exp\{\varepsilon\sin(\log s)\},\qquad
 G(s)=s^p\exp\{\delta\sin(\sqrt2\log s)\},\qquad s>0,
\]
extended by zero at zero, whenever
\[
 q-|\varepsilon|\ge1,\qquad p-\sqrt2|\delta|\ge1,
 \qquad (q-|\varepsilon|)(p-\sqrt2|\delta|)>1,
\]
and
\[
 \frac{n}{p+\sqrt2|\delta|+1}
 +\frac{n}{q+|\varepsilon|+1}>n-2m.
\]
Their elasticities are respectively
$q+\varepsilon\cos(\log s)$ and
$p+\sqrt2\delta\cos(\sqrt2\log s)$.
For nonzero perturbation parameters the corresponding response is
not regularly varying at zero or infinity: for a generic fixed
$c>0$, its ratio at $cs$ and $s$ retains a nonconstant oscillation
in $\log s$.  Thus this example is outside an endpoint
regular-variation assumption.
\end{example}

The upper elasticity bounds are used here for compactness of the
normalized response family.  A positive primitive ratio by itself
does not supply this equicontinuity: normalized responses can have
increasingly steep transitions and discontinuous subsequential
limits.  The primitive nonexistence theorem therefore does not,
without an additional compactness hypothesis, yield the same
universal estimate for its entire response class.

\vspace{1cm}
\textbf{AI assistance.}
ChatGPT assisted with mathematical exploration, manuscript organization,
English editing and computational checks. The authors are responsible for
the statements, proofs, citations and final text.
% References are embedded to match the supplied single-file template.
\providecommand{\bysame}{\leavevmode\hbox to3em{\hrulefill}\thinspace}
\providecommand{\MR}{\relax\ifhmode\unskip\space\fi MR }
% \MRhref is called by the amsart/book/proc definition of \MR.
\providecommand{\MRhref}[2]{%
  \href{http://www.ams.org/mathscinet-getitem?mr=#1}{#2}
}
\providecommand{\href}[2]{#2}

\end{document}